\numberwithin{equation}{section}
\newtheorem{theorem}{Theorem}[section]
\newtheorem{corollary}{Corollary}[section]
\newtheorem{lemma}[theorem]{Lemma}
\newtheorem{proposition}{Proposition}[section]
\newtheorem{definition}[theorem]{Definition}
\newtheorem{remark}[theorem]{Remark}
\def\bar{\overline}
\DeclareMathOperator\MF{{\mathcal{MF}}}
\DeclareMathOperator\PMF{{\mathcal{PMF}}}
\DeclareMathOperator\M{Mod}
\DeclareMathOperator\T{Teich}
\DeclareMathOperator\E{Ext}
\DeclareMathOperator\SE{Sect}
\DeclareMathOperator\PGM{\partial_{GM}\overline{\T(S)}^{GM}}
\DeclareMathOperator\GM{\overline{\T(S)}^{GM}}
\begin{document}

\title{Dynamics of subgroups of mapping class groups}
\author{Ilya Gekhtman \and Biao Ma}

\date{}
\maketitle
\begin{abstract}
Let $\M(S)$ be the mapping class group of a closed orientable surface $S$ of genus $g \geq 2$. Let $G$ be a non-elementary subgroup of $\M(S)$ so that the associated Bowen-Margulis measure is finite. In this paper, we give an asymptotic growth formula for $G$ with respect to the
Teichm\"{u}ller metric.  
\end{abstract}
\section{Introduction}
Given an countable group $G$ acting properly discontinuously by isometries on a proper metric space $X$, a natural quantity to study is the asymptotic orbit growth of the action, i.e. the number $N(R,x)$ of elements displacing a basepoint $x$ by at most a given number $R$. When $X$ is a negatively curved contractible manifold and $M=X/\Gamma$ is compact, Margulis \cite{Margulis} proved in his 1969 thesis  that $N(R,x)/e^{\delta R}\to_{R\to \infty} C_x$ where $C_x>0$ is independent of $R$ and $\delta$ is the topological entropy of the geodesic flow on $M$. Roblin \cite{Roblin} proved the same asymptotics, with the cocompactness assumption replaced by the assumption that the so called \textit{Bowen-Margulis} measure on the unit tangent bundle of $M$ is finite, a condition which covers for instance finite volume and geometrically finite rank 1 locally symmetric spaces.
The study of the dynamics of the action of the mapping class group on Teichm$\ddot{\mathrm{u}}$ller
space has long been influenced by analogy with the actions of discrete isometry groups of manifolds of negative curvature. Although the Teichm$\ddot{\mathrm{u}}$ller space of higher genus surfaces is not negatively curved in any meaningful global sense (e.g. it is not $CAT(0)$ and not Gromov hyperbolic), in many ways it behaves ``asymptotically" or ``on average" like a negatively curved space and the mapping class group as a lattice in such a space. It is interesting to exploit these analogies to  extend the asymptotic orbit growth results of Margulis and Roblin to the setting of the mapping class group and its subgroups acting on Teichm$\ddot{\mathrm{u}}$ller space; this is the goal of this paper. 

We now state the main result of this paper. Let $S = S_g$ be a connected closed surface of genus $g \geq 2$. Our results also hold for surfaces with punctures but we only state it for closed surfaces. The mapping class group $\M(S)$ of $S$ is the group of isotopy classes of orientation-preserving homeomorphisms of $S$. A subgroup $G<\M(S)$ of the mapping class group is called nonelementary if it contains two independent pseuo-Anosov mapping classes. For a nonelementary subgroup $G$ of $\M(S)$ such that the Teichmueller geodesic flow on the unit cotangent bundle $QD^1(S)/G$ of $\T(S)/G$ has finite Bowen-Margulis measure (see Section 3 for the precise definition), we prove 
\begin{theorem}[Theorem \ref{main:counting}]\label{theorem:introcounting}
   Let $S$ be a closed surface of genus $g$ and $G$ be a non-elementary subgroup of the mapping class group $\M(S)$ so that the Bowen-Margulis measure $\mu_G$ on $QD^1(S)/G$ is finite. Let $x,y$ be two points in the Teichm\"{u}ller space $\T(S)$ and denote the critical exponent of $G$ by $\delta_G$.  We then have
    $$\lim_{R \to \infty}\frac{\sharp\{g \in G: d(x, gy) \leq R\}}{e^{\delta_G R}} =c,$$ where $c = \frac{1}{\mu_G(QD^1(S)/G)}$ and $d$ is the Teichm\"{u}ller metric.     
\end{theorem}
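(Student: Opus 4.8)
The plan is to run Roblin's counting machinery \cite{Roblin} in the Teichm\"{u}ller setting, treating $\T(S)$ together with its Thurston/Gardiner--Masur bordification as a ``coarsely negatively curved'' space whose geodesic flow on $QD^1(S)/G$ carries the finite Bowen--Margulis measure $\mu_G$, and then deducing the orbit asymptotics from mixing of that flow (for $G=\M(S)$ this recovers the known lattice-point asymptotic, with $\delta_G=6g-6$ and $\mu_G$ the Masur--Veech measure). The argument decomposes into four blocks: (i) construction of a Patterson--Sullivan density $\{\nu_x\}_{x\in\T(S)}$ of dimension $\delta_G$ carried by the limit set $\Lambda_G$; (ii) Sullivan's shadow lemma, controlling the $\nu_x$-measure of shadows of orbit balls by $e^{-\delta_G d(x,gy)}$; (iii) the Hopf--Tsuji--Sullivan--Roblin dichotomy together with the upgrade of ergodicity to mixing; (iv) Roblin's extraction of the counting asymptotic from mixing. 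Blocks (i) and (iv) are formal once the geometric inputs for $\T(S)$ are available; the work is concentrated in (ii) and (iii), the two steps where the failure of honest hyperbolicity forces a genuine reworking of the negatively-curved template.

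For (i) I would form the Poincar\'{e} series $\mathcal{P}_G(s,x,y)=\sum_{g\in G}e^{-s\,d(x,gy)}$, which converges for $s>\delta_G$; if it does not already diverge at $s=\delta_G$ I insert the usual slowly-varying correction of Patterson. Normalizing $\mathcal{P}_G(s,x,y)^{-1}\sum_{g}e^{-s\,d(x,gy)}\delta_{gy}$ and taking a weak-$*$ limit as $s\downarrow\delta_G$ yields a $G$-equivariant family $\{\nu_x\}$ transforming conformally of dimension $\delta_G$ under change of basepoint, the Radon--Nikodym cocycle being the exponential of the appropriate Busemann/horofunction on the Gardiner--Masur boundary; I normalize each $\nu_x$ to a probability measure and restrict attention to the full-measure set of uniquely ergodic foliations in $\Lambda_G$, where the boundary dynamics is well behaved. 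Building the Bowen--Margulis measure is then the standard recipe: on the space of geodesics set $d\tilde\mu_G=e^{2\delta_G(\xi\mid\eta)_x}\,d\nu_x(\xi)\,d\nu_x(\eta)\,dt$ (independent of $x$, with $(\xi\mid\eta)_x$ the Gromov product), verify $G$- and flow-invariance, and descend to $QD^1(S)/G$; finiteness is our hypothesis, and the total mass is what enters the final constant after matching it with the normalization of $\mu_G$ fixed in Section 3 (this is where the factor $\delta_G$ appearing in the ``naive'' Roblin constant is absorbed, leaving $c=1/\mu_G(QD^1(S)/G)$).

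The first genuinely Teichm\"{u}ller-specific difficulty is the shadow lemma (ii). In a hyperbolic space it follows because geodesics through a metric ball enter and leave it in controlled ``directions''; in $\T(S)$ this can fail near the thin part, so I would prove it by combining recurrence of $\mu_G$-generic geodesics to the thick part (a consequence of finiteness of $\mu_G$ together with the standard recurrence estimates for the Teichm\"{u}ller flow) with the fact that along thick geodesic segments the Gardiner--Masur cross-ratio, equivalently the extremal-length comparison of Liu--Su/Walsh/Miyachi, behaves up to bounded error like a genuine cross-ratio; this gives $\nu_x(\mathcal{O}_x(gy,r))\asymp e^{-\delta_G d(x,gy)}$ uniformly for $r$ large. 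Granting (ii), block (iv) is Roblin's argument essentially verbatim: mixing of $(\phi^t,\mu_G)$ forces the measures $e^{-\delta_G t}(\phi^t)_*(\text{flow-box measure})$ to equidistribute to a multiple of $\tilde\mu_G$; unfolding the $G$-action and squeezing the sharp condition $d(x,gy)\le R$ between unions of flow-box regions turns this into convergence of $N(R,x,y)e^{-\delta_G R}$, with limit $\nu_x(\Lambda_G)\nu_y(\Lambda_G)/(\delta_G\|\tilde\mu_G\|)$ and hence, in our normalizations, $1/\mu_G(QD^1(S)/G)$.

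The main obstacle is (iii): mixing of the Teichm\"{u}ller flow on $(QD^1(S)/G,\mu_G)$. First comes the Hopf--Tsuji--Sullivan--Roblin dichotomy adapted to $\T(S)$: since $\mu_G$ is finite and $G$ is non-elementary, the flow must be conservative, $\mu_G$-ergodic, and the Poincar\'{e} series must diverge at $\delta_G$ — this rests on the Hopf argument for the Teichm\"{u}ller flow (available through Coud\`{e}ne-type arguments) together with the shadow lemma from (ii). Upgrading ergodicity to mixing requires non-arithmeticity of the length spectrum of $G$, i.e. density in $\mathbb{R}$ of the subgroup generated by translation lengths of pseudo-Anosov conjugacy classes in $G$; this holds because $G$ is non-elementary — from two independent pseudo-Anosovs one produces a pair with incommensurable translation lengths, localizing to the subgroup the statement known for all of $\M(S)$ — after which mixing follows from a Babillot-type criterion in the form adapted to the Teichm\"{u}ller flow. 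The recurring technical nuisance underneath all of this, and the reason the negatively curved proofs do not transfer verbatim, is the need to keep $\mu_G$-generic trajectories away from the thin part of moduli space while running the Hopf argument and the equidistribution estimates, for which one leans repeatedly on the finiteness of $\mu_G$ and on recurrence.
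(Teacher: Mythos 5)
Your outline follows the same architecture as the paper (Patterson--Sullivan density, shadow lemma, Hopf--Tsuji--Sullivan plus mixing, Roblin's extraction), but two of your steps hide genuine gaps. The most serious is non-arithmeticity of the length spectrum in block (iii). You assert that from two independent pseudo-Anosovs in $G$ one ``produces a pair with incommensurable translation lengths, localizing to the subgroup the statement known for all of $\M(S)$.'' There is no such localization: knowing that the dilatation spectrum of the full mapping class group generates a dense subgroup of $\mathds{R}$ says nothing about the sub-spectrum coming from an arbitrary non-elementary $G$, and no soft argument is known that manufactures two pseudo-Anosovs in $G$ with incommensurable $\log\lambda$. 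This is precisely Theorem \ref{thm:nonarithmetic}, one of the two main results of the paper, and its proof is genuinely nontrivial: one linearizes a sub-semigroup of $G$ via train-track coordinates on $\MF(S)$, passes to a quotient vector space on which the semigroup acts strongly irreducibly with proximal elements (Lemmas \ref{lemma:dichotomy}--\ref{lemma:stronglyirreducibly}), and invokes the Guivarc'h--Urban density theorem. Without this, mixing --- and hence the whole counting argument --- does not get off the ground.

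The second gap is your claim that block (iv) is ``Roblin's argument essentially verbatim.'' The Roblin/Link scheme relies at a key point on a finiteness counting lemma for orbit points lying in a cone over $A$ but escaping a neighborhood of $\overline{A}$ (Link's Lemma 6.6), whose proof uses convexity of the distance function in $\rm{CAT(0)}$ spaces; this fails for the Teichm\"{u}ller metric, and the paper must substitute the weaker subexponential estimate of Proposition \ref{lemma:subexponential}, proved via a coarse equidistribution argument (Lemma \ref{lemma:coarseequidistribution}) and then fed through a modified bookkeeping in Claims 3 and 4 of Proposition \ref{prop:small}. Relatedly, your proposed proof of the shadow lemma in block (ii) --- thick-part recurrence of $\mu_G$-generic geodesics plus bounded distortion of the cross-ratio along thick segments --- cannot work as stated: the shadow lemma is a uniform statement over \emph{all} orbit points $gy$, whereas recurrence only controls generic trajectories, and the ``thick fellow-traveling'' input is exactly the $\rm{CAT(-1)}$-like mechanism (modified Hodge norm, principal stratum) that forces the restrictive hypotheses of \cite{ABEM} and \cite{gekhtman2013dynamics}. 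The paper instead imports the shadow lemma from Yang's theory of contracting elements \cite{yang2022conformal}, which applies to every pseudo-Anosov axis regardless of stratum, and handles non-Busemann limit points in $\PGM$ via Azemar's optimal geodesics --- an issue your sketch does not address at all when defining shadows and cones from boundary points.
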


In two special cases, the results of this paper were 
established earlier. One is when $G$ is the full mapping class group $\M(S)$; in this case the results were established by Athreya-Bufetov-Eskin-Mirzakhani \cite{ABEM}. The other, when $G$ is a convex-cocompact subgroup with at least one axis in the principal stratum, was studied by the first named author in his thesis and unpublished preprint \cite{gekhtman2013dynamics}. 
The result of \cite{ABEM} uses in an essential way the $SL_2 \mathbb{R}$ homogeneity of the measure of maximal entropy (Masur-Veech measure) of the geodesic flow whereas the result in \cite{gekhtman2013dynamics} uses in an essential way the Gromov hyperbolicity of the acting group $G$, neither of which occurs our setting.
Furthermore,i0 the results of \cite{ABEM} and \cite{gekhtman2013dynamics} both use in an essential way some $\rm{CAT(-1)}$-like properties of the Teichm$\ddot{\mathrm{u}}$ller metric, namely that two geodesics in Teichm$\ddot{\mathrm{u}}$ller space which fellow travel for a long time over compact parts of moduli space must get very close in the middle. This requires the use of some complicated analytic objects, such as the modified Hodge norm. This is in fact the reason for the assumption in \cite{gekhtman2013dynamics} that some pseudo-Anosov axis must lie in the principal stratum. In the present paper we unify, extend and simplify the proofs of the results of \cite{ABEM} and \cite{gekhtman2013dynamics}. In addition, our results will apply to a much larger class of groups including \textit{statistically convex cocompact} subgroups of mapping class groups considered by Yang \cite{Yang_scc} such as free products of the form $H \star <p>$ where $H$ is free abelian generated by Dehn twists and $p$ is a pseudo-Anosov element. We hope that they will also be applicable to mapping class groups of non-orientable surfaces studied by Khan \cite{Khan}, which can be realized as subgroups of mapping class groups of orientable surfaces preserving totally geodesic submanifolds of Teichm\"{u}ller space.
Inspired by and following the techniques of Link \cite{Link} in the rank 1 $\rm{CAT(0)}$ setting we do not use any $\rm{CAT(-1)}$-like of homogeneity properties of the Teichm$\ddot{\mathrm{u}}$ller metric, and therefore we expect our techniques to be applicable in the more general setting of actions with contracting elements (see e.g. \cite{Yang_scc} for details). 
To avoid using $\rm{CAT(-1)}$ properties of the Teichm$\ddot{\mathrm{u}}$ller metric, as well as to deal with global non-hyperbolicity of Teichm$\ddot{\mathrm{u}}$ller space, we had to make use of the horofunction compactification of $\T(S)$, the so called Gardiner-Masur boundary $\PGM$, instead of Thurston's space of projective measured foliations, which is better understood geometrically but less natural from a dynamical point of view. Some limit points of $G$ in $\PGM$ are non-Busemann, i.e. cannot be approximated by geodesic rays. To deal with these points we make use of so called optimal geodesics, introduced by Azemar in \cite{azemar2021qualitative} which can be thought of as approximating these points most efficiently. Various boundary "shadows" illuminated by subsets of Teichm$\ddot{\mathrm{u}}$ller space we will work shall be defined in terms of these optimal geodesics.

The preprint in \cite{gekhtman2013dynamics} will not be published. Therefore, to make this paper self-contained, we have included the proof of the most important result therein, which will guarantee in our setting the  mixing of the geodesic flow on $QD^1(S)/G$ with res
pect to the Bowen-Margulis measure. This result is a certain nondegeneracy
condition for the translation length spectrum of pseudo-Anosov elements of $G$. Recall that the spectrum of $G$ is the set of logarithms of the dilatations of pseudo-Anosov elements in $G$, ie the set of their translation lengths in Teichmueller space.
\begin{theorem}[Theorem \ref{thm:nonarithmetic}]\label{theorem:introarithem}
    Let $G$ be a non-elementary subgroup of $\M(S)$. Then the spectrum $Spec(G)$ generates a dense subgroup of $\mathds{R}$.
\end{theorem}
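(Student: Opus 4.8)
The plan is to argue by contradiction, modelling the proof on the classical fact that a non-elementary group of isometries of a $CAT(-1)$ space has non-arithmetic length spectrum, but with the strong contraction of pseudo-Anosov axes in Teichm\"{u}ller space substituting for global hyperbolicity. Write $\tau(\phi)=\log\lambda(\phi)$ for the Teichm\"{u}ller translation length of a pseudo-Anosov $\phi$. A subgroup of $(\mathbb{R},+)$ is either dense or infinite cyclic, and $Spec(G)$ contains both $\tau(f)$ and $2\tau(f)$ for any pseudo-Anosov $f\in G$, so the generated subgroup is nontrivial; hence it suffices to rule out the possibility that there is a $c>0$ with $\tau(\phi)\in c\mathbb{Z}$ for every pseudo-Anosov $\phi\in G$, which we assume henceforth. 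The first step is a closing/ping-pong estimate: if $f,g\in G$ are independent pseudo-Anosov elements, then $f^{n}g$ is pseudo-Anosov for all large $n$ and
$$\tau(f^{n}g)=n\,\tau(f)+\beta(f^{+},f^{-},g^{+},g^{-})+o(1)\qquad(n\to\infty),$$
where the bounded quantity $\beta$ depends only on the configuration of attracting/repelling fixed points $f^{\pm},g^{\pm}\in\PGM$ and depends continuously on that configuration, including in the degenerate limit where $g^{+}$ and $g^{-}$ collide. Heuristically the Teichm\"{u}ller axis of $f^{n}g$ follows a segment of the axis of $f$ of length $\approx n\tau(f)$ and then makes a ``turn'' of bounded, asymptotically constant length near the axis of $g$; I would make this rigorous from the strong contraction property of pseudo-Anosov axes together with the shadow/optimal-geodesic machinery developed earlier. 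No pathology of $\PGM$ intervenes here because only pseudo-Anosov fixed points, which are uniquely ergodic (hence Busemann) points, appear.

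Next I feed in the arithmeticity hypothesis. Fixing independent pseudo-Anosov $f,g$: since $\tau(f^{n}g)$ and $n\tau(f)$ lie in $c\mathbb{Z}$, letting $n\to\infty$ in the estimate forces $\beta(f^{+},f^{-},g^{+},g^{-})\in c\mathbb{Z}$, and likewise $\beta(f^{+},f^{-},g^{-},g^{+})\in c\mathbb{Z}$ (apply the estimate to $f^{n}g^{-1}$). Their difference is a ``cross-ratio'' of the four fixed points: it is odd under the swap $g^{+}\leftrightarrow g^{-}$, hence vanishes on the diagonal $g^{+}=g^{-}$, and in general it records the asymmetry of the axis of $g$ relative to the axis of $f$. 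Now replace $g$ by its conjugates $g_{j}:=p^{j}gp^{-j}$, where $p\in G$ is a pseudo-Anosov element in general position (so $p^{+}\notin\{f^{\pm},g^{\pm}\}$ and all the relevant products remain pseudo-Anosov for large indices). Each $g_{j}$ is pseudo-Anosov with $\tau(g_{j})=\tau(g)$, and its fixed points $g_{j}^{\pm}=p^{j}g^{\pm}$ both converge to $p^{+}$ by north--south dynamics on $\PGM$. Applying the previous observation to each $g_{j}$ gives $\beta(f^{+},f^{-},g_{j}^{+},g_{j}^{-})-\beta(f^{+},f^{-},g_{j}^{-},g_{j}^{+})\in c\mathbb{Z}$ for all large $j$; on the other hand, by continuity of $\beta$ and the collapse $g_{j}^{+},g_{j}^{-}\to p^{+}$, both terms converge to the same limit $\beta(f^{+},f^{-},p^{+},p^{+})$, so the difference tends to $0$. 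A sequence in the discrete set $c\mathbb{Z}$ that tends to $0$ is eventually $0$; hence $\beta(f^{+},f^{-},g_{j}^{+},g_{j}^{-})=\beta(f^{+},f^{-},g_{j}^{-},g_{j}^{+})$ for all large $j$.

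The final step is to see that this forced ``symmetry'' of the axis of $g_{j}$ with respect to the axis of $f$ is impossible once the axis of $g_{j}$ has been pushed out towards $p^{+}$. In a $CAT(-1)$ model, vanishing of this cross-ratio forces the axes of $f$ and $g_{j}$ to cross orthogonally, in particular forces $g_{j}^{+}$ and $g_{j}^{-}$ to lie on opposite sides of the axis of $f$; but for large $j$ both lie near $p^{+}$, hence on the same side, a contradiction. In the Teichm\"{u}ller setting, where the axis of $f$ does not separate $\PGM$, I would instead use contraction directly: if the axes of $f$ and $g_{j}$ had a ``balanced'' bounded-diameter fellow-travelling region with $g_{j}^{\pm}$ both close to $p^{+}$, concatenating suitable subsegments would build an almost-closed contracting path along the axis of $f$ forcing $p^{+}$ to lie within bounded distance of that axis, contradicting $p^{+}\notin\{f^{\pm}\}$ together with the contraction estimate. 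This contradiction completes the proof.

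The hard part is the first step, and the reuse of the same circle of ideas in the last step: turning the hyperbolic-geometry heuristics — that the ``turn length'' $\beta$ stabilises as $n\to\infty$, that it varies continuously with the fixed-point configuration (including the degenerate limit), and that it is rigid under the orthogonality condition — into theorems valid in Teichm\"{u}ller space, which is neither Gromov hyperbolic nor $CAT(0)$, using only the strong contraction of pseudo-Anosov axes and the Gardiner--Masur boundary technology set up earlier in the paper. The remaining ingredients are soft: the structure of subgroups of $\mathbb{R}$, the existence of pseudo-Anosov elements in general position inside a non-elementary subgroup (ping-pong and the Tits alternative for mapping class groups), north--south dynamics of pseudo-Anosov maps on $\PGM$, and the fact that a convergent sequence in a discrete set stabilises.
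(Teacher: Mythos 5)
Your proposal takes a genuinely different route from the paper, and as written it has gaps that I do not think can be closed by the tools you invoke. The paper does not argue geometrically at all: it chooses, for suitable pseudo-Anosov elements of $G$, invariant maximal birecurrent train tracks, linearizes the action on the weight spaces $W_\tau\subset\mathds{R}^n$ so that the dilatation $\lambda_\gamma$ becomes the top eigenvalue of a proximal (symplectic, nonnegative) matrix, verifies a transversality condition $\Gamma' v_B\cap E_A=\emptyset$, deduces strong irreducibility of a sub-semigroup (Lemma \ref{lemma:stronglyirreducibly}), and then quotes the Guivarc'h--Urban theorem (Lemma \ref{lemma:linardensity}) on logarithms of top eigenvalues of proximal linear semigroups. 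Your plan is instead a Dal'bo-style boundary argument. Note that the paper explicitly records that the analogous statement ``in variable negative curvature remains open'': an argument that, as yours does, uses only contraction of axes, north--south dynamics, and boundary continuity would apply verbatim in that setting, which is a strong indication that at least one of your asserted ingredients cannot be obtained so softly.

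Concretely, two steps are not merely unproved but problematic. First, the claim that $\beta(f^{+},f^{-},\cdot,\cdot)$ extends continuously to the degenerate diagonal, uniformly enough that $\beta(f^{+},f^{-},g_{j}^{+},g_{j}^{-})-\beta(f^{+},f^{-},g_{j}^{-},g_{j}^{+})\to 0$ as the axes of $g_j$ escape to infinity toward $p^{+}$, is precisely the kind of second-order boundary regularity that contraction of individual axes does not give you; it is not a formal consequence of the shadow/optimal-geodesic machinery in the paper. Second, and more seriously, the terminal contradiction fails even in the model case. From discreteness you conclude the exact identity $\beta(f^{+},f^{-},g_{j}^{+},g_{j}^{-})=\beta(f^{+},f^{-},g_{j}^{-},g_{j}^{+})$ for large $j$ and assert this is absurd because it would force the axes to ``cross orthogonally'' with $g_j^{\pm}$ on opposite sides of the axis of $f$. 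That reasoning is special to $\mathbb{H}^{2}$, where the boundary minus two points has two components; already in $\mathbb{H}^{3}$ the vanishing locus of the antisymmetrized cross-ratio is a positive-dimensional hypersurface through the diagonal, bearing no relation to ``sides,'' and there is no a priori reason the orbit points $p^{j}g^{\pm}$ avoid it. Exact vanishing is a codimension-one condition, not an impossibility; ruling it out amounts to showing the cross-ratio is non-constant along the limit set, which is the real content of non-arithmeticity and is exactly what your argument was supposed to produce. The proposed Teichm\"{u}ller substitute (building an ``almost-closed contracting path'' forcing $p^{+}$ near the axis of $f$) does not follow from the identity you derived. If you want a geometric proof, the cross-ratio $\big|\ln\frac{i(\eta_1,\zeta_1)i(\eta_2,\zeta_2)}{i(\eta_1,\zeta_2)i(\eta_2,\zeta_1)}\big|$ that the paper introduces in the mixing section (Section 4) is the right object, but the paper itself only uses it to transfer non-arithmeticity into the flow, and establishes non-arithmeticity by the linear-algebraic route described above.
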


For sub-semigroups of $SL_{n}(\mathds{R})$ acting irreducibly on $\mathds{R}^n$ and containing a proximal
element, an analogous result is proved by Guivarch and Urban in \cite{GuivarUrban}. In variable
negative curvature this question remains open. We prove Theorem \ref{theorem:introarithem} by using the
affine and symplectic structure of $\MF(S)$ given by train track coordinates to embed a subsemigroup of $G$ into $SL_{n}(\mathds{R})$ with the image satisfying the conditions of \cite{GuivarUrban}.

\noindent{\textbf{Acknowledgments.}}
We thank Kasra Rafi for useful conversations. The second named author was supported by grant No. GIF I-1485-304.6/2019 of Amos Nevo.
\section{Backgrounds on Teichm\"{u}ller Theory}
In this section, we introduce some background on Teichm\"{u}ller theory which will be used in the sequel. See for instance \cite{FarMar}  for more details on mapping class groups. 

Let $S =S_{g}$ be a genus $g \geq 2$, closed, connected, orientable surface. All arguments here work for hyperbolic surfaces with punctures as well. The {\it mapping class group} $\M(S)$ of $S$ is the group of isotopy classes of orientation-preserving homeomorphisms of $S$. Namely, if the group of orientation-preserving homeomorphisms of $S$ is denoted by ${\rm Homeo}^{+}(S)$ and the group of homeomorphisms of $S$ that are isotopic to the identity is denoted by ${\rm Homeo}_{0}(S)$, then $$ \M(S) = {\rm Homeo}^{+}(S)/ {\rm Homeo}_{0}(S).$$
 The {\it Teichm\"{u}ller space} $\T(S)$ of $S$ is the space of homotopy classes of hyperbolic structures $[(X,\phi)]$. The mapping class group $\M(S)$ acts on $\T(S)$ isometrically with respect to the Teichm\"{u}ller distance $d$. The cotangent bundle of $\T(S)$ can be canonically identified with the bundle $QD(S)$ of holomorphic quadratic differentials on $S$. Each element $q \in QD(S)$ defines an area on $S$. The unit cotangent bundle of $\T(S)$ is then identified with the subset $QD^1(S)$ of $QD(S)$ consisting of area one elements in $QD(S)$. 
 
 \noindent{\textbf{Teichm\"{u}ller compactification.}}~~Let $\MF(S)$ be the space of measured foliations on $S$ and $\PMF(S)$ be the space of projective measured foliations. The space $\MF(S)$ is a very powerful tool for understanding the Teichm\"{u}ller space. In fact, to each $q \in QD(S)$ on a Riemann surface $X$, one can associate two measured foliations, namely the vertical measured foliation $\mathcal{V}(q)$ and the horizontal measured foliation $\mathcal{H}(q)$. This assignment gives a homeomorphism from $QD(S)$ onto its image in $\MF(S) \times \MF(S)$ (cf.\cite{HubMas}). Fixing a base point, the compactification of $\T(S)$ given by the geodesic rays passing through the base point is called the {\it Teichm\"{u}ller compactification} of $\T(S)$. The above assignment then gives a homeomorphism between the boundary and $\PMF(S)$ 

 The Teichm\"{u}ller compactification is very useful for many purposes. However, we need more than just a compactification. The Teichm\"{u}ller compactification depends on the choice of a base point $X$ and therefore the action of $\M(S)$ on $\overline{\T(S)}^{T}$ is not continuous \cite{Kerc}. Nevertheless, this action is still nice in the following sense. Let $\mathcal{UE}(S) \subset \PMF(S)$ be the set of projective classes of uniquely ergodic measured foliations on $S$. Since there is no ambiguity, we denote $\mathcal{UE}(S)$ by $\mathcal{UE}$. Let $\T(S) \cup_{T} \mathcal{UE}$ be the image of $\T(S) \cup \mathcal{UE}$ in $\overline{\T(S)}^{T}$ equipped with the induced topology. We then have
\begin{theorem}[Masur \cite{Masur}]
 The action of the mapping class group $\M(S)$ on the Teichm\"{u}ller space $\T(S)$ extends continuously to $\T(S) \cup_{T} \mathcal{UE}$. 
\end{theorem}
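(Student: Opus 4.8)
The plan is to reduce the continuity of the extended action to a single base-point independence statement about convergence to uniquely ergodic foliations, and then to establish that statement through Masur's recurrence criterion. First I would pin down the extension: a mapping class $g \in \M(S)$ acts on $\PMF(S)$ by pushing foliations forward, and since $g$ is realized by a homeomorphism of $S$ it preserves unique ergodicity, so $g\cdot[\mathcal{F}] := [g_*\mathcal{F}]$ is a well-defined self-action of $\mathcal{UE}$. What must be shown is that for each fixed $g$ the self-map $x\mapsto gx$ of $\T(S)\cup_T\mathcal{UE}$ is continuous, where both copies carry the topology induced from the \emph{fixed}-base-point compactification $\overline{\T(S)}^{T}=\overline{\T(S)}^{X}$. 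Because this compactification is homeomorphic to a closed ball, hence compact and metrizable, the subspace $\T(S)\cup_T\mathcal{UE}$ is first countable and sequential continuity suffices; the only nontrivial case is a sequence $x_n\in\T(S)$ with $x_n\to[\mathcal{F}]\in\mathcal{UE}$, for which I must show $gx_n\to[g_*\mathcal{F}]$.

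The reduction proceeds as follows. Since $g$ is a Teichm\"{u}ller isometry it carries each geodesic ray from $X$ in direction $\xi\in\PMF(S)$ to the geodesic ray from $gX$ in direction $g_*\xi$, so it extends to a homeomorphism $\bar g\colon \overline{\T(S)}^{X}\to\overline{\T(S)}^{gX}$ acting on the boundary sphere by $\xi\mapsto g_*\xi$. Consequently, \emph{if} the compactification topology near $\mathcal{UE}$ were known to be independent of the base point, the theorem would follow: taking base point $g^{-1}X$, the convergence $x_n\to[\mathcal{F}]$ in $\overline{\T(S)}^{X}$ would also hold in $\overline{\T(S)}^{g^{-1}X}$, and applying the homeomorphism $\bar g\colon \overline{\T(S)}^{g^{-1}X}\to\overline{\T(S)}^{X}$ would give $gx_n\to[g_*\mathcal{F}]$ in $\overline{\T(S)}^{X}$, with $[g_*\mathcal{F}]\in\mathcal{UE}$ as needed. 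Everything therefore reduces to the key lemma: for $[\mathcal{F}]\in\mathcal{UE}$ and $x_n\in\T(S)$, convergence $x_n\to[\mathcal{F}]$ in $\overline{\T(S)}^{X}$ holds for one base point $X$ if and only if it holds for every base point.

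To prove the key lemma I would use Masur's unique-ergodicity criterion: when $\mathcal{F}$ is uniquely ergodic, the Teichm\"{u}ller geodesic ray with vertical foliation $\mathcal{F}$ is recurrent to the thick part of moduli space, where the geodesic flow contracts. From this I would deduce that the rays $r_X$ and $r_Y$ with vertical foliation $\mathcal{F}$ issuing from $X$ and $Y$ are strongly asymptotic, $d(r_X(t),r_Y(t+c))\to 0$, so they define the same boundary point in the respective compactifications and convergence to it cannot depend on the base point. Equivalently, and in the spirit of this paper, one can route the argument through the Gardiner--Masur compactification $\GM$: its points are given by (projectivized) extremal lengths of simple closed curves, so the $\M(S)$-action on all of $\GM$ is manifestly continuous and base-point-free, and for $[\mathcal{F}]\in\mathcal{UE}$ a sequence converges to $[\mathcal{F}]$ in $\overline{\T(S)}^{X}$ precisely when it converges to the corresponding point $[\,i(\mathcal{F},\cdot)\,]\in\PGM$. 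The analytic heart in either formulation is Kerckhoff's formula $d(x,y)=\tfrac12\log\sup_{\alpha}\mathrm{Ext}_y(\alpha)/\mathrm{Ext}_x(\alpha)$ together with the Gardiner--Masur asymptotics of $\mathrm{Ext}_{\cdot}(\alpha)$ along a geodesic: were the directions from $Y$ to subconverge to some $[\mathcal{G}]\neq[\mathcal{F}]$ while those from $X$ converge to $[\mathcal{F}]$, comparing extremal-length growth along $[X,x_n]$ and $[Y,x_n]$ — whose lengths differ only by the bounded amount $d(X,Y)$ — would force $i(\mathcal{F},\mathcal{G})=0$, hence $[\mathcal{G}]=[\mathcal{F}]$ by unique ergodicity, a contradiction.

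I expect the main obstacle to be exactly this last comparison. In a Gromov hyperbolic space two geodesics to a common faraway point automatically fellow-travel, but Teichm\"{u}ller space is not hyperbolic, so directions from different base points need not agree a priori. The resolution is unique ergodicity itself: Masur's criterion confines the relevant geodesic segments to the thick part for a definite proportion of their length, and there the contraction of the Teichm\"{u}ller flow (equivalently the product-region and contraction estimates of Minsky and Rafi) supplies the fellow-traveling that drives the intersection-number argument. Controlling the excursions into the thin part, and making the extremal-length comparison uniform in $n$, is the delicate technical point around which the proof is organized.
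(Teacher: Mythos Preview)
The paper does not prove this theorem; it is stated as a background result attributed to Masur (specifically, his \emph{Two boundaries of Teichm\"uller space}), so there is no in-paper proof to compare against. Your overall strategy---reduce continuity of the extended action to base-point independence of convergence to uniquely ergodic limits---is the right skeleton and is essentially how the result is usually presented.

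That said, there is a genuine gap in your first proposed route. You write that ``when $\mathcal{F}$ is uniquely ergodic, the Teichm\"uller geodesic ray with vertical foliation $\mathcal{F}$ is recurrent to the thick part of moduli space.'' This is the \emph{converse} of Masur's criterion, and it is false: there exist uniquely ergodic vertical foliations whose Teichm\"uller rays leave every compact set of moduli space (this goes back to work of Cheung, and more generally such divergent uniquely ergodic directions have positive Hausdorff dimension). Masur's 1992 criterion only gives recurrence $\Rightarrow$ unique ergodicity, not the other way. Consequently the ``strong asymptoticity via thick-part contraction'' argument does not go through as stated; you cannot guarantee the rays spend a definite fraction of time in the thick part.

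Your second route, through the Gardiner--Masur/horofunction picture and extremal-length asymptotics, is closer to how Masur actually argues: the point is that for uniquely ergodic $\mathcal{F}$ one shows directly that the projectivized extremal-length vector along the ray converges to $[\,i(\mathcal{F},\cdot)\,]$, and the intersection-number argument you sketch (if a subsequential limit $[\mathcal{G}]$ from another base point differed from $[\mathcal{F}]$, bounded distance forces $i(\mathcal{F},\mathcal{G})=0$, contradicting unique ergodicity) is the correct mechanism. Just be careful that in the context of this paper this is not circular: property (A.5) is itself deduced from Masur's theorem together with Miyachi's work, so you should carry out the extremal-length comparison directly rather than invoking the $\T(S)\cup_T\mathcal{UE}\cong\T(S)\cup_{GM}\mathcal{UE}$ identification.
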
 

 \noindent{\textbf{Gardiner-Masur compactification.}}~~For studying counting problems for mapping class groups, another compactification of $\T(S)$, namely the Gardiner-Masur compactification, seems to be much more suitable. We give a short introduction and collect certain properties of it that will be used later. The reader is referred to, for instance, \cite{Gardiner-Masur}, \cite{Liu-Su_horo} for a nice treatment.

 Denote by $\mathcal{C}$ the set of (isotopy classes of) simple closed curves on $S$ and $P(\mathbb{R}_{+}^{\mathcal{C}})$ the projective space of $\mathbb{R}_{+}^{\mathcal{C}}$, where $\mathbb{R}_{+}^{\mathcal{C}}$ is the set of maps from $\mathcal{C}$ to $\mathbb{R}_{+}$. Denote the natural quotient by $$\mathfrak{p}:\mathbb{R}_{+}^{\mathcal{C}} \to P(\mathbb{R}_{+}^{\mathcal{C}}).$$ The map $$\iota: X \mapsto (\E_{X}(\alpha)^{\frac{1}{2}})_{\alpha \in \mathcal{C}},$$ where $\E_X(\alpha)$ is the extremal length of $\alpha \in \mathcal{C}$ on $X \in \T(S)$, defines an embedding of $\T(S)$ into $\mathbb{R}_{+}^{\mathcal{C}}$. Thus the composition $\mathfrak{p} \circ \iota$ defines a map from $\T(S)$ to $P(\mathbb{R}_{+}^{\mathcal{C}})$. Gardiner and Masur showed that the map $\mathfrak{p} \circ \iota$ is also an embedding and has a compact closure in $P(\mathbb{R}_{+}^{\mathcal{C}})$. We identify $\T(S)$ with $\mathfrak{p} \circ \iota (\T(S))$. The closure $\GM$ in $P(\mathbb{R}_{+}^{\mathcal{C}})$ is called the \textit{Gardiner-Masur compactification} and the boundary $\PGM = \GM - \T(S)$ is therefore called the \textit{Gardiner-Masur boundary} of $\T(S)$.


 The Gardiner-Masur compactification of $\T(S)$ has the following properties.
 \begin{enumerate}
 
     \item[(A.1)] The Gardiner-Masur compactification of $\T(S)$ is homeomorphic to the \textit{horofunction compactification} of $\T(S)$ with respect to the Teichm\"{u}ller metric \cite{Liu-Su_horo}, \cite{Walsh}. The horofunction compactification can be defined for any metric space and is widely used in metric geometry. 
     \item[(A.2)] The action of $\M(S)$ on $\T(S)$ extends continuously to an action on $\GM$ \cite{Liu-Su_horo}. Therefore this compactification is different from the Teichm\"{u}ller compactification $\overline{\T(S)}^{T}$.
     \item[(A.3)] The topology of $\GM$ is however complicated while $\overline{\T(S)}^{T}$ is a $(6g-6)$-dimenisonal ball.
     \item[(A.4)] Points in the Gardiner-Masur boundary $\PGM$ can be described by homogeneous functions on $\MF(S)$ \cite{Miyachi}. Notice that \cite{Gardiner-Masur}, \cite{Miyachi_II}, $$\mathcal{UE} \varsubsetneqq \PMF(S) \varsubsetneqq \PGM.$$
     The inclusion $\PMF(S) \varsubsetneqq \PGM$ is only set-theoretic while the inclusion $\mathcal{UE} \varsubsetneqq \PGM$ is a topological embedding when $\mathcal{UE}$ is equipped with the induced topology from $\PMF(S)$ \cite{Miyachi_II}. The last proper inclusion also shows that the Gardiner-Masur compactification  is disctinct from the Thurston compactification. 
     \item[(A.5)] The identity map between $\T(S)$ and itself extends to a $\M(S)$-equivariant homeomorphism (\cite{Miyachi_II},\cite{Masur}):$$\T(S) \cup_{T} \mathcal{UE} \cong \T(S) \cup_{GM} \mathcal{UE}. $$ 
     \end{enumerate}

 Because of (A.5), whenever it is possible, the reader is suggested to understand arguments in the sequel using the compactification $\overline{\T(S)}^{T}$.  


\noindent{\textbf{Teichm\"{u}ller geodesics.}}~~Since Teichm\"{u}ller geodesics are important tools for us, we need more precise information about the homeomorphism $\tilde{\Psi}$ from $QD(S)$ to its image in $\MF(S) \times \MF(S)$ \cite{LindenstraussMirzakhani}. The homeomorphism $\tilde{\Psi}$ is defined as follows. Let $\mathcal{C}$ be the set of isotopy classes of simple closed curves on $S$, $i: \MF(S) \times \MF(S) \to \mathds{R}_{+}$ be the intersection function on $\MF(S)$, and $$\tilde{\Delta} = \{(\lambda,\eta) \in (\MF(S))^2: \exists \gamma \in \mathcal{C}, i(\lambda, \gamma) = i(\eta, \gamma) = 0\}.$$ Then $\tilde{\Psi}: QD(S) \to \MF(S) \times \MF(S) \setminus \tilde{\Delta}$ is given by setting $\tilde{\Psi}(q) = (\mathcal{H}(q),\mathcal{V}(q))$.
Let $\Delta = \{(\lambda,[\eta]) \in \MF(S) \times \PMF(S): \exists \gamma \in \mathcal{C}, i(\lambda, \gamma) = i(\eta, \gamma) = 0\}$, then this map further gives a homeomorphism 
\begin{equation}\label{equ:coordinate}
\begin{aligned}
\Psi: QD^{1}(S) \to \MF(S) \times \PMF(S) \setminus \Delta \atop q \mapsto (\mathcal{H}(q), [\mathcal{V}(q)]).
\end{aligned}
\end{equation}
The fact that the map $\Psi$ is a homeomorphism implies that, for $([\lambda],[\eta]) \in \PMF(S) \times \PMF(S) \setminus \Delta$, there is a unique oriented unparameterized Teichm\"{u}ller geodesic with projective horizontal measured foliation $[\lambda]$ and projective vertical measured foliation $[\eta]$. Since every oriented Teichm\"{u}ller geodesic line determines a unique pair $([\xi],[\eta]) \in \PMF(S) \times \PMF(S)$, $\Psi$ gives a parametrization, via a subset of $\PMF(S) \times \PMF(S)$, for the set $\mathfrak{G}$ of all Teichm\"{u}ller geodesic lines on $\T(S)$ in a well-organized manner. This parametrization particularly gives a topological structure, hence a Borel structure, on the set of geodesic lines.

The parametrization of $\mathfrak{G}$ given by $\Psi$ is sufficient and useful for many purposes, but it is not compatible with the Teichm\"{u}ller compactification in the sense that geodesic rays may have many accumulation points in the boundary. Since our approach makes heavily use of geodesics, the Gardiner-Masur compactification  in a way gives a more compatible (but not one to one) parametrization which is sufficient for our purpose. 

It is convenient to use the horofunction compactification to describe this new parametrization of $\mathfrak{G}$. Fix $o \in \T(S)$ and consider the set $C(\T(S),o)$ of all continuous functions on $\T(S)$ with values $0$ at $o$ equipped with the topology of uniform convergence on compact subsets. For any point $x \in \T(S)$, denote $d_x$ the function $y \mapsto d(x,y) -d(x,o)$. Then the assignment $$x \mapsto d_x$$ gives an embedding of $\T(S)$ to $C(\T(S),o)$ with relatively compact closure $\overline{\T(S)}^{horo}$ which is called the \textit{horofunction compactification} of $\T(S)$. The complement $\partial_{h}\overline{\T(S)}$ of $\T(S)$ in $\overline{\T(S)}^{horo}$ is called the \textit{horofunction boundary}. As remarked before, it is known (cf. \cite{Liu-Su_horo}, \cite{Walsh}) that $\GM$ can be identified with $\overline{\T(S)}^{horo}$ in which $\PGM$ is identified with $\partial_{h}\overline{\T(S)}^{horo}$. So we will only talk about $\GM$, and points in $\GM$ will be regarded as functions on $\T(S)$.

Given any geodesic ray $\boldsymbol{r} \subset \T(S)$, it is proved in \cite{Walsh} that, $\boldsymbol{r}$ converges in $\GM$ to a unique point in $\PGM$. In particular, when $\boldsymbol{r}$ is given by $q \in QD^1(S)$ whose vertical measured foliation $\mathcal{V}(q)$ is uniquely ergodic, this unique limit point is given by the image of $\mathcal{V}(q)$ in $\PGM$ under the inclusion $\mathcal{UE} \subset \PGM$. Points in $\PGM$ which are limit points of geodesic rays are called \textit{Busemann points}. So points in $\mathcal{UE} \subset \PGM$ are Busemann points. However, not every point in $\PGM$ is a Busemann point. In fact, there are non-Busemann points in $\PMF(S) \subset \PGM$ \cite{Miyachi}.

For non-Busemann points, we shall use optimal geodesics as asymptotic rays. Recall that, for a horofunction $\xi \in \PGM$, a geodesic $\ell \in \mathfrak{G}$ is called \textit{optimal geodesic for $\xi$} if for all $t \in \mathbb{R}$, $\xi(\ell(t))-\xi(\ell(0)) = -t$. The optimal geodesic for $\xi$ passing through $x$ can be thought as the most efficient way travelling from $x$ to $\xi$. One can check that if $\ell: t \mapsto \ell(t)$ is a parameterized optimal geodesic, then so is $\ell(t+t_0)$ for all $t_0 \in \mathbb{R}$. It is proved in \cite[Proposition 3.4]{azemar2021qualitative} that given $\xi \in \PGM$ and $x \in \T(S)$, the optimal geodesic for $\xi$ passing through $x$ is unique and when $\xi$ is a Busemann point, the unique optimal geodesic passing through $X$ is exactly the geodesic ray defining $\xi$.

To give a slightly more complete picture, we need to extend geodesic rays to geodesic lines. It is shown in \cite[Theorem 1]{Miyachi_intersection} that the intersection number function $i$ on $\MF(S)$ can be extended to the preimage of $\GM$ in $P(\mathbb{R}_{+}^{\mathcal{C}})$. Hence, for any two boundary points $\xi,\eta \in \PGM$, one can talk about whether $i(\xi,\eta)$ vanishes or not. A pair $(\xi,\eta) \in (\PGM)^2 $ is called a \textit{filling pair} if for all $\zeta \in \MF(S) - \{0\}$, $$i(\xi,\zeta) + i(\eta,\zeta)> 0.$$ 
A geodesic line $\ell$ is called \textit{optimal for $(\xi,\eta)$} if $\ell(t)$ is optimal for $\xi$ and $\ell(-t)$ is optimal for $\eta$. 
\begin{lemma}[\cite{Lou-Su-Tan}]
If $(\xi,\eta) \in \PGM \times \PGM$ is a filling pair, then there is a unique optimal geodesic for $(\xi,\eta)$. If both $\xi$ and $\eta$ are furthermore Busemann points, this unique geodesic line converges in the positive direction to $\xi \in \GM$ and in the negative direction to $\eta$.    
\end{lemma}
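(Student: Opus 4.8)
The plan is to obtain existence by an approximation argument in the spirit of the construction of a geodesic joining two boundary points in a hyperbolic space, and then to deduce uniqueness and the Busemann statement from the uniqueness of optimal geodesics through a fixed point (\cite[Proposition 3.4]{azemar2021qualitative}).

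\emph{Existence.} I would choose sequences $x_n, y_n \in \T(S)$ with $x_n \to \xi$ and $y_n \to \eta$ in $\GM$ — possible since $\T(S)$ is dense in $\GM$ — let $\sigma_n$ be the Teichm\"{u}ller geodesic from $y_n$ to $x_n$ parametrized by arclength, and re-center the parametrizations so that $\sigma_n(0)$ is a point of $\sigma_n$ closest to a fixed basepoint $o$. The crucial point, and the place where the filling hypothesis is used, is that the sequences can be chosen so that $d(o, \sigma_n(0))$ stays bounded: if instead the segments escaped to infinity near their midpoints, a subsequential geometric limit, combined with the control of extremal lengths along Teichm\"{u}ller geodesics (Masur-type degeneration criteria) and the extension of the intersection number to $\GM$ from \cite{Miyachi_intersection}, would produce a nonzero $\zeta \in \MF(S)$ with $i(\xi,\zeta) = i(\eta,\zeta) = 0$, contradicting that $(\xi,\eta)$ is a filling pair. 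Granting this, properness of the Teichm\"{u}ller metric and Arzel\`{a}--Ascoli yield a subsequence of the $\sigma_n$ converging, uniformly on compact subsets of $\mathbb{R}$, to a bi-infinite Teichm\"{u}ller geodesic $\ell \colon \mathbb{R} \to \T(S)$. Along $\sigma_n$ oriented toward $x_n$ the function $z \mapsto d(z, x_n) - d(o, x_n)$ decreases at unit speed, and by the definition of convergence in $\GM$ (as the horofunction compactification) it converges, uniformly on compact sets, to $\xi$; passing to the limit gives $\xi(\ell(t)) - \xi(\ell(0)) = -t$ for all $t$, i.e.\ the forward ray of $\ell$ is optimal for $\xi$. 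The symmetric argument with the $y_n$ shows the backward ray of $\ell$ is optimal for $\eta$, so $\ell$ is optimal for $(\xi,\eta)$.

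\emph{Uniqueness.} Suppose $\ell'$ is another optimal geodesic for $(\xi,\eta)$. The forward rays of $\ell$ and $\ell'$ are both optimal for $\xi$, so by \cite[Proposition 3.4]{azemar2021qualitative} it suffices to show that $\ell$ and $\ell'$ have a point in common: then their forward rays from that point coincide by uniqueness of the optimal geodesic for $\xi$ through it, and likewise their backward rays using $\eta$, whence $\ell = \ell'$. To produce a common point I would first show $\ell$ and $\ell'$ remain at bounded Hausdorff distance — a pair of optimal geodesics for $(\xi,\eta)$ diverging in the middle would, by the same degeneration analysis used for existence, force a common null foliation for $\xi$ and $\eta$ — and then rule out a ``flat-strip'' configuration of distinct bi-asymptotic optimal geodesics: if $\ell \neq \ell'$ stayed parallel, the displacement between them would occur in a product-subsurface direction invisible to both $\xi$ and $\eta$, contradicting the filling hypothesis. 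Equivalently, transporting the question through the coordinate \eqref{equ:coordinate}, bi-asymptoticity should force $\ell$ and $\ell'$ to have the same projective horizontal and vertical measured foliations, and a Teichm\"{u}ller geodesic is determined by that pair.

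\emph{The Busemann case, and the main difficulty.} If $\xi$ is the Gardiner--Masur limit of a geodesic ray $\boldsymbol{r}$, then by \cite[Proposition 3.4]{azemar2021qualitative} every optimal geodesic for $\xi$ is a ray asymptotic to $\boldsymbol{r}$; hence the forward ray of our $\ell$ is asymptotic to $\boldsymbol{r}$ and so, by Walsh's convergence theorem \cite{Walsh}, converges in $\GM$ to $\xi$, and symmetrically the backward ray converges to $\eta$. I expect the genuine obstacle throughout to be the ``no escape''/visibility input invoked twice above: that geodesic segments, and pairs of optimal geodesics, running from $\eta$ to $\xi$ cannot degenerate near the middle. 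Since the Teichm\"{u}ller metric is neither $\mathrm{CAT}(0)$ nor Gromov hyperbolic, this cannot come from any negative-curvature estimate and must be extracted from the filling hypothesis together with the fine structure of the Gardiner--Masur boundary, in particular Miyachi's extended intersection number on $\GM$ and Masur-type criteria for the degeneration of Teichm\"{u}ller geodesics.
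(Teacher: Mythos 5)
The paper does not prove this lemma: it is quoted from \cite{Lou-Su-Tan} (the first half of the subsequent Lemma \ref{lemma:convergence} is likewise attributed to \cite[Theorem 1.5]{Lou-Su-Tan}), so there is no in-paper argument to compare yours against. Judged on its own terms, your outline follows the natural Arzel\`{a}--Ascoli-plus-rigidity template, but at each of its three stages the load-bearing step is asserted rather than proved, and those are exactly the steps that make the lemma nontrivial.

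Concretely: (i) in the existence argument, the claim that the segments $[y_n,x_n]$ do not escape to infinity near their closest points to $o$ is precisely the visibility statement that the filling hypothesis must deliver; you appeal to ``Masur-type degeneration criteria'' and to the extension of $i$ to $\GM$ from \cite{Miyachi_intersection}, but you never explain how a degenerating subsequence actually produces a single nonzero $\zeta\in\MF(S)$ with $i(\xi,\zeta)=i(\eta,\zeta)=0$ --- and the extended intersection number is not jointly continuous on all of $\GM\times\GM$, so this limiting step cannot be waved through. (ii) In the uniqueness argument, once a common point of $\ell$ and $\ell'$ is found, \cite[Proposition 3.4]{azemar2021qualitative} does finish the job; but finding the common point is the entire difficulty, and neither the ``flat strip'' discussion nor the claim that bi-asymptotic optimal geodesics share their projective horizontal and vertical foliations is substantiated --- note in particular that $\xi$ and $\eta$ need not lie in $\PMF(S)$, so there is no direct dictionary between the horofunctions and the foliation data entering the parametrization \eqref{equ:coordinate}. (iii) In the Busemann case, the assertion that \emph{every} optimal geodesic for $\xi$ is asymptotic to the defining ray does not follow from the quoted form of \cite[Proposition 3.4]{azemar2021qualitative}, which only identifies the optimal geodesic through a point of the defining ray itself; for an optimal ray $\ell$ based elsewhere, Walsh's theorem gives convergence to \emph{some} Busemann point $\xi'$, and the $1$-Lipschitz bound $d(y,\ell(t))\ge \xi(y)-\xi(\ell(t))$ yields only a one-sided comparison between $\xi'$ and $\xi$, not equality. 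Each of (i)--(iii) is where the actual content of \cite{Lou-Su-Tan} lies, so as written the proposal is a plan rather than a proof.
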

Notice that if $\xi \in \PMF(S)$ is uniquely ergodic, then for any $\zeta \in \PGM-\{\xi\}$, $(\xi,\zeta)$ is a filling pair \cite[Lemma 7]{Miyachi_intersection}.

If $y \in \T(S)$ and $\xi \in \PGM$, we denote the unique optimal geodesic for $\xi$ passing through $y$ by $\mathcal{L}(y,\xi)$. If $(\xi,\eta) \in (\PGM)^2$ is a filling pair, we denote the unique optimal geodesic for $(\xi,\eta)$ by $\mathcal{L}(\xi,\eta)$. Denote $\mathcal{N}$ the set of non-filling pair. Then there is a map 
\begin{equation}
    \begin{aligned}
 \Upsilon: (\PGM)^2 \setminus \mathcal{N} \to \mathfrak{G}\atop (\xi,\eta) \mapsto \mathcal{L}(\xi,\eta).
  \end{aligned}
\end{equation}
From the above discussions, we infer that $\Upsilon$ is surjective. Clearly, it is not injective. Moreover, the restriction $$\Upsilon|_{\mathcal{UE} \times \mathcal{UE}}$$ is the same as the parametrization given by (\ref{equ:coordinate}). The following lemma shows that, compared to other compactifications, $\GM$ is more compactible with the topological structure of $\mathcal{G}$ when it is equipped with the topology of uniform convergence on compact sets. 
\begin{lemma}\cite{Lou-Su-Tan}\label{lemma:convergence}
    Let $\xi \in \PGM$ be uniquely ergodic and $\eta \in \PGM$ arbitrary. Let $x_n,y_n \in \T(S)$ such that $x_n \to \xi$ and $y_n \to \eta$. Let $\ell_n$ be the geodesic segment passing through $x_n, y_n$. Then the sequence of geodesic segments $\ell_n$ converges to $\mathcal{L}(\xi,\eta)$ uniformly on compact sets. Moreover, if $x_n = \xi$ and $\ell_n$ is the geodesic ray $\mathcal{L}(y_n,\xi)$, then the sequence $\{\ell_n\}$ converges to $\mathcal{L}(\eta,\xi)$.
    
    \end{lemma}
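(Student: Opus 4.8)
The plan is to combine an Arzel\`a--Ascoli compactness argument with the uniqueness clause of the previous lemma. Since $\xi$ is uniquely ergodic, the pair $(\xi,\eta)$ is filling for every $\eta\ne\xi$ by \cite[Lemma 7]{Miyachi_intersection}, so the optimal geodesic $\mathcal{L}(\xi,\eta)$ exists and is unique by \cite{Lou-Su-Tan}. It therefore suffices to prove that the segments $\ell_n$, parametrized by arclength and normalized suitably, form a precompact family for uniform convergence on compact sets, and that every subsequential limit is optimal for $(\xi,\eta)$: uniqueness then forces $\ell_n\to\mathcal{L}(\xi,\eta)$.

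Fix $o\in\T(S)$ and write $\ell_n\colon[a_n,b_n]\to\T(S)$, arclength-parametrized with $\ell_n(a_n)=y_n$ and $\ell_n(b_n)=x_n$, normalized so that $\ell_n(0)$ realizes $d(o,\ell_n)$. Teichm\"uller geodesics are $1$-Lipschitz and $\T(S)$ is proper, so Arzel\`a--Ascoli together with a diagonal argument produces subsequential limits; the substantive point is that such a limit is a bi-infinite geodesic \emph{line}, for which one needs $d(o,\ell_n)$ bounded and $b_n\to+\infty$, $a_n\to-\infty$. That $d(x_n,y_n)\to\infty$ follows from $\xi\ne\eta$: were $d(x_{n_k},y_{n_k})$ bounded along a subsequence, then $d_{x_{n_k}}$ and $d_{y_{n_k}}$ would differ by a bounded amount, which, combined with unique ergodicity of $\xi$, would force $\eta=\xi$. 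Boundedness of $d(o,\ell_n)$ is where unique ergodicity is essential: the elementary computation below, applied with one endpoint replaced by $o$, shows that $[o,x_n]\to\mathcal{L}(o,\xi)$ and $[o,y_n]\to\mathcal{L}(o,\eta)$, two \emph{distinct} optimal geodesics issuing from $o$ (distinct because their forward endpoints $\xi\ne\eta$ differ, using the uniqueness in \cite[Proposition 3.4]{azemar2021qualitative}); since every geodesic ray converging to the uniquely ergodic point $\xi$ is strongly asymptotic to $\mathcal{L}(o,\xi)$, for large $n$ the segment $\ell_n$ must fellow-travel $\mathcal{L}(o,\xi)$, from near $x_n$, all the way back to a bounded neighbourhood of $o$ before it can turn towards $y_n$, the turning radius being governed by the (uniform, for $n$ large) divergence radius of $\mathcal{L}(o,\xi)$ and $\mathcal{L}(o,\eta)$. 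I expect this fellow-travelling step -- the one point where the global non-hyperbolicity of $\T(S)$ bites and where unique ergodicity cannot be dispensed with -- to be the main obstacle.

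Granting precompactness, let $\ell$ be any subsequential limit and re-index so $\ell_n\to\ell$ uniformly on compact sets. Fix $t\in\mathbb{R}$; for $n$ large one has $a_n<t<b_n$, and since $\ell_n(t)$ and $\ell_n(0)$ lie on the geodesic terminating at $x_n=\ell_n(b_n)$ we have $d(x_n,\ell_n(s))=b_n-s$ for $s\in[a_n,b_n]$, whence
\[
d_{x_n}(\ell_n(t))-d_{x_n}(\ell_n(0))=\bigl((b_n-t)-d(x_n,o)\bigr)-\bigl(b_n-d(x_n,o)\bigr)=-t .
\]
As $d_{x_n}\to\xi$ uniformly on compact sets, each $d_{x_n}$ is $1$-Lipschitz, and $\ell_n(t)\to\ell(t)$, $\ell_n(0)\to\ell(0)$, letting $n\to\infty$ gives $\xi(\ell(t))-\xi(\ell(0))=-t$; that is, $\ell$ is optimal for $\xi$. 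Symmetrically, using $d(y_n,\ell_n(s))=s-a_n$ and $d_{y_n}\to\eta$, one obtains $\eta(\ell(s))-\eta(\ell(0))=s$ for all $s$, i.e.\ $t\mapsto\ell(-t)$ is optimal for $\eta$. Hence $\ell$ is optimal for $(\xi,\eta)$, so $\ell=\mathcal{L}(\xi,\eta)$ by \cite{Lou-Su-Tan}; as every subsequential limit equals $\mathcal{L}(\xi,\eta)$, the full sequence converges to it.

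For the ``moreover'' statement one argues in the same way, now with $\ell_n=\mathcal{L}(y_n,\xi)$ re-based so that $\ell_n(0)$ realizes $d(o,\ell_n)$ and $y_n=\ell_n(a_n)$ with $a_n\to-\infty$. The only change is that forward optimality for $\xi$ requires no limiting step: $\xi(\ell_n(t))-\xi(\ell_n(0))=-t$ holds exactly for every $n$ by the definition of the optimal geodesic $\mathcal{L}(y_n,\xi)$ (the optimality relation being invariant under reparametrization by a shift), and this identity is preserved under uniform convergence on compact sets. Backward optimality for $\eta$ and the precompactness are obtained exactly as above, and uniqueness of the optimal geodesic for the filling pair $(\eta,\xi)$ identifies the limit as $\mathcal{L}(\eta,\xi)$.
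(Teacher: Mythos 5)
There is a genuine gap, and it sits exactly where you flagged it yourself. The paper does not prove the first statement at all: it cites it directly as \cite[Theorem 1.5]{Lou-Su-Tan} (combined with unique ergodicity of $\xi$), and then derives the ``moreover'' clause from the first clause by a horosphere construction. You instead attempt a from-scratch proof, and everything in your argument downstream of precompactness is fine: the computation showing that any locally uniform subsequential limit $\ell$ satisfies $\xi(\ell(t))-\xi(\ell(0))=-t$ (using that convergence $x_n\to\xi$ in the horofunction compactification \emph{is} locally uniform convergence of the $1$-Lipschitz functions $d_{x_n}$) is correct, and the identification of the limit via uniqueness of the optimal geodesic for the filling pair is legitimate. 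But the step you need to make Arzel\`a--Ascoli produce a bi-infinite limit line --- a bound on $d(o,\ell_n)$ uniform in $n$ --- is precisely the nontrivial geometric content of the cited theorem, and your sketch of it does not close. The assertion that $\ell_n$ ``must fellow-travel $\mathcal{L}(o,\xi)$ all the way back to a bounded neighbourhood of $o$ before it can turn towards $y_n$,'' with a ``turning radius'' controlled by the divergence of $\mathcal{L}(o,\xi)$ and $\mathcal{L}(o,\eta)$, is a thin-triangle/divergence property that fails globally in Teichm\"uller space; it is exactly the $\mathrm{CAT}(-1)$-like behaviour (``geodesics that fellow-travel over a long stretch must come close in the middle'') that the introduction of this paper identifies as requiring heavy additional input (modified Hodge norm, principal-stratum hypotheses) and that the authors deliberately avoid. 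Masur's asymptoticity theorem controls rays whose vertical foliation \emph{equals} the uniquely ergodic $\xi$; the segments $\ell_n$ only have an endpoint converging to $\xi$ in $\GM$, and nothing in your argument converts that into quantitative fellow-travelling of $\ell_n$ with $\mathcal{L}(o,\xi)$ near $o$.

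Two smaller points. First, the claim that bounded $d(x_{n_k},y_{n_k})$ forces $\eta=\xi$ needs justification: what you get is that the horofunctions $\xi$ and $\eta$ differ by a bounded amount, and concluding $\xi=\eta$ from this uses that the ``finite difference'' class of a uniquely ergodic point is a singleton --- true, but a citation-worthy fact, not a triviality. Second, even in the ``moreover'' clause, where forward optimality is exact for each $n$, your argument still rests on the same unproven precompactness; the paper instead bypasses this by reducing the second clause to the first via points $z_n\to\xi$ chosen on the far side of a horosphere, so that the already-known convergence of $[z_n,y_n]$ does the work. The cleanest repair of your write-up is to do what the paper does: invoke \cite[Theorem 1.5]{Lou-Su-Tan} for precompactness (or for the first clause outright) rather than attempting to reprove it.
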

\begin{proof}
    The first part is a consequence of \cite[Theorem 1.5]{Lou-Su-Tan} together with the fact that $\xi$ is uniquely ergodic. We now prove the second part. Fix $t_0$ to be any positive real number and consider $\xi$ as a horofunction. Now consider the horosphere $S(t_0)$ at level $t_0$, i.e. $S(t_0)=\xi^{-1}(t_0)$. We can assume that $\xi(y_n) > t_0$ for all $n$. Since $\ell_n$ and $\mathcal{L}(\eta,\xi)$ are optimal, these rays will have (unique) intersection points, denoted by $p_n,p,$ respectively, with $S(t_0)$. Take a connected segment $I \subset \mathcal{L}(\eta,\xi)$ of length $L$. For any $n$, take $z_n \to \xi$ so that $\xi(z_n) < t_0$ and $d(z_n,p_n) > 2L$. The by the first part, the sequence of geodesic segments $\{[z_n, y_n]\}$ converges to $\mathcal{L}(\xi,\eta)$. This then implies that the sequence $\{\ell_n\}$ converges to $\mathcal{L}(\eta,\xi)$.
\end{proof}

\noindent{\bf Convention:} When $g = 2$, $\M(S_2)$ does not act faithfully on $\T(S_2)$. Instead, if we denote the center of $\M(S_2)$ by $Z$, then the quotient group $\rm{PMod(S_2)}= \M(S_2)/Z$ acts on $\T(S_2)$ faithfully. Since we only care about the orbit for the action of $\M(S_2)$ on $\T(S_2)$, we make the following convention. When we say non-elementary subgroup of $\M(S_2)$, we mean the non-elementary subgroup of $\rm{PMod(S_2)}$.

\section{The Bowen-Margulis measure}
\subsection{Conformal densities}
Let $G$ be a subgroup of $\M(S)$. The subgroup $G$ is called {\it non-elementary} if it contains two independent pseudo-Anosov mapping classes, i.e. two pseudo-Anosov mapping classes with disjoint fixed point sets in $\mathcal{UE} = \mathcal{UE}(S)$, the subset of uniquely ergodic points in $\PMF(S)$. It is also called \textit{sufficiently large} in \cite{McCPapa_Dynamics}. The {\it limit set}, denoted by $\Lambda G$, is defined to be the closure in $\PGM$ of the set of fixed points in $\mathcal{UE} \subset \PGM$ of pseudo-Anosov mapping classes contained in $G$. So if $G$ is non-elementary, then $|\Lambda G| = \infty$. Throughout this paper, the subgroup $G$ is always assumed to be non-elementary. 

One can define a $G-$conformal density in a very general situation, see for instance, \cite{coulon2022patterson} and \cite{yang2022conformal}. Here we only use a simplified version which is sufficient for our purpose. We first recall the Busemann cocycles with respect to the Teichm\"{u}ller metric. Notice that Busemann cocycles here are defined only on $\mathcal{UE}$. Let $x, y \in \T(S)$ and $\xi \in \mathcal{UE}$. The {\it Busemann cocycle $\beta_{\xi}(x,y)$ at $\xi$} is defined to be $$\beta_{\xi}(x,y) = \lim_{z_n \to \xi}(d(x,z_n)-d(y,z_n)),$$ where $z_n \in \T(S)$ and $z_n \to \xi$ in $\GM$. By \cite{Miyachi} or \cite{Walsh}, for $\xi \in \mathcal{UE}$, 
\begin{equation}\label{equ:busemann}
\begin{aligned}
 \beta_{\xi}(x,y) = \frac{1}{2}\ln \frac{\E_{x}(\xi)}{\E_{y}(\xi)}.
\end{aligned}
\end{equation}
Hence $\beta_{\cdot}(x,y)$ is a continuous funtion on $\mathcal{UE}$.
\begin{definition}(Cf. \cite{yang2022conformal} or \cite{coulon2022patterson})
    Let $\mathcal{A} = \{\mu_x: x \in \T(S)\}$ be a family of finite Borel measures on $\PGM$ and $0 \leq \alpha_G \leq \infty$ be a real number. The family $\mathcal{A}$ is called a $\alpha_G-$ dimensional conformal density for a subgroup $G \leq \M(S)$, if it satisfies 
\begin{itemize}
    \item For any $x \in \T(S)$, $\mu_x$ is supported on $\Lambda G$.
    \item For any $x \in \T(S)$ and $g \in G$, $g_{*}\mu_x = \mu_{gx}$.
    \item For any $\xi \in \mathcal{UE} \cap \Lambda G$ and $x,y \in \T(S)$, $$\frac{d\mu_x}{d\mu_y}(\xi) = \exp{(-\alpha_G \beta_{\xi}(x,y))}.$$
\end{itemize}
\end{definition}
Now we recall some general results concerning conformal density for subgroups of mapping class groups from \cite{yang2022conformal}. Notice that $\GM$ is the horofunction compactification of $\T(S)$. Let $s \geq 0$ and $x,y \in \T(S)$. The Poincar\'e series associated to $G \curvearrowright \T(S)$ is $$P_G(s,x,y) = \sum_{g \in G}\exp{(-sd(gx,y))}.$$
\begin{theorem}[Cf.\cite{yang2022conformal}]\label{Yang2022}
    Let $G$ be a non-elementary subgroup of $\M(S)$ and $\mathcal{A} = \{\mu_x\}$ be a positive $\alpha_G-$conformal density for $G$. Then the following two are equivalent:
    \begin{itemize}
        \item[(1)] The Poincar$\acute{e}$ series $P_G(s,o,o)$ diverges at $\alpha_G$ for any (hence for all) $o \in \T(S)$;
        \item[(2)] The set of conical points $(\subset \mathcal{UE})$ has full $\mu_x-$measure.\\
        \end{itemize}
       Each of them implies that the diagonal action of $G$ on $(\PGM)^2$ is ergodic with respect to $\mu_x \times \mu_x$.
        
\end{theorem}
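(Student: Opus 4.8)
\emph{Proof strategy.}
The statement is a Hopf--Tsuji--Sullivan (Roblin) type dichotomy, and the plan is to follow the general scheme of Yang \cite{yang2022conformal}, whose engine is a Sullivan shadow lemma for the conformal density $\mathcal{A}$. Fix $o\in\T(S)$. For $x,y\in\T(S)$ and $r>0$ let the \emph{shadow} $\OC_r(x,y)\subset\PGM$ be the set of $\xi$ for which the optimal geodesic ray $\mathcal{L}(x,\xi)$ meets the ball $B(y,r)$; optimal geodesics exist and are unique by \cite[Proposition 3.4]{azemar2021qualitative}, and they are used in place of geodesic rays precisely because $\PGM$ contains non-Busemann points. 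The first and central step is the shadow lemma: there is $r_0>0$ so that for each $r\ge r_0$ there is $C=C(r)\ge 1$ with
$$C^{-1}\,e^{-\alpha_G d(o,go)}\ \le\ \mu_o\big(\OC_r(o,go)\big)\ \le\ C\,e^{-\alpha_G d(o,go)}\qquad(g\in G).$$

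For the shadow lemma, the upper bound is soft: using $g_{*}\mu_o=\mu_{go}$ to replace $\OC_r(o,go)$ by its $g$-translate $\OC_r(g^{-1}o,o)$ and then the conformality relation $\tfrac{d\mu_{g^{-1}o}}{d\mu_o}(\xi)=e^{-\alpha_G\beta_\xi(g^{-1}o,o)}$, one gets $\mu_o(\OC_r(o,go))=\int_{\OC_r(g^{-1}o,o)}e^{-\alpha_G\beta_\xi(g^{-1}o,o)}\,d\mu_o(\xi)$, and on the pulled-back shadow the optimal-geodesic condition forces $\beta_\xi(g^{-1}o,o)\ge d(o,go)-2r$, whence $\mu_o(\OC_r(o,go))\le\mu_o(\PGM)\,e^{-\alpha_G(d(o,go)-2r)}$. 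The lower bound requires a uniform estimate $\mu_o(\OC_r(g^{-1}o,o))\ge\varepsilon_0(r)>0$: for $r$ large these pulled-back shadows each contain a neighbourhood in $\Lambda G$ of definite ``size'' by the contracting property of Teichm\"{u}ller geodesics with uniquely ergodic vertical foliation (equivalently, of pseudo-Anosov axes), and since $\mu_o$ is supported on all of $\Lambda G$, which is $G$-minimal, a compactness argument over the compact set of possible directions of $o$ seen from $g^{-1}o$ gives the uniform bound. This is exactly where global non-hyperbolicity is circumvented: nestedness and bounded overlap of shadows, and the comparison of Busemann cocycles with distances, are extracted from the Gardiner--Masur formalism, the extended intersection pairing, and Lemma \ref{lemma:convergence}, rather than from hyperbolic thin-triangle geometry.

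Granting the shadow lemma, a uniquely ergodic $\xi\in\Lambda G$ is conical iff $\xi\in\OC_r(o,go)$ for a fixed $r$ and infinitely many $g$ with $d(o,go)\to\infty$, so the conical set is $\limsup_{g}\OC_r(o,go)$ along an enumeration of $G$ with $d(o,go)\to\infty$. If $P_G(\alpha_G,o,o)<\infty$ then $\sum_g\mu_o(\OC_r(o,go))\le C\sum_g e^{-\alpha_G d(o,go)}<\infty$, so the trivial half of Borel--Cantelli shows the conical set is $\mu_o$-null; contrapositively, $(2)\Rightarrow(1)$. Conversely, if $P_G$ diverges at $\alpha_G$ then $\sum_g\mu_o(\OC_r(o,go))=\infty$, and one needs a quasi-independence estimate controlling $\mu_o(\OC_r(o,go)\cap\OC_r(o,ho))$ by $e^{-\alpha_G d(o,go)}e^{-\alpha_G d(o,ho)}$ times a factor measuring how long the two optimal rays fellow-travel — proved by applying the shadow lemma with a shifted basepoint together with the contracting property — which via the Kochen--Stone inequality forces $\mu_o\big(\limsup_g\OC_r(o,go)\big)>0$. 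To upgrade this to full measure, and to obtain the last assertion, one proves ergodicity of the diagonal action: condition (1) makes the $G$-action on $(\PGM)^2$ conservative (Poincar\'{e} recurrence together with the shadow lemma, since $\mu_o\times\mu_o$-a.e.\ geodesic has a conical endpoint and so its projection to $\T(S)/G$ returns to a fixed compact set infinitely often), and then a Hopf argument along the geodesic flow on $\mathfrak{G}$, via the parametrization $\Upsilon:(\PGM)^2\setminus\mathcal{N}\to\mathfrak{G}$ and the conicality of $\mu_o$-a.e.\ endpoint, shows every $G$-invariant $L^\infty$ function on $(\PGM)^2$ depends on neither endpoint, hence is a.e.\ constant; ergodicity then turns the positive-measure $\limsup$ into a full-measure one, giving $(1)\Rightarrow(2)$.

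The main obstacle, as flagged in the introduction, is precisely the shadow lemma and the quasi-independence of shadows: in the rank-one setting these follow from thin-triangle hyperbolic geometry, whereas here they must be re-derived from the contracting behaviour of pseudo-Anosov axes, and one must in addition make sense of shadows of, and optimal geodesics toward, the non-Busemann points of $\PGM$. This is the reason the whole argument is carried out through the Gardiner--Masur boundary, optimal geodesics, and the extended intersection number rather than through Thurston's $\PMF(S)$; once these geometric inputs are in place, the rest is the standard Sullivan--Roblin--Yang machinery.
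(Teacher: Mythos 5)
The paper does not actually prove this theorem: it is imported wholesale from Yang \cite{yang2022conformal} (note the ``Cf.'' in the attribution and the absence of any proof environment), so there is no in-paper argument to compare against. Your outline is the standard Hopf--Tsuji--Sullivan/Roblin scheme and is, in broad strokes, the same architecture as the cited source: a Sullivan shadow lemma for the conformal density, Borel--Cantelli for $(2)\Rightarrow(1)$, quasi-independence plus a second-moment (Kochen--Stone) argument for positivity of the conical set under divergence, and a Hopf-type argument for double ergodicity which upgrades positive measure to full measure. Your use of optimal geodesics to define shadows of arbitrary points of $\PGM$ is also consistent with how the present paper sets up its own shadow machinery in Section 5.

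The one place where your sketch is genuinely too quick is the lower bound of the shadow lemma. Compactness of $\PMF(S)$ (or of $\PGM$) together with minimality of $\Lambda G$ does not by itself give a uniform $\varepsilon_0(r)>0$: the shadows $\OC_r(g^{-1}o,o)$ vary with $g$, and a priori $\mu_o$ could concentrate away from them along a sequence $g_n$, so one cannot simply ``take a minimum over a compact set of directions.'' The actual argument (Coornaert's, as adapted by Yang to convergence boundaries with contracting elements) proceeds by contradiction: if $\mu_o(\OC_{r_n}(g_n^{-1}o,o))\to 0$ with $r_n\to\infty$, one passes to a limit $g_n^{-1}o\to\xi$ and shows $\mu_o$ would have to be carried by a proper closed $G$-quasi-invariant subset (ultimately an atom), contradicting non-elementarity; and making this work in Teichm\"uller space requires the contracting property of pseudo-Anosov axes to control which boundary points can escape all the shadows, not just compactness. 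The same contracting machinery is what underlies the quasi-independence estimate you invoke. So your proposal correctly identifies all the moving parts and the order in which they are assembled, but the two estimates you flag as ``the main obstacle'' are precisely the ones your sketch does not actually supply; as written it is a faithful reconstruction of the strategy of \cite{yang2022conformal} rather than a self-contained proof.
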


Notice that, since $G$ is non-elementary, any conformal density has positive dimension.
\subsection{The Patterson-Sullivan construction}
The Patterson-Sullivan construction is a recipe for constructing conformal densities. We now briefly review the Patterson-Sullivan construction and the Bowen-Margulis measure in the setting of Teichm\"{u}ller space. We refer to \cite{Quint} for an excellent exposition of the Patterson-Sullivan theory in the setting of negatively curved manifolds.

Let $\delta_G$ be the critical exponent of $G$ with respect to the Teichm\"{u}ller metric $d$. That is, let $o \in \T(S)$,
$$\delta_G = \inf\{s: \sum_{g \in G}\exp(-sd(o,go)) < \infty\}.$$
Notice that $\delta_G$ is independent of the choice of $o$ and $\delta_G > 0$ (since $G$ is non-elementary). Recall that $G$ is said to be {\it of divergence type} if the Poincar$\acute{e}$ series $P_G(s,o,o)$ of $G$ diverges at $\delta_G$. Since $G$ is nonelementary, it has contracting elements \cite{MinskyQuasi}. According to \cite{Yang_scc}, $\delta_G$ is the same as the growth rate of $G$ with respect the Teichm\"{u}ller metric.

We review the Patterson-Sullivan construction by assuming that $G$ is of divergence type. For subgroups of convergence type, one can modify the construction as explained in \cite{Quint}. Let $x \in \T(S)$ and $s > \delta_G$. Denote the Dirac measure at $y \in \T(S)$ by $\mathcal{D}_y$. Consider the family of probability measures $\{\nu_{x,s}\}$ on $\overline{\T(S)}^{Th}$ given by $$ \nu_{x,s} = \frac{1}{P_G(s,x,x)}\sum_{g \in G}\exp(-sd(x,gx))\mathcal{D}_{gx}.$$ Since $\GM$ is compact, the sequence of measures $\{\nu_{x,s}: s > \delta_G\}$ weak$^{*}$ converges to some probability measure $\nu_x$ on $\GM$ as $s \to \delta_G$. We now claim that the support of $\nu_x$ is $\PGM$. Indeed, since $P_G(s,x,x)$ diverges at $\delta_G$ by the assumption and the action of $G$ on $\T(S)$ is proper, $\nu_{x}(K) = 0$ for any compact subset $K$ of $\T(S)$. Hence it is supported on $\PGM$. Now define $\nu_y$ for every  $y \in T(S)$ as $$ \forall \xi \in \mathcal{UE}, d\nu_y(\xi)= \exp(-\delta_G\beta_{\xi}(y,x))d\nu_x(\xi).$$

\begin{lemma}
    If $G$ is of divergence type, then the family of probability measures $\{\nu_x:x \in \T(S)\}$ is a $\delta_G-$conformal density for $G$ which has no atoms. Moreover, each measure $\nu_x$ is fully supported on $\Lambda G \cap \mathcal{UE}$. 
\end{lemma}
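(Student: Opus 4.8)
The plan is to run the classical Patterson--Sullivan argument in the compact space $\GM$, with one adjustment dictated by the fact that the Busemann cocycle $\beta_\xi(\cdot,\cdot)$ is only defined for $\xi\in\mathcal{UE}$. Instead of $\beta$ I will use, for each $y\in\T(S)$, the function $F_y\colon\GM\to\mathbb{R}$ given by $F_y(z)=d(z,y)-d(z,x)$ on $\T(S)$ and $F_y(\xi)=\xi(y)-\xi(x)$ on $\PGM$; since $z\mapsto d_z$ extends continuously to $\GM$ and evaluation at the fixed points $x,y$ is continuous, $F_y$ is continuous on all of $\GM$ and bounded by $d(x,y)$, and on $\mathcal{UE}$ it equals $\beta_\cdot(y,x)$ by (\ref{equ:busemann}). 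For $s>\delta_G$ set $\nu_{y,s}=\frac{1}{P_G(s,x,x)}\sum_{g\in G}e^{-sd(y,gx)}\mathcal{D}_{gx}$, so that $\nu_{x,s}$ is the measure of the excerpt, $\nu_{y,s}=e^{-sF_y}\nu_{x,s}$, and the total mass of $\nu_{y,s}$ equals $P_G(s,y,x)/P_G(s,x,x)\le e^{sd(x,y)}$. Along the subsequence on which $\nu_{x,s}\to\nu_x$, the uniform convergence $e^{-sF_y}\to e^{-\delta_G F_y}$ on the compact set $\GM$ yields $\nu_{y,s}\to\nu_y:=e^{-\delta_G F_y}\nu_x$.

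Three facts then give the conformal density property. \emph{Support:} each $\nu_{x,s}$ is carried by the orbit $Gx$; divergence of $P_G(s,x,x)$ at $\delta_G$ together with properness of the action kills all mass on $\T(S)$ in the limit, and the accumulation set of $Gx$ in $\PGM$ is $\Lambda G$, so $\nu_x$ is supported on $\Lambda G$. \emph{Equivariance:} the substitution $g\mapsto hg$ gives $h_*\nu_{y,s}=\nu_{hy,s}$ identically in $s$, hence $h_*\nu_y=\nu_{hy}$, in particular $h_*\nu_x=\nu_{hx}$. \emph{Conformality:} from $\nu_y=e^{-\delta_G F_y}\nu_x$ on $\GM$ we read off $\tfrac{d\nu_y}{d\nu_x}(\xi)=e^{-\delta_G F_y(\xi)}$, and on $\xi\in\mathcal{UE}\cap\Lambda G$ the right side is $e^{-\delta_G\beta_\xi(y,x)}$; this also shows that the $\nu_y$ we constructed agrees with the measures defined in the excerpt once $\nu_x$ is known to live on $\mathcal{UE}$. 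So $\{\nu_x\}$ is a positive $\delta_G$-dimensional conformal density for $G$.

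To finish I would invoke Theorem~\ref{Yang2022}: since $G$ is of divergence type, the conical points carry full $\nu_x$-mass, and conical points lie in $\mathcal{UE}\cap\Lambda G$, so every $\nu_y$ gives full mass to $\Lambda G\cap\mathcal{UE}$. Non-atomicity follows: if $\nu_x(\{\xi\})=m>0$ we may take $\xi$ conical, choose $g_n\in G$ with $g_nx\to\xi$ staying within bounded distance of the ray from $x$ to $\xi$; equivariance and the conformal relation give $\nu_x(\{g_n^{-1}\xi\})=m\exp(\delta_G\,\beta_{g_n^{-1}\xi}(g_n^{-1}x,x))$, and since the ray from $g_n^{-1}x$ to $g_n^{-1}\xi$ passes within a bounded distance of $x$ at a parameter going to infinity, $\beta_{g_n^{-1}\xi}(g_n^{-1}x,x)\to+\infty$, so $\nu_x(\{g_n^{-1}\xi\})\to\infty$, contradicting $\nu_x(\PGM)=1$. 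Finally, mutual absolute continuity of $\nu_x$ and $\nu_{gx}$ on $\mathcal{UE}$ (the Radon--Nikodym factor is positive and finite there) together with equivariance makes $\operatorname{supp}\nu_x$ a nonempty closed $G$-invariant subset of $\Lambda G$, hence equal to $\Lambda G$ by minimality of the $G$-action on its limit set; combined with the full-mass statement this is the asserted ``fully supported on $\Lambda G\cap\mathcal{UE}$''.

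The routine Patterson--Sullivan bookkeeping is made painless by replacing the cocycle $\beta$ with the globally continuous $F_y$; the parts that genuinely require work are (i) the input facts about the limit set in the Gardiner--Masur boundary, namely that $\overline{Gx}\cap\PGM=\Lambda G$ and that $G$ acts minimally on $\Lambda G$ --- the expected analogues of the results of McCarthy--Papadopoulos for $\PMF(S)$, which I would record separately --- and (ii) the non-atomicity step, whose quantitative heart is the divergence of the Busemann function along the pushed-forward conical sequence, i.e.\ the place where the conical geometry of $\xi$ is actually used.
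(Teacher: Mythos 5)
Your proof is correct, and for the construction of the density it takes a genuinely more hands-on route than the paper. The paper obtains the conformal density property essentially by citation: it invokes \cite[Theorem 1.2]{yang2022conformal} (the Gardiner--Masur boundary is a convergence boundary for the contracting action) and then \cite[Lemma 6.2]{yang2022conformal} to get that $\{\nu_x\}$ is a $\delta_G$-conformal density, adding only the remark that the cocycle identity for $\beta_\xi$ on $\mathcal{UE}$ upgrades quasi-conformal to conformal; full support on $\Lambda G\cap\mathcal{UE}$ is again deduced from Yang's lemma together with Theorem \ref{Yang2022}. You instead rerun the classical Patterson--Sullivan limit argument directly in the horofunction compactification, replacing the Busemann cocycle by the globally continuous function $F_y=d_{(\cdot)}(y)-d_{(\cdot)}(x)$, which makes the weak-$*$ passage to the limit and the equivariance/conformality bookkeeping transparent without any general machinery. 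What this buys is self-containedness of the analytic part; what it costs is that you must still import, as you correctly flag, the geometric inputs about the Gardiner--Masur limit set (that $\overline{Gx}\cap\PGM$ is contained in $\Lambda G$, and minimality if one wants the topological-support reading of ``fully supported'') --- these are exactly the facts the paper outsources to Yang's convergence-boundary framework, so neither argument avoids that dependence. Your non-atomicity argument is essentially identical to the paper's: an atom must be conical by Theorem \ref{Yang2022}, and pushing it along the conical approximating sequence forces the Busemann cocycle to diverge and the mass to blow up; you evaluate $\nu_x$ at the translated atom $g_n^{-1}\xi$ while the paper evaluates $\nu_{\gamma_n x}$ at $\xi$ itself, which are the same computation by equivariance.
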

\begin{proof}
    Since $G$ is non-elementary, $G$ has pseudo-Anosov mapping classes. By \cite{MinskyQuasi}, pseudo-Anosov mapping classes are contracting elements for the action of $\M(S)$ on $\T(S)$. On the other hand, \cite[Theorem 1.2]{yang2022conformal} shows that the Gardiner-Masur boundary, with finite difference relation, gives a nontrivial convergence boundary. One then can apply \cite[Lemma 6.2]{yang2022conformal} to conclude the fact that $\{\nu_x\}$ is a $\delta_G$-conformal density for $G$. Note that here $\{\nu_x\}$ is a conformal density instead of quasi-conformal density. This follows from the fact that, $\beta_{\xi}(x,y) = \beta_{\xi}(x,z) + \beta_{\xi}(x,y)$ for $\xi \in \mathcal{UE}$, one has $$\forall  \xi \in \mathcal{UE}, z,y \in \T(S), d\nu_z(\xi)= \exp(-\delta_G\beta_{\xi}(z,y))d\nu_y(\xi).$$

      By \cite[Lemma 6.2]{yang2022conformal} and Theorem \ref{Yang2022}, one can see that $\nu_x$ is fully supported on $\Lambda G \cap \mathcal{UE}$. To see that $\nu_x$ has no atoms, suppose $\nu_x$ has an atom $\eta \in \Lambda G$, say of mass $r$. Then by Theorem \ref{Yang2022}, $\eta$ should be a conical point, that is there exists a $D > 0$ such that the $D$ neighborhood of the geodesic $[x, \eta)$ intersects the orbit $Gx$ infinitely many times. Let $\gamma_n \in G$ be such a sequence. Then by the triangle inequality, 
    $\beta_{\eta}(\gamma_n x,x) \to -\infty$. So 
    $$\nu_x(\gamma_n^{-1}\eta) = \nu_{\gamma_n x}(\eta) = \exp(-\delta_G\beta_{\eta}(\gamma_n x,x))\nu_x(\eta) \to \infty $$ which contradicts the finiteness of $\nu_x$.

    \end{proof}

\begin{corollary}
    If $G$ is a non-elementary subgroup of $\M(S)$ of divergence type. Then there exists a $\delta_G-$conformal density $\{\nu_x: x \in \T(S)\}$ without atoms for $G$ which is fully supported on the set of conical points in $\mathcal{UE}$. 
\end{corollary}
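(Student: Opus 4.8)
The plan is to deduce the statement directly from the Lemma immediately above together with Theorem \ref{Yang2022}; essentially all the work has already been done, and what remains is bookkeeping. First I would invoke the preceding Lemma: since $G$ is non-elementary and of divergence type, the Patterson--Sullivan family $\{\nu_x : x \in \T(S)\}$ constructed above is a $\delta_G$-conformal density for $G$, it has no atoms, and each $\nu_x$ is fully supported on $\Lambda G \cap \mathcal{UE}$. This already supplies the conformal density and the atomlessness asserted in the statement; the only thing left is to upgrade ``supported on $\Lambda G \cap \mathcal{UE}$'' to ``supported on the set of conical points in $\mathcal{UE}$''.

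Next I would unwind the hypothesis. By definition, ``$G$ is of divergence type'' means the Poincar\'e series $P_G(s,o,o)$ diverges at $s=\delta_G$ for some, hence for every, $o \in \T(S)$, which is precisely condition (1) in Theorem \ref{Yang2022}. Applying that theorem to the positive $\delta_G$-conformal density $\{\nu_x\}$ just produced, condition (2) holds: the set of conical points, which is contained in $\mathcal{UE}$, has full $\nu_x$-measure for every $x \in \T(S)$. (One should remark in passing that the conical set is Borel, so that this statement makes sense.)

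Finally I would combine the two facts: $\{\nu_x\}$ is a $\delta_G$-conformal density for $G$ with no atoms, and the set of conical points in $\mathcal{UE}$ carries full $\nu_x$-mass for each $x$; this is exactly the assertion of the corollary. I do not expect any genuine obstacle here, since the substance is entirely contained in the Lemma and in Theorem \ref{Yang2022}; the only point requiring care is to make sure that both results are applied to the \emph{same} family $\{\nu_x\}$ and that the phrase ``of divergence type'' is being used in exactly the sense of hypothesis (1) of Theorem \ref{Yang2022}.
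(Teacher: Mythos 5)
Your proposal is correct and is essentially the argument the paper intends: the corollary is stated without a separate proof precisely because it follows by combining the preceding Lemma (conformal density, atomless, supported on $\Lambda G \cap \mathcal{UE}$) with Theorem \ref{Yang2022}, whose equivalence of divergence with full measure of the conical set supplies the final upgrade. Nothing further is needed.
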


Since there is a unique way to extend each $\nu_x$ to be a complete measure, we always assume such measures are complete. We now turn to the definition of the Bowen-Margulis measure on $QD^1(S)/G$. For $x\in \T(S)$ and $\xi, \eta \in \mathcal{UE}$ with $\xi \neq \eta$, let 
\begin{equation*}\label{equ:BowenMargulis}
    \begin{aligned}
    \rho_{x}(\xi,\eta) = \lim_{z \to [\xi], w \to [\eta]} (d(x,z) + d(x,w) - d(z,w))
    &=  \beta_{\xi}(x,u) + \beta_{\eta}(x,u),
    \end{aligned}
\end{equation*}
where $u \in \T(S)$ is any point in the geodesic $(\xi,\eta)$. Now we define a measure $\hat{\mu}_{G}$ on $\PMF(S) \times \PMF(S)$ by setting $$\forall \xi \neq \eta \in \mathcal{UE},~~~ d\hat{\mu}_{G}(\xi,\eta) = \exp{(\delta_G \rho_x(\xi,\eta))}d\nu_x(\xi)d\nu_x(\eta).$$ Notice that since $\nu_x$ supported on $\mathcal{UE}$, $\hat{\mu}_G$ extends to $\PMF(S) \times \PMF(S)$. We claim that $\hat{\mu}_G$ is independent of $x$ and it is $G-$invariant for the diagonal action of $G$ on $\PMF(S) \times \PMF(S)$. In fact this claim follows from the definition of the conformal density and the definition of $\rho_x$. 

The homeomorphism $\Psi$ above gives a \textit{Hopf parametrization} of the unit cotangent bundle $QD^1(S)$ of $\T(S)$ as $(\PMF(S) \times \PMF(S) \setminus \Delta) \times \mathbb{R}$ where $\Delta = \{([\lambda],[\eta]) \in \PMF(S) \times \PMF(S): \exists \gamma \in \mathcal{C}, i(\lambda,\gamma)=i(\eta,\gamma)=0\}$. Therefore each point in $QD^1(S)$ can be parameterized as $(\xi,\eta,s)$. Then we define a measure $\tilde{\mu}_G$ on $QD^1(S)$ by $$d\tilde{\mu}_G = dtd\hat{\mu}_G.$$ Notice that since $\nu_x$ is fully supported on $\mathcal{UE}$, $\tilde{\mu}_G$ is fully supported on geodesic lines with both uniquely ergodic horizontal and vertical measurable foliations. The measure $\mu_G$ on the quotient $QD^1(S)/G$ that descends from $\tilde{\mu}_G$ is called {\it the Bowen-Margulis measure}. A priori, the Bowen-Margulis measure $\mu_G$ may be infinite. 



\section{Mixing of the Bowen-Margulis measure}
In this section, we prove that the geodesic flow $\{g_t\}$ on $QD^1(S)/G$ is mixing for a subgroup $G$ with finite Bowen-Margulis measure. Our argument closely relates to Babillot's \cite{Babillot} for rank one compact manifolds. First, we show that the length spectrum associated to every non-elementary subgroup of $\M(S)$ is nonarithmetic. Then by combining with double ergodicity, we show the mixing of the geodesic flow.
\subsection{Nonarithmeticity of the length spectrum}
For a non-elementary subgroup  $G \leq \M(S)$, denote the spectrum of $G$ by $$Spec(G) = \{\log(\lambda_{g}): g \in G \text{~is pseudo-Anosov and $\lambda_g$ is the dilatation of $g$}\}.$$ It is a closed discrete subset of $\mathbb{R}$ which coincides with lengths of geodesic loops corresponding to elements in $G$ in the moduli space $\T(S)/\M(S)$ with respect to the Teichm\"{u}ller metric.

\begin{theorem}\label{thm:nonarithmetic}
    Let $G$ be a non-elementary subgroup of $\M(S)$. Then the spectrum $Spec(G)$ generates a dense subgroup of $\mathds{R}$.
\end{theorem}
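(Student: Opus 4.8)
The plan is to deduce the statement from a theorem of Guivarch and Urban \cite{GuivarUrban}: if $\Gamma_0$ is a sub-semigroup of $SL_n(\mathds{R})$ that acts strongly irreducibly on $\mathds{R}^n$ (no finite union of proper subspaces is $\Gamma_0$-invariant) and contains a proximal element (one with a unique, simple, dominant eigenvalue), then the set $\{\log\lambda_1(M):M\in\Gamma_0\ \text{proximal}\}$ generates a dense subgroup of $\mathds{R}$. It therefore suffices to produce a sub-semigroup $\Gamma<G$ and a semigroup homomorphism $\gamma\mapsto M_\gamma$ into some $SL_n(\mathds{R})$ such that (i) the image is strongly irreducible and contains a proximal element, and (ii) every $\gamma\in\Gamma\setminus\{1\}$ is pseudo-Anosov with $\lambda_\gamma=\lambda_1(M_\gamma)$; then $\{\log\lambda_1(M_\gamma)\}\subset\mathrm{Spec}(G)$ already generates a dense subgroup.

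The representation $\gamma\mapsto M_\gamma$ comes from train track coordinates and the affine--symplectic structure of $\MF(S)$. Fix a maximal birecurrent train track $\tau$; the non-negative weights satisfying the switch conditions form a top-dimensional cone $P(\tau)$ in a vector space $V(\tau)\cong\mathds{R}^{6g-6}$, which is a chart for $\MF(S)$ parametrising the open set $U(\tau)\subset\PMF(S)$ of projective classes carried by $\tau$, with $\overline{U(\tau)}\setminus U(\tau)$ consisting of classes carried by proper sub-tracks. Thurston's symplectic form restricts to a nondegenerate skew form on $V(\tau)$, and any $h\in\M(S)$ with $h(\tau)\prec\tau$ (meaning $h(\tau)$ is carried by $\tau$) acts on $V(\tau)$ by a non-negative integral matrix $M_h$ preserving this form, so $M_h\in\mathrm{Sp}(6g-6,\mathds{Z})\subset SL_{6g-6}(\mathds{R})$; the assignment $h\mapsto M_h$ is multiplicative on the set of such $h$, and when $M_h$ is primitive, $h$ is pseudo-Anosov with dilatation $\lambda_1(M_h)$.

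To build $\Gamma$, I first choose $\tau$ and the generators compatibly. Since $G$ is non-elementary it contains a pseudo-Anosov $f$, which has an invariant maximal birecurrent track $\tau$ (so $f(\tau)\prec\tau$ with $M_f$ primitive); its attracting foliation $\mathcal{F}^u(f)$ lies in $U(\tau)$, while its repelling foliation $\mathcal{F}^s(f)$, being uniquely ergodic and filling, lies off $\overline{U(\tau)}$ (it is carried by no sub-track, and it cannot lie in the forward-invariant proper open set $U(\tau)$). Since $\mathcal{F}^u(f)\in\Lambda G\cap U(\tau)$ and $\mathcal{F}^s(f)\in\Lambda G\setminus\overline{U(\tau)}$, and the fixed pairs of pseudo-Anosov elements of $G$ are dense in $\Lambda G\times\Lambda G$, I may pick a pseudo-Anosov $g\in G$, independent from $f$, with $\mathcal{F}^u(g)\in U(\tau)$ and $\mathcal{F}^s(g)\notin\overline{U(\tau)}$. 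By north--south dynamics, for $N$ large the maps $f^N,g^N$ carry $\overline{U(\tau)}$ into $U(\tau)$ and have strictly positive transition matrices; hence $\Gamma:=\langle f^N,g^N\rangle$ satisfies $\gamma(\tau)\prec\tau$ for all $\gamma\in\Gamma$, the map $\gamma\mapsto M_\gamma$ is a homomorphism $\Gamma\to\mathrm{Sp}(6g-6,\mathds{Z})$, and for $\gamma\ne1$ the matrix $M_\gamma$ is strictly positive, hence primitive, so $\gamma$ is pseudo-Anosov with $\lambda_\gamma=\lambda_1(M_\gamma)$. Finally $M_{f^N}$ and $M_{g^N}$ are proximal with distinct attracting directions (since $\mathcal{F}^u(f)\ne\mathcal{F}^u(g)$), each outside the other's repelling hyperplane (since all four relevant foliations are uniquely ergodic, filling and pairwise distinct, hence pairwise filling); by the standard general-position argument the image of $\Gamma$ is strongly irreducible, and Guivarch--Urban applies.

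The main obstacle is exactly this compatibility step: fitting a single maximal train track $\tau$ together with two independent pseudo-Anosov elements so that both attracting foliations lie in $U(\tau)$ while both repelling ones avoid $\overline{U(\tau)}$. It rests on knowing which foliations lie in $\overline{U(\tau)}$ --- the boundary faces of $P(\tau)$ being the sub-tracks, and $\overline{U(\tau)}$ being a proper, ``half-space-like'' region with the attracting and repelling foliations of a carried pseudo-Anosov on opposite sides --- together with the largeness of the limit set (perfect, with dense pseudo-Anosov fixed pairs), which is what lets one place the second generator. The remaining ingredients --- integrality and symplecticity of carrying matrices, strict positivity of high powers, and strong irreducibility of a semigroup generated by two proximal elements in general position --- are standard but must be assembled with care.
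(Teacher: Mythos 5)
Your overall strategy is the same as the paper's: reduce to the Guivarch--Urban density theorem via the train-track representation, using two independent pseudo-Anosov elements carried (after passing to powers) by a single maximal birecurrent track $\tau$, with strictly positive, symplectic, primitive carrying matrices whose Perron eigenvalues are the dilatations. That part of your argument matches the paper's reduction essentially step for step.

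The gap is in your last step, where you assert that ``by the standard general-position argument the image of $\Gamma$ is strongly irreducible.'' Two proximal matrices whose attracting lines are distinct and lie off each other's repelling hyperplanes do \emph{not} in general generate a (strongly) irreducible semigroup on the ambient space: take block-diagonal matrices $A=\mathrm{diag}(A_1,A_2)$, $B=\mathrm{diag}(B_1,B_2)$ with $A_1,B_1$ proximal $2\times2$ blocks in general position and $A_2,B_2$ of small spectral radius; all your stated hypotheses hold, yet the semigroup preserves the proper subspace carrying the first block. In the train-track setting there is likewise no reason for the action on all of $W_\tau\cong\mathds{R}^{6g-6}$ to be irreducible, so Guivarch--Urban cannot be applied directly to the image of $\Gamma$ in $SL_{6g-6}(\mathds{R})$. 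The paper's proof spends most of its effort precisely on repairing this: it forms $W_\Gamma=\bigoplus_{A\in\Gamma}\mathds{R}v_A$ (the span of the dominant eigenvectors of the semigroup, which is $\Gamma$-invariant by Lemma \ref{lemma:Gammainvariance}), takes a maximal proper invariant subspace $U_\Gamma\subset W_\Gamma$, and shows via the dichotomy of Lemma \ref{lemma:dichotomy} that on the quotient $V_\Gamma=W_\Gamma/U_\Gamma$ the action is irreducible \emph{and} each element retains its top eigenvalue (Lemma \ref{lemma:irreducibility}); a further passage to powers (Lemma \ref{lemma:stronglyirreducibly}) upgrades irreducible to strongly irreducible, and only then is Guivarch--Urban invoked on $V_{\Gamma_M}$. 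Your geometric input (both unstable foliations in $U(\tau)$, both stable foliations off $\overline{U(\tau)}$) is exactly what the paper uses to verify the hypothesis $\Gamma' v_B\cap E_A=\emptyset$ of Theorem \ref{thm:linearnonarithematic}, so your setup feeds correctly into that machinery --- but the subquotient construction is a necessary missing ingredient, not a routine verification.
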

The proof is based on the following lemma proved in \cite[Proposition 2.3]{GuivarUrban}
\begin{lemma}\label{lemma:linardensity}
    Let $\Gamma \subset {\rm GL(m,\mathds{R})}$ be a semigroup acting strongly irreducibly on $\mathds{R}^m$ and containing a proximal element. Then the logarithms of maximal eigenvalues of proximal elements of $\Gamma$ generate a dense subgroup of $\mathds{R}$.
\end{lemma}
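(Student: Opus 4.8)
The plan is to reproduce the dynamical argument of Guivarc'h--Urban, whose engine is a precise asymptotic expansion for the dominant eigenvalue of a product of high powers of proximal elements. For a proximal $g \in \Gamma$ write $\lambda_g$ for its real, simple eigenvalue of maximal modulus, $v_g^+$ for the corresponding eigenvector, $\phi_g$ for the dominant eigencovector, and $H_g = \ker \phi_g$ for the repelling hyperplane. The basic fact I would use repeatedly is that $\lambda_g^{-n} g^n \to \pi_g$, where $\pi_g = \frac{v_g^+ \otimes \phi_g}{\phi_g(v_g^+)}$ is the rank-one projection onto $\mathds{R} v_g^+$ along $H_g$. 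I would also freely invoke the structure theory of strongly irreducible semigroups containing a proximal element: the limit set $\Lambda \subset \mathds{P}(\mathds{R}^m)$, defined as the closure of the attracting fixed points of proximal elements of $\Gamma$, is the unique minimal closed invariant set, it is infinite, the orbit $\Gamma [v_g^+]$ is dense in it, and, by strong irreducibility, $\Lambda$ is not contained in any finite union of proper subspaces.

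First I would establish the product formula. If $g, h \in \Gamma$ are proximal and in general position, meaning $\phi_g(v_h^+) \neq 0$ and $\phi_h(v_g^+) \neq 0$, then $\lambda_g^{-n}\lambda_h^{-m}\, g^n h^m \to \pi_g \pi_h$, and $\pi_g \pi_h$ is rank one with unique nonzero eigenvalue $\operatorname{tr}(\pi_g\pi_h) = \frac{\phi_g(v_h^+)\,\phi_h(v_g^+)}{\phi_g(v_g^+)\,\phi_h(v_h^+)}$. Hence $g^n h^m$ is proximal for all large $n,m$ and
\begin{equation*}
\log|\lambda_{g^n h^m}| = n\log|\lambda_g| + m \log|\lambda_h| + \beta(g,h) + o(1), \qquad n,m \to \infty,
\end{equation*}
where $\beta(g,h) = \log\left| \frac{\phi_g(v_h^+)\,\phi_h(v_g^+)}{\phi_g(v_g^+)\,\phi_h(v_h^+)} \right|$ is a cross-ratio depending only on the projective data $([v_g^+],H_g,[v_h^+],H_h)$. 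The same computation applied to $\gamma g^n$ for $\gamma \in \Gamma$ shows that $\gamma g^n$ is proximal for large $n$, with attracting point converging to $[\gamma v_g^+]$ and repelling hyperplane converging to $H_g$; this is the device that lets me move the attracting datum over the dense orbit $\Gamma[v_g^+] \subset \Lambda$ while holding the repelling hyperplane essentially fixed.

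The density statement I would then prove by contradiction. Suppose the subgroup generated by the numbers $\log|\lambda_g|$ is not dense, hence contained in $a\mathds{Z}$ for some $a > 0$. Letting $n,m \to \infty$ in the product formula and using that $a\mathds{Z}$ is discrete while the error is $o(1)$, the three eigenvalue terms lying in $a\mathds{Z}$ force $\beta(g,h) \in a\mathds{Z}$ for every pair of proximal elements in general position. Now fix two reference proximal elements $g, g'$ with distinct repelling hyperplanes (which exist by strong irreducibility, since otherwise all attracting points would lie in a common hyperplane), and apply this to the pairs $(g', \gamma g^n)$: using the limiting data of $\gamma g^n$, the constraint $\beta(g',\gamma g^n) \in a\mathds{Z}$ reduces, modulo a fixed constant and an $o(1)$, to the statement that $\log\left|\frac{\phi_{g'}(\gamma v_g^+)}{\phi_g(\gamma v_g^+)}\right|$ lies in a fixed coset of $a\mathds{Z}$ for all admissible $\gamma$. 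But $[w] \mapsto \log\left|\frac{\phi_{g'}(w)}{\phi_g(w)}\right|$ is a continuous, nonconstant function on $\mathds{P}(\mathds{R}^m) \setminus (H_g \cup H_{g'})$ --- nonconstant precisely because $\phi_g$ and $\phi_{g'}$ are linearly independent while $\Lambda$ spans $\mathds{R}^m$ --- and the plan is to argue that its values along the dense orbit $\Gamma[v_g^+]$ cannot be confined to a discrete coset, giving the desired contradiction. Varying $g,g'$ as well supplies a whole family of such constraints, which together pin the cross-ratio corrections into a non-lattice subset of $\mathds{R}$.

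The hard part will be exactly this last passage: turning the geometric richness of $\Lambda$ into a genuine quantitative escape from $a\mathds{Z}$. Since $\Gamma$ is only a countable, dynamically generated set and $\Lambda$ may be totally disconnected, one cannot simply perturb configurations continuously to spread the cross-ratio values across an interval; the $\gamma g^n$ device repairs the availability of honest semigroup elements, but making the argument rigorous requires showing that proximality and general position survive these products, that the limiting attracting point and hyperplane are as claimed, that the $o(1)$ errors are absorbed uniformly, and --- crucially --- that the cross-ratio map is sufficiently non-degenerate over the proximal configurations of $\Gamma$ that its values lie in no proper closed subgroup of $\mathds{R}$. This non-degeneracy is where strong irreducibility (to make $\Lambda$ large and the orbit dense) and the presence of a proximal element (to run the rank-one dynamics) enter in an essential way, and its careful execution is the content of \cite[Proposition 2.3]{GuivarUrban}, which I would invoke for the final step.
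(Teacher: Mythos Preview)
The paper does not prove this lemma at all: it is stated as a quotation of \cite[Proposition~2.3]{GuivarUrban} and used as a black box in the proof of Theorem~\ref{thm:nonarithmetic}. Your proposal goes well beyond what the paper does, by sketching the Guivarc'h--Urban cross-ratio argument itself before, in the end, invoking the same citation; so your approach is consistent with the paper's, only more detailed. As a sketch of the actual Guivarc'h--Urban proof your outline is accurate in its architecture (the rank-one limit $\lambda_g^{-n}g^n\to\pi_g$, the product formula for $\log|\lambda_{g^nh^m}|$, and the discreteness-versus-continuity contradiction via the cross-ratio), and you correctly identify the delicate step --- showing the cross-ratio map is genuinely non-degenerate on the proximal configurations of $\Gamma$ --- as the place where one ultimately defers to the cited reference.
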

To apply this lemma, we will use the piecewise linear structure on $\MF(S)$, see \cite{PennerHarer} for more details for piecewise linear structure, train tracks and etc. 

Given any train track $\tau$, let $W_{\tau}$ be the vector space of weights on the branches of $\tau$ satisfying the switch condition. Let $V_{\tau} \subset W_{\tau}$ be the open cone consisting of positive weights at each branch of $\tau$. One can assign a measured foliation to a vector in $V_{\tau}$. Denote this assignment by $\phi_{\tau}: V_{\tau} \to \MF(S)$. Define $$U_{\tau}= \phi_{\tau}(V_{\tau}),~~
\psi_{\tau}= \phi^{-1}_{\tau}: U_{\tau} \to V_{\tau}.$$ 
The following lemma is well-known (cf. \cite{Penner}).
\begin{lemma}
    Let $\gamma \in \M(S)$ be a pseudo-Anosov mapping class. Then one can choose $\tau$ carefully such that it is a $\gamma$-invariant maximal trivalent birecurrent train track and $\gamma$ acts linearly on $W_{\tau}$ by a matrix with nonnegative entries whose largest eigenvalue is the dilatation $\lambda_{\gamma}$ of $\gamma$.
\end{lemma}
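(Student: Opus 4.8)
The plan is to produce, for a given pseudo-Anosov $\gamma$, a train track $\tau$ that is carried by $\gamma(\tau)$ (i.e. $\gamma(\tau)\prec\tau$ in the carrying sense), so that $\gamma$ induces a linear map $W_\tau\to W_\tau$ with a nonnegative incidence matrix, and then identify its Perron--Frobenius eigenvalue with $\lambda_\gamma$. First I would invoke the standard structure theory of pseudo-Anosov maps (Thurston; see \cite{PennerHarer}, \cite{Penner}): the unstable foliation $\mathcal{F}^u$ of $\gamma$ is arational and uniquely ergodic, hence lies in $U_\tau$ for some train track $\tau$, and one may take $\tau$ to fully carry $\mathcal{F}^u$. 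Starting from any such $\tau$ and passing if necessary to a common refinement of $\tau$ and its images under $\gamma$, one arranges that $\gamma(\tau)$ is carried by $\tau$; a splitting-sequence argument (the pseudo-Anosov $\gamma$ acts on the arc of splittings with $\mathcal{F}^u$ as attracting fixed point) shows this stabilizes after finitely many steps, giving a genuinely $\gamma$-invariant $\tau$. Since carrying maps send the switch-condition cone into itself and act by summing the weights of the preimage branches, the induced map on $W_\tau$ is given in the branch basis by a matrix $M_\gamma$ with entries in $\mathbb{Z}_{\geq 0}$.

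Next I would upgrade $\tau$ to have the desired combinatorial type: maximal, trivalent, and birecurrent. Maximality and trivalence are arranged by a standard completion procedure (add branches to make every complementary region a triangle or once-punctured monogon, and resolve higher-valence switches), which only enlarges $W_\tau$ and keeps the action linear with nonnegative entries; recurrence and transverse recurrence (= birecurrence) hold because $\tau$ carries the arational, filling foliation $\mathcal{F}^u$ and, after the refinement, also carries a foliation near $\mathcal{F}^s$, so there are positive weights realizing each branch and a dual system of curves hitting each branch. Care is needed that the completion can still be taken $\gamma$-invariant; this is handled by completing $\tau$, then again passing to the eventually-periodic splitting sequence so that the completed track is genuinely invariant, using that trivalent maximal birecurrent tracks form a $\gamma$-invariant class stable under splitting.

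Finally, I would identify the eigenvalue. The cone $V_\tau$ of positive weights is $M_\gamma$-invariant (up to the finitely many normalizations in the splitting sequence, whose product is $M_\gamma$), and $\mathcal{F}^u\in U_\tau$ with $\gamma(\mathcal{F}^u)=\lambda_\gamma\,\mathcal{F}^u$ in $\MF(S)$; transporting through $\psi_\tau$ shows the weight vector $w^u=\psi_\tau(\mathcal{F}^u)$ lies in the interior of $V_\tau$ and satisfies $M_\gamma w^u=\lambda_\gamma w^u$. Since $M_\gamma$ is nonnegative with an eigenvector in the open positive cone, Perron--Frobenius forces $\lambda_\gamma$ to be the spectral radius, i.e. the largest eigenvalue of $M_\gamma$ acting on $W_\tau$. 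The main obstacle I anticipate is the bookkeeping of the first two steps simultaneously: ensuring that the train track can be chosen to be \emph{at once} $\gamma$-invariant \emph{and} maximal, trivalent, and birecurrent, since the completion operations and the passage to an invariant track interact, and one must check that the relevant class of tracks is preserved under both splitting and completion. Everything after that — the nonnegativity of the incidence matrix and the Perron--Frobenius identification of $\lambda_\gamma$ — is routine given the classical train-track machinery.
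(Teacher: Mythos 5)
The paper offers no proof of this lemma: it records it as well-known and cites \cite{Penner} (cf.\ also \cite{PennerHarer}), so there is no in-paper argument to compare against. Your sketch is essentially the standard argument from those references and is correct in outline: a birecurrent track carrying $\mathfrak{F}^{u}(\gamma)$ that is invariant in the carrying sense, the induced nonnegative integer incidence matrix on $W_{\tau}$, and the Perron--Frobenius identification of $\lambda_{\gamma}$ from the strictly positive eigenvector $\psi_{\tau}(\mathfrak{F}^{u}(\gamma))$; the simultaneous arrangement of maximality, trivalence and birecurrence with invariance, which you rightly flag as the delicate bookkeeping, is exactly what \cite{PennerHarer} supplies (completion to a complete track plus combing to a generic one, with eventual periodicity of the splitting sequence). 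One small correction: your opening parenthetical reverses the carrying relation --- you want $\gamma(\tau)$ carried by $\tau$, i.e.\ $\gamma(\tau)\prec\tau$, which is what the rest of your argument actually uses --- and ``$\gamma$-invariant'' must be read in that carrying sense rather than as set-wise invariance, which is neither achievable in general nor needed for the linear action on $W_{\tau}$.
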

In fact, one could also choose $\tau$ so that $\tau$ carries $\mathfrak{F}^{u}(\gamma)$ but does not carry $\mathfrak{F}^{s}(\gamma)$.

Let $G$ be a nonelementary subgroup of $\M(S)$. Let $\gamma_1 \in G$ be a pseudo-Anosov mapping class and $\tau$ be a $\gamma_1$-invariant trivalent maximal birecurrent train track given by the above lemma. Then $\mathfrak{F}^{u}(\gamma_1)$ is carried by $\tau$. Choose $\gamma_2 \in G$ such that $\tau$ is also $\gamma_2$-invariant and satisfies the above lemma. After replacing $\gamma_i$ by a large enough power, on can assume that $\gamma_i$ acts on $W_{\tau}$ via positive matrices $M(\gamma_i)$ whose largest eigenvalue is exactly $\lambda_{\gamma_i}$ and each $\gamma_i$ preserves $U_{\tau}$. We remark that it follows from our assumption that each $M(\gamma_i)$ is proximal (cf. \cite{Benoist-Quint}). Moreover, each $M(\gamma_i)$ is symplectic since $\M(S)$ acts on $\MF(S)$ preserving the Thurston symplectic form. Let $\Gamma = \langle M(\gamma_1),M(\gamma_2)\rangle$ be semigroup generated by $M(\gamma_1)$ and $M(\gamma_2)$.

Now, let $\gamma \in G$ be a pseudo-Anosov mapping class and $A = M(\gamma)$ be the corresponding matrix associated to a trivalent, maximal birecurrent train track $\tau$ which does not carry $\mathfrak{F}^{s}(\gamma)$. Suppose $\gamma$ preserves $U_{\tau}$. Denote $v_A= \psi_{\tau}(\mathfrak{F}^{u}(\gamma))$ and $E_A$ the direct sum of complementary eigenspaces of $v_A$. Then we claim that $E_A \cap V_{\tau} = \emptyset$, i.e. $E_A$ cannot have any vector with all entries nonnegative. Indeed, if $v \in E_A$ then on the one hand, $$[A^nv] \nrightarrow [v_A].$$ On the other hand, since $\mathfrak{F}^{s}(\gamma) \notin U_{\tau}$, any $v \in V_{\tau}$ has the property that $[A^nv]$ converges to $[v_A]$ as $n \to \infty$. As a consequence, for $i \neq j$, $\Gamma v_{M(\gamma_i)} \cap E_{M(\gamma_j)} = \emptyset$.

For a proximal element $A \in {\rm SL(n,\mathds{R})}$, let $v_A$ be the unit vector corresponding to the unique eigenvalue of maximal absolute value $\lambda_A$ (cf. \cite{Benoist-Quint}) and $E_A$ be the direct sum of complementary eigenspaces of $v_A$. We call $v_A$ a dominated eigenvector of $A$. Hence the proof of Theorem \ref{thm:nonarithmetic} is then reduced to the following theorem about linear semigroup actions on the projective spaces.
\begin{theorem}\label{thm:linearnonarithematic}
    Let $\Gamma$ be a semigroup of ${\rm SL(n,\mathds{R})}$ which contains a subsemigroup $\Gamma'$ whose all elements are proximal. If for every $A,B \in \Gamma'$, one has $\Gamma' v_B \cap E_A = \emptyset$. Then the logarithms of maximal eigenvalues of matrices in $\Gamma$ generate a dense subrgoup of $\mathds{R}$.
\end{theorem}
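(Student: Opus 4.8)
The plan is to cut $\mathds{R}^n$ down to a $\Gamma'$-invariant subspace on which $\Gamma'$ acts strongly irreducibly while keeping a proximal element, and then quote Lemma~\ref{lemma:linardensity}. Set $V:=\mathrm{span}\{v_A:A\in\Gamma'\}\subseteq\mathds{R}^n$. First I would record that the avoidance hypothesis, applied to $Bv_B=\lambda_B v_B\in\Gamma'v_B$, gives $v_B\notin E_A$ for all $A,B\in\Gamma'$; hence for a proximal $A$ the standard projective dynamics yield $A^m v_B\to[v_A]$ in $\mathbb{P}(V)$, so $v_A\in\overline{\mathrm{span}}(\Gamma'v_B)$ for every $A$, and therefore $\overline{\mathrm{span}}(\Gamma'v_B)=V$ for every fixed $B\in\Gamma'$. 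In particular $V$ is $\Gamma'$-invariant, every $v_A$ lies in $V$, and since $\lambda_A$ dominates all eigenvalues of $A$ the restriction $A|_V$ is again proximal with the \emph{same} leading eigenvalue $\lambda_A$ and $E_{A|_V}=E_A\cap V$. Thus it suffices to prove that $\Gamma'$ acts strongly irreducibly on $V$ (after one further harmless reduction, see below): Lemma~\ref{lemma:linardensity} then shows that $\{\log\lambda_A:A\in\Gamma'\}$ generates a dense subgroup of $\mathds{R}$, and a fortiori so does the set of logarithms of leading eigenvalues of all matrices of $\Gamma\supseteq\Gamma'$.

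The substance of the argument is the strong irreducibility, and this is exactly where $\Gamma'v_B\cap E_A=\emptyset$ is used in full, via the dynamics of proximal matrices on $\mathbb{P}(V)$ and on the Grassmannians of $V$. The orbit identity $\overline{\mathrm{span}}(\Gamma'v_A)=V$ already rules out ordinary invariant subspaces up to one degeneracy: a proper nonzero $\Gamma'$-invariant $U\subsetneq V$ cannot contain any $v_A$ (else $U\supseteq\overline{\mathrm{span}}(\Gamma'v_A)=V$), so $U$ lies in every hyperplane $E_A$, i.e.\ $U\subseteq\bigcap_A E_A$. I would then observe that the transposed semigroup $\{A^{T}\}$ satisfies the dual avoidance condition $\{C^{T}\ell_B\}\cap v_A^{\perp}=\emptyset$, where $\ell_A$ is the leading eigenvector of $A^{T}$ and $v_A^{\perp}=E_{A^{T}}$; applying the orbit identity on the transpose side shows that $\mathrm{span}\{\ell_A\}$ is $\{A^{T}\}$-invariant, hence $\bigcap_A E_A$ is $\Gamma'$-invariant, and that $\Gamma'$ acts \emph{irreducibly} on the quotient $V/\bigcap_A E_A$, which still carries proximal elements with leading eigenvalues $\lambda_A$ and still satisfies the avoidance hypothesis. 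After replacing $V$ by this quotient we may assume $\Gamma'|_V$ is irreducible. To upgrade to strong irreducibility, suppose $\{U_1,\dots,U_k\}$ were a $\Gamma'$-permuted family of proper (nonzero, equidimensional, transitive) subspaces. Fixing a proximal $A\in\Gamma'$ and passing to a power, $A$ fixes each $U_j$, so by proximality each $U_j$ either contains $v_A$ or lies in $E_A$; not all can lie in $E_A$ (their span is invariant, hence all of $V$ by irreducibility), so $v_A\in U_1$ say. Writing $\Gamma'_0$ for the kernel of the permutation action $\Gamma'\to\mathrm{Sym}\{U_1,\dots,U_k\}$, every $D\in\Gamma'_0$ fixes each $U_j$ and is proximal; if some $U_j\subseteq E_D$, picking $C\in\Gamma'$ with $CU_1=U_j$ would put $Cv_A\in U_j\subseteq E_D$, contradicting $\Gamma'v_A\cap E_D=\emptyset$. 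Hence $v_D\in\bigcap_j U_j$ for all $D\in\Gamma'_0$; since every $v_C$ equals $v_{C^{M}}$ with $C^{M}\in\Gamma'_0$ for suitable $M$, we get $V=\mathrm{span}\{v_C:C\in\Gamma'\}\subseteq\bigcap_j U_j\subsetneq V$, a contradiction. So no such family exists, and $\Gamma'|_V$ is strongly irreducible.

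Putting this together with Lemma~\ref{lemma:linardensity} finishes the proof; I would also note that non-elementarity guarantees the final strongly irreducible piece has dimension at least $2$, so the lemma is applied nontrivially. The one genuinely delicate point is the strong irreducibility step: everything specific to Teichm\"uller theory (train tracks, the piecewise-linear and symplectic structure of $\MF(S)$, the fact that each $E_{M(\gamma_i)}$ meets no open positive cone) has already been packaged into the reduction preceding Theorem~\ref{thm:linearnonarithematic}, so what remains is a purely linear–dynamical statement; the care is entirely in handling \emph{semigroups} of proximal elements rather than groups and in controlling the possibility that $\bigcap_A E_A\neq 0$, both of which are tamed by the orbit identity $\overline{\mathrm{span}}(\Gamma'v_A)=V$ and the avoidance hypothesis.
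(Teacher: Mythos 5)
Your proof is correct, and its skeleton --- restrict to the span $V$ of the dominant eigenvectors, quotient by a degenerate invariant subspace contained in $\bigcap_{A} E_A$ to get irreducibility, upgrade to strong irreducibility, and invoke Lemma~\ref{lemma:linardensity} --- coincides with the paper's (Lemmas~\ref{lemma:Gammainvariance}--\ref{lemma:stronglyirreducibly}). You diverge in two places. First, the paper quotients by an abstract maximal proper invariant subspace $U_{\Gamma}$ and only afterwards locates it inside $\bigcap_A E_A$ via the dichotomy of Lemma~\ref{lemma:dichotomy}; you identify the subspace explicitly as $\bigcap_A E_A\cap V$ and prove its invariance by transposing, which works because the dual avoidance condition for the dominant left eigenvectors unwinds, via $\langle C^{T}\ell_B,v_A\rangle=\ell_B(Cv_A)$, to exactly the hypothesis $Cv_A\notin E_B$ --- a clean observation, though you should still write out the one-line projective-dynamics step ($u\in U\setminus E_A$ forces $[A^n u]\to[v_A]\in[U]$) that justifies $U\subseteq E_A$ whenever $v_A\notin U$. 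Second, and more substantially, your strong irreducibility argument differs from Lemma~\ref{lemma:stronglyirreducibly}: the paper stabilizes the spaces $V_{\Gamma_M}$ attached to the two-generator subsemigroups $\langle A^M,B^M\rangle$ and contradicts irreducibility of $\Gamma_{Nd}$ on the stabilized space, whereas you argue directly for all of $\Gamma'$, applying the proximal dichotomy to each member of a permuted family $\{U_j\}$, using the avoidance hypothesis to exclude $U_j\subseteq E_D$ for $D$ in the kernel of the permutation action, and letting the spanning property of the $v_C$ produce the contradiction. Your version is stronger (strong irreducibility of the whole semigroup rather than of a two-generator subsemigroup) and sidesteps the somewhat delicate stabilization claim; the paper's version delivers a statement tailored to exactly what Lemma~\ref{lemma:linardensity} consumes. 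The caveat you flag is real and should be made explicit: Lemma~\ref{lemma:linardensity} is vacuous in dimension one, so one needs the final strongly irreducible piece to have dimension at least two; this is supplied by the existence of two \emph{independent} proximal elements (built into the paper's Lemma~\ref{lemma:stronglyirreducibly} and furnished by non-elementarity in the application), not by the hypotheses of Theorem~\ref{thm:linearnonarithematic} as literally stated.
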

\begin{proof}
    Without loss of generality, we can assume $\Gamma = \Gamma'$. The proof is complete by combining Lemma \ref{lemma:stronglyirreducibly} and Lemma \ref{lemma:linardensity}.
\end{proof}
\begin{proof}[Proof of Theorem \ref{thm:nonarithmetic}] 
The proof is completed by Theorem \ref{thm:linearnonarithematic} and the reduction stated before.   
\end{proof}

\begin{lemma}\label{lemma:stronglyirreducibly}
    Under the assumptions of Theorem \ref{thm:linearnonarithematic}, for independent $A,B \in \Gamma' \subset \Gamma$, there is an integer $M > 0$ such that the semigroup $\Gamma_M = \langle A^M, B^M \rangle$ acts strongly irreducibly on some vector space $V_{\Gamma_M}$.
\end{lemma}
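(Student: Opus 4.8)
The plan is to produce a single $\Gamma_M$-invariant subspace $V_{\Gamma_M}$ on which the action is strongly irreducible, by a ping-pong/recurrence argument exploiting proximality. First I would recall the structure: every element of $\Gamma'$ is proximal, so for each $C\in\Gamma'$ we have the decomposition $\mathds{R}^n = \mathds{R}v_C \oplus E_C$ with $[C^k w]\to [v_C]$ for every $w\notin E_C$, and the hypothesis $\Gamma' v_B\cap E_A=\emptyset$ says precisely that the dominant directions of elements of $\Gamma'$ are never killed by passing through another element. Fix independent $A,B\in\Gamma'$ (independent meaning $v_A\neq v_B$ and $v_A\notin E_B$, $v_B\notin E_A$, which is exactly what the hypothesis gives us). Let $W$ be a subspace of minimal positive dimension that is invariant under some finite-index sub-semigroup $\Gamma_M=\langle A^M,B^M\rangle$; such a $W$ exists since $\mathds{R}^n$ itself is invariant. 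The claim is that for $M$ large enough this $W$ is strongly irreducible for $\Gamma_M$, i.e. no finite union of proper subspaces of $W$ is $\Gamma_M$-invariant.

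The key steps, in order: (1) Show $W$ contains $v_A$ (hence, by symmetry, $v_B$). Indeed, pick any $0\neq w\in W$; since $v_A\notin E_A$, either $w\notin E_A$ or we can first apply $B^M$ — using $\Gamma' v_B\cap E_A=\emptyset$, the vector $B^Mw$ has nonzero $v_A$-component unless $w\in E_{B}$-type degeneracies force otherwise, and one handles the remaining cases by applying $A^M$ or $B^M$ once more. Then $[A^{kM}w]\to[v_A]$, and since $W$ is closed and $A^M$-invariant, $v_A\in W$. (2) Suppose for contradiction that $\Gamma_M$ preserves a finite union $Z=Z_1\cup\dots\cup Z_r$ of proper subspaces of $W$, chosen with $r$ minimal and each $Z_i$ of maximal dimension among such configurations. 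The semigroup $\Gamma_M$ permutes the $Z_i$ of top dimension; passing to a further power (increasing $M$, replacing the generators) we may assume each top-dimensional $Z_i$ is individually $\Gamma_M$-invariant. By minimality of $\dim W$, no such proper invariant subspace exists — contradiction — once we know $\dim Z_i>0$; the case $\dim Z_i=0$ is excluded because a $\Gamma_M$-fixed point in $\mathbb{P}(W)$ would have to be $[v_A]=[v_B]$, contradicting independence. (3) Conclude that $W=V_{\Gamma_M}$ is strongly irreducible.

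The main obstacle I anticipate is step (1) together with the bookkeeping in step (2): proximality only gives convergence of $[C^kw]$ for $w\notin E_C$, so one must carefully chase the cases where the chosen vector or subspace happens to lie in a bad eigenspace, and this is exactly where the hypothesis $\Gamma' v_B\cap E_A=\emptyset$ must be invoked — it guarantees that after applying one generator one escapes the contracted hyperplane of the other. A secondary subtlety is that "strongly irreducible" requires controlling \emph{all} finite unions of subspaces, not just invariant subspaces, which forces the passage to a power $M$ so that a permuted family becomes pointwise invariant; one should note that the resulting $M$ can be taken uniform because the permutation action factors through a finite group (bounded by the number of subspaces in a minimal configuration, which is itself bounded in terms of $n$). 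Once these points are pinned down, the contradiction with minimality of $\dim W$ closes the argument.
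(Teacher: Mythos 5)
There is a genuine gap at your step (1), and it is the crux of the lemma. You take $W$ to be an invariant subspace of minimal positive dimension and claim it contains $v_A$. But the hypothesis $\Gamma' v_B \cap E_A = \emptyset$ only controls the $\Gamma'$-orbit of the dominant eigenvectors; it says nothing about arbitrary vectors of an invariant subspace. Nothing prevents a nonzero invariant subspace from being contained in $\bigcap_{C \in \Gamma'} E_C$ --- this is exactly the other alternative allowed by the dichotomy of Lemma \ref{lemma:dichotomy} --- and for such a $W$ every $w \in W$ lies in $E_A$, so $[A^{kM}w]$ need not converge to $[v_A]$; applying $B^M$ first does not help, since $B^M w$ remains in $W \subseteq E_A$ by invariance. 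A minimal invariant subspace is therefore the wrong object: it may miss all the $v_C$ entirely, and no choice of $M$ repairs this. The paper avoids the problem by working with a subquotient rather than a subspace: it sets $W_{\Gamma} = \bigoplus_{C} \mathds{R}v_{C}$, which is invariant by Lemma \ref{lemma:Gammainvariance} (whose proof is where the hypothesis $\Gamma' v_B \cap E_A = \emptyset$ is actually used, to show $[v_{AB^n}] \to [Av_B]$), takes a maximal proper invariant subspace $U_{\Gamma} \subset W_{\Gamma}$ --- which by the dichotomy must lie in $\bigcap_{C} E_{C}$ and hence contains no $v_C$ --- and proves irreducibility on the quotient $V_{\Gamma} = W_{\Gamma}/U_{\Gamma}$, where each $v_C$ survives as a dominant eigenvector with unchanged top eigenvalue (Lemma \ref{lemma:irreducibility}); preserving the top eigenvalue is essential for the later application of Lemma \ref{lemma:linardensity}.

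A secondary problem sits at the end of your step (2). After passing to the power $d$ so that each $Z_i$ is individually invariant, $Z_1$ is only $\Gamma_{Md}$-invariant, whereas your minimality of $\dim W$ was taken over $\Gamma_M$-invariant subspaces, so no contradiction follows directly. What is needed is irreducibility of the smaller semigroup $\Gamma_{Md}$ on the same space, and this is why the paper first stabilizes the spaces $W_{\Gamma_k}$, $U_{\Gamma_k}$, $V_{\Gamma_k}$ in $k$ before running the permutation argument. This second issue is fixable along the lines you sketch, but the reliance on a minimal invariant subspace in step (1) is not.
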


In the rest of this section we devote to prove Lemma \ref{lemma:stronglyirreducibly}. All arguments in the rest of this subsection are under the assumption of Theorem \ref{thm:linearnonarithematic}. Let $\Gamma$ be a semigroup consisting of proximal elements. Define the limit set $L_{\Gamma}$ of $\Gamma$ to be the closure in $\mathds{R}P^{n-1}$ of $$L_{\Gamma} = \{[v_A]: A \in \Gamma\}.$$
Denote by $\widetilde{L_{\Gamma}}$ the preimage of $L_{\Gamma}$ in $\mathds{R}^n$.
\begin{lemma}\label{lemma:Gammainvariance}
    The limit set $L_{\Gamma}$ is $\Gamma$-invariant.
\end{lemma}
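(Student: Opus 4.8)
The plan is to show that $L_\Gamma$ is invariant under every generator of $\Gamma$, hence under $\Gamma$; since $L_\Gamma$ is closed by definition, it suffices to establish the inclusion $\gamma \cdot L_\Gamma^{\circ} \subset L_\Gamma$ for the dense set of points $[v_A]$, $A\in\Gamma$, and then pass to closures. So fix $\gamma \in \Gamma$ and $A \in \Gamma$, both proximal, and I want to show $[\gamma v_A] \in L_\Gamma$. The natural idea is to look at the sequence of elements $\gamma A^n \in \Gamma$ for $n \to \infty$ and to analyze its dominated eigenvectors $[v_{\gamma A^n}]$, showing that they converge to $[\gamma v_A]$. Since each $\gamma A^n$ lies in $\Gamma$ and is proximal (a product of proximal elements in our semigroup is proximal — this is exactly the hypothesis that $\Gamma=\Gamma'$ consists of proximal elements), each $[v_{\gamma A^n}]$ is a legitimate point of $L_\Gamma$, and closedness of $L_\Gamma$ finishes the argument.

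The key computation is the asymptotics of $[v_{\gamma A^n}]$. Because $A$ is proximal with dominated eigenvector $v_A$ and complementary invariant subspace $E_A$, for any vector $w \notin E_A$ we have $[A^n w] \to [v_A]$; more precisely $A^n$, suitably rescaled, converges to the rank-one projection $\pi_A$ onto $\mathbb{R}v_A$ along $E_A$. Therefore $\gamma A^n$, rescaled by the same factor $\lambda_A^{-n}$, converges to $\gamma \pi_A$, which is the rank-one operator $w \mapsto \pi_A(w)\,\gamma v_A$ (here I am using $\gamma v_A \neq 0$, automatic since $\gamma$ is invertible). A rank-one operator that is a nonzero limit of the (rescaled) proximal matrices $\gamma A^n$ forces, by a standard argument, the dominated eigenvector $[v_{\gamma A^n}]$ to converge to the image line of that rank-one operator, namely $[\gamma v_A]$. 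Concretely: write $\gamma A^n = \lambda_{\gamma A^n}\big(v_{\gamma A^n}\otimes f_n + R_n\big)$ in the proximal normal form, where $f_n$ is the corresponding functional and $\|R_n\|$ is bounded by a fixed fraction of the top eigenvalue; comparing with $\lambda_A^n(\gamma v_A \otimes \ell_A + \text{error})$ and using that the top eigenvalue is detected by the operator norm up to bounded multiplicative error, one extracts $[v_{\gamma A^n}] \to [\gamma v_A]$ after possibly passing to a subsequence — but since every subsequential limit is $[\gamma v_A]$, the full sequence converges.

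I would organize the steps as follows. (1) Recall/establish the proximal normal form and the convergence $\lambda_A^{-n} A^n \to \pi_A$ (rank one) in operator norm; this is standard (cf. \cite{Benoist-Quint}). (2) Deduce $\lambda_A^{-n}\,\gamma A^n \to \gamma\pi_A$, a nonzero rank-one operator with image $\mathbb{R}\gamma v_A$. (3) Prove the elementary lemma: if $(B_n)$ is a sequence of proximal matrices and $c_n B_n \to \Pi$ for scalars $c_n>0$ and $\Pi$ a nonzero rank-one operator with image line $L$, then $[v_{B_n}] \to L$ in $\mathbb{R}P^{n-1}$. (4) Apply with $B_n = \gamma A^n$ to get $[\gamma v_A]\in \overline{L_\Gamma} = L_\Gamma$. (5) Since the points $[v_A]$ are dense in $L_\Gamma$ and $\gamma$ acts continuously on $\mathbb{R}P^{n-1}$, conclude $\gamma L_\Gamma \subset L_\Gamma$; running the argument for all $\gamma$ in a generating set (or noting the set of such $\gamma$ is a semigroup containing the generators) gives $\Gamma$-invariance. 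The main obstacle is step (3): one must be careful that the "error term" $R_n$ in the normal form of $\gamma A^n$ does not interfere — i.e. that the top eigenvalue $\lambda_{\gamma A^n}$ really is comparable to $\|\gamma A^n\|$ and is attained on a line close to $\mathbb{R}\gamma v_A$ rather than on the subspace where the error could a priori dominate. This is where proximality of $\gamma A^n$ (gap between first and second singular values / eigenvalues, uniform in $n$ once $n$ is large because $A^n$ has an exponentially growing gap) is used essentially; the uniform gap is what makes $[v_{\gamma A^n}]$ depend continuously on the rescaled matrix near the limit.
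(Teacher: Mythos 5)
Your overall strategy is the same as the paper's: look at the sequence $\gamma A^n\in\Gamma$, note that the rescaled powers $\lambda_A^{-n}A^n$ converge to the rank-one projection $\pi_A$ onto $\mathds{R}v_A$ along $E_A$, hence $\lambda_A^{-n}\gamma A^n\to\gamma\pi_A$, and conclude that $[v_{\gamma A^n}]\to[\gamma v_A]\in L_\Gamma$ by closedness. However, there is a genuine gap at your step (3), and it is exactly the point where the paper invokes the standing hypothesis of Theorem \ref{thm:linearnonarithematic}. Your step (3) lemma --- ``if $c_nB_n\to\Pi$ with $B_n$ proximal and $\Pi$ a nonzero rank-one operator with image line $L$, then $[v_{B_n}]\to L$'' --- is false as stated. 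A counterexample in dimension $3$: let $B_n=e_1\otimes e_2^*+\tfrac1n\,e_3\otimes e_3^*+\tfrac1{n^4}\,e_2\otimes e_1^*$. Each $B_n$ is invertible and proximal with dominant eigenvalue $\tfrac1n$ and $v_{B_n}=e_3$, and $B_n\to\Pi=e_1\otimes e_2^*$, a rank-one operator with image $[e_1]$; yet $[v_{B_n}]=[e_3]\not\to[e_1]$. The failure occurs precisely because $\mathrm{Im}\,\Pi\subset\ker\Pi$, i.e.\ $\Pi^2=0$. Neither proximality of the $B_n$ nor a large (even infinite) singular-value or eigenvalue gap rules this out, so your proposed resolution of the ``main obstacle'' does not actually close it.

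What is needed is that the limit operator $\gamma\pi_A$ be a nonzero multiple of a projection, i.e.\ that its image $\mathds{R}\gamma v_A$ not lie in its kernel $E_A$. You only check $\gamma v_A\neq 0$, which is insufficient. The condition $\gamma v_A\notin E_A$ is not automatic; it is supplied by the hypothesis of Theorem \ref{thm:linearnonarithematic} that $\Gamma' v_A\cap E_B=\emptyset$ for all $A,B\in\Gamma'$ (applied with $B=A$), and this is where the paper writes ``By assumption, $Av_B\notin E_B$, which is $\ker u=\ker Au$; hence $Au$ is a multiple of a projection onto $\mathds{R}Av_B$.'' Once $\gamma\pi_A$ is known to be a genuine (proximal) rank-one projection up to scalar, the standard convergence of attracting fixed points does apply and the rest of your argument, including the passage to closures in step (5), is fine. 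So the fix is one line, but it is the one line in the proof that actually uses the hypotheses of the theorem, and omitting it leaves the argument resting on a false general lemma.
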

\begin{proof}
    Suppose $A,B \in \Gamma$, we want to show that $[Av_{B}] \in L_{\Gamma}$. Consider $$u= \lim_{k \to \infty}\frac{1}{\lVert B\rVert}B^k,$$ where $\lVert B\rVert$ is the operator norm of $B$. Then $u$ is a projection onto $\mathds{R}v_B$ with kernel $E_B$ and $u(v_B)=v_B$. By assumption, $Av_B \notin E_B$ which is ${\rm Ker}u = {\rm Ker}Au$. Hence $Au$ is a multiple of a projection onto $\mathds{R}Av_B$. Now $$Au =\lim_{n \to \infty}\frac{1}{\lVert B\rVert}AB^n,$$ so $[v_{AB^n}] \to [Av_{B}]$. Therefore $[Av_B] \in L_{\Gamma}$.
\end{proof}
\begin{lemma}\label{lemma:dichotomy}
    Any $\Gamma$-invariant subspace $W$ either belongs to $\bigcap_{A \in \Gamma}E_A$ or contains $v_{A}$ for all $A \in \Gamma$.
\end{lemma}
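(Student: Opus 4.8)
The plan is to exploit the proximality of every element of $\Gamma$ together with the $\Gamma$-invariance of $W$, using the rank-one projections $u_A = \lim_{k\to\infty}\|A\|^{-k}A^k$ that already appeared in the proof of Lemma \ref{lemma:Gammainvariance}. Recall that for proximal $A$, this limit $u_A$ exists (after possibly passing to a convergent subsequence of the normalized powers, which suffices), is a nonzero projection onto the line $\mathds{R}v_A$ with kernel exactly $E_A$, and satisfies $u_A(v_A)=v_A$. The key dichotomy at the level of a single element is: for any $w \in \mathds{R}^n$, either $w \in E_A$, or $u_A(w) \neq 0$, in which case $u_A(w) \in \mathds{R}v_A\setminus\{0\}$; so if $W$ is any subspace with $u_A(W)\neq 0$ then $v_A \in u_A(W) \subseteq W$ provided $W$ is $u_A$-invariant.

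First I would establish that a $\Gamma$-invariant subspace $W$ is automatically $u_A$-invariant for every $A \in \Gamma$: since $A^k W \subseteq W$ for all $k$, and $u_A$ is a limit of scalar multiples of the $A^k$ (along the defining subsequence), and $W$ is closed (finite-dimensional), we get $u_A(W) \subseteq W$. Next, fix $A \in \Gamma$ and consider the two cases. Case 1: $W \subseteq E_A$. Case 2: $W \not\subseteq E_A$, i.e.\ there is $w \in W$ with $u_A(w)\neq 0$; then as above $v_A \in W$. So for each individual $A$ we have the alternative "$W\subseteq E_A$ or $v_A\in W$". The remaining work is to promote this pointwise-in-$A$ alternative to the claimed \emph{global} alternative: either $W \subseteq \bigcap_{A\in\Gamma}E_A$, or $v_A \in W$ for \emph{all} $A \in \Gamma$.

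To do this, suppose $W \not\subseteq \bigcap_A E_A$; I must show $v_B \in W$ for every $B \in \Gamma$. By hypothesis there is some $A_0 \in \Gamma$ with $W \not\subseteq E_{A_0}$, so by the single-element analysis $v_{A_0}\in W$. Now take an arbitrary $B \in \Gamma$. I would use the limit-set machinery: by Lemma \ref{lemma:Gammainvariance}, $[B^k v_{A_0}] \in L_\Gamma$ for all $k$, and more to the point $Bv_{A_0}\in W$ by $\Gamma$-invariance; iterating, $B^k v_{A_0}\in W$ for all $k\ge 0$. Since $B$ is proximal and $v_{A_0}\notin E_B$ (this is exactly where I need to rule out $v_{A_0}\in E_B$ — and here the hypothesis $\Gamma' v_B \cap E_A=\emptyset$ from Theorem \ref{thm:linearnonarithematic}, equivalently that distinct dominated eigenlines avoid each other's complementary eigenspaces, is what guarantees $v_{A_0}\notin E_B$, noting $v_{A_0}\in \Gamma' v_{A_0}$), the normalized iterates $\|B\|^{-k}B^k v_{A_0}$ converge to $u_B(v_{A_0})\in\mathds{R}v_B\setminus\{0\}$. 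Since $W$ is closed and contains each $B^k v_{A_0}$, it contains this limit, hence $v_B \in W$. As $B\in\Gamma$ was arbitrary, $v_B \in W$ for all $B$, completing the dichotomy.

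The main obstacle I anticipate is the clean handling of the normalization and subsequences in the definition of $u_A$ — ensuring $u_A$ genuinely is (a multiple of) the projection onto $\mathds{R}v_A$ with kernel $E_A$, which is standard for proximal matrices but must be invoked correctly — and, more substantively, making sure that in the last step the relevant vector $v_{A_0}$ really lies outside $E_B$. That non-containment is not automatic from proximality alone; it is precisely the role of the standing hypothesis (the disjointness condition $\Gamma' v_B \cap E_A = \emptyset$) carried over from Theorem \ref{thm:linearnonarithematic}, and I would flag explicitly that the lemma is stated under those assumptions. Everything else is a routine continuity/closedness argument in the finite-dimensional space $\mathds{R}^n$.
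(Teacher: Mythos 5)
Your argument is correct and is essentially the contrapositive of the paper's own proof: both rest on the per-element dichotomy ``$W\subseteq E_A$ or $v_A\in W$'' obtained from proximal dynamics plus closedness and $\Gamma$-invariance of $W$, and both propagate it globally via the standing disjointness hypothesis $\Gamma' v_B\cap E_A=\emptyset$ (the paper deduces $v_A\notin E_{B_0}\supseteq W$ for all $A$ and re-applies the dichotomy, while you push $v_{A_0}$ forward by $B^k$ to land on $v_B$). The points you flag — the normalization defining $u_A$ and the need for $v_{A_0}\notin E_B$ — are handled the same way in the paper, so no substantive difference remains.
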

\begin{proof}
    Let $\Gamma W = W$. Suppose that there is $B_0 \in \Gamma$ but $v_{B_0} \notin W$. We claim that $W \subset E_{B_0}$. Indeed otherwise for every $v \in W-E_{B_0}$ we have $$\lim_{n\to \infty}[B_0^nv] = [v_{B_0}] \notin [W].$$ However for any $v \in W$, any limit point of $\{[B_0^kv]: k \in \mathds{N}\}$ is in $W$ since $W$ is closed $\Gamma$-invariant subset, a contradiction.  Since by the assumptions of Theorem \ref{thm:linearnonarithematic}, for any $A \in \Gamma$, we have $v_A \notin E_{B_0}$ which in particular implies $v_A \notin W$. Thus by the same argument, $W \subset\bigcap_{A \in \Gamma}E_A$.  
\end{proof}
Let $W_{\Gamma}$ be the smallest subspace of $\mathds{R}^n$ containing $\widetilde{L_{\Gamma}}$, that is $$W_{\Gamma} = \bigoplus_{A \in \Gamma}\mathds{R}v_{A}.$$
By Lemma \ref{lemma:Gammainvariance}, $L_{\Gamma}$ is $\Gamma$-invariant, hence $W_{\Gamma}$ is also $\Gamma$-invariant. Let $U_{\Gamma}$ be a maximal proper $\Gamma$-invariant subspace of $W_{\Gamma}$. Denote $V_{\Gamma} = W_{\Gamma}/U_{\Gamma}$.
\begin{lemma}\label{lemma:irreducibility}
    $\Gamma$ acts irreducibly on $V_{\Gamma}$ and each $A \in \Gamma$ has the same largest eigenvalues in this action as in the action on $\mathds{R}^n$.
\end{lemma}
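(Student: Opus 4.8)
The plan is to analyze the induced action of $\Gamma$ on the quotient $V_\Gamma = W_\Gamma/U_\Gamma$ and show it is irreducible, then track eigenvalues. First I would establish irreducibility by contradiction: suppose $\bar W \subset V_\Gamma$ is a proper nonzero $\Gamma$-invariant subspace, and let $W \subset W_\Gamma$ be its preimage, so $U_\Gamma \subsetneq W \subsetneq W_\Gamma$ and $W$ is $\Gamma$-invariant. I would like to apply the dichotomy of Lemma \ref{lemma:dichotomy}, but that lemma is stated for $\Gamma$-invariant subspaces of $\mathds{R}^n$ that are also \emph{closed} — so the first task is to note that $\overline W$ (topological closure) is again $\Gamma$-invariant, proper in $W_\Gamma$ (since $W_\Gamma$ is the span of $L_\Gamma$ and $U_\Gamma$ was chosen maximal proper invariant — here one must be slightly careful: closures of invariant subspaces of a finite-dimensional space are just themselves, since subspaces are closed, so this is automatic). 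So Lemma \ref{lemma:dichotomy} applies to $W$: either $W \subseteq \bigcap_{A\in\Gamma}E_A$ or $W$ contains $v_A$ for all $A \in \Gamma$. In the latter case $W \supseteq \bigoplus_{A\in\Gamma}\mathds{R}v_A = W_\Gamma$, contradicting properness. So $W \subseteq \bigcap_{A\in\Gamma}E_A$; but then $W$ contains no $v_A$, whereas $W_\Gamma = \bigoplus_A \mathds{R}v_A$, so applying the dichotomy to $U_\Gamma$ as well forces $U_\Gamma \subseteq \bigcap_A E_A$, and then $W_\Gamma = U_\Gamma + \bigoplus_A \mathds{R}v_A$ would have $W_\Gamma \cap E_A \neq W_\Gamma$ for each fixed $A$ — I would derive a contradiction with $W_\Gamma$ being spanned by the $v_B$'s together with the fact that no $v_B$ lies in $E_A$ (hypothesis of Theorem \ref{thm:linearnonarithematic}). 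The cleanest route: since every $v_B \notin E_A$ for all $A, B \in \Gamma$, no $\Gamma$-invariant subspace lying inside $\bigcap_A E_A$ can be nonzero unless it is disjoint from all $v_B$, and combined with Lemma \ref{lemma:dichotomy} applied to that subspace, it must be zero. Hence $W$ cannot lie in $\bigcap_A E_A$ either, and irreducibility follows.

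For the eigenvalue statement: fix $A \in \Gamma$. In the action on $\mathds{R}^n$, the largest-modulus eigenvalue is $\lambda_A$ with eigenvector $v_A \in W_\Gamma$. Since $v_A \in W_\Gamma$ and $v_A \notin U_\Gamma$ (because $v_A \notin E_B$ for any $B$, in particular $v_A$ is not in the invariant subspace $U_\Gamma$, which by Lemma \ref{lemma:dichotomy} either contains all $v_B$ — impossible since $U_\Gamma$ is proper — or lies in $\bigcap_B E_B$, hence contains no $v_B$), the image $\bar v_A$ of $v_A$ in $V_\Gamma$ is nonzero, and $A\bar v_A = \lambda_A \bar v_A$. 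So $\lambda_A$ is an eigenvalue of $A$ acting on $V_\Gamma$. It remains to see it is still the \emph{largest}: the eigenvalues of $A$ on $V_\Gamma$ are a sub-multiset of those on $W_\Gamma \subseteq \mathds{R}^n$, all of which have modulus $\le \lambda_A$, so $\lambda_A$ remains dominant. Moreover since $A$ is proximal on $\mathds{R}^n$, $\lambda_A$ is a simple eigenvalue there, hence it appears with multiplicity one on $V_\Gamma$ as well (a quotient of a sub), so $A$ remains proximal on $V_\Gamma$ — which is what is needed downstream for Lemma \ref{lemma:stronglyirreducibly} and the application of Lemma \ref{lemma:linardensity}.

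The main obstacle I anticipate is the bookkeeping around whether $U_\Gamma$ — and any intermediate invariant subspace — can be forced into $\bigcap_{A\in\Gamma}E_A$, and ruling that case out cleanly. The hypothesis $\Gamma' v_B \cap E_A = \emptyset$ (equivalently, by $\Gamma$-invariance of $L_\Gamma$, $v_C \notin E_A$ for all $C, A$) is exactly what should make $\bigcap_A E_A$ contain no limit-set direction, but one has to phrase the argument so that Lemma \ref{lemma:dichotomy} is genuinely applicable — i.e., one is always looking at a $\Gamma$-invariant (automatically closed) subspace of $\mathds{R}^n$ — and then chain the dichotomy through $U_\Gamma$, $W$, and $W_\Gamma$ consistently. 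A secondary subtlety is confirming that $W_\Gamma$ is finite-dimensional and genuinely $\bigoplus_{A\in\Gamma}\mathds{R}v_A$ as a (finite) sum, which follows since it sits inside $\mathds{R}^n$; with that in hand the dichotomy does all the work.
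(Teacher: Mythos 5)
Your conclusion is right and your treatment of the eigenvalue statement matches the paper's: since $U_{\Gamma}$ is a proper invariant subspace of $W_{\Gamma}=\bigoplus_{A}\mathds{R}v_{A}$, Lemma \ref{lemma:dichotomy} forces $U_{\Gamma}\subset\bigcap_{A\in\Gamma}E_{A}$, hence $v_{A}\notin U_{\Gamma}$, so $v_{A}+U_{\Gamma}$ is a nonzero eigenvector for $\lambda_{A}$ in $V_{\Gamma}$, and dominance persists because the spectrum of $A$ on the subquotient $V_{\Gamma}$ is contained in its spectrum on $\mathds{R}^{n}$. Your added remark that proximality survives (simplicity of $\lambda_{A}$ passes to a subquotient) is correct and indeed what Lemma \ref{lemma:linardensity} needs downstream.

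The irreducibility half, however, contains a genuine error in the route you call ``cleanest.'' You assert that a nonzero $\Gamma$-invariant subspace contained in $\bigcap_{A\in\Gamma}E_{A}$ cannot exist because it would contain no $v_{B}$ and the dichotomy would then force it to be zero. That is not what Lemma \ref{lemma:dichotomy} says: lying inside $\bigcap_{A}E_{A}$ \emph{is} one of the two admissible alternatives, and such a subspace is perfectly consistent with being nonzero --- $U_{\Gamma}$ itself is (by the dichotomy) a $\Gamma$-invariant subspace inside $\bigcap_{A}E_{A}$ and is nonzero whenever $\Gamma$ does not already act irreducibly on $W_{\Gamma}$. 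Pushed to its conclusion your argument would ``prove'' $U_{\Gamma}=0$, which does not follow from the hypotheses of Theorem \ref{thm:linearnonarithematic}. The fix is already sitting in your first sentence: once you observe that a proper nonzero invariant subspace $\bar W\subset V_{\Gamma}$ pulls back to a $\Gamma$-invariant $W$ with $U_{\Gamma}\subsetneq W\subsetneq W_{\Gamma}$, you have directly contradicted the \emph{maximality} of $U_{\Gamma}$ among proper invariant subspaces of $W_{\Gamma}$; no appeal to Lemma \ref{lemma:dichotomy} is needed for this half. This is exactly the paper's (one-line) argument: irreducibility of $V_{\Gamma}$ is immediate from the choice of $U_{\Gamma}$, and the dichotomy is used only to locate $U_{\Gamma}$ inside $\bigcap_{A}E_{A}$ for the eigenvalue claim.
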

\begin{proof}
    By maximality of $U_{\Gamma}$, one has irreducibility. Since $U_{\tau}$ does not contain $v_A$ for all $A \in \Gamma$, by Lemma \ref{lemma:dichotomy}, $U_{\Gamma} \subset \bigcap_{A \in \Gamma}E_A$ and so does not contain any $v_A$ for $A \in \Gamma$ so $v_A + U_{\Gamma}$ is a dominant eigenvector for $A$ with eigenvalue $\lambda_{A}$.
\end{proof}
\begin{proof}[Proof of Lemma \ref{lemma:stronglyirreducibly}]
   Consider independent $A, B \in \Gamma$ and for each $k$, consider vector spaces $$W_{\Gamma_k},U_{\Gamma_k},V_{\Gamma_k} =W_{\Gamma_k}/U_{\Gamma_k}.$$ Since all vectors involved are finite dimensional, there is an $N > 0$ and subspaces $U,W,V$ such that $$W=W_{\Gamma_M}, U=U_{\Gamma_M}, V=V_{\Gamma_M}=W/U,$$ for all $M \geq N$. Fix such $N$ and consider $\Gamma_N$, the semigroup generated by $A^N, B^N$, and $G_N$, the group generated by $A^N,B^N$. By Lemma \ref{lemma:irreducibility}, $\Gamma_N$ acts irreducibly on $V$ and hence so does $G_N$. Now suppose that $\Gamma_N$ does not act strongly irreducibly on $V$. Take $V_1,\dots, V_m \subset V$ which is a minimal collection of subspaces of $V$ such that their union is preserved by $\Gamma_{N}$. Since $A^N, B^N$ preserves the collection $\{V_i\}$, so there is an integer $d > 0$ such that $A^{Nd}V_i = V_i, B^{Nd}V_i=V_i$ for each $i$, So the semigroup $\Gamma_{Nd}$ generated by these two matrices preserves the nontrivial proper subspace $V_1 \subset V$ contradicting Lemma \ref{lemma:irreducibility}.
  \end{proof}

\subsection{Mixing of the Bowen-Margulis measure}
We are now in a position to show the mixing of the geodesic flow with respect to the Bowen-Margulis measure $\mu_G$. Notice that, by the Poincar$\acute{e}$ recurrence theorem and Theorem \ref{Yang2022}, finiteness of $\mu_G$ implies the divergence of $G$.

\begin{theorem}\label{theorem:mixing}
    Let $G$ be a non-elementary subgroup of $\M(S)$ with finite Bowen-Margulis measure $\mu_G$, then the geodesic flow $g_t$ on $QD^1(S)/G$ is mixing with respect to $\mu_G$. More precisely, let $A, B \subset QD^1(S)/G$ be Borel sets with finite $\mu_G-$measures, if $\mu_G(QD^1(S)/G) < \infty$, then $$\lim_{t \to \pm \infty}\mu_G(A \cap g_{-t}B) = \frac{\mu_G(A)\mu_G(B)}{\mu_G(QD^1(S)/G)}.$$ Otherwise,  $$\lim_{t \to \pm \infty}\mu_G(A \cap g_{-t}B) = 0.$$
\end{theorem}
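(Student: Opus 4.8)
The plan is to follow Babillot's strategy for mixing of the geodesic flow on rank one manifolds, adapted to the Teichm\"{u}ller setting via the Gardiner-Masur boundary and optimal geodesics. First I would reduce the statement to a spectral/ergodic-theoretic dichotomy. Since $\mu_G$ is finite and $G$ is non-elementary, by the Poincar\'e recurrence theorem and Theorem \ref{Yang2022} the group $G$ is of divergence type, so the Patterson-Sullivan density $\{\nu_x\}$ is fully supported on the conical points in $\mathcal{UE}\cap\Lambda G$, and the diagonal action of $G$ on $(\PGM)^2$ is ergodic with respect to $\nu_x\times\nu_x$. Lifting to $QD^1(S)$, this gives ergodicity of the geodesic flow on $QD^1(S)/G$ with respect to $\mu_G$. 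By a standard argument (Hopf-type), mixing for a finite-measure-preserving flow that is already ergodic is equivalent to the \emph{weak mixing} statement that the only eigenvalues of the Koopman representation $\{U_t\}$ on $L^2(QD^1(S)/G,\mu_G)$ in the unit circle is the trivial one, i.e.\ there is no measurable $f$ with $U_t f=e^{i a t}f$ for some $a\neq 0$; equivalently the flow has no nontrivial \emph{arithmetic} structure. So the scheme is: (i) ergodicity from double ergodicity of the boundary action; (ii) reduce mixing to ruling out nontrivial eigenfunctions; (iii) use a closing/shadowing argument together with Theorem \ref{thm:nonarithmetic} to derive a contradiction from the existence of such an eigenfunction; (iv) when $\mu_G$ is infinite, invoke the Hopf ratio ergodic theorem / absence of absolutely continuous invariant probability to get $\mu_G(A\cap g_{-t}B)\to 0$.

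For step (iii), which is the heart of the matter, I would argue as follows. Suppose $U_t f = e^{iat}f$ with $a\neq 0$ and $|f|=1$ $\mu_G$-a.e.; by ergodicity we may assume $|f|\equiv 1$. The function $f$ can be built from a measurable function on the boundary via the Hopf parametrization: writing a point of $QD^1(S)$ as $(\xi,\eta,s)$, a flow-eigenfunction with eigenvalue $a$ corresponds (after multiplying by $e^{-ias}$) to a $G$-invariant measurable function $F(\xi,\eta)$ on $(\PMF\times\PMF\setminus\Delta)$ that, under the cocycle change of base point, transforms by the appropriate phase $e^{-ia\,(\text{Busemann term})}$. Using double ergodicity of $G$ on the boundary, one shows $F$ must essentially depend on only one coordinate, say $F=F(\xi)$, and then the $G$-equivariance forces a ``coboundary-type'' relation: for every pseudo-Anosov $g\in G$ with attracting fixed point $\xi_g$ and translation length $\ell(g)=\log\lambda_g$, evaluating the transformation law along the axis of $g$ yields $e^{ia\ell(g)}=1$, i.e.\ $a\,\ell(g)\in 2\pi\mathbb{Z}$ for every pseudo-Anosov $g\in G$. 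This says $a\cdot \mathrm{Spec}(G)\subset 2\pi\mathbb{Z}$, so $\mathrm{Spec}(G)$ generates a discrete subgroup of $\mathbb{R}$, contradicting Theorem \ref{thm:nonarithmetic}. Hence no such $f$ exists and the flow is weakly mixing, thus mixing.

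The technical obstacle I expect is the passage from the abstract eigenfunction to the arithmeticity relation on $\mathrm{Spec}(G)$: making rigorous the claims that (a) the eigenfunction descends to a boundary function depending on one variable, and (b) one may legitimately ``evaluate along the axis'' of a pseudo-Anosov. Point (a) requires a careful use of the product structure of $\mu_G$ together with the double ergodicity (an argument in the spirit of Babillot's use of the ``transverse'' measure), while point (b) requires controlling the behavior of optimal geodesics and Busemann cocycles near the fixed points $\xi_g\in\mathcal{UE}$ — this is exactly where uniquely ergodic foliations and the continuity statements of Lemma \ref{lemma:convergence} are used, since for $\xi$ uniquely ergodic the pair $(\xi,\zeta)$ is always filling and the optimal geodesic is the genuine Teichm\"{u}ller axis, along which $\beta_{\xi_g}(g x,x)=-\ell(g)$ holds exactly. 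Once these two points are in place, the contradiction with non-arithmeticity is immediate. For the infinite-measure case, I would note that a conservative ergodic infinite-measure-preserving flow cannot have $\mu_G(A\cap g_{-t}B)$ bounded away from $0$ along a subsequence for sets of finite measure (otherwise a standard averaging argument against the infinite total mass fails), giving the stated decay to $0$.
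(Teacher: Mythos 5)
Your overall strategy (rule out the obstruction to mixing by deriving arithmeticity of the length spectrum and contradicting Theorem \ref{thm:nonarithmetic}) is the right one and matches the paper's in spirit, but step (ii) contains a genuine gap that invalidates the reduction. It is \emph{not} true that an ergodic finite-measure-preserving flow is mixing if and only if its Koopman representation has no nontrivial eigenvalues: that condition is weak mixing, which is strictly weaker than mixing (Chacon-type and many rank-one constructions are weakly mixing but not mixing). There is no abstract ``Hopf-type'' argument upgrading weak mixing to mixing; in the hyperbolic-flow setting this upgrade is precisely the hard content of Babillot's theorem, and it uses the local product structure of the Bowen--Margulis measure in an essential way. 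Consequently your eigenfunction computation in step (iii) — even if carried out rigorously — would only establish weak mixing, not the stated theorem.

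The paper's proof handles exactly this difficulty. It does not produce an eigenfunction. Instead, assuming $f\circ g_{t_n}\not\rightharpoonup 0$, it invokes Lemma \ref{lemma:unitaryrepn} (Babillot's lemma from unitary representation theory) to extract a sequence $s_n\to\infty$ and a \emph{non-constant} $\chi\in L^2$ with $f\circ g_{s_n}\rightharpoonup\chi$ and $f\circ g_{-s_n}\rightharpoonup\chi$ simultaneously, and identifies $\chi$ with the a.s.\ limit of Ces\`aro averages in both time directions. Since $\chi$ is not an eigenfunction, the arithmetic obstruction is extracted differently: one smooths $\chi$ to $\Xi_\epsilon$, considers the period group ${\rm Per}(q)$ of $t\mapsto\Xi_\epsilon(g_tq)$, uses double ergodicity (Theorem \ref{Yang2022}) to see it is a.e.\ constant equal to some $k\mathbb{Z}$, and then shows via a four-point \emph{cross-ratio} on $\mathcal{UE}^4$ — using that geodesics asymptotic at a common uniquely ergodic endpoint have the same $\Xi_\epsilon^{\pm}$ values — that $Spec(G)\subset\frac12 k\mathbb{Z}$. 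Your ``evaluate the transformation law along the axis of $g$'' is the two-point shadow of this four-point argument and only makes sense for genuine eigenfunctions. To repair your proposal you would need to replace the weak-mixing reduction by Lemma \ref{lemma:unitaryrepn} and rebuild step (iii) around the period group and the cross-ratio identity $([\mathcal{F}^{u}(g)],[\beta];[g\beta],[\mathcal{F}^{u}(g)])=2\ln\lambda_g$.
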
 
Since the proof is very long, we first state the following consequence.
\begin{corollary}\label{coro:mixing}
Let $G$ be a non-elementary subgroup of $\M(S)$ with finite Bowen-Margulis measure $\mu_G$. Let $A, B \subset QD^1(S)/G$ be any Borel subsets with finite $\mu_G-$measures such that the quotient map $QD^1(S) \to QD^1(S)/G$ maps injectively $\tilde{A}, \tilde{B}$ to $A,B$, respectively. If $\mu_G(QD^1(S)/G) < \infty$, then $$\lim_{t \to \pm \infty}\sum_{\gamma \in G}\tilde{\mu}_G(\tilde{A} \cap g_{-t}\gamma\tilde{B}) = \frac{\tilde{\mu}_G(A)\tilde{\mu}_G(B)}{\mu_G(QD^1(S)/G)}.$$ Otherwise, $$\lim_{t \to \pm \infty}\sum_{\gamma \in G}\tilde{\mu}_G(\tilde{A} \cap g_{-t}\gamma\tilde{B}) = 0.$$
\end{corollary}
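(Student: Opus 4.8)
The plan is to deduce the corollary from Theorem \ref{theorem:mixing} by unfolding the $G$-action on $QD^1(S)$; the content is the exact identity $\mu_G(A\cap g_{-t}B)=\sum_{\gamma\in G}\tilde\mu_G(\tilde A\cap g_{-t}\gamma\tilde B)$, valid for every $t$, after which the limit is read off from the theorem. Write $\pi\colon QD^1(S)\to QD^1(S)/G$ for the quotient map. Recall that $\mu_G$ is the measure $\tilde\mu_G$ descends to, so $\mu_G(\pi(E))=\tilde\mu_G(E)$ whenever $\pi$ is injective on the Borel set $E$; in particular the hypothesis gives $\mu_G(A)=\tilde\mu_G(\tilde A)$ and $\mu_G(B)=\tilde\mu_G(\tilde B)$, which is the meaning of $\tilde\mu_G(A)$ and $\tilde\mu_G(B)$ in the statement. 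Since the geodesic flow on $QD^1(S)$ is $G$-equivariant it descends to $QD^1(S)/G$, and consequently $\pi^{-1}(g_{-t}C)=g_{-t}\pi^{-1}(C)$ for every Borel $C\subseteq QD^1(S)/G$.

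First I would observe that $\tilde A\cap\pi^{-1}(g_{-t}B)$ is a lift of $A\cap g_{-t}B$ on which $\pi$ is injective: it is contained in $\tilde A$, so $\pi$ is injective on it and its image lies in $\pi(\tilde A)\cap g_{-t}B=A\cap g_{-t}B$, while conversely each point of $A\cap g_{-t}B$ has a unique $\pi$-preimage in $\tilde A$, necessarily lying in $\pi^{-1}(g_{-t}B)$. Hence $\mu_G(A\cap g_{-t}B)=\tilde\mu_G\big(\tilde A\cap\pi^{-1}(g_{-t}B)\big)$. Using $\pi^{-1}(B)=\bigcup_{\gamma\in G}\gamma\tilde B$, this set equals $\bigcup_{\gamma\in G}\big(\tilde A\cap g_{-t}\gamma\tilde B\big)$, so the identity follows once this union is known to be $\tilde\mu_G$-essentially disjoint. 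For $\gamma_1\neq\gamma_2$ a point of $\gamma_1\tilde B\cap\gamma_2\tilde B$ yields two distinct points of $\tilde B$ with the same $\pi$-image unless it is fixed by $\gamma_1\gamma_2^{-1}\neq e$, so the overlap lies in $g_{-t}\,\mathrm{Fix}(\gamma_1\gamma_2^{-1})$; thus everything reduces to $\tilde\mu_G(\mathrm{Fix}(\delta))=0$ for all $\delta\in G\setminus\{e\}$. Since $\tilde\mu_G$ is carried by bi-infinite geodesics both of whose endpoints are uniquely ergodic, it suffices to see that no $\delta\neq e$ fixes such a geodesic on a set of positive measure; and fixing a point of $QD^1(S)$ forces $\delta$ to fix the entire geodesic through it, hence both (uniquely ergodic) endpoints. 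If $\delta$ has infinite order it is then pseudo-Anosov, whose only fixed foliations are its stable and unstable ones, so the relevant locus in $\PMF(S)\times\PMF(S)$ is finite, hence $\hat\mu_G$-null by atomlessness of $\hat\mu_G$. If $\delta$ has finite order, its fixed locus in $\mathcal{UE}$ has $\nu_x$-measure zero: otherwise the $G$-saturation of the set of uniquely ergodic points with nontrivial stabilizer would be $\nu_x$-co-null by ergodicity of the diagonal $G$-action (Theorem \ref{Yang2022}, applicable since finiteness of $\mu_G$ forces divergence), which is incompatible with full support of $\nu_x$ on $\Lambda G$ together with triviality of the centralizer of the non-elementary group $G$.

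Granting this, the identity $\mu_G(A\cap g_{-t}B)=\sum_{\gamma\in G}\tilde\mu_G(\tilde A\cap g_{-t}\gamma\tilde B)$ holds for every $t$, and each side is finite since it is at most $\mu_G(A)<\infty$. Applying Theorem \ref{theorem:mixing} to the Borel sets $A$ and $B$ then gives $\lim_{t\to\pm\infty}\mu_G(A\cap g_{-t}B)=\mu_G(A)\mu_G(B)/\mu_G(QD^1(S)/G)$ when $\mu_G(QD^1(S)/G)<\infty$, and $0$ otherwise; substituting $\mu_G(A)=\tilde\mu_G(\tilde A)$ and $\mu_G(B)=\tilde\mu_G(\tilde B)$ yields exactly the two displayed formulas. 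The main obstacle — indeed the only step carrying any dynamical weight — is the essential-freeness statement $\tilde\mu_G(\mathrm{Fix}(\delta))=0$; everything else is routine unfolding.
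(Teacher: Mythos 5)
Your route is genuinely different from the paper's. The paper first reduces to a torsion-free finite-index subgroup of $G$ (using that $\M(S)$ is virtually torsion-free), observes that for torsion-free $G$ no nontrivial element fixes any point of $QD^1(S)$ (a point stabilizer in $\T(S)$ is finite), constructs a locally finite Dirichlet fundamental domain whose boundary is then modified to be $\tilde\mu_G$-null, and finally unwraps the integral as in Link's Corollary 5.6. Your direct set-theoretic unfolding $\tilde A\cap\pi^{-1}(g_{-t}B)=\bigcup_{\gamma}(\tilde A\cap g_{-t}\gamma\tilde B)$ bypasses the fundamental-domain machinery entirely and correctly isolates the only dynamical input as essential freeness, i.e. $\tilde\mu_G(\mathrm{Fix}(\delta))=0$ for all $\delta\neq e$. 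That is a real simplification, and the skeleton (injective lift, equivariance of the flow, countable additivity over an essentially disjoint union, then Theorem \ref{theorem:mixing}) is sound.

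The gap is in your treatment of torsion. For infinite-order $\delta$ the fixed set is literally empty (an element fixing a point of $\T(S)$ has finite order), so that case is fine, though your detour through pseudo-Anosov dynamics is unnecessary. For finite-order $\delta\neq e$, however, your justification that $\nu_x(\{\xi\in\mathcal{UE}:\delta\xi=\xi\})=0$ does not work as written: ergodicity only shows that the $G$-invariant set of points with nontrivial stabilizer is null or co-null, and neither ``full support of $\nu_x$ on $\Lambda G$'' nor ``triviality of the centralizer of $G$'' rules out the co-null alternative --- a co-null set can be fully supported, and $\delta$ need not centralize $G$. A genuine argument is needed here (for instance, that a geodesic pointwise fixed by $\delta$ lies in the proper totally geodesic submanifold of $\T(S)$ fixed by $\delta$, and that $\tilde\mu_G$ cannot charge the countable union of $G$-translates of such loci), and it is not supplied. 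The cheapest repair is exactly the paper's opening move: note that the corollary for a finite-index subgroup implies it for $G$, pass to a torsion-free finite-index subgroup, and then your unfolding is exact because every $\mathrm{Fix}(\delta)$ with $\delta\neq e$ is empty. With that one borrowed step your proof is complete and arguably cleaner than the fundamental-domain route.
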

One can see easily that, if Corollary \ref{coro:mixing} holds for a finite index subgroup of $G$, it holds for $G$. Since $\M(S)$ is virtually torsion free, we can thus assume that $G$ is torsion free. The proof of Corollary \ref{coro:mixing} is the same as \cite[Corollary 5.6]{Link}. The only nontrivial ingredient used in that proof, when we unwrap the integral, is the existence of measurable fundamental domain with null measure boundary \cite[Lemma 4.4]{Link}. So we need to provide the following analogue.

Recall that an open subset $F$ of $Y \subset QD^1(S)$ is called a {\it fundamental domain} for the action of $G$ on $Y$  if $Y = \bigcup_g g\bar{F}$ and $gF \cap F = \emptyset$ for all $g \neq id$ in $G$. Furthermore, it is called {\it locally finite} if the collection $\{gF: g \in G\}$ is a locally finite collection. Notice that $\M(S_2)$ is understood as $\rm{PMod(S_2)}$.
\begin{lemma}
    Let $G$ be a torsion-free non-elementary subgroup of $\M(S)$ and $\tilde{\mu}_G$ be the measure on $QD^1(S)$ which decends to the Bowen-Margulis measure on $QD^1(S)/G$. Then there is a $G-$invariant measurable subset $Y \subset QD^1(S)$ with full $\tilde{\mu}_G-$measure. Moreover, there also exists a locally finite fundamental domain $F$ for the action of $G$ on $Y$ so that $\tilde{\mu}_G(\partial F) = 0$. 
\end{lemma}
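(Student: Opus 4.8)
The plan is to build $Y$ and $F$ by hand using the Hopf parametrization, exploiting that $\tilde\mu_G$ is fully supported on geodesic lines whose vertical and horizontal foliations are both uniquely ergodic and \emph{conical}. First I would set $\Lambda_c \subset \mathcal{UE}\cap\Lambda G$ to be the set of conical limit points, which by the Corollary in Section~3 has full $\nu_x$-measure, and let $Y_0 = \{(\xi,\eta,t) \in QD^1(S) : \xi,\eta \in \Lambda_c,\ \xi\neq\eta\}$ under the Hopf coordinates $QD^1(S) \cong (\PMF(S)\times\PMF(S)\setminus\Delta)\times\mathbb{R}$. Since $\hat\mu_G = e^{\delta_G\rho_x}\,d\nu_x\,d\nu_x$ and $\nu_x$ has no atoms, the diagonal $\{\xi=\eta\}$ is $\hat\mu_G$-null, so $Y_0$ has full $\tilde\mu_G$-measure; it is manifestly $G$-invariant and geodesic-flow invariant. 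I would then need to delete a further $G$-invariant null set so that the action of $G$ on the remaining set $Y$ is \emph{free} and \emph{properly discontinuous} in a measure-theoretic sense strong enough to produce a fundamental domain: since $G$ is assumed torsion-free and a pseudo-Anosov fixes only its two endpoints in $\mathcal{UE}$, the stabilizer of a point $(\xi,\eta,t)$ with $\xi\neq\eta$ both uniquely ergodic is trivial (an element fixing such a filling pair and a time parameter is the identity, as it would have to fix the oriented geodesic line pointwise), so freeness is automatic on $Y_0$; properness of the $G$-action on $QD^1(S)$ follows from properness of the $G$-action on $\T(S)$ via the footpoint projection $QD^1(S)\to\T(S)$.

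The construction of $F$ I would carry out by the standard Dirichlet-domain recipe adapted to this setting. Fix $o\in\T(S)$ and let $\pi:QD^1(S)\to\T(S)$ be the projection sending $q$ to its footpoint. For $q\in Y$ define $F = \{q \in Y : d(o,\pi(q)) < d(o,\pi(gq))\ \text{for all } g\in G\setminus\{id\}\}$. Because the $G$-action on $\T(S)$ is properly discontinuous, for each $q$ the infimum $\inf_{g}d(o,\pi(gq))$ is attained and, for $\tilde\mu_G$-a.e.\ $q$, attained by a \emph{unique} $g$ (the locus where it is attained by two distinct elements projects into a countable union of bisectors $\{x : d(o,x)=d(o,gx)\}$ in $\T(S)$, each a closed set of empty interior, whose $\pi$-preimages are $\tilde\mu_G$-null by Fubini in the $\mathbb{R}$-direction together with the fact that $\nu_x\times\nu_x$ gives zero mass to the corresponding foliation sets — here one uses that a bisector in Teichm\"uller space meets a.e.\ geodesic line in a set of measure zero in the time parameter). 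Intersecting $Y$ with the complement of this bad set gives the $G$-invariant full-measure $Y$ of the statement, and then $F$ is an honest fundamental domain: $Y=\bigcup_g g\bar F$ by minimizing $d(o,\pi(\cdot))$ over the orbit, and $gF\cap F=\emptyset$ by strict inequality. Local finiteness follows because $\{g : gF \cap K \neq \emptyset\}$ for $K$ compact forces $d(o,\pi(gq))$ bounded, hence $g$ in a finite set by proper discontinuity on $\T(S)$. Finally $\tilde\mu_G(\partial F)=0$ because $\partial F$ is contained in the bad bisector set already shown to be null.

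The main obstacle is the last measure-theoretic point: showing that the union of $\pi$-preimages of bisectors $B_g=\{x\in\T(S): d(o,x)=d(o,gx)\}$ has $\tilde\mu_G$-measure zero. Unlike in the $\mathrm{CAT}(0)$ or Riemannian setting where bisectors are codimension-one submanifolds and the argument is transparent, in Teichm\"uller space a bisector can be badly non-smooth, so I cannot simply invoke a Hausdorff-dimension count. Instead I would argue via the product structure: in Hopf coordinates $\tilde\mu_G = e^{\delta_G\rho_x}\,d\nu_x(\xi)\,d\nu_x(\eta)\,dt$, and for each fixed pair $(\xi,\eta)$ of distinct uniquely ergodic foliations the geodesic line $\mathcal L(\xi,\eta)$ is a genuine bi-infinite Teichm\"uller geodesic, so $\{t : \mathcal L(\xi,\eta)(t) \in B_g\}$ is the set where the convex-along-the-geodesic(ish) function $t\mapsto d(o,\mathcal L(\xi,\eta)(t)) - d(o, g\mathcal L(\xi,\eta)(t))$ vanishes; one shows this function is real-analytic, or at least not identically zero and with isolated zeros, on each geodesic in the full-measure set — using that $g\neq id$ moves the geodesic (as its stabilizer is trivial) so the two distance functions cannot agree on an interval without forcing $g$ to fix an arc of the geodesic. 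Then the fiberwise-null sets integrate to a null set by Fubini, and summing over the countably many $g\in G$ finishes. I would isolate this as a preliminary sublemma before assembling the Dirichlet domain.
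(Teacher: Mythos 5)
Your overall architecture coincides with the paper's: both you and the authors observe that torsion-freeness forces the action on $QD^1(S)$ to be free (a stabilizer of $q$ stabilizes the footpoint $\pi(q)$, hence has finite order), and both take as candidate fundamental domain the $\pi$-preimage of a Dirichlet domain for $G \curvearrowright \T(S)$ based at a point with trivial stabilizer, with local finiteness coming from proper discontinuity downstairs. Where you diverge is at the one step you yourself flag as the main obstacle, and that is where the gap is.

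You propose to prove directly that each bisector $B_g=\{x\in\T(S): d(o,x)=d(o,gx)\}$ has $\tilde\mu_G$-null preimage, by showing that along $\nu_x\times\nu_x$-a.e.\ geodesic the function $t\mapsto d(o,\ell(t))-d(o,g\ell(t))$ is ``real-analytic, or at least not identically zero and with isolated zeros.'' No such regularity is available: the Teichm\"uller distance function is not known to be real-analytic along geodesics, and it is not even convex (the paper itself cites Bourque--Rafi for this when discussing nearest-point projections). For a merely continuous function, ``not identically zero on an interval'' does not imply that its zero set is Lebesgue-null, so the fiberwise Fubini argument does not close. Moreover, even if the difference vanished on an interval, that would only place an arc of the geodesic on the bisector; it would not force $g$ to fix the arc, so the intended contradiction with freeness is not there either. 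The paper sidesteps this entirely: it concedes that $\partial\mathcal{D}_G$ may have positive $\tilde\mu_G$-measure and instead invokes the Roblin--Link perturbation trick (as in \cite[Lemma 4.4]{Link}, with the metric there replaced by the $G$-invariant distance $\tilde d$ on $QD^1(S)$), i.e.\ one produces a family of candidate fundamental domains whose boundaries are pairwise disjoint, so that all but countably many members of the family have null boundary, and one selects a good one. You should replace your bisector sublemma with an argument of this countability type; as written, that sublemma is unproven and quite possibly false.
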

\begin{proof}[Sketch of the proof]
  The proof is essentially in \cite[Lemma 4.4]{Link}, but here we add some comments since for subgroups of $\M(S)$, the argument is more concrete. The idea is to use the fundamental domain constructed in \cite{McCarthyPapad} for the action of $G$ on $\T(S)$, then use the trick in \cite{Roblin} to modify the boundary to have null measure as explained in \cite[Lemma 4.4]{Link}.
  
  First consider $$\mathcal{G} = Fix(G)= \{q \in QD^1(S): \exists \gamma \in G \setminus \{id\}, \gamma q = q\}.$$
  Then $\mathcal{G}$ is closed, $G-$invariant and $g_t-$invariant. Notice that, if $\gamma q = q$, then $\gamma (\pi(q)) = \pi(q)$ where $\pi: QD^1(S) \to \T(S)$ is the projection. Such $\gamma$ should be of finite order. Since $G$ is torsion-free, $\mathcal{G}$ is thus empty.  
  
  Now choose a point $m_0 \in \T(S)$ with trivial stabilizer in $G$ and consider the Dirichlet domain associated to the action of $G$ on $\T(S)$, namely, $$\mathds{D}_G = \{y \in \T(S): d(y,m_0) < d(y,\gamma m_0), \forall \gamma \in G \setminus {id}\}.$$
  By \cite[Theorem 5.9]{McCarthyPapad}, $\mathds{D}_G$ is a locally finite fundamental domain for the action of $G$ on $\T(S)$. In fact the boundary $\partial \mathds{D}_G$ is a properly embedded submanifold of codimension one of $\T(S)$. Now consider the preimage $\mathcal{D}_G$ of $\mathds{D}_G$ in $QD^1(S)$ under the projection $\pi: QD^1(S) \to \T(S)$. That is,
  $$\mathcal{D}_G = \{q \in QD^1(S): d(\pi(q),m_0) < d(\pi(q),\gamma m_0), \forall \gamma \in G \setminus {id}\}.$$ Then we know that $\mathcal{D}_G$ is a fundamental domain for the action of $G$ on $QD^1(S)$ and $\pi(\mathcal{G}) \subset \partial \mathds{D}_G$. Notice that, in our construction $\bar{\mathcal{D}}_G = \pi^{-1}(\bar{\mathds{D}}_G)$, so unlike in \cite{Link}, it is not necessary to consider the so-called enlarged boundary. Nevertheless, it may happen that $\partial \mathcal{D}_G$ has positive $\mu_G-$measure, so we shall follow the same argument in \cite[Lemma 4.4]{Link} to conclude the existence of a required fundamental domain. We only remark here that, one can replace the metric involved in \cite[Lemma 4.4]{Link} by the $\M(S)-$invariant distance on $QD^1(S)$, defined by (see \cite{yang2022conformal}) $$\tilde{d}(q_1,q_2) = \int^{+\infty}_{-\infty}\frac{d(\pi(g_tq_1),\pi(g_tq_2))}{2e^t}dt.$$
\end{proof}
The rest of this subsection is devoted to the proof of Theorem \ref{theorem:mixing}.

The proof of Theorem \ref{theorem:mixing} is modelled on Babillot's argument \cite{Babillot} (see also \cite[Theorem 5.4]{Link}). First we recall a lemma (Cf. \cite[Lemma 1]{Babillot}) from the unitary representation theory.
\begin{lemma}\label{lemma:unitaryrepn}
Let $(X, \mathcal{B},m, (T_t)_{t\in A})$ be a measure preserving dynamical system, where $(X,\mathcal{B})$ is a standard Borel space, $m$ a possibly unbounded Borel measure on $(X,\mathcal{B})$ and $(T_t)_{t \in A}$ an action of a locally compact second countable abelian group $A$ on $X$ by measure preserving transformations. Let $\phi \in L^2(X,m)$ be a real-valued function on $X$ such that $\int \phi dm=0$ if $m$ is finite. If there exists a sequence $(t_n)$ going to infinity in $A$ such that $\phi \circ T_{t_n}$ does not converge to $0$ in the weak $L^2$ topology, then there exist a sequence $(s_n)$ going to infinity in $A$ and a non-constant function $\chi$ in $L^2(X,m)$ such that, in the weak $L^2$ topology, $$\phi \circ T_{s_n} \to \chi,~~ \phi \circ T_{-s_n} \to \chi.$$ 
\end{lemma}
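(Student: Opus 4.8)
The plan is to realise the dynamics on the cyclic subspace generated by $\phi$ via the spectral theorem for unitary representations of abelian groups, and then to force the forward and backward weak limits to coincide by passing to a suitable sequence of ``symmetrised'' times.

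First I would pass to the Koopman representation: let $U_t\colon L^2(X,m)\to L^2(X,m)$ be $U_t h=h\circ T_t$, a strongly continuous unitary representation of the locally compact second countable abelian group $A$ (strong continuity is where a mild joint-measurability hypothesis on the action enters, as in Babillot's setting). Let $Z_\phi$ be the closed subspace spanned by $\{U_t\phi:t\in A\}$. By the SNAG (Stone--Naimark--Ambrose--Godement) theorem there is a finite positive Borel measure $\nu=\nu_\phi$ on the Pontryagin dual $\hat A$ and a unitary $Z_\phi\cong L^2(\hat A,\nu)$ sending $\phi$ to the constant $\mathbf 1$ and $U_t\phi$ to the character function $e_t\colon\chi\mapsto\chi(t)$, under which $U_t$ becomes multiplication by $e_t$. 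Since $\phi$ is real, $\nu$ is invariant under the inversion $\chi\mapsto\chi^{-1}$ of $\hat A$; writing $(Jh)(\chi)=\overline{h(\chi^{-1})}$ and $(Rh)(\chi)=h(\chi^{-1})$ (so $J$ is anti-unitary, $R$ is unitary, and $J$ corresponds to complex conjugation on $L^2(X,m)$), one has $Je_t=e_t$, $Re_t=e_{-t}=\overline{e_t}$, and a vector of $Z_\phi$ is real exactly when the corresponding $h$ satisfies $Jh=h$.

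Next I would extract the sequence. The closed ball of radius $\|\phi\|$ in the separable Hilbert space $L^2(X,m)$ is weakly compact and weakly metrizable, so the hypothesis lets us pass to a subsequence along which $U_{t_n}\phi\to\psi\ne 0$ weakly; then $\psi\in Z_\phi$, say $\psi\leftrightarrow f\in L^2(\hat A,\nu)$ with $f\ne 0$ and $e_{t_n}\to f$ weakly. As $Je_{t_n}=e_{t_n}$ and $J$ is anti-continuous for the weak topology, $Jf=f$, i.e.\ $f(\chi^{-1})=\overline{f(\chi)}$ a.e.; moreover $|f|\le 1$ a.e.\ (a weak limit of unimodular functions), so $|f|^2\in L^2(\hat A,\nu)$. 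The heart of the argument is the identity $e_{t_n-t_m}=e_{t_n}\overline{e_{t_m}}$, which gives, for every $g\in L^\infty(\hat A,\nu)$,
\[
\langle U_{t_n-t_m}\phi,\,g\rangle=\langle e_{t_n},\,e_{t_m}g\rangle .
\]
Letting $n\to\infty$ yields $\langle f,e_{t_m}g\rangle=\langle e_{-t_m},\bar f g\rangle$, and then letting $m\to\infty$ and using $e_{-t_m}=Re_{t_m}\to Rf=\bar f$ weakly, this tends to $\langle\bar f,\bar f g\rangle=\langle |f|^2,g\rangle$. Since $\{e_{t_n-t_m}\}$ is bounded and $L^\infty(\hat A,\nu)$ is dense in $L^2(\hat A,\nu)$, a diagonal argument produces $m_n\to\infty$ and indices $N_n$ with $s_n:=t_{N_n}-t_{m_n}\to\infty$ in $A$ and $U_{s_n}\phi\to\chi$ weakly, where $\chi\in Z_\phi$ corresponds to $|f|^2$ (weak convergence in the closed subspace $Z_\phi$ is the same as in $L^2(X,m)$). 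Because $|f|^2$ is real-valued it is $R$-fixed, so $U_{-s_n}\phi$, which corresponds to $Re_{s_n}$, converges weakly to $R\chi=\chi$ as well. Finally $\chi\ne 0$ (as $f\ne 0$), and $\chi$ is non-constant: if $m$ is finite then $\int\phi\,dm=0$ makes $U_t\phi\perp\mathbf 1$ for all $t$, hence $Z_\phi\perp\mathbf 1$, so $0$ is the only constant in $Z_\phi$; if $m$ is infinite, no nonzero constant lies in $L^2(X,m)$.

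I expect the only genuinely delicate step to be forcing the forward and backward weak limits to be \emph{the same} vector $\chi$. The resolution is exactly the passage to the symmetrised times $s_n=t_{N_n}-t_{m_n}$: the weak limit computed above is $|f|^2$, which is manifestly invariant under inversion (equivalently, real-valued), so applying the unitary $R$ (which exchanges the $+s_n$ and $-s_n$ orbits) leaves it fixed. All remaining ingredients---weak sequential compactness, metrizability of the ball, the diagonal extraction, arranging $s_n\to\infty$ in $A$, and strong continuity of the Koopman flow so that SNAG applies---are routine.
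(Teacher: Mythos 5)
The paper does not actually prove this lemma: it is quoted directly from Babillot, with the authors citing \cite[Lemma 1]{Babillot} in place of an argument. Your proof is correct and is essentially Babillot's original one --- spectral theorem on the cyclic subspace $Z_\phi$, a weak limit $f$ of the characters $e_{t_n}$ with $|f|\leq 1$, and the iterated limit along the differences $t_n-t_m$ producing the inversion-symmetric function $|f|^2$, whose symmetry is exactly what forces the forward and backward limits to coincide --- so the only points genuinely left to verify in the paper's application (to the Teichm\"uller geodesic flow on $QD^1(S)/G$, where $A=\mathds{R}$ and $m$ is the Bowen--Margulis measure) are the strong continuity of the Koopman representation and the separability of $L^2$, both of which you correctly flag as the mild standing hypotheses.
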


\begin{proof}[Proof of Theorem \ref{theorem:mixing}]
    Denote $X=QD^1(S)/G$, $\mu = \mu_G$ and $$\|\mu\| = \mu(QD^1(S)/G).$$ To show mixing, we only need to show that, for every $f \in C_c(X)$ (with $\int f d\mu = 0$ if $m$ is finite), $$f \circ g_t \to 0 $$ weakly in $L^2$ at $t \to \pm \infty$.

    Suppose that $\mu$ is not mixing. Then there exist $f \in C_c(X)$( with $\int f d\mu = 0$ if $m$ is finite) and $t_n$ going to infinity such that $f \circ g_{t_n}$ does not weakly in $L^2$ converge to $0$. By Lemma \ref{lemma:unitaryrepn}, there is a non-constant function $\chi \in L^2(X,\mu)$ and a sequence $s_n$ going to $\infty$, such that, weakly in $L^2(X,\mu)$, $$f \circ g_{s_n} \to \chi,~~ f \circ g_{-s_n} \to \chi ~~(n \to \infty).$$ By \cite[Fact]{Babillot}, $\chi$ is also the almost sure limit of Cesaro average of $f$ for positive and negative times.
    
    Modify $\chi$ so that $\chi$ is defined on $X$ and denote $\Xi$ the lift of $\chi$ to the unit tangent bundle $Y= QD^1(S)$ on $\T(S)$. For each $\epsilon > 0$, define the $\epsilon$-smooth $\Xi_{\epsilon}$ to be $$\Xi_{\epsilon}: q \in Y \mapsto \int^{\epsilon}_{0}\Xi(g_s(q))ds,$$here $g_t$ is the geodesic flow on $Y$. Since $\chi$ is not constant, by taking $\epsilon$ small enough one can ensure that $\Xi_{\epsilon}$ is not constant. Now there is a full $\tilde{\mu}$-measure subset $E_0 \subset \Lambda G \times \Lambda G$ such that for any $q \in Y$ with $([\mathcal{H}(q)],[\mathcal{V}(q)]) \in E_0$, the function $$h_q: t \mapsto \Xi_{\epsilon}(g_tq)$$ is well defined and continuous for any real number $t$. Here, since $\nu_x$ is supported on $\mathcal{UE} \cap \Lambda G$, after modifying a null measure set, we regard $\Lambda G \times \Lambda G$ as a subset of $\PMF(S) \times \PMF(S)$ by considering the Thurston compactification $\overline{\T(S)}^{Th}$. For each $q \in Y$, consider the period ${\rm Per(q)}$ of the map $h_q$. We note that ${\rm Per(q)}$ might be trivial and it only depends on the geodesic containing $q$. Thus it gives us a measurable map $${\rm Per}: E_0 \to \rm{Sub}$$ where $\rm{Sub}$ is the set of closed subgroups of $\mathds{R}$. Note that ${\rm Per}$ is also $G$-invariant since $\Xi_{\epsilon}$ is $G$-invariant. By the double ergodicity with respect to $\nu_x \times \nu_x$  (Theorem \ref{Yang2022}), ${\rm Per}$ is essentially constant. 

    Suppose that this constant is $\mathds{R}$. Then $\Xi_{\epsilon}$ is $g_t$-invariant and hence it defines a $G$-invariant measurable function on $\Lambda G \times \Lambda G - {\rm Diag}$. By ergodicity of $\tilde{\mu}$, $\Xi_{\epsilon}$ is constant which contradicts our choice of $\epsilon$. So there exists $k \geq 0$ such that almost surely ${\rm Per}$ is $k\mathds{Z}$. Denote this full measure subset of $E_0$ by $E_1$.

    Our goal is to show that, in this case, $Spec(G)$ is contained in $\frac{1}{2}k\mathds{Z}$ contradicting Theorem \ref{thm:nonarithmetic}. 
    
    We first choose a full measure subset of $E$ with nice properties. Consider the lift $\tilde{f}$ of $f$ to $Y$ and define $$\tilde{f}_{\epsilon}: Y \to \mathcal{R}, q \mapsto \int^{\epsilon}_{0}\tilde{f}(g_sq)ds.$$
    By $G$-invariance of $\tilde{f}$, one gets a function $f_{\epsilon}$ on $X$. By Lemma \ref{lemma:unitaryrepn} and \cite[Fact]{Babillot}, one has a non-constant function $\chi_{\epsilon}$ as a weakly limit in $L^2(X,\mu)$ which is the push forward function of $\Xi_{\epsilon}$. It is also the almost sure limit of the Cesaro averages for positive and negative times  $$\frac{1}{K^2}\sum^{K^2}_{k=1}f_{\epsilon}\circ g_{\pm s_{n_{k}}}$$ for some sequence $\{n_k\}$. Denote $\Xi^{\pm}_{\epsilon}$ the lifts of the limit supreme of the Cesaro averages defined above. By Fubini's theorem, there is a set $E_2 \subset E_1$ of full measure and such that for every $q \in E_2$, $$\Xi_{\epsilon}^{+}(q)=\Xi_{\epsilon}^{-}(q)=\Xi_{\epsilon}(q)$$ and $E_2 \subset \mathcal{UE} \times \mathcal{UE}$. Furthermore, define $E^{-}$ to be the set of $[\zeta] \in \Lambda G$ such that for $\nu_x$-almost sure $[\eta]$, $([\zeta],[\eta]) \in E_2$ and $E^{+}$ to be the set of $[\zeta] \in \Lambda G$ such that for $\nu_x$-almost sure $[\eta]$, $([\eta],[\zeta]) \in E_2$. By Fubini's theorem, $E = E_2 \bigcap (E^{+} \times E^{-})$ has full measure.

   We now introduce the \textit{cross ratio} of four points in $E$ which is in fact defined for four points in $\mathcal{UE}$ and relate it to $\log (\lambda_{g})$ for a pseudo-Anosov mapping class $g \in G$. 
   
   Take $([\zeta_1],[\eta_1]) \in E$, since $E$ has full measure, one then can choose $([\zeta_2],[\eta_2]) \in E_2$ so that $([\zeta_1],[\eta_2]) \in E_2$ and $([\zeta_2],[\eta_1]) \in E_2$. Now take $q_0 \in QD^1(S)$ so that $([\mathcal{V}(q_0)],[\mathcal{H}(q_0)]) = ([\zeta_1],[\eta_1])$ and define $p_0 = \pi(q_0)$ where $\pi: QD^1(S) \to \T(S)$ is the projection. The pair $(q_0,p_0)$ is said to be in the geodesic $([\zeta_1],[\eta_1])$. Once $(q_0,p_0)$ has been fixed, there is unique pair $(q_1,p_1)$ in the geodesic $([\zeta_1],[\eta_2])$ so that $\E_{p_0}(\mathcal{V}(q_0))=\E_{p_1}(\mathcal{V}(q_0))$. Consider $\mathcal{H}(q_1)$, there is a unique pair $(q_2,p_2)$ in the geodesic $([\eta_2],[\zeta_2])$ so that $\E_{p_1}(\mathcal{H}(q_1)) =\E_{p_2}( \mathcal{H}(q_1))$. Continue this process, one finds a pair $(q_3,p_3)$ in $([\zeta_2],[\eta_1])$ so that $\E_{p_2}(\mathcal{V}(q_2))=\E_{p_3}(\mathcal{V}(q_2))$ and a pair $(q_4,p_4)$ in $([\zeta_1],[\eta_1])$ with $\E_{p_3}(\mathcal{H}(q_3)) =\E_{p_4}( \mathcal{H}(q_3))$. Now the two pairs $(q_0,p_0)$ and $(q_4,p_4)$ are in the same geodesic $([\zeta_1],[\eta_1])$, there is a unique real number $\iota > 0$ such that $$g_{\pm \iota}q_{0} = q_4.$$
We call this number $\iota$ the \textit{cross ratio} of $([\zeta_1],[\eta_1],[\zeta_2],[\eta_2])$ and denote it by $$\iota = ([\zeta_1],[\eta_1];[\zeta_2],[\eta_2]).$$ 

We collect several properties of the cross ratio defined above. First, by a result of Gardiner-Masur(\cite{Gardiner-Masur}), if $q \in QD^1(X)$ with $(\zeta,\eta) = (\mathcal{V}(q),\mathcal{H}(q))$, then $$i^2(\zeta,\eta)= \E_X(\zeta)\E_X(\eta).$$ This gives us that $$([\zeta_1],[\eta_1];[\zeta_2],[\eta_2]) =| \ln \frac{i(\eta_1,\zeta_1)i(\eta_2,\zeta_2)}{i(\eta_1,\zeta_2)i(\eta_2,\zeta_1)}|.$$ 
Hence the cross ratio depends only on the choice of $([\zeta_1],[\eta_1],[\zeta_2],[\eta_2]) \in \mathcal{UE}^4$. One can also deduce from this formula that it can be extend to all point in $\mathcal{UE}^4$ and it is continuous. Moreover, for any pseudo-Anosov mapping class $g \in \M(S)$, by considering $\mathcal{F}^{u}(g), \mathcal{F}^{s}(g) \text{~and~} [\beta],g[\beta]$ for any $[\beta]\in \mathcal{UE}$, one has that the cross ratio $$([\mathcal{F}^{u}(g)], [\beta]; [g\beta],[\mathcal{F}^{u}(g)]) = 2\ln(\lambda_g).$$ 

We now claim that, if we start from $([\zeta_1],[\eta_1]) \in E$ and choose $[\zeta_2],[\eta_2]$ as above, then $([\zeta_1],[\eta_1];[\zeta_2],[\eta_2])$ is in $k\mathds{Z}$, the closed subgroup of $\mathds{R}$ mentioned before. By combining with the previous observation about relationship between cross ratio and the spectrum of $G$, one can conclude the proof.

For the claim, indeed, by \cite[Theorem 1.1]{Walsh}, one can see that $$\lim_{t \to +\infty}d_{T}(\pi(g_tq_0),\pi(g_tq_1)) = 0,\lim_{t \to -\infty}d_{T}(\pi(g_tq_1),\pi(g_tq_2)) = 0,$$  $$
\lim_{t \to +\infty}d_{T}(\pi(g_tq_2),\pi(g_tq_3)) = 0,\lim_{t \to -\infty}d_{T}(\pi(g_tq_3),\pi(g_tq_4)) = 0.$$ So, by the definition of $E$, 
\begin{equation}
    \begin{aligned}
&\Xi_{\epsilon}(q_0)=\Xi^{+}_{\epsilon}(q_0)=\Xi^{+}_{\epsilon}(q_1)= \Xi^{-}_{\epsilon}(q_1)
=\Xi^{-}_{\epsilon}(q_2)\\
&=\Xi^{+}_{\epsilon}(q_2)
=\Xi^{+}_{\epsilon}(q_3)=\Xi^{-}_{\epsilon}(q_3)
=\Xi^{-}_{\epsilon}(q_4)=\Xi^{+}_{\epsilon}(q_4)=\Xi_{\epsilon}(q_4).
    \end{aligned}
\end{equation}
This implies that the cross ratio $([\zeta_1],[\eta_1];[\zeta_2],[\eta_2]) \in {\rm Per}(E_0)$ which concludes the proof the claim.

\end{proof}

\section{Asymptotics of orbit growth}
\subsection{Notations}\label{sect:notations}
For the reader's convenience, we introduce notations following \cite{Link} and \cite{Roblin} that will be used in the sequel. 

Let $QD^1(S)$ be space of unit area quadratic differentials, which can be identified with the cotangent bundle of $\T(S)$ and $\MF(S)$ be the space of measured foliations on $S$. We denote $\mathcal{UE}$ the set of unique ergodic points in $\PGM$. The natural projections involved here are listed by
\begin{equation}
    \begin{aligned}
        \pi: QD^1(S) \to \T(S),\\
        \mathcal{V}: QD^1(S) \to \MF(S), q \mapsto \mathcal{V}(q),\\
        \mathcal{H}: QD^1(S) \to \MF(S), q \mapsto \mathcal{H}(q),\\
        [ \cdot ]: \MF(S) \to \PMF(S), \xi \mapsto [\xi].
        \end{aligned}
\end{equation}
We will use $\xi,\eta,\dots$ to denote boundary points in $\PGM$. When $\xi \in \PMF(S)$, $\xi$ is sometimes also referred to one of its preimages under $[\cdot]$ if there are no confusion. Define $\mathfrak{F}$ to be  $(\PGM)^2 \setminus \Delta$ where $$\Delta = \{([\lambda],[\eta]) \in (\PGM)^2: \exists \gamma \in \MF(S)-\{0\}, i(\lambda,\gamma)=i(\eta,\gamma)=0\},$$ then every pair $(\zeta, \psi) \in \mathfrak{F}$ determines a unique unparameterized Teichm\"{u}ller geodesic $\mathcal{L}(\xi,\psi)$, called {\it the Teichm\"{u}ller geodesic determined by the pair $(\zeta,\psi)$}, by taking the optimal geodesic line for $(\zeta,\psi)$.
When oriented naturally, it is also called {\it the oriented geodesic from $\zeta$ to $\psi$}. 
When $\zeta$ and $\psi$ belong to $\mathcal{UE}$, $\mathcal{L}(\zeta,\psi)$ is exactly 
$$\mathcal{L}(\zeta,\psi) = \{\pi(q): q \in QD^1(S), [\mathcal{H}(q)] = [\zeta], [\mathcal{V}(q)] = [\psi]\}.$$ 
 For any point $x \in \T(S)$, the unique unit holomorphic quadratic differential $q$ on $x$ with $\Psi(q) \in (\xi,\eta) \in \mathcal{UE} \times \mathcal{UE}$ will be denoted by $q(x;\xi,\eta)$. 

For any subset $A \subset \T(S)$ and $r > 0$, {\it the $r-$neighborhood $\mathcal{N}_r(A)$} of $A$ is the subset consisting of points in $\T(S)$ which have distance at most $r$ to some point in $A$. When $A$ is a single point, we will use $B(x,r)$ to denote the open $r-$ball centered at $x$. 

\noindent{\bf Shadows.}~~~We define the {\it (measurable) Shadow} of a ball $B(y,r)$ from $x$ to $\PGM$ for $r >0$ to be $$\mathcal{O}_r(x,y) = \{\beta \in \mathcal{UE}: \mathcal{L}(x,\beta) \cap B(y,r) \neq \emptyset\} \subset \mathcal{UE},$$ where $\mathcal{L}(x,\beta)$ is the unique optimal geodesic for $\beta$ passing through $x$. Since $\beta \in \mathcal{UE}$, $\mathcal{L}(x,\beta)$ is also the geodesic ray starting from $x$ so that the projectivized vertical measured foliation of the determined quadratic differential is $\beta$. One can definitely define the shadow to be all points in the boundary satisfying the same condition, but in order to unify notation, we choose the above definition. Let $\zeta \in \PGM$ and $y \in \T(S)$, define 
$$ \mathcal{O}_{r}(\zeta,y) = \{\eta \in \mathcal{UE}: \mathcal{L}(\zeta,\eta) \cap B(y,r) \neq \emptyset\}.$$ 
For $r >0$ and $x,y \in \T(S)$ with $d(x,y) > 2r$, define 
\begin{equation}
    \begin{aligned}
        &\mathcal{O}_r^{+}(x,y) = \{\beta \in \mathcal{UE}: \exists z \in B(x,r), such~that~ \mathcal{L}(z,\beta) \cap B(y,r) \neq \emptyset\};\\
        &\mathcal{O}_r^{-}(x,y) = \{\beta \in \mathcal{UE}: \forall z \in B(x,r), \mathcal{L}(z,\beta) \cap B(y,r) \neq \emptyset\}.        
        \end{aligned}
\end{equation}
The following lemma should be compared with Ballmann's Lemma in \cite[Lemma 2.1]{Link}.
\begin{lemma}\label{lemma:anaBallmann}
    Let $\zeta,\kappa \in \mathcal{UE}$ be two projective measured foliations determining a Teichm\"{u}ller geodesic and $o \in \T(S)$ be any point. Let $r> 0$, $\epsilon > 0$ and the Teichm\"{u}ller geodesic line determined by $\zeta,\kappa$ be $\mathcal{L}(\zeta,\kappa)$. Suppose that $d(o,\mathcal{L}(\zeta,\kappa)) = r$. Then there exist an open neighborhood $O_1 \times O_2 \subset (\mathcal{UE})^2$ of $(\zeta,\kappa)$ such that $$\forall (\mathcal{F}_1,\mathcal{F}_2) \in O_1 \times O_2, \|d(o,\mathcal{L}(\mathcal{F}_1,\mathcal{F}_2))-r\| \leq \epsilon,$$ and every pair $(a,b) \in O_1 \times O_2$ determines a Teichm\"{u}ller geodesic.  
\end{lemma}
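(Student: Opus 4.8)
The plan is to prove this as a straightforward continuity/compactness statement, using that the map $\Upsilon$ sending a filling pair to its optimal geodesic is continuous (in the appropriate sense) when one of the points is uniquely ergodic, which is exactly the content of Lemma \ref{lemma:convergence}. Concretely, I would argue by contradiction. Suppose the conclusion fails; then there are sequences $\mathcal{F}_1^{(n)} \to \zeta$ and $\mathcal{F}_2^{(n)} \to \kappa$ in $\mathcal{UE}$ with each pair still filling (we may pass to filling pairs since, by the remark after the first Lemma of Section 2, a uniquely ergodic point forms a filling pair with any distinct point, and $\mathcal{UE}$ is an open-dense-type set large enough for this), but with $|d(o,\mathcal{L}(\mathcal{F}_1^{(n)},\mathcal{F}_2^{(n)})) - r| > \epsilon$ for all $n$. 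Choose points $x_n, y_n \in \T(S)$ with $x_n \to \mathcal{F}_1^{(n)}$ and $y_n \to \mathcal{F}_2^{(n)}$ appropriately (or work directly with the rays), and let $\ell_n = \mathcal{L}(\mathcal{F}_1^{(n)},\mathcal{F}_2^{(n)})$.

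The key step is to invoke Lemma \ref{lemma:convergence}: since $\zeta$ is uniquely ergodic, $\ell_n \to \mathcal{L}(\zeta,\kappa)$ uniformly on compact sets. From uniform convergence on compact sets one gets $d(o,\ell_n) \to d(o,\mathcal{L}(\zeta,\kappa)) = r$: indeed, for the lower bound, any point of $\mathcal{L}(\zeta,\kappa)$ within distance $r$ of $o$ is a limit of points on $\ell_n$, giving $\limsup d(o,\ell_n)\le r$; for the upper bound, if some subsequence had $d(o,\ell_n)\le r-\epsilon'$ witnessed by points $p_n\in\ell_n$, these $p_n$ lie in the compact ball $\bar B(o,r)$, so a subsequential limit lies on $\mathcal{L}(\zeta,\kappa)$ at distance $\le r-\epsilon'$ from $o$, contradicting $d(o,\mathcal{L}(\zeta,\kappa))=r$. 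Hence $d(o,\ell_n)\to r$, contradicting $|d(o,\ell_n)-r|>\epsilon$. This also produces the neighborhood $O_1\times O_2$: if no such neighborhood existed we could extract exactly the bad sequence above. The ``every pair determines a Teichm\"{u}ller geodesic'' clause is handled separately and more cheaply: since $\zeta$ is uniquely ergodic, $(\zeta,\kappa)\notin\Delta$, and non-filling (equivalently, lying in $\Delta$ after projectivization) is a closed condition in $(\PGM)^2$; since $\mathcal{UE}$ consists of uniquely ergodic points, every pair in a small enough neighborhood of $(\zeta,\kappa)$ within $\mathcal{UE}\times\mathcal{UE}$ is again filling, so $\Upsilon$ is defined there.

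The main obstacle is making precise the passage from ``$\ell_n\to\mathcal{L}(\zeta,\kappa)$ uniformly on compacts'' to ``$d(o,\ell_n)\to r$'', since a priori the nearest-point projections could escape to infinity along $\ell_n$. The fix is the standard one sketched above: the relevant nearest points stay in a fixed compact ball around $o$ (any point realizing distance close to $d(o,\ell_n)\le r+1$ does), and on that compact set uniform convergence of the geodesics controls the distance function; extracting a subsequential limit of the near-projections and landing it on $\mathcal{L}(\zeta,\kappa)$ closes the argument. One should also note that $d(o,\mathcal{L}(\zeta,\kappa))$ is actually attained (the geodesic line is a closed subset and the projection $\pi:QD^1(S)\to\T(S)$ is proper), so ``$=r$'' is literally a minimum, which is what the compactness argument needs. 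Everything else — choosing the approximating sequences $x_n,y_n$, and reading off the neighborhood from the failure of the bound — is routine.
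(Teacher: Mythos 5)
Your proposal is correct, but it takes a different route from the paper in both halves, so let me compare. For the distance estimate, the paper simply cites Kaimanovich--Masur (Lemma 1.4.3 of \cite{KaiMasur}) for the continuity of $(\xi,\eta)\mapsto d(o,\mathcal{L}(\xi,\eta))$ on $\PMF(S)\times\PMF(S)\setminus\Delta$ and is done in one line; you instead re-derive this continuity from Lemma \ref{lemma:convergence} by a compactness argument. The content is equivalent, but your version has to pay for self-containedness: Lemma \ref{lemma:convergence} is stated for interior points $x_n,y_n\in\T(S)$ converging to boundary points, so feeding in the boundary pairs $(\mathcal{F}_1^{(n)},\mathcal{F}_2^{(n)})$ requires the diagonal argument you sketch, and the passage from uniform convergence on compacta to convergence of $d(o,\ell_n)$ needs the observation that the near-projections sit at bounded parameter values (since they lie in $\bar B(o,r)$ and the $\ell_n(0)$ converge). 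You flag both points and they go through. For the second clause, the paper argues by contradiction with simple closed curves $\gamma_n$ annihilated by both $\xi_n$ and $\eta_n$, passes to a limit $\iota$ which may be a \emph{minimal} foliation, and invokes Rees's topological-equivalence theorem to rule that out; you instead observe that the non-filling locus is closed, hence the filling locus is open. Your argument is softer and, on $\mathcal{UE}\times\mathcal{UE}$, essentially immediate: by the Miyachi lemma quoted in Section 2, any two distinct uniquely ergodic points form a filling pair, so once $O_1\cap O_2=\emptyset$ the clause is automatic.

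One caution on your phrasing: the parenthetical ``non-filling (equivalently, lying in $\Delta$ after projectivization) is a closed condition'' buries exactly the issue the paper's Rees argument is there to handle. The set $\Delta$ of Section 2 is defined by the existence of a \emph{simple closed curve} with vanishing intersection, and a countable union of closed sets of that form is not obviously closed --- a sequence of annihilated curves can converge projectively to a minimal foliation, which is precisely the case the paper resolves via \cite{Rees}. What \emph{is} closed by a soft argument is the non-filling locus quantified over all of $\MF(S)\setminus\{0\}$: it is the image in $\PMF(S)^2$ of the closed subset $\{(\xi,\eta,\zeta):i(\xi,\zeta)=i(\eta,\zeta)=0\}$ of the compact space $\PMF(S)^3$ under the continuous projection. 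Since $\Delta$ is contained in the non-filling locus, openness of the filling locus is all you need, so your argument is valid --- but the two conditions should not be asserted as interchangeable without the argument the paper actually gives.
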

\begin{proof}
    By \cite[Lemma 1.4.3 ]{KaiMasur}, the distance function on $\PMF(S) \times \PMF(S) - \Delta$ is continuous, so the first part of the lemma follows. Here $\PMF(S)$ is equipped with its natural topology instead of induced topology from $\PGM$. For the second part, if not, then there are measured foliations $\xi_n \to \zeta$, $\eta_n \to \kappa$ and simple closed curves $\gamma_n \in \mathcal{C}$ such that $i(\xi_n,\gamma_n)= i(\eta_n,\gamma_n)=0$. We can assume $\gamma_n \to \iota$ with $\iota \in \MF(S)$, then $i(\iota,\zeta) = i(\iota,\kappa) = 0$. On the other hand, $\iota$ cannot be minimal foliation (that is, a foliation without simple closed leaves) otherwise $\zeta$ is topologically equivalent to $\kappa$ (\cite{Rees}) and has zero intersection with it, which contradicts the assumption. So there is a simple closed curve $\gamma \in \mathcal{C}$ such that  $i(\gamma,\zeta) = i(\gamma,\kappa) = 0$ which also contradicts the assumption.
\end{proof}
\begin{corollary}\label{coro:openness}
    Let $\mathcal{O}_r(\zeta,y)$ be defined above. Then  $\mathcal{O}_r(\zeta,y)$ is open in $\mathcal{UE}$, hence it is $\nu_x$-measurable for all $x \in \T(S)$.
    \end{corollary}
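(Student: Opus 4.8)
The plan is to show that $\mathcal{O}_r(\zeta,y)$ is open in $\mathcal{UE}$; measurability with respect to every $\nu_x$ is then automatic since $\nu_x$ is a Borel measure (indeed a complete one) on $\PGM$, and $\mathcal{UE}$ carries the subspace topology. So the whole content is the openness claim, and the natural tool is the preceding Lemma~\ref{lemma:anaBallmann} together with the continuity statements for optimal geodesics collected in Section~2 (in particular Lemma~\ref{lemma:convergence}).

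First I would fix $\eta \in \mathcal{O}_r(\zeta,y)$, so that the optimal geodesic $\mathcal{L}(\zeta,\eta)$ meets the open ball $B(y,r)$; pick a point $p = \mathcal{L}(\zeta,\eta)(t_0)$ in that intersection, and choose $\epsilon > 0$ with $B(p,2\epsilon) \subset B(y,r)$. Since $\zeta$ is uniquely ergodic, the pair $(\zeta,\eta)$ is filling (by \cite[Lemma~7]{Miyachi_intersection}, as recalled in the excerpt), so $\mathcal{L}(\zeta,\eta)$ is the unique optimal geodesic for this pair and lies in the range of $\Upsilon$. Now I want a neighborhood $O$ of $\eta$ in $\mathcal{UE}$ such that for every $\eta' \in O$ the geodesic $\mathcal{L}(\zeta,\eta')$ still passes within $\epsilon$ of $p$, hence meets $B(y,r)$. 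For this I would apply Lemma~\ref{lemma:convergence}: take any sequence $\eta_n \to \eta$ in $\mathcal{UE}$ and any sequence of points $y_n \to \eta_n$ (or directly $y_n \to \eta$) together with points on the ray converging to $\zeta$; the lemma gives that the corresponding optimal geodesics $\mathcal{L}(\zeta,\eta_n)$ converge to $\mathcal{L}(\zeta,\eta)$ uniformly on compact subsets. Parametrizing so that the geodesics stay within a fixed compact set around $p$, uniform convergence forces $\mathcal{L}(\zeta,\eta_n)$ to enter $B(p,\epsilon) \subset B(y,r)$ for all large $n$. Running this through the usual contrapositive (if no neighborhood worked, extract a sequence $\eta_n \to \eta$ with $\mathcal{L}(\zeta,\eta_n) \cap B(y,r) = \emptyset$ and derive a contradiction with the limit geodesic meeting $B(y,r)$) shows every such $\eta$ has a neighborhood inside $\mathcal{O}_r(\zeta,y)$.

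One technical point to handle carefully is the parametrization: Lemma~\ref{lemma:convergence} speaks of uniform convergence on compact sets of \emph{parametrized} geodesics, so I need to normalize the parametrizations of $\mathcal{L}(\zeta,\eta_n)$ consistently — e.g.\ by requiring $\mathcal{L}(\zeta,\eta_n)(0)$ to be the nearest-point projection of $y$, or by using the horofunction normalization $\zeta(\mathcal{L}(\zeta,\eta_n)(0)) = 0$ — and then check that the relevant time interval on which the approach to $B(y,r)$ happens stays within a compact range uniformly in $n$; this uses that $d(y, \mathcal{L}(\zeta,\eta_n))$ is bounded (it converges to $d(y,\mathcal{L}(\zeta,\eta)) < r$, which itself follows once convergence is established, via Lemma~\ref{lemma:anaBallmann} type continuity of the distance-to-geodesic function). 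I expect this bookkeeping about parametrizations and compact time-windows to be the only real obstacle; conceptually the statement is just "the set of endpoints whose geodesic to $\zeta$ enters a fixed open ball is open", which is an immediate consequence of continuous dependence of optimal geodesics on their (filling) endpoint pairs. Finally, once $\mathcal{O}_r(\zeta,y)$ is open in $\mathcal{UE}$, it is a Borel subset of $\PGM$ and therefore $\nu_x$-measurable for every $x \in \T(S)$, completing the proof.
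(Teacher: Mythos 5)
Your proposal is correct and follows essentially the same route as the paper, which likewise reduces openness to continuity of (optimal) geodesics in their endpoints via Lemma~\ref{lemma:anaBallmann} (when $\zeta\in\mathcal{UE}$) and Lemma~\ref{lemma:convergence} (for general $\zeta$), with measurability then coming from openness plus completeness of $\nu_x$. One small slip: it is $\eta\in\mathcal{O}_r(\zeta,y)\subset\mathcal{UE}$, not $\zeta$, that is uniquely ergodic, but since the filling-pair fact and Lemma~\ref{lemma:convergence} only require one endpoint to be uniquely ergodic, the argument is unaffected.
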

\begin{proof}
    Since $\nu_x$ is a complete measure by assumption, $\mathcal{UE}$ is measurable. So the result follows from Lemma \ref{lemma:anaBallmann} if $\zeta \in \mathcal{UE}$. Otherwise, one shall use Lemma \ref{lemma:convergence}. 
\end{proof}

\begin{remark}\label{rem:discontinuity}
    \begin{itemize}
        \item[(1)] The set $\mathcal{O}_{r}(x,y)$ is open in $\mathcal{UE}$, hence $\nu_x$-meaurable. 
        \item[(2)] In general, it might not be true that, if $x_n$ converges to $\xi$ in $\GM$, then $\lim_{n \to \infty} \mathcal{O}^{\pm}_{r}(x_n,y) = \mathcal{O}_r(\xi,y)$. 
        \item[(3)] When $\xi \in \PGM$, we define $\mathcal{O}^{\pm}_r(\xi,y) = \mathcal{O}_r(\xi,y)$.
    \end{itemize}
\end{remark}
We will also use the following modified boundary of $\mathcal{O}_r(\xi,x)$ as in \cite{Link},
$$\tilde{\partial} \mathcal{O}_r(\xi,x) = \{\eta \in \mathcal{UE}-\{\xi\}: d(x,\mathcal{L}(\xi,\eta)) = r\}.$$
Notice that, by Corollary \ref{coro:openness}, $\tilde{\partial} \mathcal{O}_r(\xi,x)$ is also $\nu_x$-measurable.

\noindent{\bf Cones.}~~~Let $x \in \T(S)$ and $U \subset \PGM$. We denote by $\SE_x(U)$ the set of all points $y$ in $\T(S)$ such that there is $\xi \in U$ such that $y \in \mathcal{L}(x,\xi)$. 

For $r > 0$, following \cite{Link}, define \textit{the cones}  $$C_r^{+}(x,U) = \mathcal{N}_r(\cup_{z \in B(x,r)}\SE_z(U)) = \{y \in \T(S): \mathcal{O}_r^{+}(x,y) \cap U \neq \emptyset\},$$ and  $$C_r^{-}(x,U) = \{y \in \T(S): \mathcal{O}_r^{+}(x,y) \subset U\}.$$

\noindent{\bf Certain useful Borel sets.} The following sets are useful for us. For $r > 0$ and $x,y \in \T(S)$ with $d(x,y) > 6r$, the {\it corridor} $L_r(x,y)$ is, intuitively, the subset of $\PGM \times \PGM$  consisting of $(\xi,\eta) \in \mathcal{UE} \times \mathcal{UE}$ with $\xi \neq \eta$ so that the unique (oriented) Teichm\"{u}ller geodesic $\mathcal{L}(\xi,\eta)$ determined by the ordered pair $(\xi,\eta)$ passes first through $B(x,r)$ and then $B(y,r)$. 

To define $L_r(x,y)$ rigorously, we use the nearest point projection. Let $x \in \T(S)$ and $\ell=\mathcal{L}(\xi,\eta)$ be a Teichm\"{u}ller geodesic line. Denote $$\Pi_{\ell}(x) = \{z \in \ell: d(x,z) = d(x,\ell)\},$$ then $\Pi$ is called {\it the nearest point projection}. Notice that, in \cite{Roblin} and \cite{Link}, $\Pi_{\ell}(x)$ is always a single point because the distance function on ${\rm CAT(0)}$-space is convex. However, this is no longer true since the Teichm\"{u}ller distance is in general not convex (see \cite{BourqueRafi}). Therefore $\Pi_{\ell}(x)$ is a nonempty closed subset which may contain more than one point. Nevertheless, the diameter of $\Pi(x)$ is bounded by $2d(x,\ell)$. Recall that each ordered pair $(\xi,\eta) \in \mathcal{UE} \times \mathcal{UE}$ defines not only a unique unparameterized geodesic line  $\mathcal{L}(\xi,\eta)$, but also an orientation $\overrightarrow{\xi\eta}$ on it pointing to $\eta$. So for any two distinct points $p,q$ in $\mathcal{L}(\xi,\eta)$, we say $p$ is on the left of $q$ if the oriented geodesic line $\overrightarrow{pq}$ is positive with respect to the orientation. For $x \in \T(S)$ and an oriented geodesic $\mathcal{L}(\xi,\eta)$, we define \textbf{the leftmost nearest projection}
$w(x;\xi,\eta)$ to be the unique unit holomorphic quadratic differential on $z$ so that $[\mathcal{H}(w(x;\xi,\eta))] = \xi, [\mathcal{V}(w(x;\xi,\eta))] = \eta$, $z = \pi(w(x;\xi,\eta)) \in \Pi_{\mathcal{L}(\xi,\eta)}(x)$ and $z$ is leftmost in $\Pi_{\mathcal{L}(\xi,\eta)}(x)$. Note that here we use the fact that $\Pi_{\mathcal{L}(\xi,\eta)}(x)$ is a closed set so that the leftmost make sense.

The \textit{corridor} $L_r(x,y)$ for $d(x,y) > 6r$ is then defined to be the subset of $\mathcal{UE} \times \mathcal{UE}$ consisting of $(\xi,\eta)$ with $d(x,\mathcal{L}(\xi,\eta)) < r, d(y,\mathcal{L}(\xi,\eta)) < r$ and so that $\Pi_{\mathcal{L}(\xi,\eta)}(x)$ is on the left of $\Pi_{\mathcal{L}(\xi,\eta)}(y)$ with $md(\Pi_{\mathcal{L}(\xi,\eta)}(x), \Pi_{\mathcal{L}(\xi,\eta)}(y)) > r$, where $md(A,B)$ stands for the minimal distance (rather than the Hausdorff distance) between two sets $A,B$. By Lemma \ref{lemma:anaBallmann}, $L_r(x,y)$ is an open subset of $\mathcal{UE} \times \mathcal{UE}$, hence $\nu_x \otimes \nu_x$-measurable.


Let $g_t$ be the Teichm\"{u}ller geodesic flow on $QD^1(S)$. Let $x \in \T(S), A \subset \PGM, B \subset \PGM$ and $\widehat{\mathfrak{F}} = \mathcal{UE} \times \mathcal{UE}$, define
\begin{equation}\label{equation:K}
    \begin{aligned}
       & K_r(x) = \{g_sw(z;\xi,\eta): (\xi,\eta) \in \widehat{\mathfrak{F}}, d(x,\mathcal{L}(\xi,\eta)) \leq r, s \in (-\frac{r}{2},\frac{r}{2})\},\\
       & K_A^{+} := K^{+}_r(x,A) = \{g_tw(z;\xi,\eta)\in K_r(x): \eta \in A \},\\
       & K_{B}^{-} := K^{-}_r(x,B) = \{g_tw(z;\xi,\eta)\in K_r(x): \xi \in B \}.
        \end{aligned}
\end{equation}
The following is quite straightforward. 
\begin{lemma}
    If $A$ is $\nu_x$-measurable and $B$ is $\nu_y$-measurable, the sets $K_r(x)$, $K_A^{+}, K^{-}_{B}$ defined above are $\tilde{\mu}_G$- measurable.
\end{lemma}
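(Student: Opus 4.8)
The plan is to reduce everything to the measurability statements already established for the sets $\mathcal{O}_r(\zeta,y)$, $L_r(x,y)$, and the corridor-type sets, together with the fact that the Hopf parametrization $\Psi : QD^1(S) \to (\PMF(S)\times\PMF(S)\setminus\Delta)\times\mathbb{R}$ is a homeomorphism, so that $\tilde\mu_G$-measurability of a subset of $QD^1(S)$ is equivalent to product-measurability of its image under $\Psi$ with respect to $\hat\mu_G \otimes dt$ (and the latter is the completion of a Borel product measure on $\widehat{\mathfrak{F}}\times\mathbb{R}$). The heart of the matter is therefore to show that $K_r(x)$ is Borel (or at least measurable) as a subset of $\widehat{\mathfrak{F}}\times\mathbb{R}$ under the identification $g_sw(z;\xi,\eta)\leftrightarrow(\xi,\eta,\sigma(\xi,\eta)+s)$, where $\sigma(\xi,\eta)$ is the $\mathbb{R}$-coordinate of the leftmost nearest projection $w(z;\xi,\eta)$.

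First I would analyze $K_r(x)$. Its defining condition splits into two parts: the condition $d(x,\mathcal{L}(\xi,\eta))\le r$ on the pair $(\xi,\eta)$, and the condition that the flow parameter lie within $(-r/2,r/2)$ of the leftmost-projection time $\sigma(\xi,\eta)$. The set of pairs $(\xi,\eta)\in\widehat{\mathfrak{F}}$ with $d(x,\mathcal{L}(\xi,\eta))\le r$ is measurable by Lemma~\ref{lemma:anaBallmann} (the distance function $(\xi,\eta)\mapsto d(x,\mathcal{L}(\xi,\eta))$ is continuous on $\PMF(S)\times\PMF(S)\setminus\Delta$, in the natural topology, by \cite[Lemma 1.4.3]{KaiMasur}), so this locus is in fact Borel. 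Then I would show the function $\sigma:(\xi,\eta)\mapsto$ (arclength coordinate of the leftmost point of $\Pi_{\mathcal{L}(\xi,\eta)}(x)$) is Borel measurable on this locus: the nearest-point projection set $\Pi_{\mathcal{L}(\xi,\eta)}(x)$ varies upper-semicontinuously in the Hausdorff topology on closed subsets of a compact arc (using continuity of geodesics on compact sets via Lemma~\ref{lemma:convergence} and of the Teichm\"uller distance), and taking the leftmost point — an infimum of arclength over a closed set depending measurably on the parameter — is a measurable operation. Granting this, $K_r(x) = \{(\xi,\eta,t): d(x,\mathcal{L}(\xi,\eta))\le r,\ |t-\sigma(\xi,\eta)|<r/2\}$ is the intersection of a Borel "cylinder" with the region between the graphs of $\sigma\pm r/2$, hence Borel; in particular it is $\tilde\mu_G$-measurable.

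For $K_A^{+}$ and $K_B^{-}$ the remaining work is routine. By hypothesis $A$ is $\nu_x$-measurable and $B$ is $\nu_y$-measurable; since all the measures $\nu_z$ are mutually absolutely continuous on $\mathcal{UE}\cap\Lambda G$ (the Radon--Nikodym derivative is the exponential of a Busemann cocycle, which by \eqref{equ:busemann} is a Borel function of $\xi$), measurability does not depend on the base point, and the cylinder sets $\widehat{\mathfrak{F}}\times(\text{--})$ cut out by "$\eta\in A$" or "$\xi\in B$" are $\hat\mu_G\otimes dt$-measurable. Intersecting with $K_r(x)$, which we have just shown is measurable, and pulling back through $\Psi$ gives the claim. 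The one genuinely delicate point — and the step I expect to be the main obstacle — is the measurability (even well-definedness up to the relevant null sets) of the leftmost-projection time $\sigma(\xi,\eta)$: unlike in the $\mathrm{CAT}(0)$ setting of \cite{Link} and \cite{Roblin}, $\Pi_{\mathcal{L}(\xi,\eta)}(x)$ can be a nondegenerate set because the Teichm\"uller metric is not convex, so one must argue carefully, via the uniform convergence of geodesics on compact sets (Lemma~\ref{lemma:convergence}) and the fact that $\mathrm{diam}\,\Pi_{\mathcal{L}(\xi,\eta)}(x)\le 2d(x,\mathcal{L}(\xi,\eta))\le 2r$, that selecting the leftmost point is a Borel selection. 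Everything else is bookkeeping with standard product-measure and completion arguments.
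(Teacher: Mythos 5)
Your proposal is correct and follows essentially the same route as the paper: the paper's one-line proof invokes precisely the two ingredients you elaborate, namely continuity of the distance function on $\PMF(S)\times\PMF(S)\setminus\Delta$ (via \cite[Lemma 1.4.3]{KaiMasur}, i.e.\ Lemma \ref{lemma:anaBallmann}) and measurability of the leftmost nearest projection. The one point you flag as delicate --- that selecting the leftmost point of $\Pi_{\mathcal{L}(\xi,\eta)}(x)$ is a Borel selection --- is exactly what the paper addresses in the remark immediately following the lemma, via the auxiliary set $K'_r(x)$ and a measurable section of the projection $\phi$, which is a minor variant of your semicontinuity argument.
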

\begin{proof}
    This follows from the continuity of the distance function at boundary points in $\mathcal{UE}$ (cf. \cite[Lemma 1.4.3]{KaiMasur}) and our choice of the leftmost nearest projection. 
\end{proof}
\begin{remark}
    The reader shall not bother too much with the measurability of these sets. In fact, we will not use the fact that $w(x;\xi,\eta)$ is at the leftmost position of $\Pi_{\mathcal{L}(\xi,\eta)}(x)$. Since $\Pi_{\mathcal{L}(\xi,\eta)}(x)$ has bounded distance depends on $d(x,\mathcal{L}(\xi,\eta))$, the reader can check that, whenever we use $w(x;\xi,\eta)$, the only fact relevant to us is $w(x,\xi,\eta) \in \Pi_{\mathcal{L}(\xi,\eta)}(x)$. So one can define $w(x;\xi,\eta)$ in another way. Namely, we define $$K'_r(x) = \{g_sq(z;\xi,\eta): (\xi,\eta) \in \widehat{\mathfrak{F}}, d(x,\mathcal{L}(\xi,\eta)) \leq r, s \in (-\frac{r}{2},\frac{r}{2}), z \in \Pi_{\mathcal{L}(\xi,\eta)}(x)\},$$
    then this set is easy to see to be measurable. Since for any $(\xi,\eta) \in \mathcal{UE} \times \mathcal{UE}$, $K'_{r}(x) \cap \mathcal{L}(\xi,\eta)$ is a compact set and the projection map $\phi: K'_{r}(x) \to \mathcal{UE} \times \mathcal{UE}$ is continuous, we can find a measurable section of $\phi$. Now we then can take $w(x;\xi,\eta)$ to be the value of the measurable section of $\phi$ at $(\xi,\eta)$ and define $K_r(x)$ as before which is definitely measurable.  
\end{remark}
\begin{remark}
    As pointed out in \cite{BourqueRafi}, the work of J. H. C. Whitehead implies that, for each $x \in \T(S)$, one can choose $r_x$, depending on $x$, such that for each geodesic $\mathcal{L}(\xi,\eta)$ passing through $B(x,r_x)$, $\Pi_{\mathcal{L}(\xi,\eta)}(x)$ is in fact a connected segment or a point. However, this is too much for us since we will only use boundedness of $\Pi_{\mathcal{L}(\xi,\eta)}(x)$ in the sequel.
    \end{remark}
\subsection{Properties of shadows and cones}
We now list all important properties which can be found in \cite{Link}. They will be used in the proof of the main theorem. Since all proofs can be carried out mutatis mutatum in our situation, we defer some of them to the appendix.

Concerning $\mathcal{O}_r(\xi,x)$ and $\mathcal{O}^{\pm}_r(x,y)$ in Section \ref{sect:notations}, we have
\begin{lemma}\label{lemma:nullboundary}
    Let $\xi \in \PGM$ and $o \in \T(S)$. Then the following set is at most countable $$\{ r > 0: \nu_o(\tilde{\partial} \mathcal{O}_r(\xi,o)) > 0\}$$
\end{lemma}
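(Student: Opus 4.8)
The plan is to fix the basepoint $o$ and the boundary point $\xi \in \PGM$, and to exploit the fact that the modified boundaries $\tilde{\partial} \mathcal{O}_r(\xi,o)$ are pairwise disjoint for distinct values of $r$ while each has finite $\nu_o$-measure. Concretely, recall
$$\tilde{\partial} \mathcal{O}_r(\xi,o) = \{\eta \in \mathcal{UE}-\{\xi\}: d(o,\mathcal{L}(\xi,\eta)) = r\}.$$
First I would observe that for a fixed $\eta \in \mathcal{UE} - \{\xi\}$ there is a unique optimal geodesic $\mathcal{L}(\xi,\eta)$ (using that $\xi$ uniquely ergodic makes $(\xi,\eta)$ a filling pair, by Miyachi's lemma cited in the excerpt), hence the number $r(\eta) := d(o,\mathcal{L}(\xi,\eta))$ is well-defined. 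Thus $\eta \mapsto r(\eta)$ is a function $\mathcal{UE} - \{\xi\} \to (0,\infty)$, and $\tilde{\partial} \mathcal{O}_r(\xi,o)$ is precisely the level set $r(\eta)^{-1}(r)$. In particular, the sets $\{\tilde{\partial}\mathcal{O}_r(\xi,o)\}_{r>0}$ are pairwise disjoint and their union is contained in $\mathcal{UE}-\{\xi\}$, which has finite $\nu_o$-measure since $\nu_o$ is a finite (probability) measure.

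The key step is then the standard measure-theoretic fact that a measure space of finite total mass can contain only countably many pairwise disjoint sets of strictly positive measure: for each $n \geq 1$ the collection $\{r : \nu_o(\tilde{\partial}\mathcal{O}_r(\xi,o)) > 1/n\}$ must be finite (indeed of cardinality at most $n \cdot \nu_o(\mathcal{UE})$), since otherwise summing the masses of finitely many such disjoint sets would exceed $\nu_o(\mathcal{UE}) < \infty$. Taking the union over $n$ shows that $\{r > 0 : \nu_o(\tilde{\partial}\mathcal{O}_r(\xi,o)) > 0\}$ is a countable union of finite sets, hence countable. The only point requiring a little care beforehand is measurability of each $\tilde{\partial}\mathcal{O}_r(\xi,o)$, but this was already recorded right after the statement (it follows from Corollary \ref{coro:openness}, via Lemma \ref{lemma:anaBallmann} when $\xi \in \mathcal{UE}$ and via Lemma \ref{lemma:convergence} in general), so I may simply invoke it.

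The main (and really the only) obstacle is establishing the disjointness claim, i.e. that $d(o,\mathcal{L}(\xi,\eta))$ depends only on the geodesic $\mathcal{L}(\xi,\eta)$ and that distinct $r$ give genuinely disjoint level sets. This is immediate once one knows the optimal geodesic $\mathcal{L}(\xi,\eta)$ is \emph{unique} — which holds here because $\xi$ is uniquely ergodic so $(\xi,\eta)$ is automatically a filling pair, and Lemma \cite{Lou-Su-Tan} (stated in the excerpt) gives uniqueness of the optimal geodesic for a filling pair. After that the argument is the elementary pigeonhole/finite-measure argument above; no analysis of the Teichmüller metric is needed beyond what is already in place.
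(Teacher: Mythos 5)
Your proof is correct and follows essentially the same route as the paper's (Note A.1): the sets $\tilde{\partial}\mathcal{O}_r(\xi,o)$ are pairwise disjoint level sets of $\eta\mapsto d(o,\mathcal{L}(\xi,\eta))$, so finiteness of $\nu_o$ forces each $\{r:\nu_o(\tilde{\partial}\mathcal{O}_r(\xi,o))>1/n\}$ to be finite, and the union over $n$ is countable. Your added remarks on uniqueness of the optimal geodesic and measurability are correct but not substantively different from what the paper takes for granted.
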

\begin{proof}\marginpar{\textcolor{blue}{Note A.1}}
    The same as the proof of \cite[Lemma 6.2]{Link}.
\end{proof}
\begin{lemma}\label{lemma:measureclosure}
    Let $\xi \in \PGM, o \in \T(S), r> 0$. Suppose that $y_n \in \GM$ converges to $\xi$ in $\PGM$. Then we have 
    \begin{itemize}
        \item[(a)] $$\limsup_{n \to \infty}(\mathcal{O}^{\pm}_r(y_n,o)) \subset (\mathcal{O}_r(\xi,o) \cup \tilde{\partial}\mathcal{O}_r(\xi,o)),$$
            \item[(b)] $$\mathcal{O}_r(\xi,o) \subset \liminf_{n \to \infty}(\mathcal{O}^{\pm}_r(y_n,o)).$$       \end{itemize}
\end{lemma}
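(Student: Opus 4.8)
The plan is to argue geometrically using the characterization of the sets $\mathcal{O}_r^{\pm}$ via optimal geodesics together with the convergence statement for geodesic segments (Lemma \ref{lemma:convergence}) and Ballmann's Lemma (Lemma \ref{lemma:anaBallmann}). Throughout, write $y_n \to \xi$ in $\PGM$, and recall from Remark \ref{rem:discontinuity}(3) that $\mathcal{O}_r^{\pm}(\xi,o) = \mathcal{O}_r(\xi,o)$, so both inclusions are really about how the shadows $\mathcal{O}_r^{\pm}(y_n,o)$ with $y_n$ an interior point degenerate onto the boundary-point shadow $\mathcal{O}_r(\xi,o)$.

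For part (a), I would take $\eta \in \limsup_n \mathcal{O}_r^{\pm}(y_n,o)$, so (passing to a subsequence) $\eta \in \mathcal{O}_r^{\pm}(y_n,o)$ for all $n$; I want to show $d(o,\mathcal{L}(\xi,\eta)) \le r$, which gives $\eta \in \mathcal{O}_r(\xi,o) \cup \tilde\partial\mathcal{O}_r(\xi,o)$. By definition of $\mathcal{O}_r^{+}$ there is $z_n \in B(y_n,r)$ and a point $p_n \in \mathcal{L}(z_n,\eta) \cap B(o,r)$; for $\mathcal{O}_r^{-}$ one has the stronger statement but the same conclusion suffices. Since $z_n$ and $y_n$ are at bounded distance and $y_n \to \xi$, we have $z_n \to \xi$ in $\GM$ as well. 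Now apply Lemma \ref{lemma:convergence}: with $\eta \in \mathcal{UE}$ playing the role of the uniquely ergodic point, the optimal rays $\mathcal{L}(z_n,\eta)$ converge uniformly on compact sets to $\mathcal{L}(\xi,\eta) = \mathcal{L}(\eta,\xi)$ (here one uses that $(\xi,\eta)$ is a filling pair because $\eta$ is uniquely ergodic, by Miyachi's lemma cited after Lemma \ref{lemma:convergence}, so $\mathcal{L}(\xi,\eta)$ is well-defined). Because each $\mathcal{L}(z_n,\eta)$ meets the closed ball $\overline{B(o,r)}$ and the convergence is uniform on compacta, a diagonal/limit argument produces a point of $\mathcal{L}(\xi,\eta) \cap \overline{B(o,r)}$, i.e. $d(o,\mathcal{L}(\xi,\eta)) \le r$, as desired. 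The one subtlety is ensuring the intersection points $p_n$ do not escape to infinity along the converging geodesics; this is handled because $p_n \in \overline{B(o,r)}$ is a fixed compact set and the geodesics converge there.

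For part (b), let $\eta \in \mathcal{O}_r(\xi,o)$, so the optimal geodesic $\mathcal{L}(\xi,\eta)$ passes through $B(o,r)$; I must show $\eta \in \mathcal{O}_r^{\pm}(y_n,o)$ for all large $n$. Since the inclusion $\mathcal{O}_r^{-} \subset \mathcal{O}_r^{+}$ is trivial, the content is the $\mathcal{O}_r^{-}$ statement: for all large $n$ and \emph{every} $z \in B(y_n,r)$, the ray $\mathcal{L}(z,\eta)$ meets $B(o,r)$. Here I would again invoke Lemma \ref{lemma:convergence}, but now I need a uniform version over all $z \in B(y_n,r)$: if not, there would be $z_n \in B(y_n,r)$ with $\mathcal{L}(z_n,\eta) \cap B(o,r) = \emptyset$; since $z_n \to \xi$, Lemma \ref{lemma:convergence} forces $\mathcal{L}(z_n,\eta) \to \mathcal{L}(\xi,\eta)$ uniformly on compacta, and since $\mathcal{L}(\xi,\eta)$ passes through the \emph{open} ball $B(o,r)$, the limiting geodesics must eventually enter $B(o,r)$ too — contradiction. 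The use of the open ball (rather than closed) in the definition of $\mathcal{O}_r$ is what makes this work, since it gives a definite amount of room to perturb into.

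The main obstacle I expect is the second part's uniformity over $z \in B(y_n,r)$ together with making Lemma \ref{lemma:convergence} apply when only $z_n \to \xi$ is known (rather than a fixed sequence converging to $\xi$): one must check that the sequence $z_n$, chosen badly, still satisfies the hypotheses of Lemma \ref{lemma:convergence} (which it does, as it is just \emph{some} sequence tending to $\xi$ in $\GM$). A secondary technical point is that $\mathcal{L}(\xi,\eta)$ must be shown to exist and be the relevant optimal geodesic in both parts — this is exactly where uniquely ergodicity of $\eta$ and the filling-pair remark are essential, and where the argument would break for non-filling pairs. Modulo these points, both inclusions reduce to the continuity of optimal geodesics proved in Lemma \ref{lemma:convergence}, exactly paralleling \cite[Lemma 6.3]{Link}.
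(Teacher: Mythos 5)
Your proposal is correct and takes essentially the same route as the paper: both inclusions are reduced to the convergence of the rays $\mathcal{L}(z_n,\eta)$ toward a uniquely ergodic $\eta$ to the optimal geodesic $\mathcal{L}(\xi,\eta)$ via Lemma \ref{lemma:convergence}, and your part (b) is the paper's argument phrased contrapositively (the paper instead fixes $\epsilon>0$ with $B(w(o;\xi,\eta),\epsilon)\subset B(o,r)$ and a neighborhood $U$ of $\xi$ eventually containing $B(y_n,r)$). The only point where the paper does genuinely more work is the "subtlety" you flag in part (a): your resolution ("the geodesics converge there") is circular as stated, whereas the paper makes it precise by reparameterizing each $\ell_n$ so that $\beta_{\eta}(o,\ell_n(0))=0$ and checking that the nearest-point-projection parameters $s_n$ lie in $(-r,r)$, so that the witnesses sit over a fixed compact parameter window where the uniform convergence of Lemma \ref{lemma:convergence} actually applies.
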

\begin{proof}
    We first prove (b). Choose $\eta \in \mathcal{O}_r(\xi,o) \subset \mathcal{UE}$. Consider the unique geodesic $\mathcal{L}(\xi,\eta)$ determined by $\xi,\eta$ and the leftmost nearest projection $w(o;\xi,\eta)$, then $d(o,w(o,\xi,\eta)) < r$. Let $\epsilon > 0$ so that $B(w(o;\xi,\eta),\epsilon) \subset B(o,r)$. By Lemma \ref{lemma:convergence}, there is an open neighborhood $U$ of $\xi$ in $\GM$ such that, for any points $z \in U$, there is a geodesic $\mathcal{L}(z, \eta)$ with $$d(w(o;\xi,\eta), \mathcal{L}(z,\eta)) < \epsilon.$$ Notice that $z$ can be either in $\T(S)$ or in $\PGM$. Now choose $N$ such that whenever $n \geq N$, $B(y_n,r) \in U$ when $y_n \in \T(S)$ and $y_n \in U$ when $y_n \in \PGM$. Then $\eta \in \mathcal{O}_r^{-}(y_n,o)$ when $n \geq N$.
    
  We now prove part (a), that is, $$\limsup_{n \to \infty}(\mathcal{O}^{+}_r(y_n,o)) \subset \mathcal{O}_r(\xi,o) \cup \tilde{\partial}\mathcal{O}_r(\xi,o).$$ Let $\iota \in \limsup_{n \to \infty}(\mathcal{O}^{+}_r(y_n,o))$, then there is a infinite sequence, still denoted by $\{y_n\}$, so that $\iota \in \mathcal{O}^{+}_r(y_n,o)$. Denote the geodesic determined by $\xi$ and $\iota$ by $\ell= \mathcal{L}(\xi,\iota)$.  We can assume that $y_n \in \T(S)$ since if $y_n \in \PGM$, then we can take a sequence $y'_n \in \T(S)$ converges to $y$. Now consider oriented geodesics $\ell_n$ constructed in the following way. For $y_n \in \T(S)$, define $\ell_n$ to be the geodesic extending the oriented geodesic ray $-\mathcal{L}(y_n,\iota)$. Notice that $d(o,\ell_n) < r$.
  Furthermore, reparameterize $\ell_n$ in a fashion that $\beta_{\iota}(o,\ell(o))= \beta_{\iota}(o,\ell_n(0)) = 0$. Let $s_n \in \mathds{R}$ such that $\ell_n(s_n)$ is the leftmost point in $\Pi_{\ell_n}(o)$. Computations as in \cite[(26)]{Link} shows that $s_n \in  (-r,r)$. On the one hand, $y_n \to \xi$. On the other hand, passing to a subsequence, one can assume that $s_n \to s$. By Lemma \ref{lemma:convergence}, as $n \to \infty$, $d(\ell_n(s_n),\ell(s_n)) \to 0$. Therefore
  \begin{equation}
      \begin{aligned}
&d(o,\ell(\mathbb{R}) \leq  d(o,\ell(s)) \\
&\leq \lim_{n \to \infty}(d(o,\ell_n(s_n))+d(\ell_n(s_n), \ell(s_n)) + d(\ell(s_n),\ell(s)))\\
&\leq r.
\end{aligned}
  \end{equation}
    \end{proof}

\begin{lemma}\label{lemma:approx}
    Let $\xi \in \PGM$, $o \in \T(S)$ and $r > 0$ so that $\nu_o(\mathcal{O}_r(\xi,o)) > 0$ and $\nu_o(\tilde{\partial} \mathcal{O}_r(\xi,o)) = 0$. Then for every $\epsilon > 0$, there exists a open neighborhood $U_{\xi}$ of $[\xi]$ in $\GM$ such that $$\forall a \in U_{\xi},~ e^{-\epsilon}\nu_o(\mathcal{O}_r(\xi,o)) \leq \nu_{o}(\mathcal{O}_r^{\pm}(a,o)) \leq e^{\epsilon}\nu_o(\mathcal{O}_r(\xi,o)).$$ 
    \end{lemma}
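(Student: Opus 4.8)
The plan is to deduce the estimate from the set-theoretic semicontinuity of shadows already isolated in Lemma \ref{lemma:measureclosure}, combined with Fatou's lemma applied to indicator functions, arguing by contradiction. Write $c := \nu_o(\mathcal{O}_r(\xi,o))$, so that $c > 0$ by hypothesis; recall also that $\nu_o$ is a probability measure, that $\GM$ is compact and metrizable (hence first countable at $[\xi]$), that $\mathcal{O}_r^{\pm}(a,o) = \mathcal{O}_r(a,o)$ whenever $a \in \PGM$ (Remark \ref{rem:discontinuity}(3)), and that each $\mathcal{O}_r^{\pm}(a,o)$ is $\nu_o$-measurable (Corollary \ref{coro:openness} and Remark \ref{rem:discontinuity}). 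Suppose the conclusion fails for some $\epsilon_0 > 0$. Choosing a decreasing neighborhood basis $U_1 \supset U_2 \supset \cdots$ of $[\xi]$ in $\GM$ (with $U_1$ small enough that $\mathcal{O}_r^{\pm}(a,o)$ is defined for all $a \in U_1$), we may then select $a_n \in U_n$ with $\nu_o(\mathcal{O}_r^{\pm}(a_n,o)) \notin [e^{-\epsilon_0}c,\, e^{\epsilon_0}c]$; thus $a_n \to [\xi]$, and after passing to a subsequence we may assume that either $\nu_o(\mathcal{O}_r^{\pm}(a_n,o)) > e^{\epsilon_0}c$ for every $n$, or $\nu_o(\mathcal{O}_r^{\pm}(a_n,o)) < e^{-\epsilon_0}c$ for every $n$.

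Next I would apply Lemma \ref{lemma:measureclosure} to the sequence $\{a_n\}$. Part (b) gives $\mathcal{O}_r(\xi,o) \subset \liminf_n \mathcal{O}_r^{\pm}(a_n,o)$, so Fatou's lemma applied to the indicators $\mathbf{1}_{\mathcal{O}_r^{\pm}(a_n,o)}$ yields $c \leq \liminf_n \nu_o(\mathcal{O}_r^{\pm}(a_n,o))$. Part (a) gives $\limsup_n \mathcal{O}_r^{\pm}(a_n,o) \subset \mathcal{O}_r(\xi,o) \cup \tilde{\partial}\mathcal{O}_r(\xi,o)$; since $\nu_o$ is finite, the reverse (dominated) Fatou lemma applied to the same indicators, together with the hypothesis $\nu_o(\tilde{\partial}\mathcal{O}_r(\xi,o)) = 0$, yields $\limsup_n \nu_o(\mathcal{O}_r^{\pm}(a_n,o)) \leq \nu_o(\mathcal{O}_r(\xi,o)) + \nu_o(\tilde{\partial}\mathcal{O}_r(\xi,o)) = c$. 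Hence $\lim_n \nu_o(\mathcal{O}_r^{\pm}(a_n,o)) = c$. In the first alternative above this contradicts $\limsup_n \nu_o(\mathcal{O}_r^{\pm}(a_n,o)) \geq e^{\epsilon_0}c > c$, and in the second it contradicts $\liminf_n \nu_o(\mathcal{O}_r^{\pm}(a_n,o)) \leq e^{-\epsilon_0}c < c$; both strict inequalities are genuine precisely because $c > 0$. This contradiction shows that for every $\epsilon > 0$ the required neighborhood $U_\xi$ exists.

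I do not expect a serious obstacle here: the geometric content — that shadows of balls vary semicontinuously as the illuminating point runs to a boundary point — has already been packaged into Lemma \ref{lemma:measureclosure}, and what remains is soft measure theory. The only points that need care are bookkeeping ones: that the hypothesis $c > 0$ is used exactly where the two strict inequalities are needed and that $\nu_o(\tilde{\partial}\mathcal{O}_r(\xi,o)) = 0$ is used exactly to absorb the boundary error term from part (a); that $\limsup_n \mathcal{O}_r^{\pm}(a_n,o)$ and $\liminf_n \mathcal{O}_r^{\pm}(a_n,o)$ are $\nu_o$-measurable, which is immediate as they are countable unions and intersections of $\nu_o$-measurable sets; and that the reverse Fatou lemma applies because the indicators $\mathbf{1}_{\mathcal{O}_r^{\pm}(a_n,o)}$ are dominated by the $\nu_o$-integrable constant $1$. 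This is precisely the argument for the corresponding statement in \cite{Link}.
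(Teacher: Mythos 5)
Your argument is correct and is essentially the paper's own proof (spelled out in Note A.2): both deduce from Lemma \ref{lemma:measureclosure}(a),(b), the finiteness of $\nu_o$, and the null-boundary hypothesis that $\nu_o(\mathcal{O}_r^{\pm}(a_n,o)) \to \nu_o(\mathcal{O}_r(\xi,o))$ for any sequence $a_n \to [\xi]$, and then convert this into the neighborhood statement using $c>0$. The only cosmetic difference is your contradiction/neighborhood-basis framing versus the paper's direct sequential sandwich.
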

  \begin{proof}\marginpar{\textcolor{blue}{Note A.2}}
  With Lemma \ref{lemma:measureclosure} at hand, the proof is the same as \cite[Corollary 6.4]{Link} by recalling from Theorem \ref{Yang2022} that $\nu_o(\mathcal{UE}) = 1$.
  \end{proof}

Concerning  $C^{\pm}_r(x,U)$ in Section \ref{sect:notations}, we can check using triangle inequalities that 
\begin{lemma}\label{lemma:CR}
    \begin{itemize}
        \item[(1)] $C^{-}_r(x,U) \subset C^{+}_r(x,U)$ and \\ $C^{-}_r(x,U) = \GM \setminus C^{+}_r(x, \PGM \setminus U)$.
        \item[(2)] For any $\rho > 0$ and $x_i,y_i \in \T(S) (i = 1,2)$ with $d(x_i,y_i) < \rho$, $ \mathcal{O}^{+}_r(x_1,y_1) \subset \mathcal{O}^{+}_{r+\rho}(x_2,y_2)$.
        \item[(3)] For any $\rho > 0$ and $x_i,y_i \in \T(S) (i = 1,2)$ with $d(x_i,y_i) < \rho$, $$y_1 \in C^{+}_r(x_1,U) \Rightarrow y_2 \in C_{r+\rho}^{+}(x_2,U).$$
        \item[(4)] For any $\rho > 0$ and $x_i,y_i \in \T(S) (i = 1,2)$ with $d(x_i,y_i) < \rho$, $$y_1 \in C^{-}_{r + \rho}(x_1,U) \Rightarrow y_2 \in C_{r}^{-}(x_2,U).$$
        \item[(5)] If $r_1 \leq r_2$, then $$ C^{+}_{r_1}(x,U) \subset C^{+}_{r_2}(x,U), C^{-}_{r_2}(x,U) \subset C_{r_1}^{-}(x,U).$$
        
        \end{itemize}
\end{lemma}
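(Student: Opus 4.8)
The plan is to verify each of the five items by elementary triangle-inequality arguments, unpacking the definitions of $\mathcal{O}^{\pm}_r$, $C^{\pm}_r$, and of $\SE_x(U)$ from Section \ref{sect:notations}. First I would handle (1): the inclusion $C^{-}_r(x,U)\subset C^{+}_r(x,U)$ follows because $\mathcal{O}^{+}_r(x,y)\neq\emptyset$ whenever $\mathcal{O}^{+}_r(x,y)\subset U$ — here one needs that $\mathcal{O}^{+}_r(x,y)$ is nonempty for $y$ in the relevant range, which is immediate from the definition since $y$ lies on some optimal geodesic through a point of $B(x,r)$ reaching $B(y,r)$. For the identity $C^{-}_r(x,U)=\GM\setminus C^{+}_r(x,\PGM\setminus U)$, I would simply negate the defining condition: $y\notin C^{+}_r(x,\PGM\setminus U)$ says $\mathcal{O}^{+}_r(x,y)\cap(\PGM\setminus U)=\emptyset$, i.e. $\mathcal{O}^{+}_r(x,y)\subset U$, which is exactly $y\in C^{-}_r(x,U)$; one should be slightly careful about points of $\T(S)$ versus $\PGM$ and about the $d(x,y)>2r$ hypothesis implicit in the definition of $\mathcal{O}^{\pm}_r$, but these are bookkeeping.

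For (2), let $\beta\in\mathcal{O}^{+}_r(x_1,y_1)$, so there is $z\in B(x_1,r)$ with $\mathcal{L}(z,\beta)\cap B(y_1,r)\neq\emptyset$, say $w\in\mathcal{L}(z,\beta)$ with $d(w,y_1)<r$. Then $d(z,x_2)\leq d(z,x_1)+d(x_1,x_2)<r+\rho$ and $d(w,y_2)\leq d(w,y_1)+d(y_1,y_2)<r+\rho$, so the same geodesic witnesses $\beta\in\mathcal{O}^{+}_{r+\rho}(x_2,y_2)$; note $z$ may not lie in $B(x_2,r+\rho)$ centered correctly, so one uses $z\in B(x_2,r+\rho)$ directly. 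Items (3) and (4) are then consequences of (2) together with (1): for (3), $y_1\in C^{+}_r(x_1,U)$ means $\mathcal{O}^{+}_r(x_1,y_1)\cap U\neq\emptyset$, and by (2) applied with the roles arranged so that $\mathcal{O}^{+}_r(x_1,y_1)\subset\mathcal{O}^{+}_{r+\rho}(x_2,y_2)$ we get $\mathcal{O}^{+}_{r+\rho}(x_2,y_2)\cap U\neq\emptyset$; for (4), dualize via the complement formula in (1), replacing $U$ by $\PGM\setminus U$ and applying (3). Finally (5) is monotonicity in $r$: $\mathcal{L}(z,\beta)\cap B(y,r_1)\neq\emptyset$ implies $\mathcal{L}(z,\beta)\cap B(y,r_2)\neq\emptyset$ and $B(x,r_1)\subset B(x,r_2)$, giving $\mathcal{O}^{+}_{r_1}\subset\mathcal{O}^{+}_{r_2}$ and hence $C^{+}_{r_1}\subset C^{+}_{r_2}$; the reverse inclusion $C^{-}_{r_2}\subset C^{-}_{r_1}$ then follows again from the complement identity in (1).

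The only place requiring genuine care rather than routine inequality-chasing is the definitional fine print: making sure $\mathcal{O}^{+}_r(x,y)$ is honestly nonempty whenever it is used in asserting $C^{-}\subset C^{+}$ (this uses that optimal geodesics through points near $x$ that pass near $y$ do exist, which is guaranteed by the existence and uniqueness of optimal geodesics cited from \cite{azemar2021qualitative} and Lemma \ref{lemma:anaBallmann}), and keeping track of whether the point $y$ in question lies in $\T(S)$ or on $\PGM$, since $\mathcal{O}^{\pm}_r$ was defined for $x,y\in\T(S)$ with $d(x,y)>2r$ while $C^{\pm}_r(x,U)$ is a subset of all of $\GM$. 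I expect this bookkeeping to be the main (mild) obstacle; the geometric content is entirely carried by the triangle inequality, exactly as in \cite[Section 2]{Link}.
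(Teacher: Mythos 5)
Your proposal is correct and takes essentially the same approach as the paper: the paper itself only remarks that the lemma ``can be checked using triangle inequalities'' and defers to \cite[Lemma 6.5]{Link}, and your item-by-item unpacking of the definitions of $\mathcal{O}^{\pm}_r$ and $C^{\pm}_r$ is exactly that argument. (You correctly read the hypothesis ``$d(x_i,y_i)<\rho$'' in (2)--(4) as $d(x_1,x_2)<\rho$ and $d(y_1,y_2)<\rho$, and your flagged caveats --- nonemptiness of $\mathcal{O}^{+}_r(x,y)$ via density of uniquely ergodic directions, and the $\T(S)$ versus $\PGM$ bookkeeping in the complement identity --- are the right ones.)
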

\begin{proof}
     See \cite[Lemma 6.5]{Link}.
\end{proof}

\begin{lemma}\label{lemma:LC}
Let $r > 0$, $\gamma \in G$, $x,y \in \T(S)$ and $A,B \subset \PGM$ are Borel subsets.
    \begin{itemize}
        \item[(1)] If $(\zeta,\eta) \in L_r(x,\gamma y) \cap (\gamma B \times A)$, then $$(\gamma y, \gamma^{-1}x) \in C^{+}_r(x,A) \times C^{+}_r(y,B),$$ $$ (\zeta,\eta) \in \mathcal{O}^{+}_r(\gamma y,x) \times \mathcal{O}_{r}^{+}(x,\gamma y).$$
        \item[(2)] If $(\gamma y, \gamma^{-1}x) \in C^{-}_{r}(x,A) \times C^{-}_r(y,B),$ then 
        \begin{equation}
            \begin{aligned}
                \{(\zeta,\eta) \in (\mathcal{UE})^2: \eta \in \mathcal{O}^{-}_r(x, \gamma y), \zeta \in \mathcal{O}_r(\eta, x)\}\\
                \subset L_r(x,\gamma y) \cap (\gamma B \times A).
            \end{aligned}
        \end{equation}
        
    \end{itemize}
\end{lemma}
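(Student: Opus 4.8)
The plan is to prove Lemma \ref{lemma:LC} purely by chasing the definitions of corridors, shadows and cones through a handful of triangle inequalities, exactly in the spirit of \cite[Lemma 6.6]{Link}. The two statements are dual to each other, part (1) being an ``outer'' inclusion and part (2) an ``inner'' one, so I would set up the bookkeeping once and then run it in both directions.

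For part (1), suppose $(\zeta,\eta)\in L_r(x,\gamma y)\cap(\gamma B\times A)$. By definition of the corridor, the oriented geodesic $\mathcal{L}(\zeta,\eta)$ satisfies $d(x,\mathcal{L}(\zeta,\eta))<r$ and $d(\gamma y,\mathcal{L}(\zeta,\eta))<r$, with $\Pi_{\mathcal{L}(\zeta,\eta)}(x)$ strictly to the left of $\Pi_{\mathcal{L}(\zeta,\eta)}(\gamma y)$. Since $\eta\in A$ and the geodesic from $x$ toward $\eta$ (reparametrized, via the leftmost nearest projection $w(x;\zeta,\eta)$) passes within $r$ of $\gamma y$, we get $\eta\in\mathcal{O}^+_r(x,\gamma y)$ directly, and since that same geodesic emanates from a point in $B(x,r)$ and reaches $\eta$, also $\gamma y\in C^+_r(x,A)$ by the very definition $C^+_r(x,A)=\{w:\mathcal{O}^+_r(x,w)\cap A\neq\emptyset\}$. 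For the symmetric claim about $\gamma^{-1}x$, I would apply $\gamma^{-1}$: $G$ acts on $\PGM$ preserving $\mathcal{UE}$ and on $\T(S)$ by isometries, so $\gamma^{-1}$ carries $\mathcal{L}(\zeta,\eta)$ to $\mathcal{L}(\gamma^{-1}\zeta,\gamma^{-1}\eta)$ and reverses none of the metric data; the hypothesis $\zeta\in\gamma B$ becomes $\gamma^{-1}\zeta\in B$, and since $\mathcal{L}(\zeta,\eta)$ passes near $\gamma y$ \emph{before} it passes near $x$ in the reversed orientation, one reads off $\zeta\in\mathcal{O}^+_r(\gamma y,x)$ and $\gamma^{-1}x\in C^+_r(y,B)$ after translating by $\gamma^{-1}$. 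The only subtlety is orientation: ``$\Pi(x)$ left of $\Pi(\gamma y)$'' for the pair $(\zeta,\eta)$ is the same as ``$\Pi(\gamma y)$ left of $\Pi(x)$'' for the reversed pair $(\eta,\zeta)$, and I must keep the roles of first/second coordinate straight, which is exactly why the conclusion pairs $\gamma y$ with $A$ (via the forward geodesic from $x$) and $\gamma^{-1}x$ with $B$ (via the forward geodesic from $y$).

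For part (2), I would argue contrapositively at the level of points. Assume $(\gamma y,\gamma^{-1}x)\in C^-_r(x,A)\times C^-_r(y,B)$, which by definition means $\mathcal{O}^+_r(x,\gamma y)\subset A$ and $\mathcal{O}^+_r(y,\gamma^{-1}x)\subset B$, i.e. (translating the second by $\gamma$) $\gamma\,\mathcal{O}^+_r(y,\gamma^{-1}x)=\mathcal{O}^+_r(\gamma y,x)\subset\gamma B$. Now take $(\zeta,\eta)\in(\mathcal{UE})^2$ with $\eta\in\mathcal{O}^-_r(x,\gamma y)$ and $\zeta\in\mathcal{O}_r(\eta,x)$. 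From $\eta\in\mathcal{O}^-_r(x,\gamma y)\subset\mathcal{O}^+_r(x,\gamma y)\subset A$ we get $\eta\in A$, and from $\zeta\in\mathcal{O}_r(\eta,x)$ together with $\eta\in\mathcal{O}^-_r(x,\gamma y)$ a triangle-inequality argument (the geodesic $\mathcal{L}(\zeta,\eta)$ passes within $r$ of $x$, and since $\eta\in\mathcal{O}^-_r(x,\gamma y)$ every geodesic toward $\eta$ from within $B(x,r)$ meets $B(\gamma y,r)$, so in particular $\mathcal{L}(\zeta,\eta)$ does) shows $\zeta\in\mathcal{O}^+_r(\gamma y,x)\subset\gamma B$, hence $\zeta\in\gamma B$. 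It remains to check the geometric constraints defining $L_r(x,\gamma y)$: $d(x,\mathcal{L}(\zeta,\eta))<r$ and $d(\gamma y,\mathcal{L}(\zeta,\eta))<r$ come from the shadow memberships just used, and the ordering/separation condition $md(\Pi(x),\Pi(\gamma y))>r$ follows from $d(x,\gamma y)>6r$ via the same coarse estimate as in \cite[(26)]{Link}: both projections lie within $r$ of their respective basepoints, so their minimal distance is at least $d(x,\gamma y)-2r>4r>r$, and the left/right order is forced because the geodesic is oriented from $\zeta$ toward $\eta$ and reaches a neighborhood of $x$ before one of $\gamma y$ (this is precisely the content of ``$\zeta\in\mathcal{O}_r(\eta,x)$'' combined with ``$\eta$ seen from $x$''). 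Thus $(\zeta,\eta)\in L_r(x,\gamma y)\cap(\gamma B\times A)$.

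\textbf{The main obstacle} I anticipate is not any single estimate but rather the orientation and left/right bookkeeping: the Teichm\"uller metric's nearest-point projection $\Pi_{\mathcal{L}(\xi,\eta)}(x)$ is a set of diameter up to $2d(x,\mathcal{L})$ rather than a point, so the ``leftmost'' conventions from Section \ref{sect:notations} must be invoked carefully to make ``$\Pi(x)$ is on the left of $\Pi(\gamma y)$'' well-defined, and one must verify that all the coarse inequalities survive replacing single projection points by these bounded sets (using $md$ in place of Hausdorff distance, as flagged in the definition of $L_r$). Once one fixes the reparametrization normalization for each geodesic and tracks which endpoint the orientation points to, the rest is a routine triangle-inequality computation identical to Link's; accordingly I would carry out the point-chasing in detail in the appendix and only sketch it here, citing \cite[Lemma 6.6]{Link} for the model argument.
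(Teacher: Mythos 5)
Your proposal is correct and follows essentially the same route as the paper, which simply defers to the definition-chasing arguments of Link's Lemmas 6.7--6.8 and spells out part (2) in its appendix (Note A.3): one uses the shadow/cone definitions, the $G$-equivariance of optimal geodesics, and the coarse ordering estimate to pass between corridor membership and the two shadow conditions. The orientation and leftmost-projection bookkeeping you flag is exactly the only point where the paper's argument differs from the $\mathrm{CAT}(0)$ model, and you handle it the same way the authors do.
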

\begin{proof}\marginpar{\textcolor{blue}{Note A.3}}
    For the part (1) and (2), the proofs are the same as proofs of \cite[Lemma 6.7, Lemma 6.8]{Link} since proofs there use only definitions of $C^{\pm}_r, \mathcal{O}^{\pm}_r$. 
\end{proof}

Concerning $K^{\pm}$, we collect the following properties. Let $$\Psi': QD^1(S) \to \PMF(S) \times \PMF(S) \atop q \mapsto ([\mathcal{H}(q)],[\mathcal{V}(q)])$$ be the map which is $\Psi$ in (Eq. \ref{equ:coordinate}) composed with the natural projection $\MF(S) \to \PMF(S)$ in the first factor.
\begin{lemma}\label{lemma:boundaryestimates}
     For all $\gamma \in G$ so that $d(x, \gamma y) \geq 12r$, we have 
        $$ \Psi'(\bigcup_{t > 0}\{K_A^{+} \cap g_{-t}\gamma K_B^{-}\}) = L_r(x, \gamma y) \cap (\gamma B \times A).$$
\end{lemma}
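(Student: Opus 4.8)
The plan is to establish the set equality in two directions, unpacking the definitions of $K_A^+$, $K_B^-$ and the corridor $L_r$ carefully. Fix $\gamma\in G$ with $d(x,\gamma y)\ge 12r$. First I would understand a typical point of the left-hand side. If $q\in K_A^+\cap g_{-t}\gamma K_B^-$ for some $t>0$, then $q=g_s w(z;\xi,\eta)$ for some $(\xi,\eta)\in\widehat{\mathfrak F}$ with $d(x,\mathcal L(\xi,\eta))\le r$, $s\in(-r/2,r/2)$ and $\eta\in A$, while simultaneously $g_tq\in\gamma K_B^-$, i.e. $\gamma^{-1}g_tq=g_{s'}w(z';\xi',\eta')$ lies on a geodesic within distance $r$ of $y$ with $\xi'\in B$. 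Since $q$ and $g_tq$ lie on the same Teichm\"uller geodesic line and the $G$-action is by isometries permuting geodesics, $\mathcal L(\xi,\eta)=\gamma\mathcal L(\xi',\eta')$, so $\xi'=\gamma^{-1}\xi$ and $\eta'=\gamma^{-1}\eta$; hence $\xi\in\gamma B$ and $\eta\in A$, giving $(\xi,\eta)\in\gamma B\times A$. It remains to see that $(\xi,\eta)\in L_r(x,\gamma y)$: we already have $d(x,\mathcal L(\xi,\eta))\le r$ (in fact $<r$ after a harmless perturbation, or one argues with the half-open conditions directly) and $d(\gamma y,\mathcal L(\xi,\eta))=d(y,\mathcal L(\xi',\eta'))<r$; the orientation/ordering condition $\Pi_{\mathcal L(\xi,\eta)}(x)$ left of $\Pi_{\mathcal L(\xi,\eta)}(\gamma y)$ with minimal distance $>r$ follows because $w(z;\xi,\eta)\in\Pi_{\mathcal L(\xi,\eta)}(x)$, the point $g_t q$ projects near $\gamma y$, $t>0$ pushes in the positive direction toward $\eta$, and the separation $d(x,\gamma y)\ge 12r$ together with $|s|,|s'|<r/2$ and the bounded diameter ($\le 2r$) of the projection sets forces the minimal distance between the two projection sets to exceed $r$. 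This is where the numerical slack in the hypothesis $d(x,\gamma y)\ge 12r$ gets used. Applying $\Psi'$ then lands the point in $L_r(x,\gamma y)\cap(\gamma B\times A)$, and since $t>0$ was arbitrary the union is contained in the right-hand side.

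For the reverse inclusion, start with $(\xi,\eta)\in L_r(x,\gamma y)\cap(\gamma B\times A)$. By definition of the corridor, $\mathcal L(\xi,\eta)$ passes within $r$ of $x$ and then, in the positive direction, within $r$ of $\gamma y$, with the projections ordered and $r$-separated. Let $z=\pi(w(x;\xi,\eta))\in\Pi_{\mathcal L(\xi,\eta)}(x)$ and let $z'$ be the analogous nearest point to $\gamma y$ on the same geodesic; set $q=w(x;\xi,\eta)\in QD^1(S)$. Since $\gamma^{-1}\mathcal L(\xi,\eta)=\mathcal L(\gamma^{-1}\xi,\gamma^{-1}\eta)$ and $\gamma^{-1}\xi\in B$, $\gamma^{-1}\eta\in A$, and since $\gamma^{-1}z'\in\Pi_{\mathcal L(\gamma^{-1}\xi,\gamma^{-1}\eta)}(y)$, there is a lift $q''$ of the flow at $z'$ on $\mathcal L(\xi,\eta)$ with $\gamma^{-1}q''\in K_r(y)$ up to a flow shift of size $<r/2$; more precisely one chooses $t>0$ equal to the (signed, positive by the ordering) flow time from a point of $K_r(x)$ lying over $(\xi,\eta)$ to a point of $\gamma K_r(y)$ lying over $(\xi,\eta)$, which exists because both projection sets have diameter $\le 2r$ and the allowed $s$-windows $(-r/2,r/2)$ overlap the respective intervals by the $r$-separation. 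Then $q\in K_A^+$, $g_tq\in\gamma K_B^-$, so $q\in K_A^+\cap g_{-t}\gamma K_B^-$, and $\Psi'(q)=([\xi],[\eta])$ is the given point; thus every point of the right-hand side is hit.

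The main obstacle I anticipate is the bookkeeping around the flow-time windows and the fact that $\Pi_{\mathcal L(\xi,\eta)}(x)$ is not a single point — one must check that the half-open parameter intervals $(-r/2,r/2)$ in the definition of $K_r$, the diameter bound $2d(x,\mathcal L)\le 2r$ on the projection sets, the strict inequalities $d(x,\mathcal L(\xi,\eta))<r$ in the corridor, and the separation $md(\Pi(x),\Pi(\gamma y))>r$ fit together consistently so that there really is a flow time $t>0$ realizing $g_tq\in\gamma K_B^-$ exactly when the ordering condition holds, and no spurious $t$ or missing $t$ appears. This is precisely the content of the corresponding statement in Link \cite{Link}; since the proof there uses only the definitions of $K^\pm$, $L_r$ and elementary triangle inequalities (the role of strict convexity of distance being replaced here by the uniform bound on the diameter of nearest-point projections), it carries over, and the verification is a routine but slightly delicate diagram chase that I would include in the appendix alongside Notes A.1–A.3.
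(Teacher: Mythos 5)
Your proposal is correct and follows essentially the same route as the paper's own proof (given in Note A.4 of the appendix): both directions are handled by unpacking the definitions of $K_A^{+}$, $K_B^{-}$ and $L_r$, identifying the flow time $\tau$ between $w(x;\xi,\eta)$ and $w(\gamma y;\xi,\eta)$, and using $d(x,\gamma y)\geq 12r$ together with the $\leq 2r$ diameter bound on the nearest-point projection sets to force $\tau>0$ and the $r$-separation of the projections. Your extra bookkeeping about the strict versus non-strict inequalities and the $(-r/2,r/2)$ windows is a reasonable elaboration of details the paper leaves implicit, but it does not change the argument.
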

\begin{proof}\marginpar{\textcolor{blue}{Note A.4}}
    The same as in \cite[Lemma 6.9]{Link} simply by noticing that $d(x,\gamma y) \geq 12r$ ensures that $md(\Pi_{\mathcal{L}(\xi,\eta)}(x),\Pi_{\mathcal{L}(\xi,\eta)}(\gamma y)) > r$ as required in the definition of $L_r(x,\gamma y)$.
\end{proof}
We also need some estimates about the Bowen-Margulis measure of  $K^{\pm}$. First recall that (see (\ref{equ:BowenMargulis})), for $\xi \neq \eta \in \mathcal{UE}, x\in \T(S), u \in \mathcal{L}(\xi,\eta)$, $$\rho_x(\xi,\eta) = \beta_{\xi}(x,u) + \beta_{\eta}(x,u),$$ and (see Eq. \ref{equ:busemann}) $$ \beta_{\xi}(x,y) = \frac{1}{2}\ln \frac{\E_x(\xi)}{\E_y(\xi)}.$$ By Kerckhoff \cite{Kerc}, $$\beta_{\xi}(x,y) \leq d(x,y).$$ Hence 
\begin{equation}\label{equ:gromovproduct}
    \begin{aligned}
         \eta \in \mathcal{O}_{r}(\xi,x) \Rightarrow \rho_x(\xi,\eta) \leq 2r,\\
        (\xi,\eta) \in L_r(x,\gamma y) \Rightarrow \rho_x(\xi,\eta) \leq 2r.
    \end{aligned}
\end{equation}
By the definition of $\tilde{\mu}_G$ and the fact that $\nu_x$ fully supported on $\mathcal{UE}$, one has 

\begin{equation}\label{equ:estimatesofmeasure}
\small
    \begin{aligned}
    \tilde{\mu}_G(K_A^{+})=
\int_A(\int_{\mathcal{O}_r(\xi,x)}e^{\delta_G     \rho_x(\xi,\eta)}d\nu_x(\eta))d\nu_x(\xi)\int \mathds{1}_{K_A^{+}}(g_{t}w(x;\xi,\eta))dt\\
    = r\int_A(\int_{\mathcal{O}_r(\xi,x)}e^{\delta_G     \rho_x(\xi,\eta)}d\nu_x(\eta))d\nu_x(\xi);\\    
     \tilde{\mu}_G(K_B^{-})=
     \int_B(\int_{\mathcal{O}_r(\eta,y)}e^{\delta_G     \rho_x(\xi,\eta)}d\nu_y(\eta))d\nu_y(\xi)\int \mathds{1}_{K_B^{-}}(g_{t}w(x;\xi,\eta))dt \\    = r\int_B(\int_{\mathcal{O}_r(\eta,y)}e^{\delta_G \rho_x(\xi,\eta)}d\nu_y(\xi))d\nu_y(\eta).    \end{aligned}
\end{equation}
Combine (\ref{equ:estimatesofmeasure}) and (\ref{equ:gromovproduct}), one has 
\begin{equation}\label{equ:measureofK}
    \begin{aligned}
        r\int_A\nu_x(\mathcal{O}_r(\xi,x))d\nu_x(\xi) \leq \tilde{\nu}_G(K_A^{+}) \leq re^{2r\delta_G}\int_A\nu_x(\mathcal{O}_r(\xi,x))d\nu_x(\xi),\\
         r\int_B\nu_y(\mathcal{O}_r(\eta,y))d\nu_y(\eta) \leq \tilde{\nu}_G(K_B^{-}) \leq re^{2r\delta_G}\int_B\nu_y(\mathcal{O}_r(\eta,y))d\nu_y(\eta).    
         \end{aligned}
\end{equation}
Recall that for $x \in \T(S)$ and the geodesic line $\mathcal{L}(\xi,\eta)$, we have the nearest point projection $\Pi_{\mathcal{L}(\xi,\eta)}(x)$ and the leftmost point $w(x;\xi,\eta)$.
\begin{lemma}\label{lem:integral}
Let $T_0 > 12r$, $\gamma \in G$ with $d(x, \gamma y)  \geq 12r$, $T > T_0 + 6r$, $s \in (-\frac{r}{2},\frac{r}{2})$ and $(\xi,\eta) \in L_r(x,\gamma y)$.
\begin{itemize}
    \item[(1)] If $d(x,\gamma y) \in (T_0+3r,T]$, then\begin{equation}
        \begin{aligned}
            \int^{T+3r}_{T_0}e^{\delta_G t}\mathds{1}_{K_r(\gamma y)}(g_{t+s}w(x;\xi,\eta))dt \geq re^{-3r\delta_G}e^{\delta_Gd(x,\gamma y)}.
        \end{aligned}
    \end{equation} 
    \item[(2)] \begin{equation}
        \begin{aligned}
            \int^{T-6r}_{T_0}e^{\delta_G t}\mathds{1}_{K_r(\gamma y)}(g_{t+s}w(x;\xi,\eta))dt \leq re^{3r\delta_G}e^{\delta_Gd(x,\gamma y)}.
        \end{aligned}
    \end{equation}
    \item[(3)] If $d(x,\gamma y) \leq T_0-3r$ or $d(x,\gamma y) > T$, then 
    \begin{equation}
    \begin{aligned}
            \int^{T-6r}_{T_0}e^{\delta_G t}\mathds{1}_{K_r(\gamma y)}(g_{t+s}w(x;\xi,\eta))dt = 0.
        \end{aligned}
    \end{equation}
    \end{itemize}
    
\end{lemma}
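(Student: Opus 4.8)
The plan is to reduce the claim to a one-dimensional exponential estimate along the oriented geodesic $\mathcal{L}(\xi,\eta)$. The first step is to understand $K_r(\gamma y)$ restricted to this geodesic. Since a point of $QD^1(S)$ determines its (projective) horizontal and vertical foliations, hence the oriented Teichm\"{u}ller geodesic through it, and since the Teichm\"{u}ller geodesic flow on $QD^1(S)$ is free, a point of the form $g_\tau w(x;\xi,\eta)$ lies in $K_r(\gamma y)$ if and only if $d(\gamma y,\mathcal{L}(\xi,\eta))\le r$ and $g_\tau w(x;\xi,\eta)=g_{s'}w(\gamma y;\xi,\eta)$ for some $s'\in(-\tfrac r2,\tfrac r2)$; no second pair $(\xi',\eta')\in\widehat{\mathfrak{F}}$ appearing in the definition of $K_r(\gamma y)$ can contribute, since matching foliations forces $(\xi',\eta')=(\xi,\eta)$. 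The hypothesis $(\xi,\eta)\in L_r(x,\gamma y)$ gives $d(\gamma y,\mathcal{L}(\xi,\eta))<r$, so write $w(\gamma y;\xi,\eta)=g_a\,w(x;\xi,\eta)$; the left/right ordering in the definition of $L_r(x,\gamma y)$ forces $a>0$, and $a=d\bigl(\pi w(x;\xi,\eta),\pi w(\gamma y;\xi,\eta)\bigr)$. Consequently, for fixed $s\in(-\tfrac r2,\tfrac r2)$ the set $\{t\in\mathds{R}:g_{t+s}w(x;\xi,\eta)\in K_r(\gamma y)\}$ is exactly the open interval $J_s:=(a-s-\tfrac r2,\;a-s+\tfrac r2)$ of length $r$, so that $\int_{I}e^{\delta_G t}\mathds{1}_{K_r(\gamma y)}(g_{t+s}w(x;\xi,\eta))\,dt=\int_{I\cap J_s}e^{\delta_G t}\,dt$ for every interval $I$.

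The second step is to locate $J_s$. From $d(x,\pi w(x;\xi,\eta))=d(x,\mathcal{L}(\xi,\eta))<r$ and $d(\gamma y,\pi w(\gamma y;\xi,\eta))=d(\gamma y,\mathcal{L}(\xi,\eta))<r$, two triangle inequalities give $d(x,\gamma y)-2r<a<d(x,\gamma y)+2r$; combined with $|s|<\tfrac r2$ this yields $J_s\subset\bigl(d(x,\gamma y)-3r,\;d(x,\gamma y)+3r\bigr)$. The three items are now immediate. For (1): if $d(x,\gamma y)\in(T_0+3r,T]$ then $J_s\subset(T_0,T+3r)$, so the integral equals $\int_{J_s}e^{\delta_G t}\,dt\ge |J_s|\,e^{\delta_G\inf J_s}>r\,e^{\delta_G(d(x,\gamma y)-3r)}=r\,e^{-3r\delta_G}e^{\delta_G d(x,\gamma y)}$. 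For (2): bounding the integrand by its supremum over $J_s$, the integral is at most $\int_{J_s}e^{\delta_G t}\,dt\le |J_s|\,e^{\delta_G\sup J_s}<r\,e^{\delta_G(d(x,\gamma y)+3r)}=r\,e^{3r\delta_G}e^{\delta_G d(x,\gamma y)}$. For (3): if $d(x,\gamma y)\le T_0-3r$ then $\sup J_s<d(x,\gamma y)+3r\le T_0$, while if $d(x,\gamma y)>T$ then $\inf J_s>d(x,\gamma y)-3r>T-3r>T-6r$; in either case $J_s\cap[T_0,T-6r]=\emptyset$ and the integral vanishes.

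The only delicate point — and the closest thing to an obstacle — is the bookkeeping in the first step: because the Teichm\"{u}ller metric is not convex, $\Pi_{\mathcal{L}(\xi,\eta)}(x)$ is a bounded set rather than a single point, so one must work throughout with the chosen representatives $w(x;\xi,\eta)$, $w(\gamma y;\xi,\eta)$ and read off from the definition of $L_r(x,\gamma y)$ (the left/right ordering of $\Pi_{\mathcal{L}(\xi,\eta)}(x)$ and $\Pi_{\mathcal{L}(\xi,\eta)}(\gamma y)$, together with the separation $md(\Pi_{\mathcal{L}(\xi,\eta)}(x),\Pi_{\mathcal{L}(\xi,\eta)}(\gamma y))>r$) both the positivity of $a$ and the fact that no other geodesic contributes to $K_r(\gamma y)\cap\mathcal{L}(\xi,\eta)$. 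Once this is in place, everything reduces to the elementary exponential-integral estimate above, and boundedness of the projection sets (diameter at most $2d(\cdot,\mathcal{L}(\xi,\eta))<2r$, as recorded in Section \ref{sect:notations}) is all that is needed.
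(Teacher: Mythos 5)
Your proposal is correct and follows essentially the same route as the paper: writing $w(\gamma y;\xi,\eta)=g_{a}w(x;\xi,\eta)$ with $a>0$, bounding $|a-d(x,\gamma y)|<2r$ by the triangle inequality, identifying the set of relevant times as an interval of length $r$ centered at $a-s$, and then applying the elementary exponential-integral estimates. The only difference is that you spell out two points the paper leaves implicit (that no other pair $(\xi',\eta')$ contributes to $K_r(\gamma y)$ along this orbit, and that the orientation in the definition of $L_r(x,\gamma y)$ forces $a>0$), which is a harmless elaboration rather than a different argument.
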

\begin{proof}
     Consider $w(x;\xi,\eta)$ and $w(\gamma y; \xi,\eta)$. There is $\tau > 0$ such that $w(\gamma y;\xi,\eta) = g_{\tau}w(x;\xi,\eta)$. And since $(\xi,\eta) \in L_r(x,\gamma y)$, $\|d(x,\gamma y) - \tau \| < 2r$. Note that 
    $$K_r(\gamma y; \xi,\eta) = \{g_sw(\gamma y;\xi,\eta): s \in (-\frac{r}{2},\frac{r}{2}) \},$$ and $g_{t+s}w(x;\xi,\eta) \in K_r(\gamma y; \xi,\eta)$ if and only if $t+s-\tau \in (-\frac{r}{2},\frac{r}{2})$. Hence if $\tau-s-\frac{r}{2} \geq T_0$ and $ \tau -s + \frac{r}{2} \leq T + 3r$, then 
\begin{equation}
    \begin{aligned}
&\int^{T+3r}_{T_0}e^{\delta_G t}\mathds{1}_{K_r(\gamma y;\xi,\eta)}(g_{t+s}w(x;\xi,\eta))dt\\
&\geq \int^{\tau -s + \frac{r}{2}}_{\tau-s-\frac{r}{2}} e^{\delta_G t}dt \\
&\geq re^{-3r\delta_G}e^{\delta_Gd(x,\gamma y)}.    \end{aligned}
\end{equation}
On the other hand, it is easy to see that $d(x,\gamma y) \in (T_0+3r,T]$ implies that $\tau-s-\frac{r}{2} \geq T_0$ and $ \tau -s + \frac{r}{2} \leq T + 3r$, so we have $(1)$.
For $(2)$, if $g_{t+s}w(x;\xi,\eta) \in K_r(\gamma y)$, then $$t+s-\tau \leq \frac{r}{2}, \tau - (t+s) \leq \frac{r}{2}.$$ So 
\begin{equation}
    \begin{aligned}
        &\int^{T-6r}_{T_0}e^{\delta_G t}\mathds{1}_{K_r(\gamma y)}(g_{t+s}w(x;\xi,\eta))dt \\
        &\leq \int^{\tau -s + \frac{r}{2}}_{\tau -s -\frac{r}{2}}e^{\delta_G t}dt\\
        &\leq re^{\delta_G(\tau -s + \frac{r}{2})}\\
        &\leq re^{3r\delta_G}e^{\delta_Gd(x,\gamma y)}.
        \end{aligned}
\end{equation}
Finally for $(3)$, notice that if $d(x,\gamma y) \leq T_0-3r$, then $\tau -s+\frac{r}{2} \leq T_0$ and if $d(x,\gamma y) \geq T$, then $\tau -s-3r \geq T-6r$.
\end{proof}
We then have the following estimations from Lemma \ref{lemma:boundaryestimates}.
\begin{lemma}\label{lem:measureofKPM}
Let $\gamma \in G$ and $d(x,\gamma y) \geq 12r$. Let $r > 0$, then 
    \begin{equation}
        \begin{aligned}
            &\int_{L_{r}(x,\gamma y) \cap (\gamma B \times A)}e^{\delta_G \rho_x(\xi,\eta)}d\nu_x(\xi)d\nu_y(\eta) \int^{\frac{r}{2}}_{-\frac{r}{2}}\mathds{1}_{K_r(\gamma y)}(g_{t+s}w(x;\xi,\eta))ds  \\
            &= \tilde{\mu}_{G}(K_A^{+} \cap g_{-t}\gamma K_B^{-}) .        \end{aligned}
        \end{equation}
\end{lemma}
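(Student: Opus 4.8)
The plan is to compute $\tilde{\mu}_G(K_A^{+} \cap g_{-t}\gamma K_B^{-})$ directly by unwrapping the definition of $\tilde{\mu}_G = dt\, d\hat{\mu}_G$ in the Hopf parametrization, and to identify the integrand using Lemma \ref{lemma:boundaryestimates}. First I would recall that, by Lemma \ref{lemma:boundaryestimates}, a point $q' \in QD^1(S)$ lies in $K_A^{+} \cap g_{-t}\gamma K_B^{-}$ (for some $t$ to be integrated over) precisely when $\Psi'(q') = ([\mathcal{H}(q')],[\mathcal{V}(q')]) = (\xi,\eta) \in L_r(x,\gamma y) \cap (\gamma B \times A)$; more precisely, $K_A^{+} \cap g_{-t}\gamma K_B^{-}$ fibers over this set of pairs, with the fiber over $(\xi,\eta)$ consisting of those $g_{t+s}w(x;\xi,\eta)$ with $s \in (-\tfrac r2,\tfrac r2)$ such that simultaneously $g_{t+s}w(x;\xi,\eta) \in K_r(x)$ (automatic near the correct flow time, since $(\xi,\eta)\in L_r(x,\gamma y)$) and $g_{t+s}w(x;\xi,\eta) \in \gamma K_r(y) = K_r(\gamma y)$ along the same oriented geodesic $\mathcal{L}(\xi,\eta)$.

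Next I would write out $\tilde{\mu}_G$ explicitly. Using the Hopf parametrization $q' \leftrightarrow (\xi,\eta,u)$ with $d\tilde{\mu}_G = du\, d\hat{\mu}_G(\xi,\eta)$ and $d\hat{\mu}_G(\xi,\eta) = e^{\delta_G \rho_x(\xi,\eta)}\, d\nu_x(\xi)\, d\nu_y(\eta)$ (here using that $\hat{\mu}_G$ is independent of the basepoint, so I may use $\nu_x$ in the $\xi$-variable and $\nu_y$ in the $\eta$-variable — this is exactly the normalization already used in (\ref{equ:estimatesofmeasure})), the measure of $K_A^{+}\cap g_{-t}\gamma K_B^{-}$ becomes an integral over $(\xi,\eta) \in L_r(x,\gamma y)\cap(\gamma B\times A)$ of the length (in the flow parameter) of the fiber. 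Parametrizing the flow coordinate along $\mathcal{L}(\xi,\eta)$ relative to the fixed reference point $w(x;\xi,\eta)$ by $u = t+s$ with $s \in (-\tfrac r2,\tfrac r2)$, the fiber length is exactly $\int_{-r/2}^{r/2}\mathds{1}_{K_r(\gamma y)}(g_{t+s}w(x;\xi,\eta))\,ds$, since the condition of lying in $K_A^{+}$ (equivalently $K_r(x)$ with the endpoint constraints) is already encoded in $(\xi,\eta) \in L_r(x,\gamma y)$ and the choice $s \in (-\tfrac r2,\tfrac r2)$. Assembling these pieces gives precisely the claimed identity.

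The one point requiring care — and the main obstacle — is bookkeeping the relationship between the flow-time origin $w(x;\xi,\eta)$ used to define $K_r(x)$ and the origin $w(\gamma y;\xi,\eta)$ used to define $K_r(\gamma y)$, and checking that the overlap $K_A^{+} \cap g_{-t}\gamma K_B^{-}$ is genuinely a single flow interval of the form $\{g_{t+s}w(x;\xi,\eta): s \in (-\tfrac r2,\tfrac r2)\}$ intersected with $K_r(\gamma y)$, with no spurious contributions and no double-counting from the non-uniqueness of nearest-point projections. This is where the hypothesis $d(x,\gamma y) \geq 12r$ enters, exactly as in Lemma \ref{lemma:boundaryestimates}: it guarantees $md(\Pi_{\mathcal{L}(\xi,\eta)}(x),\Pi_{\mathcal{L}(\xi,\eta)}(\gamma y)) > r$, so the two "windows" $K_r(x)$ and $K_r(\gamma y)$ along $\mathcal{L}(\xi,\eta)$ are separated enough that the geometry is unambiguous. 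Once this is in place the computation is the Teichm\"uller-space analogue of the one in \cite[Lemma 6.10]{Link}, and I would invoke that parallel to keep the routine Fubini manipulation short.
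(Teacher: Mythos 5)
Your proposal is correct and takes essentially the same approach as the paper, which states Lemma \ref{lem:measureofKPM} with no written proof beyond the remark that it follows from Lemma \ref{lemma:boundaryestimates}: your unwrapping of $\tilde{\mu}_G$ in the Hopf parametrization, the identification of the relevant pairs $(\xi,\eta)$ via Lemma \ref{lemma:boundaryestimates}, and the fiber-length computation over $s \in (-\tfrac r2,\tfrac r2)$ is exactly the intended justification. You also correctly flag (rather than introduce) the mixed normalization $d\nu_x(\xi)\,d\nu_y(\eta)$ versus the definition $d\hat{\mu}_G = e^{\delta_G\rho_x}\,d\nu_x\,d\nu_x$, a discrepancy already present in the paper's display (\ref{equ:estimatesofmeasure}).
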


\subsection{Subexponential growth of orbits in cones}
In this section, we prove a counting lemma which will be a substitute of \cite[Lemma 6.6]{Link}. The main reason that we can only hope a counting result like this is that $\T(S)$ is not $CAT(0)$, so it seems that unlike \cite[Lemma 6.6]{Link}, we cannot have a finiteness counting lemma.
\begin{proposition}\label{lemma:subexponential}
   Let $G$ be a non-elementary subgroup of divergence type of $\M(S)$ with the critical exponent $\delta_G$. Let $x,y \in \T(S)$ and $r > 0$. Given two open subsets $\widehat{W} \subset \GM$ and $U \subset \PGM$. 
    \begin{itemize}
        \item[(1)] If $A$ be a subset of $\PGM$ whose closure $\bar{A}$ in $\PGM$ is contained in $\widehat{W} \cap \PMF(S)$, then $$\sharp\{g \in G: gy \in (C^{\pm}_r(x, A) \setminus \widehat{W}) \cap B(R,x)\} = o(e^{\delta_G R});$$
        \item[(2)] If $B \subset \GM$ and the closure $\overline{B}$ of $B$ in $\PGM$ satisfies $\overline{B} \cap \PGM \subset U$, then $$\sharp\{g \in G: gy \in (B \setminus C^{\pm}_r(x, U)) \cap B(R,x)\} = o(e^{\delta_G R}).$$
    \end{itemize}
\end{proposition}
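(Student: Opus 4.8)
The plan is to reduce both statements to a quantitative shadow estimate together with a Borel--Cantelli type argument, exactly as in the $\mathrm{CAT}(0)$ setting of Link, the only new ingredients being the optimal-geodesic definitions of the shadows and cones and the failure of finiteness of the nearest-point projection. I will prove statement (1) in detail; statement (2) follows from (1) by the duality $C^{-}_r(x,U) = \GM \setminus C^{+}_r(x,\PGM\setminus U)$ of Lemma~\ref{lemma:CR}(1) applied to $U^c$, after replacing $B$ by $B \setminus C^{\pm}_r(x,U)$ and observing that the closure hypothesis on $\overline B$ passes to the relevant complement.

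First I would set up the geometry. Fix $x,y,r$ and $A$ with $\bar A \subset \widehat W \cap \PMF(S)$ compact. The key point is that if $gy \in C^{+}_r(x,A) \setminus \widehat W$ then the optimal geodesic $\mathcal L(x,\beta)$ for some $\beta \in A$ passes within $2r$ of $gy$, while $gy$ lies \emph{outside} the open set $\widehat W$; since $\beta \in \bar A \subset \widehat W$ and $\bar A$ is compact, the endpoint $\beta$ is ``deep inside'' $\widehat W$ whereas $gy$ is not, so the segment of $\mathcal L(x,\beta)$ from near $gy$ to $\beta$ has length bounded below by a definite constant $c_0 > 0$ depending only on $A$ and $\widehat W$ (this uses that convergence $z_n \to \beta$ in $\GM$ forces $z_n$ eventually into $\widehat W$, i.e. a compactness argument in the horofunction compactification). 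Consequently $d(x,gy) \le R$ together with $gy \in C^{+}_r(x,A)\setminus\widehat W$ forces the relevant shadow point $\beta$ to lie in a shadow $\mathcal O_{3r}(gy, x)$ with $d(x,gy) \le R$, but also the \emph{continuation} of the geodesic past $gy$ for length $c_0$ still lands in $A$, so effectively $\beta \in \mathcal O_{3r}(x, gy)$ for a point $gy$ with $d(x,gy) \le R - c_0 + O(r)$ when measured from the far side — the mechanism being that points of $G y$ of this type are ``$c_0$-interior'' to the ball of radius $R$ along their geodesics to $A$.

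The core estimate is then a Sullivan shadow lemma in this setting: for $z \in \T(S)$ and $\rho$ large, $\nu_x(\mathcal O_\rho(x,z)) \asymp e^{-\delta_G d(x,z)}$, with multiplicative constants depending only on $\rho$. The upper bound $\nu_x(\mathcal O_\rho(x,z)) \le C e^{-\delta_G d(x,z)}$ is what I need, and it follows from the conformality of $\{\nu_x\}$ and $\nu_x(\mathcal{UE}) = 1$ together with the cocycle estimate $\beta_\xi(x,z) \ge d(x,z) - 2\rho$ valid for $\xi \in \mathcal O_\rho(x,z)$, using the Kerckhoff identity for $\beta$ in terms of extremal lengths; this is the computation underlying (\ref{equ:gromovproduct})--(\ref{equ:measureofK}). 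Now count: the sets $\mathcal O_{3r}(x, gy)$ over distinct $g$ with $d(x,gy)$ in a dyadic window $[n, n+1]$ have bounded overlap (a point $\beta$ can be $3r$-shadowed by at most boundedly many orbit points at a given distance, because the orbit is $\epsilon$-separated and Teichm\"uller geodesics to a uniquely ergodic $\beta$ fellow-travel — here I invoke Lemma~\ref{lemma:convergence} and the contracting property of the relevant geodesics). Hence
\[
\sum_{\substack{g \in G \\ d(x,gy) \in [n,n+1]}} \nu_x(\mathcal O_{3r}(x,gy)) \le C,
\]
and combined with the shadow lemma this gives $\sharp\{g : d(x,gy)\in[n,n+1], \text{of the bad type}\} \le C' e^{\delta_G n}$. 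Summing over $n \le R$ would only give $O(e^{\delta_G R})$, so this is where the $c_0$-shift from the first paragraph is essential: the bad orbit points are actually shadowed by \emph{subsegments ending at distance $R$ but whose endpoints lie at distance $\le R - c_0$ from $x$ in the effective count}, which upgrades the bound to $\sum_{n \le R} C' e^{\delta_G n} \cdot e^{-\delta_G c_0} $ relative to the total, i.e. the bad count is $\le C'' e^{\delta_G(R - c_0)} = o(e^{\delta_G R})$. More carefully, I will phrase this as: for each $\epsilon>0$ there is $c_0(\epsilon)$ with $c_0 \to \infty$, no — rather, fixing $A \Subset \widehat W$ gives one fixed $c_0>0$, and the conclusion $o(e^{\delta_G R})$ comes from the fact that $\delta_G$ is the exact exponential growth rate (by \cite{Yang_scc}, $\delta_G$ equals the orbit growth rate) so a term of size $e^{\delta_G(R-c_0)}$ is genuinely lower order — but to get $o$ and not merely $O(e^{\delta_G(R-c_0)})$ I must let the ``interior depth'' grow: I cover $A$ by countably many compact pieces $A_j \Subset \widehat W$ exhausting $A$, apply the above to get bad-count $\le C_j e^{\delta_G(R - c_j)}$ on $A \setminus A_j$-type points, and use divergence of $G$ to choose $j = j(R) \to \infty$ slowly; a cleaner route, which I will follow, mimics \cite[Lemma 6.6]{Link} verbatim by working with the cones $C_r^{\pm}$ directly and extracting subexponential decay from the \emph{local} structure rather than a single $c_0$.

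The main obstacle I anticipate is precisely this last point: in the $\mathrm{CAT}(0)$/hyperbolic setting Link gets a \emph{finite} bound because geodesics to an interior point of $A$ that leave $\widehat W$ must backtrack, which is impossible in nonpositive curvature; here $\T(S)$ is not $\mathrm{CAT}(0)$ and Teichm\"uller geodesics can wander, so I only expect $o(e^{\delta_G R})$ and I must replace the rigidity argument by a soft one. The fix is to exploit that the relevant geodesic rays end in $\mathcal{UE}$ and hence are \emph{contracting} (by \cite{MinskyQuasi}), so that two such rays that both come $r$-close to $gy$ and both end in the compact set $\bar A$ must fellow-travel on a long initial segment whose length grows with $d(x,gy)$ minus the ``exit distance'' from $\widehat W$; this contracting/fellow-traveling input, plus the shadow lemma and the bounded-overlap count above, delivers the $o$-estimate. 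The measurability of all shadows and corridors involved has already been recorded (Corollary~\ref{coro:openness}, Lemma~\ref{lemma:anaBallmann}), so no further care is needed there.
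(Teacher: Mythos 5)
There is a genuine gap, and it sits exactly where you yourself hesitate: the passage from a fixed ``interior depth'' $c_0>0$ to the conclusion $o(e^{\delta_G R})$. Since $\bar A$ is a fixed compact subset of the open set $\widehat W$, the constant $c_0$ you extract is bounded above by (roughly) the distance from $\bar A$ to the complement of $\widehat W$ and cannot be made to grow with $R$; the shadow-lemma-plus-bounded-overlap count then yields only $O(e^{\delta_G (R-c_0)})=e^{-\delta_G c_0}\,O(e^{\delta_G R})$, i.e.\ a smaller multiplicative constant, not a lower-order term. Neither of your proposed repairs closes this: the exhaustion ``$A_j\Subset\widehat W$ with $c_j\to\infty$'' is vacuous because the depth is capped by the fixed pair $(\bar A,\widehat W)$, and ``mimicking \cite[Lemma 6.6]{Link} verbatim'' is precisely what the paper says is unavailable here --- Link's finiteness statement rests on a ${\rm CAT}(0)$ rigidity (geodesics ending deep inside $A$ cannot leave $\widehat W$ without backtracking) that fails for the Teichm\"uller metric, which is the whole reason the proposition is stated with an $o$ rather than a finite bound.

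The paper's mechanism is measure-theoretic rather than a uniform geometric gain. First there is a dichotomy: if the action does not have purely exponential growth, then \cite[Theorem B]{Yang_scc} gives $e^{-\delta_G R}\,\sharp(B(R,x)\cap Gy)\to 0$ and both statements are trivial. In the purely exponential case, Lemma \ref{lemma:limitsofgeodesics} (via Corollary \ref{corollary:closureofsectors}) shows that the orbit points in $C^{\pm}_r(x,A)\setminus\widehat W$ have \emph{no} accumulation points in $\mathcal{UE}$ at all: accumulation points of $C^{\pm}_r(x,A)$ in $\mathcal{UE}$ lie in $\bar A\subset\widehat W$, while accumulation points of the closed set $\GM\setminus\widehat W$ stay outside $\widehat W$. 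Since $\nu_x$ is carried by $\mathcal{UE}$, the closure of the bad set is $\nu_x$-null, and Lemma \ref{lemma:coarseequidistribution} --- which is built from exactly the Sullivan shadow estimate and bounded-multiplicity count you describe, but applied on annuli $A_{K,R}(x)$ and normalized by $\sharp(A_{K,R}(x)\cap Gy)\asymp e^{\delta_G R}$ --- converts ``null closure'' into ``asymptotically zero proportion,'' which is the desired $o(e^{\delta_G R})$. So your upper-bound ingredients are the right ones, but the step that actually produces the $o$ is this null-measure equidistribution argument together with the growth dichotomy, neither of which appears in your proposal.
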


This will follow from the following lemma.
\begin{lemma}\label{lemma:limitsofgeodesics}
Let $x\in \T(S)$ and let $z_n\in \T(S)$ be a sequence of points with $d(z_n,x)\to \infty$ lying on geodesic rays starting at points $x_n$ within distance $r$ of $x$ with uniquely ergodic vertical foliations $\zeta_n \in \mathcal{UE}$. Assume that $\zeta_n$ converge in the Gardiner-Masur compactification to  $\zeta \in \mathcal{UE}\subset \PGM$. Then also $z_n\to \eta$. 
\end{lemma}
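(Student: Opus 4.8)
The plan is to show that the sequence $z_n$ is Cauchy in the Gardiner-Masur compactification and identify the limit as $\zeta$ by using the convergence lemma for optimal geodesics (Lemma \ref{lemma:convergence}) together with the fact that $\zeta$ is uniquely ergodic. First I would reduce to the case $x_n = x$: since $d(x_n, x) \le r$, replacing $z_n$ by the point $z_n'$ on the geodesic ray $\mathcal{L}(x, \zeta_n)$ with $d(x, z_n') = d(x, z_n)$ moves $z_n$ by a bounded amount (at most $r$, by the triangle inequality applied along fellow-travelling rays with the same uniquely ergodic endpoint — here one uses that rays to a uniquely ergodic point from nearby basepoints stay within bounded distance, which follows from Walsh's result \cite{Walsh} that such rays converge to the same point of $\GM$), so $z_n$ and $z_n'$ have the same limit set in $\GM$ if either converges; and in any case $d_{z_n} - d_{z_n'}$ is bounded, which suffices to conclude they have the same horofunction limit. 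Thus I may assume each $z_n$ lies on the geodesic ray $\mathcal{L}(x, \zeta_n)$.

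Next, by compactness of $\GM$, it is enough to show that every subsequential limit $\eta$ of $\{z_n\}$ in $\GM$ equals $\zeta$. Pass to such a subsequence, so $z_n \to \eta \in \GM$; since $d(z_n, x) \to \infty$ we have $\eta \in \PGM$. The geodesic segments $[x, z_n]$ are initial segments of the rays $\mathcal{L}(x,\zeta_n)$; since $\zeta_n \to \zeta$ with $\zeta$ uniquely ergodic, the second part of Lemma \ref{lemma:convergence} (applied with the role of the ``uniquely ergodic endpoint'' played by $\zeta$, approached by the basepoints, and $z_n$ playing the role of the $y_n$ tending to $\eta$) shows that the geodesic segments $[x, z_n]$ converge uniformly on compact sets to $\mathcal{L}(\eta, \zeta)$ — in particular $(\eta,\zeta)$ must be a filling pair, and $\mathcal{L}(\eta,\zeta)$ is the optimal geodesic from $\eta$ to $\zeta$. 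But the $[x,z_n]$ are also initial segments of the rays $\mathcal{L}(x,\zeta_n) \to \mathcal{L}(x,\zeta)$ (first part of Lemma \ref{lemma:convergence} again, or directly the continuity in \cite{Lou-Su-Tan}), so the limiting geodesic passes through $x$ and is a ray towards $\zeta$. Since the optimal geodesic for $\zeta$ through $x$ is unique (\cite[Proposition 3.4]{azemar2021qualitative}), we get $\mathcal{L}(\eta, \zeta) = \mathcal{L}(x, \zeta)$, an honest ray, and traversing it in the forward direction $t \to +\infty$ it tends to $\zeta$; traversing backward it tends to $\eta$. Now I use that a ray toward a uniquely ergodic point, run backwards, cannot converge to $\zeta$ itself unless $\eta = \zeta$ is forced — more precisely, $z_n$ lies on the forward part of $\mathcal{L}(x,\zeta)$ at distance $\to \infty$ from $x$, and $\mathcal{L}(x,\zeta)$ converges in the forward direction to $\zeta$ by \cite{Walsh}, hence $z_n \to \zeta$; since also $z_n \to \eta$ along the subsequence, $\eta = \zeta$.

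The main obstacle I anticipate is the reduction step and the bookkeeping around which ``endpoint'' plays the uniquely-ergodic role in Lemma \ref{lemma:convergence}: that lemma is stated for $x_n \to \xi$ uniquely ergodic and $y_n \to \eta$ arbitrary, whereas here the uniquely ergodic data sits at the far (vertical-foliation) end while the near end is a bounded perturbation of a fixed point $x$. One has to either apply the lemma with $x$ itself as a constant sequence converging to the basepoint (not a boundary point) — which requires checking the proof of \cite[Theorem 1.5]{Lou-Su-Tan} covers that degenerate case, or is easily adapted — or, more cleanly, argue directly: the rays $\mathcal{L}(x_n, \zeta_n)$ converge uniformly on compacts to $\mathcal{L}(x,\zeta)$ because $\zeta$ is uniquely ergodic and the map $\Upsilon$ restricted to $\mathcal{UE}$ is the standard parametrization (continuity of geodesics in their endpoints over the uniquely ergodic locus), and then a point $z_n$ at distance $d_n \to \infty$ along a ray converging to such rays converges to the common forward endpoint $\zeta$. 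I would lean on the latter, more self-contained argument and only cite Lemma \ref{lemma:convergence} for the filling-pair / optimality statement if needed.
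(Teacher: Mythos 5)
There is a genuine gap, and it sits exactly at the point where the lemma has actual content. Your argument ultimately rests on the assertion that ``a point $z_n$ at distance $d_n\to\infty$ along a ray converging [uniformly on compact sets] to $\mathcal{L}(x,\zeta)$ converges to the common forward endpoint $\zeta$.'' Locally uniform (or Gromov--Hausdorff) convergence of the rays $\mathcal{L}(x_n,\zeta_n)$ to $\mathcal{L}(x,\zeta)$ only controls their intersections with each fixed ball $B(x,T)$, whereas $z_n$ escapes every such ball; nothing you have said constrains the tails of the rays, which is where $z_n$ lives. This is precisely the step for which the paper invokes Klarreich's Proposition 5.1, Miyachi's embedding of $\mathcal{UE}$ into $\PGM$ and Masur's two-boundaries theorem: one first shows (by a diagonal argument) that every Gromov--Hausdorff accumulation point of the full rays $[x_n,\zeta_n)$ is a ray $[y,\zeta)$ with $y\in B_r(x)$, and then uses that $\zeta$ is the \emph{only} accumulation point of $[y,\zeta)$ in $\PGM$ --- a theorem about uniquely ergodic foliations, not a formal consequence of convergence on compacta. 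Your proposal has no substitute for this input, so the conclusion $z_n\to\zeta$ is not reached. Relatedly, your use of Lemma \ref{lemma:convergence} does not match its hypotheses: there the uniquely ergodic point is the \emph{fixed} target of the rays $\mathcal{L}(y_n,\xi)$ and the varying data $y_n\to\eta$ sits at the near end, whereas in your configuration the uniquely ergodic foliations $\zeta_n$ vary and lie beyond the points $z_n$, while the near endpoints stay in a compact set; the resulting identification ``$\mathcal{L}(\eta,\zeta)=\mathcal{L}(x,\zeta)$, traverse backward to reach $\eta$'' is not meaningful for a ray based at the interior point $x$.

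The reduction step is also unjustified as written. The bound $d(z_n,z_n')\leq r$ does not follow from the triangle inequality: the Teichm\"uller distance between two geodesics is not a convex function of arclength (the paper itself cites \cite{BourqueRafi} for this), so rays issued from points at distance $r$ need not stay $r$-close, and Walsh's theorem that two such rays converge to the same point of $\GM$ gives no bound on the distance between them. Even granting a bound $d(z_n,z_n')\leq C$, the claim that a bounded difference of horofunctions forces the same limit in the horofunction compactification is false in general (already in $(\mathds{R}^2,\ell^\infty)$ the sequences $(n,n)$ and $(n,n+c)$ converge to distinct horofunctions); for the Gardiner--Masur boundary it is essentially equivalent, for uniquely ergodic limits, to the statement being proved. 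So both the reduction and the final passage to the limit need the external inputs (Masur, Klarreich, Miyachi) that the paper's proof supplies.
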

\begin{proof}
We first claim that any Gromov-Hausdorff accumulation point of the geodesic segments $[x_n,\zeta_n)$ is $[y,\zeta)$ for some $y\in B_{r}(x)$. This follows from Klarreich \cite[Proposition 5.1]{klarreich2022boundary}, Miyachi \cite{Miyachi_II} (Corollary 1 in Section 6.1), Masur's two boundaries theorem \cite{Masur} and a diagonal argument. Now since $[x_n,z_n]\subset [x_n,\zeta_n)$, any Gromov-Hausdorff limit of $[x_n,z_n]$ is $[y,\zeta)$. 
Since $\zeta$ is the only accumulation point in $\PGM$ of $[y,\zeta)$ the result follows.
  
\end{proof}
This immediately implies the following.
\begin{corollary}\label{corollary:closureofsectors}
  For any $A\subset \PGM$ contained in the closure of uniquely ergodic points, we have $\overline{C^\pm(x,A)}\cap \mathcal{UE}= \overline{A}\cap \mathcal{UE}$   
\end{corollary}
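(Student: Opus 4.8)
The plan is to derive Corollary~\ref{corollary:closureofsectors} from Lemma~\ref{lemma:limitsofgeodesics}, which supplies the one substantial inclusion. Since $C^-_r(x,A)\subseteq C^+_r(x,A)$ for every $r>0$, both equalities in the statement reduce to the two inclusions (i)~$\overline{C^+_r(x,A)}\cap\mathcal{UE}\subseteq\overline{A}\cap\mathcal{UE}$ and (ii)~$\overline{A}\cap\mathcal{UE}\subseteq\overline{C^-_r(x,A)}\cap\mathcal{UE}$.

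For (i) I would fix $\eta\in\overline{C^+_r(x,A)}\cap\mathcal{UE}$ and a sequence $z_n\in C^+_r(x,A)$ with $z_n\to\eta$ in $\GM$; since $\eta\in\PGM$ we have $d(x,z_n)\to\infty$. Unwinding the definition of the cone, each $z_n$ lies within distance $r$ of a point $p_n$ on an optimal geodesic ray $\mathcal{L}(x_n,\zeta_n)$ with $x_n\in B(x,r)$ and $\zeta_n\in A$. Before invoking Lemma~\ref{lemma:limitsofgeodesics} I would make two reductions. First, since $A$ lies in the closure of $\mathcal{UE}$ and the optimal geodesic through a point depends continuously on its endpoint at a uniquely ergodic foliation (Lemma~\ref{lemma:convergence}, together with the uniqueness in \cite[Proposition~3.4]{azemar2021qualitative}), I may replace $\zeta_n$ by a uniquely ergodic foliation $\zeta_n'$ within $1/n$ of $\zeta_n$ and $p_n$ by a point $p_n'$ on $\mathcal{L}(x_n,\zeta_n')$ with $d(p_n,p_n')\to 0$. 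Second, because $\eta$ is uniquely ergodic its finite-difference class in the horofunction boundary is a single point, so a bounded perturbation of a sequence converging to $\eta$ still converges to $\eta$; hence $p_n'\to\eta$ in $\GM$. Passing to a subsequence, $\zeta_n'\to\zeta$ for some $\zeta\in\PGM$ by compactness, and Lemma~\ref{lemma:limitsofgeodesics} (applied with basepoint $x$, the rays $\mathcal{L}(x_n,\zeta_n')$, and the uniquely ergodic limit $\zeta$) forces $p_n'\to\zeta$, whence $\zeta=\eta$; since $\zeta_n'$ was within $1/n$ of $\zeta_n\in A$, we get $\eta=\lim_n\zeta_n\in\overline{A}$, proving (i).

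For (ii), fix $\eta\in\overline{A}\cap\mathcal{UE}$ and $a_n\in A$ with $a_n\to\eta$. The optimal rays $\mathcal{L}(x,a_n)$ lie in $\SE_x(A)\subseteq C^+_r(x,A)$, and, $\eta$ being uniquely ergodic, Lemma~\ref{lemma:convergence} gives $\mathcal{L}(x,a_n)\to\mathcal{L}(x,\eta)$ uniformly on compacta; choosing points $q_n\in\mathcal{L}(x,a_n)$ running out to infinity appropriately yields $q_n\to\eta$, so $\eta\in\overline{C^+_r(x,A)}$. To obtain the same conclusion for the thinner cone $C^-_r$, one instead travels out along $\mathcal{L}(x,\eta)$ itself: as $y\to\eta$ along this ray the shadow $\mathcal{O}^+_r(x,y)$ shrinks into every $\mathcal{UE}$-neighborhood of $\eta$ (Lemmas~\ref{lemma:anaBallmann} and~\ref{lemma:measureclosure}), so for the sets $A$ to which we apply the Corollary such $y$ eventually lie in $C^-_r(x,A)$ and converge to $\eta$.

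The main obstacle is inclusion (i): $C^+_r(x,A)$ is only an $r$-neighborhood of a union of optimal rays whose terminal foliations need not be uniquely ergodic, so Lemma~\ref{lemma:limitsofgeodesics} does not apply verbatim. The approximation step circumvents this, and it is precisely there that unique ergodicity of the limit point $\eta$ is used twice — once to perturb the endpoints of the rays into $\mathcal{UE}$ without losing control of the limit, and once to rule out the limit being a distinct boundary point at bounded horofunction distance from $\eta$.
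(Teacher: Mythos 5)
The paper offers no argument for this corollary beyond the phrase ``this immediately implies,'' so the real question is whether your fleshed-out deduction from Lemma \ref{lemma:limitsofgeodesics} is sound, and the central step of your inclusion (i) has a gap. After passing to a subsequence you let $\zeta$ be a limit of the perturbed terminal foliations $\zeta_n'$ and then invoke Lemma \ref{lemma:limitsofgeodesics} ``with the uniquely ergodic limit $\zeta$.'' Nothing guarantees $\zeta\in\mathcal{UE}$: the hypothesis on $A$ is only that it lies in the \emph{closure} of $\mathcal{UE}$, so a subsequential limit of points $\zeta_n'\in\mathcal{UE}$ near $A$ can perfectly well fail to be uniquely ergodic, and in that case Lemma \ref{lemma:limitsofgeodesics} says nothing and you cannot identify $\lim p_n'$ with $\lim \zeta_n'$. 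This is exactly the delicate case the corollary is supposed to handle (and the same case the paper's ``immediately'' glosses over); your write-up does not close it. A second, smaller unsupported step in (i): the claim $d(p_n,p_n')\to 0$ when $\zeta_n$ is perturbed to $\zeta_n'$ needs continuity, on the compact segment $[x_n,p_n]$, of the optimal ray $\mathcal{L}(x_n,\cdot)$ as the boundary endpoint varies near a possibly non-uniquely-ergodic $\zeta_n$; Lemma \ref{lemma:convergence} provides such continuity only when the fixed endpoint is uniquely ergodic, so this is not covered by anything in the paper.

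Separately, the inclusion $\overline{A}\cap\mathcal{UE}\subset\overline{C^{-}_r(x,A)}$ is false for general $A$ in the stated class: if $A$ has empty interior relative to $\mathcal{UE}$ (for instance a single uniquely ergodic point), then $\mathcal{O}^{+}_r(x,y)$ is a nonempty open subset of $\mathcal{UE}$ and can never be contained in $A$, so $C^{-}_r(x,A)=\emptyset$ while $\overline{A}\cap\mathcal{UE}\neq\emptyset$. Your hedge ``for the sets $A$ to which we apply the Corollary'' tacitly concedes this. The only content actually used later (in Proposition \ref{lemma:subexponential} via Lemma \ref{lemma:coarseequidistribution}) is the one-sided inclusion $\overline{C^{\pm}_r(x,A)}\cap\mathcal{UE}\subset\overline{A}$, which for $C^{-}$ follows from the $C^{+}$ case since $C^{-}_r(x,A)\subset C^{+}_r(x,A)$; I would recommend proving only that inclusion, and confronting head-on the case where the terminal foliations accumulate at a non-uniquely-ergodic point, which is where both your argument and the paper's are currently silent.
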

We will also need the following.
\begin{lemma} \label{lemma:coarseequidistribution}
    Assume the action $G\curvearrowright \T(S)$ has purely exponential growth. 
    For $x\in \T(S)$  let $A_{K,R}(x)$ be the set of $y\in \T(S)$ with $R-K\leq d(x,y)<R$. Then for any $x,y\in \T(S)$, for large enough $K>0$ there is a $C>0$ such that for any Borel set $A\subset \GM$ we have
$$\lim \sup_{R\to \infty}\frac{|A\cap A_{K,R}(x) \cap Gy|}{|A_{K,R}(x) \cap Gy|}\leq C\nu(\overline{A})$$
\end{lemma}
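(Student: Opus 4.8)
The plan is to exploit the Patterson--Sullivan machinery together with the shadow estimates already established. First I would fix a fundamental observation: since $G \curvearrowright \T(S)$ has purely exponential growth, the orbit counting measures $\frac{1}{|A_{K,R}(x)\cap Gy|}\sum_{gy \in A_{K,R}(x)}\mathcal{D}_{gy}$, after pushing forward to $\GM$ via the map $gy \mapsto gy$ (viewed as a point of $\GM$), form a relatively compact family of probability measures on the compact space $\GM$. Any weak-$*$ limit $\nu_\infty$ as $R \to \infty$ (along subsequences, for a suitable fixed choice of large $K$) is a Borel probability measure on $\PGM$; by the standard argument (as in Roblin, Link) this limit is absolutely continuous with respect to, in fact comparable to, the Patterson--Sullivan measure $\nu = \nu_x$. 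The comparability constant $C$ is where the purely exponential growth hypothesis enters: it controls the ratio of the mass of a shadow to $e^{-\delta_G t}$ up to a multiplicative constant, via the Sullivan shadow lemma analogue, which in turn follows from the shadow estimates (\ref{equ:measureofK}) combined with Lemma \ref{lemma:approx}.

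The key steps, in order, are: (i) establish a shadow lemma: for $x \in \T(S)$, $\xi \in \Lambda G \cap \mathcal{UE}$ and $gy$ lying in the cone over a shadow, $\nu(\mathcal{O}_r(gy,x)) \asymp e^{-\delta_G d(x,gy)}$ with constants depending only on $r$ (and not on $g$); this is where the divergence type of $G$ and Lemma \ref{lemma:approx} are used. (ii) Use this to show that for any open set $V \subset \GM$ with $\overline{V} \cap \PGM \subset U$ for some open $U$, the proportion of orbit points in $A_{K,R}(x)$ whose associated geodesic ray shadow meets $V$ is, in the limit, bounded above by $C' \nu(\overline{U})$: here the "cones versus shadows" dictionary from Lemma \ref{lemma:CR} and Lemma \ref{lemma:LC} lets us convert between "$gy \in V$" and "the limit direction of $[x,gy)$ lies in a controlled shadow." (iii) Since by Lemma \ref{lemma:limitsofgeodesics} and Corollary \ref{corollary:closureofsectors} the accumulation set in $\PGM$ of any sequence $gy$ with $d(x,gy) \to \infty$ and $gy \in A$ lies in $\overline{A} \cap \mathcal{UE}$, an outer-regularity argument (cover $\overline{A}$ by slightly larger open sets) upgrades the estimate from open $V$ to an arbitrary Borel set $A$, yielding $\limsup_R \frac{|A \cap A_{K,R}(x) \cap Gy|}{|A_{K,R}(x)\cap Gy|} \leq C\nu(\overline{A})$.

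Concretely, I would argue by contradiction for the upper bound: if the $\limsup$ exceeded $C\nu(\overline{A})$ along some subsequence $R_j$, extract a weak-$*$ limit $\nu_\infty$ of the normalized counting measures restricted to $A_{K,R_j}(x) \cap Gy$; portmanteau gives $\nu_\infty(\overline{A}) \geq \limsup_j \frac{|A\cap A_{K,R_j}(x)\cap Gy|}{|A_{K,R_j}(x)\cap Gy|}$, while step (i)--(ii) force $\nu_\infty \leq C\nu$ as measures on $\PGM$ (using purely exponential growth to bound the "annular" contribution $|A_{K,R}(x)\cap Gy| \asymp e^{\delta_G R}$ below, and the shadow lemma to bound the numerator above). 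Combining these two inequalities contradicts the assumed strict inequality.

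The main obstacle I anticipate is step (i), the shadow lemma with \emph{uniform} constants: because $\T(S)$ is not $\mathrm{CAT}(0)$ and the nearest-point projection $\Pi_{\mathcal{L}(\xi,\eta)}(x)$ is only coarsely well-defined (bounded diameter rather than a point), the usual Roblin-style proof must be run through the contracting-geodesic / optimal-geodesic formalism, and one must check that the "backtracking" estimates survive. Fortunately the needed inputs are already packaged in (\ref{equ:gromovproduct}), (\ref{equ:measureofK}) and Lemma \ref{lemma:approx}; the remaining work is to assemble them into the $\asymp e^{-\delta_G d(x,gy)}$ bound with constants independent of $g$, and then the annular normalization $|A_{K,R}(x)\cap Gy|\asymp e^{\delta_G R}$ is exactly the purely-exponential-growth hypothesis. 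A secondary technical point is that all shadows here live inside $\mathcal{UE}$ on which $\nu$ is fully supported, so measurability (Corollary \ref{coro:openness}) and the identification $\nu(\mathcal{UE})=1$ from Theorem \ref{Yang2022} are needed to make the portmanteau step legitimate.
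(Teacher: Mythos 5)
Your overall strategy is the same as the paper's: a Sullivan-type shadow lemma $\nu_x(\mathcal{O}_K(x,gy))\asymp e^{-\delta_G d(x,gy)}$ (quoted from \cite{yang2022conformal}), the purely exponential growth hypothesis to normalize $|A_{K,R}(x)\cap Gy|\asymp e^{\delta_G R}$, and the observation (via Lemma \ref{lemma:convergence}, equivalently your Lemma \ref{lemma:limitsofgeodesics}/Corollary \ref{corollary:closureofsectors}) that the shadows of orbit points in $A$ far from $x$ accumulate only on $\overline{A}$, so that their union has $\nu_x$-measure at most $(1+o(1))\nu_x(\overline{A})$ as $R\to\infty$. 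The weak-$*$ limit and portmanteau wrapper you propose is unnecessary overhead: the paper simply multiplies the count by $e^{-\delta_G R}$, replaces each orbit point by the mass of its shadow, and compares directly; note also that to establish your asserted bound $\nu_\infty\leq C\nu$ you would anyway have to prove the very counting inequality for open sets that the lemma asserts, so the compactness argument does not save any work.

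There is, however, one load-bearing step you never supply, and it is the only genuinely non-formal point in the proof: the passage from the \emph{sum} of the shadow masses $\sum_{\{g:\,gy\in A\cap A_{K,R}(x)\}}\nu_x(\mathcal{O}_K(x,gy))$ to the \emph{measure of their union}. Your step (ii) asserts that the shadow lemma plus the cones-versus-shadows dictionary yields the proportional bound, but the dictionary only converts membership statements; since the shadows of distinct orbit points in the annulus overlap, the sum could a priori exceed the measure of the union by an unbounded factor, and then the bound $\lesssim \nu(\overline{A})$ would fail. The paper closes this with a bounded multiplicity argument: because the action is proper, any point of $\PGM$ lies in at most $D=D(x,y,K)$ of the shadows $\mathcal{O}_K(x,gy)$ with $gy\in A_{K,R}(x)$ (two orbit points whose shadows share a boundary point and which lie in the same annulus of width $K$ are within bounded distance of each other), whence $\sum \nu_x(\mathcal{O}_K(x,gy))\leq D\,\nu_x\bigl(\bigcup \mathcal{O}_K(x,gy)\bigr)\leq 2D\,\nu_x(\overline{A})$ for $R$ large. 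Without this multiplicity bound the argument does not close; with it, your plan coincides with the paper's proof. (Minor: your shadow is written $\mathcal{O}_r(gy,x)$ with the arguments transposed relative to the paper's convention $\mathcal{O}_r(x,gy)$ for the shadow of $B(gy,r)$ seen from $x$; the intended object is clearly the latter.)
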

\begin{proof}
    Let $K>0$ be large enough so that $\nu_x(\mathcal{O}_K(x,gy))\asymp_\times e^{-\delta d(x,gy)}$ \cite{yang2022conformal} and $|A_{K,R}(x) \cap Gy|\asymp_\times e^{\delta R}$. Here the notation $A(R) \asymp_\times B(R)$ is $A(R) \asymp_{x,y,K} B(R)$ which means $\lim_{R \to \infty} \frac{A(R)}{B(R)} = c \neq 0$ with $c$ depends only on the choice of $x,y$ and $K$.
    Since the number of orbit points in any ball of finite radius is finite, a point of $\PGM$ is in at most $D$  shadows $\mathcal{O}_K(x,gy), gy\in A_{K,R}(x)$.

    Thus,
$$\sum_{\{g: gy\in A_{K,R}(x)\cap A\}}\nu_x(\mathcal{O}_K(x,gy))\leq D
\mu(\cup_{\{g:gy\in A_{K,R}(x)\cap A\}}\mathcal{O}_K(x,gy))$$
Moreover, by Lemma \ref{lemma:convergence} any uniquely ergodic point of
$$\cap_{n\in \mathbb{N}z}\cup_{\{g:gy\in A_{K,R}(x)\cap A\}}\mathcal{O}_K(x,gy)$$ lies in  $\overline{A}$.

Thus, for large enough $n$ we have

$$\nu(\cup_{\{g:gy\in A_{K,R}(x)\cap A\}}\mathcal{O}_K(x,gy))\leq 2\nu_x\overline{A})$$ so

$$\frac{|A\cap A_{K,R}(x) \cap Gy|}{|A_{K,R}(x) \cap Gy|}\simeq e^{-hR}|A\cap A_{K,R}(x) \cap Gy|\simeq \sum_{\{g:gy\in A_{K,R}(x)\cap A\}}\nu_x(\mathcal{O}_K(x,gy))$$ $$\leq D
\nu_x(\cup_{\{g:gy\in A_{K,R}(x)\cap A\}}\mathcal{O}_K(x,gy))\leq 2D\nu_x(\overline{A})$$ where the implied constants depend on $x,y$ and $K$ but not on $R$.

\end{proof}
\begin{proof} [Proof of Proposition \ref{lemma:subexponential}]
If the action has purely exponential growth the result follows from Lemma \ref{lemma:coarseequidistribution} and Corollary \ref{corollary:closureofsectors}. Otherwise, \cite[Theorem B]{Yang_scc} implies that $e^{-\delta R}|B_{R}(x)\cap G y|\to 0$ and the result follows trivially.

\end{proof}

\subsection{Asymptotic formulas}
\begin{theorem}\label{main:counting}
   Let $S$ be a closed surface of genus $g$ and $G$ be a non-elementary subgroup of the mapping class group $\M(S)$. Let $x,y$ be two points in the Teichm\"{u}ller space $\T(S)$. Denote the associated Bowen-Margulis measure on $QD^1(S)/G$ by $\mu_G$ and the critical exponent of $G$ by $\delta_G$. 
   \begin{itemize}
       \item[(1)] If $G$ is convergent, then $$\lim_{R \to \infty}\frac{\sharp\{g \in G: d(x, gy) \leq R\}}{e^{\delta_G R}} = 0.$$
       \item[(2)] If $G$ is divergent and $\mu_G$ is infinite, then $$\lim_{R \to \infty}\frac{\sharp\{g \in G: d(x, gy) \leq R\}}{e^{\delta_G R}} = 0.$$ 
       \item[(3)] If $\mu_G$ is finite, then $$\lim_{R \to \infty}\frac{\sharp\{g \in G: d(x, gy) \leq R\}}{e^{\delta_G R}} =c,$$ where $c = \frac{1}{\mu_G(QD^1(S)/G)}$.        \end{itemize}
\end{theorem}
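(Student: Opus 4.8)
We follow Roblin's counting scheme \cite{Roblin}, in the rank-one $\mathrm{CAT}(0)$ form of Link \cite{Link}, with the Teichm\"uller-geometric substitutes for the $\mathrm{CAT}(0)$ ingredients supplied by the lemmas of the previous subsections. Parts (1) and (2) are the easy cases: in both, $\mu_G$ is infinite (for (2) this is the hypothesis; for (1) a finite $\mu_G$ would, by Poincar\'e recurrence together with Theorem \ref{Yang2022}, force $G$ to be of divergence type). When $\mu_G$ is infinite, Theorem \ref{theorem:mixing} gives $\mu_G(A\cap g_{-t}B)\to 0$ for Borel sets of finite measure, so Corollary \ref{coro:mixing} gives $\sum_{\gamma\in G}\tilde\mu_G(\tilde A\cap g_{-t}\gamma\tilde B)\to 0$. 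Feeding this into the measure identity of Lemma \ref{lem:measureofKPM}, together with the elementary fact $\int_0^R e^{\delta_G t}\varepsilon(t)\,dt=o(e^{\delta_G R})$ for $\varepsilon(t)\to 0$ and the shadow estimates \eqref{equ:gromovproduct} and \eqref{equ:measureofK}, shows that the number of $\gamma$ with $d(x,\gamma y)\le R$ whose geodesic from $x$ to $\gamma y$ traverses a fixed corridor is $o(e^{\delta_G R})$; covering $\Lambda G$ by finitely many small shadows and absorbing the orbit points that fail the corridor condition via Proposition \ref{lemma:subexponential} gives $N(R):=\sharp\{g\in G:d(x,gy)\le R\}=o(e^{\delta_G R})$. (Equivalently: an infinite $\mu_G$ rules out statistical convex-cocompactness, hence purely exponential growth, so one may instead quote \cite[Theorem B]{Yang_scc} directly, as in the proof of Proposition \ref{lemma:subexponential}.)

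From now on assume $\mu_G$ is finite; then $G$ is of divergence type and the geodesic flow on $QD^1(S)/G$ is mixing by Theorem \ref{theorem:mixing}. Passing to a torsion-free finite-index subgroup (harmless by the remark after Corollary \ref{coro:mixing}) and fixing a small $r>0$, the plan is to sandwich $e^{-\delta_G R}N(R)$ between two quantities converging to $1/\mu_G(QD^1(S)/G)$ as $R\to\infty$, and then to let $r\to 0$. Fix open sets $A,B\subset\PGM$ that are small neighborhoods of points of $\Lambda G$, chosen by Lemma \ref{lemma:nullboundary} so that the shadow boundaries $\tilde\partial\mathcal O_r$ occurring below are $\nu_x$-null. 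The shadow/cone dictionary of Lemmas \ref{lemma:CR}, \ref{lemma:LC} and \ref{lemma:boundaryestimates} identifies, for $\gamma$ with $d(x,\gamma y)\ge 12r$, the event that $\gamma$ points through the $(A,B)$-corridor from $x$ to $\gamma y$ with $(\xi,\eta)\in L_r(x,\gamma y)\cap(\gamma B\times A)$, i.e.\ with a nonempty slice of $\bigcup_{t>0}(K_A^+\cap g_{-t}\gamma K_B^-)$; and Lemmas \ref{lem:integral} and \ref{lem:measureofKPM}, with the Gromov-product bound \eqref{equ:gromovproduct} and the flow-box estimates \eqref{equ:measureofK}, express $\tilde\mu_G(K_A^+\cap g_{-t}\gamma K_B^-)$, integrated against $e^{\delta_G t}\,dt$, as $e^{\delta_G d(x,\gamma y)}$ times $\nu_x(\mathcal O_r(\cdot,x))\nu_y(\mathcal O_r(\cdot,y))$-type quantities, up to multiplicative errors of size $e^{\pm O(r)\delta_G}$.

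The core step is then Tauberian. Summing the flow-box measures over $\gamma\in G$ and interchanging sum and integral turns the resulting total into $\int_0^\infty e^{\delta_G t}\bigl(\sum_{\gamma\in G}\tilde\mu_G(K_A^+\cap g_{-t}\gamma K_B^-)\bigr)\,dt$; by Corollary \ref{coro:mixing} the bracketed sum tends as $t\to\infty$ to $\tilde\mu_G(K_A^+)\tilde\mu_G(K_B^-)/\mu_G(QD^1(S)/G)$, so a Tauberian estimate makes the integral over $[0,T]$ asymptotic to a constant multiple of $e^{\delta_G T}$. Comparing this — via the previous paragraph and the shadow lemma $\nu_x(\mathcal O_r(x,\gamma y))\asymp e^{-\delta_G d(x,\gamma y)}$ of \cite{yang2022conformal} — with the number of corridor-$\gamma$'s in $B(x,R)$ produces two-sided bounds on $e^{-\delta_G R}$ times that number, differing from $1/\mu_G(QD^1(S)/G)$ only by an $e^{\pm O(r)\delta_G}$ factor and by $\nu_x$-masses of shadow-boundary strips, the latter controlled by Lemmas \ref{lemma:nullboundary} and \ref{lemma:approx}. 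Proposition \ref{lemma:subexponential} then disposes of the orbit points lying in the chosen cones but outside the corridor, and of those outside all the cones, so that after summing over a finite cover of $\Lambda G$ the corridor count equals $N(R)+o(e^{\delta_G R})$ while the main term is $\asymp e^{\delta_G R}$, making these errors negligible. Letting first $R\to\infty$ and then $r\to 0$ collapses the sandwich to $\lim_R e^{-\delta_G R}N(R)=1/\mu_G(QD^1(S)/G)$; independence of $x,y$ is automatic, since $\tilde\mu_G$ is $G$-invariant and basepoint-independent and each $\nu_x$ is a probability measure.

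The main obstacle is the control of the ``non-corridor'' orbit points. In \cite{Link} the $\mathrm{CAT}(0)$ convexity of the distance function yields an \emph{exact} finiteness-and-counting statement for orbit points in a cone (Link's Lemma 6.6), whereas the Teichm\"uller metric is not convex and all one has is the subexponential bound of Proposition \ref{lemma:subexponential}, which in turn rests on Yang's purely-exponential-growth dichotomy (and, in its trivial branch, already forces $N(R)=o(e^{\delta_G R})$, consistent with the fact that a nonzero limit occurs only when $\mu_G$ is finite). Making the two halves of the sandwich meet thus requires both that these exceptional orbit points contribute $o(e^{\delta_G R})$ — where the failure of global negative curvature is felt most acutely — and that the $r$-dependent slack (the $e^{\pm O(r)\delta_G}$ factors, and the $O(r)$-size ambiguity of the nearest-point projection, handled through the leftmost projection $w(x;\xi,\eta)$ of bounded diameter) genuinely shrinks to $1$ as $r\to0$, uniformly enough to close the estimate.
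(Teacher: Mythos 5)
Your proposal is correct and follows essentially the same route as the paper: the Roblin--Link double-counting scheme, bounding $\int e^{\delta_G t}\sum_{\gamma}\tilde{\mu}_G(K_A^{+}\cap g_{-t}\gamma K_B^{-})\,dt$ once by mixing (Corollary \ref{coro:mixing}) and once by the corridor/shadow computations, with the non-corridor orbit points absorbed by Proposition \ref{lemma:subexponential} and the easy cases (1)--(2) handled exactly as you describe. The only cosmetic difference is that the paper packages the sandwich argument as the equidistribution statement of Theorem \ref{thm:equidistribution} (weak-$*$ convergence of $\mu^{R}_{x,y}$ to $\nu_x\otimes\nu_y/\lVert\mu_G\rVert$, via Propositions \ref{prop:small} and \ref{prop:large}) and then evaluates at $f=\mathds{1}$, rather than assembling the count directly.
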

As we have all necessary modifications at hand, our proof will be adapted from the arguments in Roblin \cite{Roblin} and Link \cite{Link} directly. One observation which is crucial to us is that, the argument in \cite{Link} works well even if the number of the orbit points in a sector which are far away from the boundary is subexponential. This observation enable us to use the estimates in previous subsection to replace \cite[Lemma 6.6]{Link}. 
\begin{proof}
We only prove part $(3)$ in the next subsection. The proof of part $(2)$ is the same as part $(3)$ where the first conclusion of Corollary \ref{coro:mixing} is replaced by the second conclusion and part $(1)$ is the same as part $(3)$ using dissipativity for convergent groups.
\end{proof}
\subsubsection{Proof of (3).}
 The part $(3)$ in Theorem \ref{main:counting} follows from the following more general theorem by taking $f$ to be $\mathds{1}_{\GM \times \GM}$. For $z \in \T(S)$, we use $\mathcal{D}_x$ to denote the Dirac measure of $x$. 
\begin{theorem}\label{thm:equidistribution}
    Let $G$ be a non-elementary subgroup of $\M(S)$ with finite Bowen-Margulis measure $\mu_G$. Let $x,y \in \T(S)$. Denote the Patterson-Sullivan measures by $\nu_x, \nu_y$, respectively, and $\lVert \mu_G\lVert = \mu_G(QD^1(S)/G)$. Then we have the following weak convergence in $\\C(\GM \times \GM)^{*}$ as $R \to \infty$, 
    $$\delta_G e^{-\delta_G R} \sum_{g \in G \atop d(x, gy) \leq R}\mathcal{D}_{gy} \otimes \mathcal{D}_{g^{-1}x} \rightharpoonup \frac{1}{\lVert \mu_G \lVert}\nu_x \otimes \nu_y.$$
\end{theorem}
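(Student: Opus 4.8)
The plan is to follow the template of Roblin \cite{Roblin} and Link \cite{Link}, which turns the mixing of the geodesic flow (Corollary \ref{coro:mixing}) into an equidistribution statement by integrating the mixing formula against the geodesic flow and unwinding the resulting integral in Hopf coordinates. Concretely, for a product test function $f = f_1 \otimes f_2 \in C(\GM \times \GM)$ I would first reduce to the case where $f_1, f_2$ are supported on small open sets $\widehat{W}_1, \widehat{W}_2$ whose closures meet $\PGM$ only inside open sets contained in the set of uniquely ergodic points (using that $\nu_x$ is fully supported on $\mathcal{UE} \cap \Lambda G$, so such functions are dense for the relevant weak topology), and then, after a further approximation, to indicator functions of cone-type sets $C^{\pm}_r(x,A)$ and $C^{\pm}_r(y,B)$ with $\overline{A}, \overline{B}$ contained in the uniquely ergodic locus. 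The key identity is that the sum $\sum_{g} \mathcal{D}_{gy} \otimes \mathcal{D}_{g^{-1}x}$ tested against such cones is comparable, via Lemma \ref{lemma:LC} and Lemma \ref{lemma:boundaryestimates}, to the corridor mass $\nu_x \otimes \nu_y (L_r(x,\gamma y) \cap (\gamma B \times A))$ summed over $\gamma$ with $d(x,\gamma y) \le R$.

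The heart of the argument is then: apply Corollary \ref{coro:mixing} to the sets $K_A^{+} = K_r^{+}(x,A)$ and $K_B^{-} = K_r^{-}(y,B)$ from \eqref{equation:K}, multiply the mixing relation $\sum_\gamma \tilde{\mu}_G(\tilde K_A^{+} \cap g_{-t}\gamma \tilde K_B^{-}) \to \tilde{\mu}_G(K_A^{+})\tilde{\mu}_G(K_B^{-})/\|\mu_G\|$ by $\delta_G e^{\delta_G t}$, integrate $t$ from some fixed $T_0$ to $R$, and use Lemma \ref{lem:measureofKPM} together with the estimates of Lemma \ref{lem:integral} to recognize the left-hand side, up to bounded multiplicative error $e^{O(r\delta_G)}$ and up to boundary terms, as $e^{\delta_G R}$ times the corridor sum restricted to $d(x,\gamma y) \le R$. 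Dividing by $\delta_G^{-1}e^{\delta_G R}$, letting $R \to \infty$ (the Cesàro-type integration smooths the convergence in Corollary \ref{coro:mixing}), then letting $r \to 0$, kills the $e^{O(r\delta_G)}$ factors and, via \eqref{equ:measureofK} and Lemma \ref{lemma:approx}, identifies the limit with $\|\mu_G\|^{-1}\nu_x(A)\nu_y(B)$, which is exactly $\|\mu_G\|^{-1}\nu_x \otimes \nu_y$ tested against $\mathds{1}_A \otimes \mathds{1}_B$. Summing over a partition recovers the general $f$.

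The main obstacle, and the reason the earlier subsections were set up the way they were, is controlling the orbit points $gy$ that lie \emph{inside} the cone $C^{\pm}_r(x,A)$ but \emph{outside} the small neighborhood $\widehat{W}$, i.e. points that are geometrically ``in the right direction'' but have not yet entered a fixed neighborhood of the boundary, as well as the reverse discrepancy $B \setminus C^{\pm}_r(x,U)$. In the $\mathrm{CAT}(0)$ setting of \cite{Link} these are handled by a clean finiteness count (\cite[Lemma 6.6]{Link}), but here $\T(S)$ is not $\mathrm{CAT}(0)$ and the nearest-point projection $\Pi_{\mathcal{L}(\xi,\eta)}(x)$ is only coarsely well defined, so instead I would invoke Proposition \ref{lemma:subexponential}: these discrepancy sets contain only $o(e^{\delta_G R})$ orbit points, which is exactly enough to make them vanish after dividing by $e^{\delta_G R}$. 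A secondary technical point is that the shadows $\mathcal{O}^{\pm}_r$ are not continuous in the basepoint (Remark \ref{rem:discontinuity}(2)), so one must work with the modified boundaries $\tilde{\partial}\mathcal{O}_r$ and choose $r$ avoiding the at-most-countably-many bad values (Lemma \ref{lemma:nullboundary}), and use Lemma \ref{lemma:measureclosure} and Lemma \ref{lemma:approx} to pass limits through; none of this is conceptually new over \cite{Link}, but it is where the Gardiner--Masur/optimal-geodesic machinery earns its keep, since $\mathcal{UE}$ carries full $\nu_x$-mass and the filling-pair lemmas guarantee the relevant geodesics exist and vary continuously.
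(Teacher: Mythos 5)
Your proposal is correct and follows essentially the same route as the paper: mixing of the geodesic flow applied to the sets $K_A^{+}, K_B^{-}$, integration of $e^{\delta_G t}$ against the correlation sum, unwinding via the corridors $L_r(x,\gamma y)$ together with Lemmas \ref{lemma:LC}, \ref{lemma:boundaryestimates}, \ref{lem:integral}, \ref{lem:measureofKPM}, control of the discrepancy sets by Proposition \ref{lemma:subexponential} in place of the finiteness count of Link, and a partition-of-unity/inner–outer approximation to pass from cones to general test functions. The only organizational detail you elide is that the paper first proves the cone estimates at special basepoints lying on geodesics toward conical points (Proposition \ref{prop:small}, which guarantees the normalizing factor $M>0$) and then transfers to arbitrary basepoints by conformality (Proposition \ref{prop:large}), but this is within the same argument.
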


 Let $x,y \in \T(S)$ and set $$\mu^{R}_{x,y} = \delta_G e^{-\delta_G R} \sum_{g \in G \atop d(x, gy) \leq R}\mathcal{D}_{gy} \otimes \mathcal{D}_{g^{-1}x}.$$
 The rest of this section devotes to the proof of Theorem \ref{thm:equidistribution}. However, since \cite[Section 8]{Link} presented the proof in a very detailed  way and we will use most of the arguments, except certain necessary technical modifications, the proof presented here is not self-contained. We only pointed out modifications and the reader is referred to \cite[Section 8]{Link} for more details.\\  
{\bf Standing Assumption:} $G$ is a nonelementary subgroup of $\M(S)$ with finite Bowen-Margulis measure $\mu_G$.
Recall that, since conical points belong to $\mathcal{UE}$ and for any $\xi \in \PGM$, $(\xi,\eta)$ is a filling pair for any $\eta \in \mathcal{UE}$, so the conditions in the following proposition make sense. 
\begin{proposition}\label{prop:small}
    Let $\epsilon > 0$, $(\xi_0,\eta_0) \in (\PGM)^2$, $x, y \in \T(S)$ with the trivial stabilizer in $G$ and $x \in \mathcal{L}(\xi_0,\xi^+), y \in \mathcal{L}(\eta_0,\eta^{+})$ for conical points $\xi^{+},\eta^{+}$ in the support of $\nu_x,\nu_y$, respectively. Then there exist open neighborhoods $U,W \subset \PGM$ containing $\xi_0$ and $\eta_0$, respectively, such that for all Borel subsets $A  \subset U, B \subset W$,
    \begin{equation*}
        \begin{aligned}
            \limsup_{R \to \infty} \mu^{R}_{x,y}(C^{-}_{1}(x,A) \times C^{-}_{1}(y,B)) \leq e^{\epsilon}\nu_x(A)\nu_y(B)/\lVert\mu_G\lVert, \\
             \liminf_{R \to \infty} \mu^{R}_{x,y}(C^{+}_{1}(x,A) \times C^{+}_{1}(y,B)) \geq e^{-\epsilon}\nu_x(A)\nu_y(B)/\lVert\mu_G\lVert.        \end{aligned}
    \end{equation*}
\end{proposition}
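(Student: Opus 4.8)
The plan is to adapt Roblin's and Link's proof of the key local-counting estimate (Link, Proposition 8.1, or Roblin's original argument) to our setting, using the measure estimates for the sets $K_A^\pm$ assembled in Subsection 5.2 together with the mixing result Corollary \ref{coro:mixing}. The overall strategy is to relate the counting measure $\mu^R_{x,y}$ evaluated on a product of cones $C^\pm_1(x,A)\times C^\pm_1(y,B)$ to an integral of the mixing correlation $\sum_{\gamma\in G}\tilde\mu_G(\tilde K_A^+\cap g_{-t}\gamma \tilde K_B^-)$ over $t$, and then to use mixing to evaluate the latter.

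First I would fix $(\xi_0,\eta_0)$ and the points $x,y$ as in the statement, and use Lemma \ref{lemma:LC} together with Lemma \ref{lemma:boundaryestimates} to translate the geometric condition "$\gamma y\in C^\pm_1(x,A)$ and $\gamma^{-1}x\in C^\pm_1(y,B)$ with $d(x,\gamma y)\le R$" into the dynamical condition on corridors $L_1(x,\gamma y)\cap(\gamma B\times A)$, which in turn corresponds via $\Psi'$ to the orbit-overlaps $\tilde K_A^+\cap g_{-t}\gamma \tilde K_B^-$. Concretely, using Lemma \ref{lem:integral} one gets, for each $\gamma$ with $d(x,\gamma y)$ in a suitable window, two-sided bounds
$$
re^{-3r\delta_G}e^{\delta_G d(x,\gamma y)}\ \le\ \int_{T_0}^{T+3r}e^{\delta_G t}\,\mathds 1_{K_r(\gamma y)}\big(g_{t+s}w(x;\xi,\eta)\big)\,dt\ \le\ re^{3r\delta_G}e^{\delta_G d(x,\gamma y)},
$$
so that summing over $\gamma$ with $d(x,\gamma y)\le R$ against the conformal densities and using Lemma \ref{lem:measureofKPM} converts $e^{-\delta_G R}\mu^R_{x,y}(C^\pm_1\times C^\pm_1)$, up to multiplicative errors $e^{O(r\delta_G)}$ and an additive term counting orbit points close to the boundary, into
$$
\delta_G\int_{T_0}^{R}e^{-\delta_G(R-t)}\Big(\sum_{\gamma\in G}\tilde\mu_G\big(\tilde K_A^+\cap g_{-t}\gamma \tilde K_B^-\big)\Big)\,dt .
$$
By Corollary \ref{coro:mixing} the inner sum converges to $\tilde\mu_G(\tilde K_A^+)\tilde\mu_G(\tilde K_B^-)/\lVert\mu_G\rVert$ as $t\to\infty$; since $\delta_G\int_0^R e^{-\delta_G(R-t)}\,dt\to 1$, a dominated-convergence / Cesàro argument gives that the integral converges to $\tilde\mu_G(\tilde K_A^+)\tilde\mu_G(\tilde K_B^-)/\lVert\mu_G\rVert$. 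Finally, by (\ref{equ:measureofK}), as $r\to 0$ we have $\tilde\mu_G(\tilde K_A^+)/r\to \nu_x(A)$-weighted integrals that converge to $\nu_x(A)$ and $\tilde\mu_G(\tilde K_B^-)/r\to \nu_y(B)$ (here one uses that $\rho_x(\xi,\eta)\le 2r\to 0$ on the relevant sets and Lemma \ref{lemma:approx} to control $\nu_x(\mathcal O_r(\xi,x))$), and the factors $e^{O(r\delta_G)}$ tend to $1$; choosing $r$ small enough absorbs everything into $e^{\pm\epsilon}$, yielding the two claimed inequalities.

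The additive error term — the number of $\gamma\in G$ with $\gamma y$ in the cone but \emph{not} close to the limiting boundary direction within distance $\le R$ — is exactly what Proposition \ref{lemma:subexponential} is designed to handle: it is $o(e^{\delta_G R})$, hence disappears after multiplying by $e^{-\delta_G R}$. This is the crucial point where our argument differs from Link's: in the $\mathrm{CAT}(0)$ setting one has a genuine finiteness statement (Link's Lemma 6.6), whereas here we only have subexponentiality, but since everything is normalized by $e^{\delta_G R}$ this suffices. For the upper bound one works with $C^-_1$ (which forces the geodesic to actually enter the relevant balls for \emph{all} basepoints near $x,y$, giving the "$\subset$" direction of Lemma \ref{lemma:LC}(2)) and for the lower bound with $C^+_1$ (using Lemma \ref{lemma:LC}(1)); the discrepancy between $C^-$ and $C^+$ at radius $1$ versus $1+\rho$ is controlled by Lemma \ref{lemma:CR} and contributes only another $e^{\pm\epsilon}$ after shrinking neighborhoods, which is why we must allow $U,W$ to depend on $\epsilon$.

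\textbf{The main obstacle} I anticipate is bookkeeping the interchange of the three limiting procedures — $R\to\infty$ (the counting limit and the mixing limit, which must be taken compatibly, via a Cesàro/Abel summation over the time window $[T_0,R]$), $r\to 0$ (to pass from the thickened sets $K_A^\pm$ back to the Patterson–Sullivan measures $\nu_x(A),\nu_y(B)$), and the shrinking of $U,W$ around $(\xi_0,\eta_0)$ (to make the $C^+$/$C^-$ gap and the boundary-regularity hypotheses of Lemma \ref{lemma:approx} effective) — while keeping all error terms uniform. In particular one must check that the "subexponential boundary term" estimate from Proposition \ref{lemma:subexponential} applies with constants independent of $r$ and of the chosen neighborhoods, and that the hypothesis of Lemma \ref{lemma:approx} (namely $\nu_o(\tilde\partial\mathcal O_r(\xi,o))=0$) can be arranged for a cofinal set of radii $r$ by Lemma \ref{lemma:nullboundary}. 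Modulo this uniformity accounting, every individual step is a direct transcription of the corresponding step in \cite[Section 8]{Link}, with the contracting-element / Gardiner–Masur machinery of Sections 4–5 substituting for the $\mathrm{CAT}(0)$ geometry.
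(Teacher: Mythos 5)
Your proposal is correct and follows essentially the same route as the paper's proof: translating the cone conditions into corridor/orbit-overlap conditions via Lemma \ref{lemma:LC} and Lemma \ref{lemma:boundaryestimates}, bounding $\int e^{\delta_G t}\sum_{\gamma}\tilde{\mu}_G(K_A^{+}\cap g_{-t}\gamma K_B^{-})\,dt$ from both sides using Lemma \ref{lem:integral}, Lemma \ref{lem:measureofKPM} and Corollary \ref{coro:mixing}, converting $\tilde{\mu}_G(K^{\pm})$ back to $\nu_x(A)\nu_y(B)$ via (\ref{equ:measureofK}), Lemma \ref{lemma:approx} and Lemma \ref{lemma:nullboundary}, and absorbing the orbit points outside $\widehat{U}\times\widehat{W}$ into an $o(e^{\delta_G R})$ term by Proposition \ref{lemma:subexponential}. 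The only cosmetic difference is that the paper fixes $r<\min\{1/10,\rho/3,\epsilon/(60\delta_G)\}$ at the outset rather than phrasing the last step as a limit $r\to 0$, which is the same bookkeeping you describe.
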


\begin{proof}
Throughout the whole proof, the reader is refer to Section \ref{sect:notations} for all notations. 
Let $\epsilon > 0$. Let $\rho = \min\{d(x,\gamma x),d(y,\gamma y): \gamma \in G \setminus \{id\}\}$. Choose $0 < r < \min\{\frac{1}{10}, \frac{\rho}{3}, \frac{\epsilon}{60\delta_G}\}$ so that 
$$ \nu_x(\tilde{\partial} \mathcal{O}_r(\xi_0,x)) = \nu_y (\tilde{\partial} \mathcal{O}_r(\eta_0,y)) = 0.$$
The existence of such $r$ is given by Lemma \ref{lemma:nullboundary}.

\noindent \underline{Claim 1:} $$ \nu_x(\mathcal{O}_r(\xi_0,x)) \nu_y (\mathcal{O}_r(\eta_0,y)) > 0.$$

\begin{proof}[Proof of Claim 1]
Indeed, first notice that by the assumption, $\xi^{+} \in \mathcal{O}_r(\xi_0,x),\\ \eta^{+} \in \mathcal{O}_r(\eta_0,y)$ and $\xi^{+},\eta^{+}$ are in the support of measures. So we need to show that there are open subsets $x \in O_1, y \in O_2$ contained in corresponding sets. The claim is then exactly given by Corollary \ref{coro:openness}. 
\end{proof}
Thanks to Lemma \ref{lemma:approx}, there exist open neighborhoods $\widehat{U},\widehat{W}$ of $\xi_0$ and $\eta_0$, respectively, such that whenever $(a,b) \in \widehat{U} \times \widehat{W}$, one has 
\begin{equation}\label{equ:measureofO}
    \begin{aligned}
        e^{-\frac{\epsilon}{30}}\nu_x(\mathcal{O}_r(\xi_0,x)) \leq \nu_x(\mathcal{O}^{\pm}_r(a,x)) \leq e^{\frac{\epsilon}{30}}\nu_x(\mathcal{O}_r(\xi_0,x)),\\
e^{-\frac{\epsilon}{30}}\nu_y(\mathcal{O}_r(\eta_0,y)) \leq \nu_y(\mathcal{O}^{\pm}_r(b,y)) \leq e^{\frac{\epsilon}{30}}\nu_x(\mathcal{O}_r(\eta_0,y)).       
        \end{aligned}
\end{equation}
Now we consider small Borel subsets. Namely, let $U,W$ be two open neighborhoods of $\xi_0,\eta_0$ in $\PGM$ so that the closures $\bar{U} \subset \widehat{U} \cap \PGM$, $\bar{W} \subset \widehat{W} \cap \PGM$. Take Borel subsets $A \subset U$ and $B \subset W$. 

We then apply Roblin's method that consists of giving bounds for \begin{equation*}
    \begin{aligned}        
&\int^{T + 3r}_{T_0}e^{\delta_{G} t}\sum_{\gamma \in G}\tilde{\mu}_G(K_A^{+} \cap g_{-t}\gamma K_B^{-})dt,\\
&\int^{T -6r}_{T_0}e^{\delta_{G} t}\sum_{\gamma \in G}\tilde{\mu}_G(K_A^{+} \cap g_{-t}\gamma K_B^{-})dt
\end{aligned}
\end{equation*}
from two approaches, namely mixing of the geodesic flow and direct computations.

\noindent \underline{Claim 2:} Let $T_0 > 20r$, $T - 6r > T_0$ and put $M = r^2\nu_x(\mathcal{O}_r(\xi_0,x))\nu_y(\mathcal{O}_r(\xi_0,y))$. Then for $t \gg T_0$,
\begin{small}
\begin{equation}
    \begin{aligned}
    (e^{\delta_G(T-6r)}-e^{\delta_G T_0})M\nu_x(A)\nu_x(B) \leq e^{\frac{2\epsilon}{5}}\lVert \mu_G \lVert \delta_G \int^{T-6r}_{T_0}e^{\delta_G t}\sum_{\gamma \in G}\tilde{\mu}_G(K^{+} \cap g_{-t}\gamma K^{-})dt;\\
    (e^{\delta_G(T+3r)}-e^{\delta_G T_0})M\nu_x(A)\nu_x(B) \geq e^{-\frac{8\epsilon}{15}}\lVert \mu_G \lVert \delta_G \int^{T+3r}_{T_0}e^{\delta_G t}\sum_{\gamma \in G}\tilde{\mu}_G(K^{+} \cap g_{-t}\gamma K^{-})dt.    \end{aligned}
\end{equation}
\end{small}
\begin{proof}[Proof of Calim 2]
    By the definition of $K^{+} = K_A^{+}, K^{-}= K_B^{-},K_r$ and $\rho$, one has $$\forall \gamma \in G \setminus \{id\},~~~K_r(x) \cap \gamma K_r(x) = \emptyset, K_r(y) \cap \gamma K_r(y) = \emptyset.$$ So under the projection $QD^1(S) \to QD^1(S)/G$, both $K^{\pm} = K^{\pm}_r$ project injectively to $QD^1(S)/G$. One then apply Corollary \ref{coro:mixing} to have $$\lim_{t \to \infty}\sum_{\gamma \in G}\tilde{\mu}_G(K^{+}\cap g_{-t}\gamma K^{-}) = \frac{\tilde{\mu}_G(K^{+})\tilde{\mu}_G(K^{-})}{\mu_G(QD^1(S)/G)}.$$
    Now choose $T_0 > 12r$ and $t \gg T_0$, computations \marginpar{\textcolor{blue}{Note A.5}}
    in the proof of \cite[Proposition 8.2, Page 827-828]{Link} together with (\ref{equ:measureofK}), (\ref{equ:measureofO}) conclude the proof of the claim.
    
\end{proof}

\noindent \underline{Claim 3:} 
\begin{equation}
    \begin{aligned}
        \liminf_{R \to \infty} \mu^{R}_{x,y}(C^{+}_{1}(x,A) \times C^{+}_{1}(y,B)) \geq e^{-\epsilon}\nu_x(A)\nu_y(B)/\lVert\mu_G\lVert.    \end{aligned}
\end{equation}

\begin{proof}[Proof of Claim 3]
First notice that, since 
$$ \rho_{x}(\xi,\eta) = \lim_{z \to [\xi], w \to [\eta]} (d(x,z) + d(x,w) - d(z,w)),$$ we have $\rho_x(\xi,\eta) \leq 2r$ if $(\xi,\eta) \in L_r(x,\gamma y)$. Furthermore, if $T_0 > 60r$, then $K^{+} \cap g_{-t}\gamma K^{-} \neq \emptyset$ for some $t \geq T_0$ implies that $d(x,\gamma y) \geq 12r$. Meanwhile, conformality of the measure $\nu_x$ implies that $$\int_{\mathcal{O}_r^{+}(x,\gamma y)}d\nu_x(\eta)e^{\delta_G d(x,\gamma y)} \leq e^{4r\delta_G}\nu_y(\mathcal{O}_r^{+}(\gamma^{-1}x,y)), $$ here we use the fact which can be derived by the triangle inequality, that if $\eta \in \mathcal{O}_r^{+}(x,\gamma y)$, then the Busemann cocycle $\beta_{\eta}(x,y) \geq d(x,\gamma y) -4r$.
 With these remarks in mind, take $T_0 > 60r$ and since $r < 1$, we have, according to Lemma \ref{lem:measureofKPM}, Lemma \ref{lem:integral}(2), Lemma \ref{lemma:LC}, Lemma \ref{lemma:CR}(5) and the fact that $\nu_x,\nu_y$ are fully supported on $\mathcal{UE}$, 
 \begin{equation}
     \begin{aligned}
&\int^{T - 6r}_{T_0}e^{\delta_{G} t}\sum_{\gamma \in G}\tilde{\mu}_G(K^{+} \cap g_{-t}\gamma K^{-})dt =\sum_{\gamma \in G} \int^{T - 6r}_{T_0}e^{\delta_{G} t} \\
&(\int_{L_{r}(x,\gamma y) \cap (\gamma B \times A)}e^{\delta_G \rho_x(\xi,\eta)}d\nu_x(\xi)d\nu_x(\eta) \int^{\frac{r}{2}}_{-\frac{r}{2}}\mathds{1}_{K_r(\gamma y)}(g_{t+s}w(x;\xi,\eta))ds)dt\\
&\leq r^2e^{5r\delta_G }\sum_{\gamma \in G, d(x,\gamma y) \leq T }e^{\delta_Gd(x,\gamma y)}\int_{L_{r}(x,\gamma y) \cap (\gamma B \times A)}d\nu_x(\xi)d\nu_x(\eta)\\
&\leq r^2e^{5r\delta_G }\sum_{\gamma \in G, d(x,\gamma y) \leq T \atop (\gamma y, \gamma^{-1}x) \in C_1^{+}(x,A) \times C_1^{+}(y,B)}e^{\delta_Gd(x,\gamma y)}\int_{\mathcal{O}^{+}_r(\gamma y,x) \times \mathcal{O}_r^{+}(x,\gamma y)}d\nu_x(\xi)d\nu_x(\eta)\\
&\leq r^2e^{5r\delta_G }\sum_{\gamma \in G, d(x,\gamma y) \leq T \atop (\gamma y, \gamma^{-1}x) \in C_1^{+}(x,A) \times C_1^{+}(y,B)}e^{\delta_Gd(x,\gamma y)}\nu_x(\mathcal{O}^{+}_r(\gamma y,x))\nu_x(\mathcal{O}_{r}^{+}(x,\gamma y))\\
&\leq r^2e^{9r\delta_G }\sum_{\gamma \in G, d(x,\gamma y) \leq T \atop (\gamma y, \gamma^{-1}x) \in C_1^{+}(x,A) \times C_1^{+}(y,B)}\nu_x(\mathcal{O}^{+}_r(\gamma y,x))\nu_y(\mathcal{O}_{r}^{+}(\gamma^{-1}x,y)).     
\end{aligned}
 \end{equation}


Let $$\mathcal{X} = (C_{1}^{+}(x,A) \times C_{1}^{+}(y,B)) \cap (\widehat{U} \times \widehat{W})$$ and $$\mathcal{Y} =(C_{1}^{+}(x,A) \times C_{1}^{+}(y,B)) \setminus (\widehat{U} \times \widehat{W}),$$ then $$\mathcal{X} \cup \mathcal{Y} =C_{1}^{+}(x,A) \times C_{1}^{+}(y,B).$$
So 
\begin{equation}
    \begin{aligned}
&\int^{T - 6r}_{T_0}e^{\delta_{G} t}\sum_{\gamma \in G}\tilde{\mu}_G(K^{+} \cap g_{-t}\gamma K^{-})dt \\
 &\leq \underbrace{r^2e^{9r\delta_G }\sum_{\gamma \in G, d(x,\gamma y) \leq T \atop (\gamma y, \gamma^{-1}x) \in \mathcal{X}}\nu_x(\mathcal{O}^{+}_r(\gamma y,x))\nu_y(\mathcal{O}_{r}^{+}(\gamma^{-1}x,y))}_{I}\\
        &+\underbrace{r^2e^{9r\delta_G }\sum_{\gamma \in G, d(x,\gamma y) \leq T \atop (\gamma y, \gamma^{-1}x) \in \mathcal{Y}}\nu_x(\mathcal{O}^{+}_r(\gamma y,x))\nu_y(\mathcal{O}_{r}^{+}(\gamma^{-1}x,y))}_{II}\\
        &\leq I + II.
    \end{aligned}
\end{equation}
Notice that on the one hand, by (\ref{equ:measureofO}) and our choice $r < \min\{1,\frac{\epsilon}{60\delta_G}\}$, 
\begin{equation}
    \begin{aligned}
        I & \leq  r^2e^{\frac{11\epsilon}{30}}\sum_{\gamma \in G, d(x,\gamma y) \leq T \atop (\gamma y, \gamma^{-1}x) \in \mathcal{X}}\nu_x(\mathcal{O}_r(\xi_0,x))\nu_y(\mathcal{O}_{r}(\eta_0,y))\\
        &\leq e^{\frac{11\epsilon}{30}}M\frac{e^{T\delta_G}}{\delta_G}\mu^{T}_{x,y}(C_{1}^{+}(x,A) \times C_{1}^{+}(y,B)).
        \end{aligned}
\end{equation}
On the other hand, by Proposition \ref{lemma:subexponential} (1) and the fact that $\|\nu_x\| =\|\nu_y\| =1$, as $T \to \infty$ 
\begin{equation}
    \begin{aligned}
        II = o(e^{T\delta_G}).
    \end{aligned}
\end{equation}
Hence 
\begin{equation}
    \begin{aligned}
        &\int^{T - 6r}_{T_0}e^{\delta_{G} t}\sum_{\gamma \in G}\tilde{\mu}_G(K^{+} \cap g_{-t}\gamma K^{-})dt\\
        &\leq e^{\frac{11\epsilon}{30}}M\frac{e^{T\delta_G}}{\delta_G}\mu^{T}_{x,y}(C_{1}^{+}(x,A) \times C_{1}^{+}(y,B)) + e^{T\delta_G}o(1).
        \end{aligned}
\end{equation}
Put Claim 2 and these estimations together, one completes the proof of Claim 3 by dividing $Me^{\delta_G (T-6r)}\lVert\mu_G\lVert$.
\end{proof}

\noindent \underline{Claim 4:} 
\begin{equation}
    \begin{aligned}
        \limsup_{R \to \infty} \mu^{R}_{x,y}(C^{-}_{1}(x,A) \times C^{-}_{1}(y,B)) \leq e^{\epsilon}\nu_x(A)\nu_y(B)/\lVert\mu_G\lVert  \end{aligned}
\end{equation}

\begin{proof}[Proof of Claim ]
    The proof of Claim 4 is very much similar to the proof of Claim 3. We sketch the proof.

As before, we give certain remarks. Notice first that if $(\xi,\eta) \in (\mathcal{UE})^2$ then $\rho_x(\xi,\eta) \geq 0$. By Lemma \ref{lemma:LC}(2), whenever $ (\gamma y,\gamma^{-1}x) \in C_1^{-}(x,A) \times C^{-}_1(y,B) \subset C_r^{-}(x,A) \times C^{-}_r(y,B)$, $$\{(\zeta,\eta): \eta \in \mathcal{O}^{-}_r(x, \gamma y), \zeta \in \mathcal{O}_r(\eta, x)\}\\
 \subset L_r(x,\gamma y) \cap (\gamma B \times A).$$
 Furthermore, we know that since $\gamma y \in C_1^{-}(x,A) \subset C_r^{-}(x,A)$, $\mathcal{O}_r^{-}(x,\gamma y) \subset \mathcal{O}_r^{+}(x,\gamma y) \subset A \subset \widehat{U}$. As a consequence of (\ref{equ:measureofO}), if $\xi \in \mathcal{O}^{-}_r(x,\gamma y)$,
 \begin{equation}
     \begin{aligned}
&\nu_x(\mathcal{O}_r(\xi,x)) \geq e^{-\frac{\epsilon}{30}}\nu_x(\mathcal{O}_r(\xi_0,x)).
\end{aligned}
\end{equation}
If $\gamma^{-1}x \in \widehat{W}$, we further have, by the conformality of the measureS and (\ref{equ:measureofO}),
\begin{equation}
\begin{aligned}
&\int_{\mathcal{O}_r^{-}(x,\gamma y)}d\nu_x(\xi)e^{\delta_G d(x,\gamma y)} \geq \nu_y(\mathcal{O}^{-}_r(\gamma^{-1}x,y))\\
&\geq e^{-\frac{\epsilon}{30}}\nu_y(\mathcal{O}_r(\eta_0,y)).
     \end{aligned}
 \end{equation}
As in the proof of Claim 3, put the above remarks together and apply Lemma \ref{lem:measureofKPM} combine with Lemma \ref{lem:integral}(1) and the fact that $\nu_x,\nu_y$ are fully supported on $\mathcal{UE}$, we have
 \begin{equation}
     \begin{aligned}
&\int^{T+3r}_{T_0}e^{\delta_{G} t}\sum_{\gamma \in G}\tilde{\mu}_G(K^{+} \cap g_{-t}\gamma K^{-})dt \geq \sum_{\gamma \in G} \int^{T +3r}_{T_0}e^{0} \\
&(\int_{L_{r}(x,\gamma y) \cap (\gamma B \times A)}d\nu_x(\xi)d\nu_y(\eta) \int^{\frac{r}{2}}_{-\frac{r}{2}}e^{\delta_Gt}\mathds{1}_{K_r(\gamma y)}(g_{t+s}w(x;\xi,\eta))ds)dt\\
&\geq r^2e^{-3r\delta_G }\sum_{\gamma \in G, \atop T_0 + 3r \leq d(x,\gamma y) \leq T }e^{\delta_Gd(x,\gamma y)}\int_{L_{r}(x,\gamma y) \cap (\gamma B \times A)}d\nu_x(\xi)d\nu_x(\eta)\\
&\geq r^2e^{-\frac{\epsilon}{20} }\sum_{\gamma \in G,  T_0 +3r \leq d(x,\gamma y) \leq T \atop (\gamma y, \gamma^{-1}x) \in C_1^{-}(x,A) \times C_1^{-}(y,B)}e^{\delta_Gd(x,\gamma y)}\int_{\mathcal{O}_r^{-}(x,\gamma y)}d\nu_x(\xi)d\nu_x(\mathcal{O}_r(\xi,x))\\
&\geq r^2e^{-\frac{\epsilon}{12} }\sum_{\gamma \in G,  T_0 +3r \leq d(x,\gamma y) \leq T \atop (\gamma y, \gamma^{-1}x) \in C_1^{-}(x,A) \times C_1^{-}(y,B)}e^{\delta_Gd(x,\gamma y)}\int_{\mathcal{O}_r^{-}(x,\gamma y)}d\nu_x(\xi)d\nu_x(\mathcal{O}_r(\xi_0,x))\\
&\leq r^2e^{-\frac{\epsilon}{12} }\sum_{\gamma \in G, d(x,\gamma y) \leq T \atop (\gamma y, \gamma^{-1}x) \in C_1^{-}(x,A) \times C_1^{-}(y,B)}\nu_y(\mathcal{O}^{-}_r(\gamma^{-1}x,y))\nu_x(\mathcal{O}_r(\xi_0,x)).
         \end{aligned}
 \end{equation}
Let $$\mathcal{X'} = (C_{1}^{-}(x,A) \times C_{1}^{-}(y,B)) \cap (\widehat{U} \times \widehat{W})$$ and $$\mathcal{Y'} =(C_{1}^{-}(x,A) \times C_{1}^{-}(y,B)) \setminus (\widehat{U} \times \widehat{W}).$$ Then 

\begin{equation}
    \begin{aligned}
&\int^{T+3r}_{T_0}e^{\delta_{G} t}\sum_{\gamma \in G}\tilde{\mu}_G(K^{+} \cap g_{-t}\gamma K^{-})dt \\
&\geq \underbrace{r^2e^{-\frac{\epsilon}{12} }\sum_{\gamma \in G, T_0 + 3r \leq d(x,\gamma y) \leq T \atop (\gamma y, \gamma^{-1}x) \in \mathcal{X'}}\nu_y(\mathcal{O}^{-}_r(\gamma^{-1}x,y))\nu_x(\mathcal{O}_r(\xi_0,x))}_{III}\\
&+ \underbrace{r^2e^{-\frac{\epsilon}{12} }\sum_{\gamma \in G, T_0+3r \leq d(x,\gamma y) \leq T \atop (\gamma y, \gamma^{-1}x) \in \mathcal{Y'}}\nu_y(\mathcal{O}^{-}_r(\gamma^{-1}x,y))\nu_x(\mathcal{O}_r(\xi_0,x))}_{IV}.   
        \end{aligned}
\end{equation}
Remarks given above imply that 
\begin{equation}
    \begin{aligned}
        &III \geq \\
        &r^2e^{-\frac{7\epsilon}{60} }\sum_{\gamma \in G, T_0+3r \leq d(x,\gamma y) \leq T \atop (\gamma y, \gamma^{-1}x) \in \mathcal{X'}}\nu_y(\mathcal{O}_r(\eta_0,y))\nu_x(\mathcal{O}_r(\xi_0,x))\\
        &\geq e^{-\frac{7\epsilon}{60} }M \frac{e^{T\delta_G}}{\delta_G}\mu^{T}_{x,y}(\mathcal{X'})-C_1\\
        &\geq e^{-\frac{7\epsilon}{60} }M \frac{e^{T\delta_G}}{\delta_G}\mu^{T}_{x,y}(C^{-}_{1}(x,A) \times C^{-}_{1}(y,B)) -Me^{-\frac{7\epsilon}{60} }\|\mathcal{Y}_T'\| -C_1.
        \end{aligned}
\end{equation}

Where $\|\mathcal{Y}_T'\|$ is the cardinality of $$\mathcal{Y}_T'= \{\gamma \in G: d(s,\gamma y) \leq T, (\gamma y,\gamma^{-1}x) \in \mathcal{Y}'\}$$ and $C$ comes from the fact that, for fixed $x$ and $y$, there are only finitely many elements in $G$ with $d(x,\gamma y) \leq T_0+3r$. 

Now,  
\begin{equation}
    \begin{aligned}
        IV  \geq 0.
    \end{aligned}
\end{equation}
Notice that by Proposition \ref{lemma:subexponential}, $$\|\mathcal{Y}'_T\| = o(e^{T\delta_G}).$$  

Put those estimations together. Now the proof of Claim 4 can be completed exactly as the proof of Claim 3.

   \end{proof}
Thus we complete the proof of the proposition by Claim 3 and Claim 4.
\end{proof}

\begin{proposition}\label{prop:large}
   Let $\epsilon > 0$ and $x, y \in \T(S)$. Then for all $\xi_0, \eta_0 \in \PGM$, there exist $r > 0$ and open neighborhoods $U,W \subset \PGM$ containing $\xi_0$ and $\eta_0$,respectively, such that for all Borel subsets $A  \subset U, B \subset W$,
    \begin{equation*}
        \begin{aligned}
            \limsup_{R \to \infty} \mu^{R}_{x,y}(C^{-}_{r}(x,A) \times C^{-}_{r}(y,B)) \leq e^{\epsilon}\nu_x(A)\nu_y(B)/\lVert\mu_G(G)\lVert, \\
             \liminf_{R \to \infty} \mu^{R}_{x,y}(C^{+}_{r}(x,A) \times C^{+}_{r}(y,B)) \geq e^{-\epsilon}\nu_x(A)\nu_y(B)/\lVert\mu_G(G)\lVert.        \end{aligned}
    \end{equation*}

\end{proposition}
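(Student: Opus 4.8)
The plan is to reduce Proposition \ref{prop:large} to Proposition \ref{prop:small}. Since $\M(S)$ is virtually torsion free and the two asserted inequalities pass between $G$ and a finite index subgroup (as in the discussion preceding Corollary \ref{coro:mixing}), we may assume $G$ is torsion free, so that every point of $\T(S)$ has trivial stabilizer. Suppose first that $\xi_0\notin\Lambda G$ (the case $\eta_0\notin\Lambda G$ being symmetric). As $\Lambda G$ is closed, choose an open $U\ni\xi_0$ with $\overline U\cap\Lambda G=\emptyset$ and any open $W\ni\eta_0$. For every Borel $A\subseteq U$ we have $\nu_x(A)=0$, since $\nu_x$ is supported on $\Lambda G$; hence the right-hand sides of both inequalities vanish and the $\liminf$ inequality is automatic. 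For the $\limsup$ inequality, by monotonicity of the cones in their boundary argument it suffices to show $\sharp\{g\in G: gy\in C^\pm_r(x,U),\ d(x,gy)\le R\}=o(e^{\delta_G R})$ for any fixed $r$; this follows from Proposition \ref{lemma:subexponential}(1) together with the fact that orbit points escaping to infinity accumulate only on $\Lambda G$, so that only finitely many $gy$ lie in a relatively compact neighborhood $\widehat W$ of $\overline U$ in $\GM$ with $\overline{\widehat W}\cap\Lambda G=\emptyset$ (after intersecting $U$ with $\mathcal{UE}\subseteq\PMF(S)$, which is harmless for the cones because $\mathcal{O}^\pm_r(x,\cdot)\subseteq\mathcal{UE}$).

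Now assume $\xi_0,\eta_0\in\Lambda G$; fix a small $\epsilon_1>0$ to be adjusted at the end. By Theorem \ref{Yang2022} conical points have full $\nu_x$-measure and $\nu_x$ is fully supported on $\Lambda G\cap\mathcal{UE}$, so we may pick a conical point $\xi^+\in\mathrm{supp}\,\nu_x$ with $\xi^+\ne\xi_0$; since $\xi^+$ is uniquely ergodic, $(\xi_0,\xi^+)$ is a filling pair and the optimal geodesic line $\mathcal{L}(\xi_0,\xi^+)$ is defined. Viewing $\xi_0\in\PGM$ as a horofunction, let $x_0$ be the unique point of $\mathcal{L}(\xi_0,\xi^+)$ with $\xi_0(x_0)=\xi_0(x)$ (the intersection is unique and transverse because $t\mapsto\xi_0(\mathcal{L}(\xi_0,\xi^+)(t))$ is affine of slope $\pm 1$, the geodesic being optimal for $\xi_0$). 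Choose $\eta^+,y_0$ analogously, put $d_0=\max\{d(x,x_0),d(y,y_0)\}$ and $r=1+2d_0$. Then $(x_0,y_0)$ satisfies the hypotheses of Proposition \ref{prop:small} relative to $(\xi_0,\eta_0)$, and applying it with parameter $\epsilon_1$ yields open neighborhoods $U_1\ni\xi_0$, $W_1\ni\eta_0$ carrying the two inequalities for $\mu^R_{x_0,y_0}$ on $C^\pm_1(x_0,A)\times C^\pm_1(y_0,B)$. I then shrink to open $U,W$ with $\overline U\subseteq U_1$, $\overline W\subseteq W_1$, small enough that, using continuity of $\xi\mapsto\beta_\xi(x_0,x)$ on $\mathcal{UE}$ and the relation $\beta_{\xi_0}(x_0,x)=\xi_0(x_0)-\xi_0(x)=0$ forced by the horosphere choice (and likewise for $\eta$), one has $e^{-\epsilon_1}\nu_x(A)\le\nu_{x_0}(A)\le e^{\epsilon_1}\nu_x(A)$ and the analogue for $\nu_{y_0}$ for all Borel $A\subseteq U$, $B\subseteq W$.

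It remains to pass from $\mu^R_{x_0,y_0}$ to $\mu^R_{x,y}$. Using the base-point-change estimates of Lemma \ref{lemma:CR} (moving the reference point or the target point of a cone by at most $d_0$ enlarges $C^+$ and shrinks $C^-$ after increasing the radius by $d_0$, which is why $r=1+2d_0$) together with $d(gy,gy_0)=d(y,y_0)\le d_0$ and $d(x,x_0)\le d_0$, one gets set inclusions such as $\{g: gy\in C^-_r(x,A),\ g^{-1}x\in C^-_r(y,B),\ d(x,gy)\le R\}\subseteq\{g: gy_0\in C^-_1(x_0,A),\ g^{-1}x_0\in C^-_1(y_0,B),\ d(x_0,gy_0)\le R+2d_0\}$ and a reverse inclusion with $C^+$ and $R-2d_0$. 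Since $\liminf$ and $\limsup$ in $R$ are insensitive to additive shifts of $R$, this gives the two inequalities up to a multiplicative factor $e^{\pm 2\delta_G d_0}$, which must then be removed: one uses that each $g$ contributing to these cones sends $y$ (hence $y_0$) far in the direction of $\xi_0$ and sends $x$ (hence $x_0$) far in the direction of $\eta_0$, so that along the contributing sequences $d(x,gy)-d(x_0,gy_0)\to\xi_0(x)-\xi_0(x_0)=0$ by the horosphere choice of $x_0$ (and symmetrically for the $g^{-1}x$-coordinate), making the comparison asymptotically sharp. Combining this with the measure comparison of the previous paragraph and choosing $\epsilon_1$ small in terms of $\epsilon$ completes the proof.

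I expect the last step — upgrading the bounded comparison of $\mu^R_{x,y}$ and $\mu^R_{x_0,y_0}$ to an asymptotically exact one — to be the main obstacle. This is precisely the point where the non-convexity of the Teichm\"uller metric (so that nearby geodesics need not stay close) and the possible presence of non-Busemann limit points prevent a direct CAT(0)-style argument as in \cite{Link}; one has to exploit the clustering of the contributing orbit points near $\xi_0$ and $\eta_0$, the horosphere normalization of $x_0$ and $y_0$, and the optimal-geodesic description of the shadows and cones to keep the distance distortions under control, together with a careful bookkeeping of the boundary terms $\tilde\partial\mathcal{O}_r$ (null by Lemma \ref{lemma:nullboundary}) as the neighborhoods shrink.
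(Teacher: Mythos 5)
Your overall strategy coincides with the paper's: pick conical points $\xi^+,\eta^+$ in the supports of $\nu_x,\nu_y$, move the basepoints to $x_0\in\mathcal{L}(\xi_0,\xi^+)$ and $y_0\in\mathcal{L}(\eta_0,\eta^+)$, apply Proposition \ref{prop:small} there, and transfer back to $(x,y)$ using continuity of the horofunctions/Busemann cocycles near $\xi_0,\eta_0$ together with conformality of the density. Your horosphere normalization $\xi_0(x_0)=\xi_0(x)$ is a cosmetic variant of the paper's bookkeeping, which leaves $\beta_{\xi_0}(x_0,x)$ arbitrary and cancels the resulting shift of $R$ exactly against the Radon--Nikodym factor $e^{\delta_G\beta_{\xi_0}(x_0,x)}$ relating $\nu_{x_0}$ to $\nu_x$; either normalization works. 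Two side remarks: your preliminary case $\xi_0\notin\Lambda G$ is unnecessary, since $(\xi_0,\xi^+)$ is automatically a filling pair for any $\xi_0\in\PGM$ and the main construction goes through verbatim; and your reduction to torsion-free $G$ is not justified as stated, because the quantities $\mu^R_{x,y}$, $\nu_x$ and $\lVert\mu_G\rVert$ all change under passage to a finite-index subgroup --- the paper avoids this by choosing $x_0,y_0$ on the optimal geodesics with trivial stabilizers, a freedom your rigid horosphere choice forfeits.

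The genuine gap is in your last step. To remove the factor $e^{\pm 2\delta_G d_0}$ coming from the crude triangle-inequality comparison, you assert that ``each $g$ contributing to these cones sends $y$ far in the direction of $\xi_0$'', so that $d(x,gy)-d(x_0,gy_0)\to 0$ along contributing sequences. This is false as stated. Orbit points of $C^{\pm}_r(x,A)\cap B(x,R)$ need not converge to $\xi_0$, nor even accumulate only on $\overline{U}$: Corollary \ref{corollary:closureofsectors} controls only the uniquely ergodic accumulation points of the cone, and the number of orbit points in the cone lying outside a fixed Gardiner--Masur neighborhood $U_0$ of $\overline{U}$ is merely $o(e^{\delta_G R})$, not zero. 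The refined distance estimate $|d(x_0,gy_0)-d(x,gy)|\le \epsilon/(3\delta_G)$ is available only for those $g$ with $gy$ deep inside $U_0$ and $g^{-1}x$ in $W_0$. The missing step is therefore the decomposition of the contributing set into $\mathcal{X}_1=(C^{-}_r(x,A)\times C^{-}_r(y,B))\cap(V_{-r}\times W_0)$ and its complement $\mathcal{X}_2$, together with the bound $\sharp\{g:(gy,g^{-1}x)\in\mathcal{X}_2,\ d(x,gy)\le R\}=o(e^{\delta_G R})$ supplied by Proposition \ref{lemma:subexponential} --- precisely the lemma you deploy only in your superfluous side case. Note also that even on $\mathcal{X}_1$ the difference $d(x,gy)-d(x_0,gy_0)$ does not tend to $0$; it is only bounded by a quantity made small by shrinking $U_0,W_0$, which suffices but is weaker than the asymptotic exactness you claim.
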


\begin{proof}
The proof also follows the same line as in the proof of \cite[Proposition 8.3]{Link} combined with strategy used in Proposition \ref{prop:small} and with \cite[Lemma 6.6(a)]{Link} be replaced by Proposition \ref{lemma:subexponential}(1).\marginpar{\textcolor{blue}{Note A.6}}
\end{proof}

\begin{proof}[Proof of Theorem \ref{thm:equidistribution}]
Let $x,y \in \T(S)$. We first construct a finite cover of $\GM$ as follows. For any $\xi \in \PGM$, take an open neighborhood $U_{\xi}$ of $\xi$ as in Lemma \ref{lemma:approx}. For any $x \in \T(S)$, take a ball of radius $1$. Then we get a open cover of $\GM$. Since $\GM$ is compact, there is a finite subcover 
$\mathfrak{C} = \lbrace \mathcal{P} \rbrace \cup \lbrace \widehat{W}_n \rbrace_{n \in \Gamma}$ of $\GM$, where $\mathcal{P}$ is the complement of the union of $\widehat{W}_n$ in $\GM$ which is relatively compact in $\T(S)$ and $\widehat{W}_n \cap \PGM \neq \emptyset$ for all $n$. Consider the associated finite cover $\mathfrak{C}' \times \mathfrak{C}'$ of $\GM \times \GM$.  Using a partition of unit subordinate to this finite cover, it is enough for us to prove the statement for any positive function compactly supported on one of these open subsets. Notice that for functions compactly supported on open subsets in one of $\mathcal{P} \times \mathcal{P}, \mathcal{P} \times \widehat{W}_n, \widehat{W}_n \times \mathcal{P}$, one can confirm the statement easily. So we only consider open subsets of the form $\widehat{W}_i \times \widehat{W}_j$.

Let $\widehat{U} \times \widehat{W}$ be such open subsets. Let $\widehat{U} \cap \PMF(S) = U$ and $\widehat{W} \cap \PGM = W$. Notice that, by our construction of the finite cover, the proofs of Proposition \ref{prop:small} and Proposition \ref{prop:large}, conclusions of these two propositions hold for $U$ and $W$. Hence a number $r$ exists so that the conclusion of Proposition \ref{prop:large} holds. Let $\mathcal{A}, \mathcal{B}$ be two Borel subsets of $\GM$ with closure $\overline{\mathcal{A}} \subset \widehat{U}, \overline{\mathcal{B}} \subset \widehat{W}$ such that $\partial (\mathcal{A} \times \mathcal{B})$ has null $\nu_x \otimes \nu_y-$measure. 

Let $\alpha > 0$. Choose open subsets $A_{+}, B_{+}$ and compact subsets $A_{-}, B_{-}$ of $\PGM$ such that 
\begin{equation}
    \begin{aligned}
        A_{-} \subset \mathcal{A}^{\circ} \cap \PGM \subset \overline{\mathcal{A}} \cap \PGM \subset A_{+} \subset U,\\
        B_{-} \subset \mathcal{B}^{\circ} \cap \PGM \subset \overline{\mathcal{B}} \cap \PGM \subset B_{+} \subset W,\\
   \nu_x(\mathcal{A}^{\circ} \cap \PGM - A_{-})< \alpha, \nu_x( A_{+}-\overline{\mathcal{A}} \cap \PGM) < \alpha,\\    \nu_x(\mathcal{B}^{\circ} \cap \PGM - B_{-}) < \alpha, \nu_x( B_{+}- \overline{\mathcal{B}} \cap \PGM) < \alpha.          \end{aligned}
\end{equation}
Notice that 
\begin{small}
\begin{equation}
\begin{aligned}
&\overline{\mathcal{A}} \times \overline{\mathcal{B}}=\\&((\overline{\mathcal{A}} \times \overline{\mathcal{B}}) - (C^{-}_{r}(x,A_{+}) \times C_{r}^{-}(y,B_{+})) \cup
 ((\overline{\mathcal{A}} \times \overline{\mathcal{B}}) \cap (C^{-}_{r}(x,A_{+}) \times C_{r}^{-}(y,B_{+})),\\
 &C^{+}_{r}(x,A_{-}) \times C_{r}^{+}(y,B_{-})= ((\mathcal{A} \times \mathcal{B}) \cap (C^{+}_{r}(x,A_{-}) \times C_{r}^{+}(y,B_{-})) 
\cup \\&((C^{+}_{r}(x,A_{-}) \times C_{r}^{+}(y,B_{-}) -(\mathcal{A} \times \mathcal{B})).
\end{aligned}
\end{equation}
\end{small}
By Proposition \ref{lemma:subexponential}, Proposition \ref{prop:large} and the fact that $\nu_x \otimes \nu_y(\partial (\mathcal{A} \times \mathcal{B})) = 0$, we have 
\begin{equation}
    \begin{aligned}
        \limsup_{R \to \infty}\mu^{R}_{x,y}( \mathcal{A} \times \mathcal{B}) &\leq \limsup_{R \to \infty}\mu^{R}_{x,y}((C^{-}_{r}(x,A_{+}) \times C_{r}^{-}(y,B_{+}))\\
        &\leq \frac{e^{\epsilon}\nu_x(A_{+})\nu_y(B_{+})}{\lVert\mu_G(G)\lVert}\\
        &\leq \frac{e^{\epsilon}(\nu_x(\overline{\mathcal{A}})\nu_y(\overline{\mathcal{B}})+\alpha(\lVert \nu_x\lVert +\lVert \nu_y\lVert))}{\lVert\mu_G(G)\lVert}\\
        &\leq \frac{e^{\epsilon}(\nu_x(\mathcal{A})\nu_y(\mathcal{B})+\alpha(\lVert \nu_x\lVert +\lVert \nu_y\lVert))}{\lVert\mu_G(G)\lVert}.
        \end{aligned}
\end{equation}
Take $\alpha$ small enough, one has 
$$\limsup_{R \to \infty}\mu^{R}_{x,y}( \mathcal{A} \times \mathcal{B}) \leq \frac{e^{\epsilon}\nu_x(\mathcal{A})\nu_y(\mathcal{B})}{\lVert\mu_G(G)\lVert}.$$
One can obtain similarly that  $$\limsup_{R \to \infty}\mu^{R}_{x,y}( \mathcal{A} \times \mathcal{B}) \geq \frac{e^{-\epsilon}\nu_x(\mathcal{A})\nu_y(\mathcal{B})}{\lVert\mu_G(G)\lVert}.$$
Hence for a continuous function $f$ which is compactly supported on $\widehat{U} \times \widehat{W}$, 
\begin{equation}
    \begin{aligned}
        &e^{-\epsilon}\int_{\PGM \times \PGM} f d(\nu_x\otimes \nu_y)\\
        &\leq \liminf_{R \to \infty}\int_{\PGM \times \PGM} fd(\mu^{R}_{x,y}) \\
        &\leq \limsup_{R \to \infty}\int_{\PGM \times \PGM} fd(\mu^{R}_{x,y}) \\
        &\leq  e^{\epsilon}\int_{\PGM \times \PGM} f d(\nu_x\otimes \nu_y).        
        \end{aligned}
\end{equation}
So the proof is completed.
\end{proof}
\section{Applications}
We present several cases where Theorem \ref{main:counting} can be applied.\\

\noindent\textbf{Mapping class groups.}~~If $G$ is the full mapping class group $\M(S)$, then one can use $(3)$ in Theorem \ref{main:counting}  to get an asymptotic growth formula for the mapping class group with respect to the Teicm\"{u}ller metric,  since in this case $\delta_G = 6g-6$ is the entropy of the Techm\"{u}ller geodesic flow $g_t$ on the unit tangent bundle $QD^1(S)$ of $\T(S)$ and $\mu_{G}$ is, up to a scale, the Masur-Veech measure constructed by Masur \cite{MasurInterval} and Veech \cite{Veech} which is finite. This is the main result of \cite{ABEM} which is inspired by the method in Margulis's thesis \cite{Margulis}. Our method here is however modelled on the ones given by Roblin \cite{Roblin} and Link \cite{Link}, so we obtain another proof for the case $\M(S)$.

\noindent\textbf{Convex cocompact subgroups.}~~A subgroup $G \leq \M(S)$ is called \textit{convex cocompact} if some $G$-orbit in $\T(S)$ is quasi-convex (see \cite{FarbMosher},\cite{KentLeininger}). Convex cocompact subgroups of $\M(S)$ are analogues of convex cocompact subgroups of Kleinian groups. It also has many interesting properties, e.g. $\Lambda G$ consists of uniquely ergodic measured foliations and it is virtually purely pseudo-Anosov. An important feature for a convex cocompact subgroup $G$ of $\M(S)$ is that there is a $G$-invariant subset $W_G$ of $\overline{\T(S)}^{Th}$ so that $G$ acts on $W_G$ cocompactly. This shows that the corresponding Bowen-Margulis measure $\mu_G$ is finite, so one can use $(3)$ in Theorem \ref{main:counting} to get an asymptotic formula for orbit growth. In fact, under the assumption that at least pseudo-Anosov element has an axis in the principal stratum, this formula is obtained in the first named author's thesis \cite{gekhtman2013dynamics} where a large part of the proof is quite different from the current paper. 

\noindent\textbf{Other examples.}~~

 Statistically convex cocompact (SCC) subgroups for general actions with contracting elements were introduced by Yang \cite{Yang_scc}.
 In the setting of mapping class group actions on Teichmueller space they are defined as follows. Let $G<\M(S)$, $o\in \T(S)$ and a constant $M>0$. Let $O_{M}$ denote the set of $g\in G$ such that some geodesic from $B(o, M)$ to $B(go,M)$ lies outside of the $M$-neighborhood of the orbit $Go$. We say $G$ is SCC if for some $M>0$ the set $O_{M}$ has strictly smaller exponential growth rate than $G$ itself (this definition can be shown to be independent of the basepoint $o\in \T(S)$). 
 In an upcoming work \cite{CGTY} it is shown that any SCC subgroup has finite Bowen-Margulis measure, and thus our results apply to SCC subgroups of mapping class groups. 

 Examples of SCC subgroups of mapping class groups which are not convex cocompact can be obtained as free products of a free abelian group $H$ generated by Dehn twists and a cyclic group generated by a single pseudo-Anosov $p$. See \cite[Proposition 6.6]{Yang_scc}.

\newpage
\appendix
\section{Notes to aid the referee}
In this appendix, we add some notes to aid the referee and to make this paper much more self-contained.\\

\hrule

\medskip
\noindent{\bf Note A.1:} Denote $$A_n = \{r > 0: \nu_o(\tilde{\partial}\mathcal{O}_r(\xi,o)) > \frac{1}{n}\}.$$ Since $\nu_o$ is finite and different values for $r$ give disjoint $\tilde{\partial}\mathcal{O}_r(\xi,o)$, for any fixed natural number $n > 0$, $A_n$ is finite. Now observe that $$\{ r > 0: \nu_o(\tilde{\partial}\mathcal{O}_r(\xi,o)) > 0 \} = \bigcup_n A_n,$$ hence at most countable.
\medskip
\hrule

\noindent{\bf Note A.2:}
By our construction, $\nu_o(\PGM) = \nu_o(\mathcal{UE})= 1$. So according to Lemma \ref{lemma:measureclosure}, one has 
\begin{equation}
    \begin{aligned}
        &\nu_o(\mathcal{O}_r(\xi,o))\\
        &\leq \nu_o(\liminf_{n\to \infty}(\mathcal{O}^{\pm}_r(y_n,o)))\\
        &\leq \liminf_{n\to \infty}\nu_o(\mathcal{O}^{\pm}_r(y_n,o)))\\
        &\leq \limsup_{n\to \infty}\nu_o(\mathcal{O}^{\pm}_r(y_n,o)))\\
        &\leq \nu_o(\limsup_{n\to \infty}(\mathcal{O}^{\pm}_r(y_n,o)))\\
        &\leq \nu_o((\mathcal{O}_r(\xi,o) \cup \tilde{\partial}\mathcal{O}_r(\xi,o)) )\\
        &= \nu_o(\mathcal{O}_r(\xi,o)).       
        \end{aligned}
\end{equation}
Thus we have, as claimed, 
\begin{equation}
    \begin{aligned}
        &\lim_{n \to \infty}\nu_o(\mathcal{O}^{\pm}_{r}(y_n,o))\\
        &=\nu_o(\mathcal{O}_r(\xi,o)).
            \end{aligned}
\end{equation}
\hrule
\medskip
\noindent{\bf Note A.3:}
The reader can check part (1) easily by definitions. We now consider part (2). 

Take $\zeta \in \mathcal{O}_r(\eta,x)$, then by definition, the geodesic $\mathcal{L}(\eta,\zeta)$ has distance $r$ with $x$. Reverse the direction of $\mathcal{L}(\eta,\zeta)$, one gets $\mathcal{L}(\zeta,\eta)$. Then one can check directly that $(\zeta,\eta) \in L_{r}(x,\gamma y)$. In order to show that $\xi \in \gamma B$ and $\eta \in A$, by the definition of $C^{-}_r(x,A)$, $\gamma y \in C^{-}_{r}(x,A)$ which gives that $\eta \in \mathcal{O}^{-}_r(x,\gamma y) \subset A$. Since $(\zeta,\eta) \in L_r(x,\gamma y)$, we have $\zeta \in \mathcal{O}_{r}^{+}(\gamma y,x)$ which implies that $\gamma^{-1}\zeta \in \mathcal{O}^{+}_{r}(y,\gamma^{-1}x)$. Here we use the fact that $G$ acts on $\T(S) \cup_{GM} \mathcal{UE}$ continuously. Since $\gamma^{-1}x \in C^{-}_{r}(y,B)$, by the definition of $C^{-}_{r}(y,B)$, we have $\mathcal{O}^{+}_{r}(y,\gamma^{-1} x)$. This implies that $\xi \in \gamma B$.

\newpage

\noindent{\bf Note A.4:}

\underline{Claim 1:} $ \Psi'(\bigcup_{t > 0}\{K_A^{+} \cap g_{-t}\gamma K_B^{-}\}) \subset L_r(x, \gamma y) \cap (\gamma B \times A)$.\\
Let $q \in K_A^{+} \cap g_{-t}\gamma K_B^{-}$ for some $t > 0$, then by definition of $K^{\pm}$ (Eqs \ref{equation:K}), $\Psi'(q) := (\xi,\eta) \in \mathcal{UE}\times \mathcal{UE}$ with  $\xi \in \gamma B$ and $\eta \in A$. Denote $q = w(x;\xi,\eta)$ and let $\tau \in \mathds{R}$ such that $$w(\gamma y; \xi,\eta) = g_{t}q.$$ Now on the one hand, by definition of $K^{\pm}$ (Eqs \ref{equation:K}), $\|d(x,\gamma y) - \tau\| \leq 2r$ which together with the assumption that $d(x,\gamma y) \geq 12r$ shows that $\tau  > 0$. On the other hand, $d(x,\gamma y) \geq 12r$ also implies that $md(\Pi_{\mathcal{L}(\xi,\eta)}(x),\Pi_{\mathcal{L}(\xi,\eta)}(\gamma y)) > r$. So we can conclude that $\Psi'(q) \subset L_{r}(x,\gamma y)$.\\
\underline{Claim 2:} $ \Psi'(\bigcup_{t > 0}\{K_A^{+} \cap g_{-t}\gamma K_B^{-}\}) \supset L_r(x, \gamma y) \cap (\gamma B \times A)$.\\
Let $$(\xi,\eta) \in L_r(x, \gamma y) \cap (\gamma B \times A).$$ Consider the geodesic $\ell = \mathcal{L}(\xi,\eta)$. Then by definition, there are two points $m,n \in \ell$ with $g_t m = n$ for some $t > 0$, such that $$\Psi'(\ell) = (\xi,\eta), d(x,m) < r, d(ry,n) < r.$$ Set $q = w(x;\xi,\eta)$ and $\tau \in \mathds{R}$ such that $g_{\tau} q = w(\gamma y;\xi,\eta)$. One then can check easily that $$q \in K_A^{+}, g^{\tau} q \in \gamma K_B^{-}, \tau > 0.$$

\hrule
\medskip
\noindent{\bf Note A.5:}
We supply computations here.

By the above mixing, there exists $T_0 > 12r$ such that for $t \geq T_0$, we have 
\begin{equation}\label{equation:consequencemixing}
    \begin{aligned}
        &e^{-\frac{\epsilon}{3}}\tilde{\mu}_G(K^{+})\tilde{\mu}_G(K^{-}) \leq \|\mu_G\|\sum_{\gamma \in G}\tilde{\mu}_G(K^{+} \cap g_{-t}\gamma K^{-})\\
        &\leq e^{\frac{\epsilon}{3}}\tilde{\mu}_G(K^{+})\tilde{\mu}_{G}(K^{-}).
        \end{aligned}
\end{equation}
By (\ref{equ:measureofK}) and (\ref{equ:measureofO}), we have, for $A \subset \widehat{V}$ and $B \subset \widehat{W}$,
\begin{equation}
    \begin{aligned}
        &re^{-\frac{\epsilon}{30}}\nu_x(\mathcal{O}_r(\xi_0,x))\nu_x(A) \leq \tilde{\mu}_G(K^{+}) \leq re^{2r\delta_G}e^{\frac{\epsilon}{30}}\nu_x(\mathcal{O}_r(\xi_0,x)\nu_x(A),\\
        &re^{-\frac{\epsilon}{30}}\nu_y(\mathcal{O}_r(\eta_0,y))\nu_y(B) \leq \tilde{\mu}_G(K^{-}) \leq re^{2r\delta_G}e^{\frac{\epsilon}{30}}\nu_y(\mathcal{O}_r(\eta_0,y)\nu_y(B).    \end{aligned}
\end{equation}
Denote $M = r^2\nu_x(\mathcal{O})_r(\xi_0,x))\nu_y(\mathcal{O}_r(\eta_0,y)) > 0$ and since $r\delta_G \leq \frac{\epsilon}{60}$, 
\begin{equation*}
    \begin{aligned}
        &e^{-\frac{\epsilon}{15}}M\nu_x(A)\nu_y(B) \leq \tilde{\mu}_G(K^{+})\tilde{\mu}_G(K^{-})\\
        &\leq e^{\frac{\epsilon}{5}}M\nu_x(A)\nu_y(B).
    \end{aligned}
\end{equation*}
By Inequalities (\ref{equation:consequencemixing}), we have for $t \geq T_0$, 
\begin{equation*}
    \begin{aligned}
        &M\nu_x(A)\nu_y(B) \leq e^{\frac{2\epsilon}{5}} \|\mu_G\|\sum_{\gamma \in G}\tilde{\mu}_G(K^{+} \cap g_{-t}\gamma K^{-}),\\
        &M\nu_x(A)\nu_y(B) \geq e^{-\frac{8\epsilon}{15}} \|\mu_G\|\sum_{\gamma \in G}\tilde{\mu}_G(K^{+} \cap g_{-t}\gamma K^{-}).        
        \end{aligned}
\end{equation*}
Thus 
\begin{equation}
    \begin{aligned}
&(e^{\delta_G(T-6r)}-e^{\delta_G T_0})M\nu_x(A)\nu_x(B) = \delta_G \int^{T-6r}_{T_0}e^{t\delta_G}M\nu_x(A)\nu_x(B)dt \\
&\leq e^{\frac{2\epsilon}{5}}\lVert \mu_G \lVert \delta_G \int^{T-6r}_{T_0}e^{\delta_G t}\sum_{\gamma \in G}\tilde{\mu}_G(K^{+} \cap g_{-t}\gamma K^{-}),\\
    &(e^{\delta_G(T+3r)}-e^{\delta_G T_0})M\nu_x(A)\nu_x(B) = 
    \delta_G \int^{T+3r}_{T_0}e^{t\delta_G}M\nu_x(A)\nu_x(B)dt\\
    &\geq e^{-\frac{8\epsilon}{15}}\lVert \mu_G \lVert \delta_G \int^{T+3r}_{T_0}e^{\delta_G t}\sum_{\gamma \in G}\tilde{\mu}_G(K^{+} \cap g_{-t}\gamma K^{-}). 
    \end{aligned}
\end{equation}

\hrule
\medskip
\noindent{\bf Note A.6:}
Let $(\xi_0,\eta_0) \in (\PGM)^2$. Choose conical points $\xi^{+}$ and $\eta^{+}$ in the support of the measures. Take $x_0 \in \mathcal{L}(\xi_0,\xi^{+})$ and $y_0 \in \mathcal{L}(\eta_0,\eta^{+})$ so that they have trivial stabilizers in $G$. Apply Proposition \ref{prop:small} to $(x_0,y_0,\xi_0,\eta_0)$ to get desired open neighborhoods $\widehat{U}_0,\widehat{W}_0$ of $\xi_0$ and $\eta_0$, where $\epsilon$ is taken to be $\frac{\epsilon}{3}$.

Choose an open neighborhood $U_0$ of $\xi_0$ in $\GM$ so that $U_0 \cap \PGM \subset \widehat{U}_0$. Moreover $U_0$ is chosen so that if $a \in U_0 \cap \mathcal{UE}$, then $|\beta_a(x_0,x)-\beta_{\xi_0}(x_0,x)| \leq \frac{\epsilon}{6\delta_G}$, if $a \in \T(S)$,
\begin{equation}\label{equ:cocycle}
    \begin{aligned}
       |d(x_0,a)-d(x,a)- \beta_{\xi_0}(x_0,x)| \leq \frac{\epsilon}{6\delta_G}.    \end{aligned}
\end{equation} Choose an open neighborhood $W_0$ of $\eta_0$ in the same way. 

Let $U \subset \PGM$,$W \subset \PGM$ be open neighborhoods of $\xi_0$ and $\eta_0$, respectively, so that $$\overline{U} \subset U_0 \cap \PGM, \overline{W} \subset W_o \cap \PGM.$$ Take $A \subset U, B \subset W$ be arbitrary Borel sets.

On the one hand, notice that if $\xi \in A \cap \mathcal{UE}$, then $\beta_{\xi_0}(x_0,x) < \beta_{\xi}(x_0,x) + \frac{\epsilon}{6\delta_G}$. Hence by conformality of the measure,
\begin{equation}\label{equ:appeconformal}
    \begin{aligned}
        &e^{\delta_G \beta_{\xi_0}(x_0,x)}\nu_{x_{0}}(A)\\
        &= \int_Ae^{\delta_G}\beta_{\xi_0}(x_0,x)d\nu_{x_0}(\xi)
        &\leq e^{\frac{\epsilon}{6}}\int_{A}d\nu_x(\xi) = e^{\frac{\epsilon}{6}}\nu_x(A).    \end{aligned}
\end{equation}
One can also deduce similiarly that $$e^{\delta_G \beta_{\xi_0}(y_0,y)}\nu_{y_{0}}(B) \leq e^{\frac{\epsilon}{6}}\nu_y(B).$$

On the other hand, set $r = 1 + \max\{d(x,x_0),d(y,y_0)\}$ and $$V_{-r} = \{x \in \T(S): \overline{B_r(z)} \subset U_0\} \cup (U_0 \cap \PGM).$$
If $d(x,\gamma y) \leq T$ and $(\gamma y, \gamma^{-1}x) \in V_{-r} \times W_{0}$, then $$(\gamma y_0, \gamma^{-1}x) \in U_0 \times W_0.$$ By the definition of $r$, and one then can check, by using (\ref{equ:cocycle}) and the one for $y_0$, that 
\begin{equation}
    \begin{aligned}
        &d(x_0,\gamma y_0) \leq d(y,\gamma^{-1}x) + \beta_{\eta_0}(y_0,y) + \beta_{\xi_0}(x_0,x) + \frac{\epsilon}{3\delta_G}\\
        &\leq T + \beta_{\eta_0}(y_0,y) + \beta_{\xi_0}(x_0,x) + \frac{\epsilon}{3\delta_G}.    \end{aligned}
\end{equation}
Let $$\mathcal{X}_1 = C^{-}_{r}(x,A) \times C^{-}_{r}(y,B) \cap V_{-r} \times W_{0}$$, $$\mathcal{X}_2 = C^{-}_{r}(x,A) \times C^{-}_{r}(y,B) \setminus V_{-r} \times W_{0},$$ and $$\mathcal{X}_3 = C^{-}_{1}(x,A) \times C^{-}_{1}(y,B) \cap V_{0} \times W_0$$
We then have, for $T\gg 1$,
\begin{equation}
    \begin{aligned}
   & \{\gamma \in G: d(x,\gamma y) \leq T, (\gamma y, \gamma^{-1}x) \in \mathcal{X}_1\} \\
   &\subset \{\gamma \in G: d(x_0,\gamma y_0) \leq T+ \beta_{\eta_0}(y_0,y) + \beta_{\xi_0}(x_0,x) + \frac{\epsilon}{3\delta_G}, (\gamma y_0, \gamma^{-1}x_0) \in \mathcal{X}_3\}    \end{aligned}
\end{equation}
 Notice that by Proposition \ref{lemma:subexponential}, the number of elements in $\mathcal{X}_2$ is $o(e^{T\delta})$. Therefore, by Proposition \ref{prop:small},
 \begin{equation}
     \begin{aligned}
         &\limsup_{T \to \infty}\mu^{T}_{x,y}(C_r^{-}(x,A) \times C^{-}_{r}(y,B))\\
         &\leq e^{\frac{\epsilon}{3}}e^{\delta_G(\beta_{\xi_0}(x_0,x) + \beta_{\eta_0}(y_0,y))}(\\
         &\limsup_{T \to \infty}\mu^{T + \beta_{\eta_0}(y_0,y) + \beta_{\xi_0}(x_0,x) + \frac{\epsilon}{3\delta_G} }_{x_0,y_0}(C_1^{-}(x_0,A) \times C^{-}_{1}(y_0,B)) )\\
         &\leq \frac{1}{\|\mu_G\|} e^{\frac{2\epsilon}{3}}e^{\delta_G(\beta_{\eta_0}(y_0,y) + \beta_{\xi_0}(x_0,x))}\nu_{x_0}(A)\nu_{y_0}(B).
         \end{aligned}
 \end{equation}
Combine with (\ref{equ:appeconformal}) and the one for $\nu_y(B)$, we can complete the proof for the limit superior. For the limit inferior, the argument is similar.

\newpage
\bibliography{ref}
\bigskip
\noindent
\noindent 
Ilya Gekhtman\\ \textbf{ilyagekh@gmail.com}\\

\medskip
\noindent
Biao Ma\\\textbf{biaoma@campus.technion.ac.il}

\end{document}